\newtheorem{definition}{Definition}
\newtheorem{theorem}[definition]{Theorem}
\newtheorem{proposition}[definition]{Proposition}
\newtheorem{lemma}[definition]{Lemma}
\newtheorem{remark}[definition]{Remark}
\newtheorem{corollary}[definition]{Corollary}
\newtheorem{conjecture}[definition]{Conjecture}
\newcommand{\C}{\ensuremath{\mathbb{C}} }
\newcommand{\Z}{\ensuremath{\mathbb{Z}} }
\newcommand{\cat}{\mathcal{C}}
\newcommand{\End}{\operatorname{End}}
\newcommand{\Hom}{\operatorname{Hom}}
\newcommand{\Aut}{\operatorname{Aut}}
\newcommand{\Ker}{\operatorname{Ker}}
\newcommand{\unit}{\ensuremath{\mathbb{I}}}
\newcommand{\Id}{\operatorname{Id}}
\newcommand{\FK}{\mathsf{k}}
\newcommand{\sll}{\mathfrak{sl}}
\newcommand{\epsh}[2]
         {\begin{array}{c} \hspace{-1.3mm}
        \raisebox{-4pt}{\epsfig{figure=#1,height=#2}}
        \hspace{-1.9mm}\end{array}}
\newcommand{\Aop}{A}
\newcommand{\Dop}{D}
\newcommand{\Aops}{\ensuremath{\mathsf{A}}}
\newcommand{\Bop}{B}
\newcommand{\Bops}{\ensuremath{\mathsf{B}}}
\newcommand{\Cop}{C}
\newcommand{\Lop}{L}
\newcommand{\Rop}{R}
\newcommand{\Qop}{Q}
\newcommand{\act}[2]{#1^{#2}}
\newcommand{\teq}{\stackrel{T}{=}}
\newcommand{\steq}{\stackrel{sT}{=}}
\newcommand{\spsi}{\widehat{\Psi}}
\newcommand{\slop}[1]{\Lop^{#1\frac12}}
\newcommand{\srop}[1]{\Rop^{#1\frac12}}
\newcommand{\scop}[1]{C^{#1\frac12}}
\newcommand{\sqop}[2]{q^{#1}}
\newcommand{\wsqop}{\widetilde q}
\newcommand{\T}{{\mathcal{T}}}
\newcommand{\LL}{\mathcal{L}}
\newcommand{\bubble}{$H$-bubble}
\newcommand{\Pachner}{$H$-Pachner}
\newcommand{\Gr}{G}
\newcommand{\p}{\varphi}
\newcommand{\vect}{\overrightarrow}
\newcommand{\cntr}{{\operatorname {cntr}}}
\newcommand{\bb}{\operatorname{\mathsf{b}}}
\newcommand{\states}{\operatorname{St}}
\renewcommand{\wp}{{\Phi}}
\newcommand{\MG}[1]{{\mathcal M}(#1,\Gr)}
\newcommand{\CH}{\mathcal{R}}
\newcommand{\bv}{w}
\newcommand{\Kas}{\mathsf{K}}
\newcommand{\rofo}{\varpi}
\newcommand{\sj}[6]{\left\{\begin{array}{ccc}#1 & #2 & #3 \\#4 & #5 &
      #6\end{array}\right\}}
      \newcommand{\sjn}[6]{\left\{\begin{array}{ccc}#1 & #2 & #3 \\#4 & #5 &
      #6\end{array}\right\}^{-}}
\begin{document}
\let\co=\comment \let\endco=\endcomment
\title[]{Tetrahedral forms in monoidal categories and 3-manifold invariants}
\author{Nathan Geer}
\address{Mathematics \& Statistics\\
  Utah State University \\
  Logan, Utah 84322, USA and\\
Max-Planck-Institut f\"{u}r Mathematik\\
Vivatsgasse 7\\
53111 Bonn, Germany}
 \email{nathan.geer@usu.edu}
  \author{Rinat Kashaev}
\address{Section de Math\'ematiques\\
Universit\'e de Gen\`eve\\
2-4, rue du Li\`evre\\
Case postale 64\\
1211 Gen\`eve 4, Suisse }
\email{Rinat.Kashaev@unige.ch}
\author{Vladimir Turaev}
\address{Department of Mathematics \\
Indiana University \\
Rawles Hall, 831 East 3rd St \\
Bloomington, IN 47405, USA}
\email{vtouraev@indiana.edu}

\begin{abstract}
 We introduce systems of objects and operators  in linear monoidal categories   called  $\widehat \Psi$-systems. A $\widehat \Psi$-system satisfying several additional assumptions gives rise to a topological invariant  of triples (a closed oriented 3-manifold $M$, a principal bundle over $M$, a link in $M$).  This construction generalizes the quantum dilogarithmic invariant of links appearing in the original formulation of the volume conjecture.
  We conjecture that all quantum groups at odd roots of unity give rise to  $\widehat \Psi$-systems and we verify this conjecture in the case of the Borel subalgebra of quantum  $\sll_2$. \end{abstract}

  \date{\today}

\maketitle
\setcounter{tocdepth}{1}

\section*{Introduction}

One of the fundamental  achievements of quantum topology was   a
discovery of deep connections between   monoidal categories and
  3-dimensional manifolds. It was first observed by O.~Viro and V.~Turaev
that  the   category of representations of the quantum
  group $U_q(\sll_2)$ gives rise to a topological invariant of
  3-manifolds.  The invariant is obtained as a    state sum  on  a triangulation  of a 3-manifold; the key ingredients of the  state sum   are
   the
  $6j$-symbols. This construction  was generalized to other categories by several authors
  including  J.~Barrett, B.~Westbury, A.~Ocneanu, S.~Gelfand,
  D.~Kazhdan and others. Their results may be summarized by saying
  that every spherical fusion category gives rise to a state sum  3-manifold
  invariant. Similar methods apply
  to links in 3-manifolds and to 3-manifolds endowed with
  principal fiber bundles. A related but somewhat different line of
  development was initiated by  Kashaev \cite{K1}. He  defined a state sum invariant of links in
  3-manifolds using
  \lq\lq charged" versions of the  $6j$-symbols   associated with certain representations of the
  Borel subalgebra of
$U_q(\sll_2)$. The work of Kashaev was further extended by
S.~Baseilhac and R.~Benedetti, see \cite{Bas}, \cite{BB}.

The aim of this paper is to analyze categorical foundations of the
Kashaev-Baseilhac-Benedetti theory. The key new notions in our
approach are the ones of $\Psi$-systems and $\widehat \Psi$-systems
in linear monoidal categories. The $\Psi$-systems provide a
general framework for  $6j$-symbols. Roughly speaking, a
$\Psi$-system is a family of simple objects of the category
$\{V_i\}_{i\in I}$ closed under duality and such that for \lq\lq
almost all" $i,j\in I$, the identity endomorphism of $V_i\otimes
V_j$ splits as a sum of certain compositions $\{V_i\otimes V_j \to
V_k \to V_i\otimes V_j\}_{k\in I}$  (see Section \ref{section1}).
Examples of $\Psi$-systems can be derived from quantum groups at
roots of unity or, more generally, from Cayley-Hamilton Hopf
algebras, see Section \ref{section12}. Every fusion category has a
$\Psi$-system formed by arbitrary representatives of the isomorphism
classes of all simple objects. A $\Psi$-system in a linear monoidal
category gives rise to a vector space $H$ (the space of
multiplicities), a linear form $T$ on $H^{\otimes 4}$ (the
tetrahedral evaluation form), and two automorphisms $A, B$ (obtained
by taking  adjoints of morphisms). The vector space $H$ has a
natural symmetric bilinear form which allows us to consider the
transposes $A^*$, $B^*$ of $A$, $B$. We use   $T$ to define
$6j$-symbols and we use  $A,B, A^*, B^* $ to formulate the
tetrahedral symmetry of the $6j$-symbols. We also develop a
$T$-calculus for endomorphisms of $H$ which allows us to speak of
equality/commutation of operators \lq\lq up to composition with
$T$". These definitions and results occupy Sections
\ref{section1}--\ref{section6}.

 To define 3-manifold invariants we need to fix  square roots of the operators $$L=A^*A , \,\,
R=B^*B, \,\, C=(AB)^3 \in \End(H) .$$  A $\Psi$-system endowed with
such square roots $\slop{}$, $\srop{}$,   $\scop{}$ satisfying
appropriate  relations is said to be a $\widehat \Psi$-system.   The
$\widehat \Psi$-systems provide a general framework for so-called
\lq\lq charged"  $6j$-symbols depending on two additional integer of
half-integer parameters. The advantage of the charged $6j$-symbols
lies in the simpler tetrahedral symmetry. This material occupies
Sections \ref{section7}--\ref{section9}.

We need two  assumptions on a $\widehat \Psi$-system to produce a
3-manifold invariant. The first assumption says essentially that the
operators $\slop{}$ and $\srop{}$  commute   up to composition with
$T$  and multiplication by a certain scalar $\widetilde q$. The
second assumption introduces  additional data: a group $G$  and a
family of finite subsets $\{I_g\}_{g\in G}$ of $I$ satisfying
certain conditions. We use this data to define a numerical
topological invariant of any tuple (a closed connected oriented
3-manifold $M$, a non-empty link $L\subset M$, a conjugacy class of
homomorphisms $\pi_1(M)\to G$, an element of $H^1(M;\Z/2\Z)$), see
Sections \ref{section10}, \ref{section11}. The invariant in question
is defined as a state sum on a Hamiltonian triangulation of $(M,L)$.
To encode the Hamiltonian path $L$ into the state sum, we use the  charges on $H$-triangulations first introduced in \cite{K1}.
The theory of charges subsequently has been developed in \cite{BB}.  It is a natural extension of the theory of angle structures due to
W.~Neumann, see, for example, \cite{Neu90,Neu98}. The key ingredients of our
state sum are the charged $6j$-symbols. The resulting invariant is
well-defined up to multiplication by integer powers of $\widetilde
q$.

We conjecture that the $\Psi$-systems associated with quantum groups and their Borel subalgebras
at odd roots of unity   extend to $\widehat \Psi$-systems satisfying all
our requirements. We verify this conjecture in the case of the Borel subalgebra of  ${U}_q(\sll_2)$, see
Sections~\ref{section12}, \ref{section13}.  Geer and Patureau-Mirand \cite{GP} verify the conjecture for all quantum groups associated to simple Lie algebras and prove that the usual modular categories arising from quantum groups have $\widehat \Psi$-systems satisfying all our requirements.  The conjecture is open for Borel subalgebras of quantum groups other than ${U}_q(\sll_2)$.
  We expect that the associated invariants are closely related with the invariants constructed in \cite{GPT2,KR}.
  In the case of the example of Section~\ref{section13} with the trivial homomorphism $\pi_1(M)\to G$,
  this construction coincides with the one of Kashaev \cite{K1}. The latter invariant enters the volume conjecture \cite{K3},
  and for links in $S^3$, it is a specialization of the colored Jones polynomial \cite{MM}.
Precise relationships of our invariants with the Baseilhac--Benedetti 3-manifold
invariants are yet unclear.

\begin{co}
I have used the environment {co} to automaticly
show/hide some comments writen in this block form.
To hind such comments just change the lines
\let\co=\comment \let\endco=\endcomment
Note that there are other comments labled by 
\end{co}
This work is partially supported by the Swiss National Science
Foundation. The work of N.\ Geer was partially supported by the NSF
  grant DMS-0706725 and the work of V.\ Turaev was partially supported by the NSF
  grants DMS-0707078 and DMS-0904262.  NG and VT would like to thank the University of Geneva
  and RK would like to thank the Indiana University for hospitality.

\section{$\Psi$-systems   in monoidal categories}\label{section1}
\subsection{Monoidal Ab-categories} A \emph{monoidal (tensor) category} $\cat$ is a category equipped with a
covariant bifunctor $\boxtimes :\cat \times \cat\rightarrow \cat$ called the
tensor product, an associativity constraint, a unit object $\unit$, and left
and right unit constraints such that the Triangle and Pentagon Axioms hold.
When the associativity constraint and the left and right unit
constraints are all identities,  the category $\cat$ is a
\emph{strict} monoidal (tensor) category. By MacLane's coherence
theorem, any monoidal category is equivalent to a strict monoidal
category. To simplify the exposition, we formulate further
definitions only for strict monoidal categories; the reader will
easily extend them to arbitrary monoidal categories.

A monoidal category $\cat$ is said to be an \emph{Ab-category} if
for any objects $V,W$ of $\cat$, the set of morphisms $\Hom(V,W)$ is
an additive abelian group and the composition and tensor product of
morphisms are bilinear.  Composition of morphisms induces a
commutative ring structure on the abelian group $\FK=\End(\unit)$.
The resulting ring is called the \emph{ground ring} of $\cat$. For
any objects $V,W$ of $\cat$ the abelian group $\Hom(V,W)$ becomes a
left $\FK$-module via $kf=k\boxtimes f$ for $k\in \FK$ and $f\in
\Hom(V,W)$. We assume that the tensor multiplication of morphisms in $\cat$ is $\FK$-bilinear.

An object $V$ of $\cat$ is   \emph{simple} if $\End(V)=
\FK \Id_V$. For any simple object $V$ and $f\in\End(V)$, there is a
unique $k\in \FK$ such that $f=k\Id_{V}$. This $k$ is denoted $\langle
f\rangle$.

Fix from now on a monoidal Ab-category $\cat$ whose ground ring $\FK$
is a field. We shall use the symbol $\otimes$ for the tensor product
of $\FK$-vector spaces over   $\FK$ and the symbol $\boxtimes$ for the
tensor product in $\cat$.

\subsection{$\Psi$-systems} A
\emph{$\Psi$-system}\footnote{This name is inspired by the logo of
the Indiana University, where the definition has been finalized.}
 in $\cat$ consists  of
 \begin{itemize}
  \item[(1)] a distinguished set of simple objects $\{ V_i\}_{ i\in I}$ such that  $\Hom(V_i,V_j)=0$ for all $i\ne j$;
  \item[(2)] an involution $I\to I$, $i\mapsto i^*$;
  \item[(3)] two families of morphisms
\(\{ b_i\colon \unit\to V_i\boxtimes V_{i^*}\}_{i\in I}\) and \(
\{d_i\colon V_{i}\boxtimes V_{i^*}\to\unit \}_{i\in I}\),   such
that for all $i\in I$,
\begin{equation}\label{E:dualitybd}
(\Id_{V_i}\boxtimes \,d_{i^*})(b_i\boxtimes \Id_{V_i})=\Id_{V_i} \quad {\rm {and}} \quad (d_i\boxtimes \Id_{V_i})(\Id_{V_i}
\boxtimes \,b_{i^*})=\Id_{V_i}.
\end{equation}
\end{itemize}
To formulate the fourth (and the last) requirement on the
$\Psi$-systems,   set
\[ H^{ij}_k=\Hom(V_k,V_i \boxtimes V_j) \quad {\rm {and}} \quad
H_{ij}^k = \Hom(V_i\boxtimes V_j,V_k).
\] for any $i,j,k\in I$. We require that

\begin{enumerate}
  \item[(4)]  for any $i,j\in I$  such that $H^{ij}_k\ne0$ for some $k\in I$,
the identity morphism  $\Id_{V_i\boxtimes V_j}$ is in the image of
the linear map
\begin{equation}\label{E:isom}
\bigoplus_{k\in I} H^{ij}_k\otimes H_{ij}^k\to \End(V_i\boxtimes V_j),\quad x\otimes y\mapsto x\circ y.
\end{equation}
\end{enumerate}

We fix from now on a  $\Psi$-system in $\cat$ and keep the notation
introduced above.

\begin{lemma}\label{L:1}
For any $i,j,k\in I$, the linear spaces $H^{ij}_k$ and $H_{ij}^k$
are finite dimensional, and the bilinear pairing
\begin{equation}\label{E:pair}
H_{ij}^k\otimes H^{ij}_k\to \FK,\quad
x\otimes y\mapsto \langle x \circ y\rangle,
\end{equation}
is non-degenerate. In particular, $\dim H_{ij}^k=\dim H^{ij}_k$.
\end{lemma}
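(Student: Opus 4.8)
The plan is to prove finite-dimensionality and nondegeneracy of the pairing together, exploiting the fact that the objects $V_i$ are simple and pairwise non-isomorphic (condition (1)), so that $\End(V_i\boxtimes V_j)$ decomposes nicely once we know the identity lies in the image of the composition map \eqref{E:isom}.

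First I would observe that the pairing \eqref{E:pair} is well-defined: if $x\in H_{ij}^k$ and $y\in H^{ij}_k$ then $x\circ y\in\End(V_k)=\FK\,\Id_{V_k}$ since $V_k$ is simple, so $\langle x\circ y\rangle\in\FK$ makes sense, and bilinearity is clear. Next I would dispose of the degenerate cases. If $H^{ij}_k=0$ then $\dim H_{ij}^k$ still needs to be shown to vanish; but by condition (4), if additionally $H^{ij}_{k'}=0$ for \emph{all} $k'$, there is nothing in the hypothesis of (4) so I argue directly that $\Id_{V_i\boxtimes V_j}$ need not be hit — actually the cleanest route is: a morphism $y\colon V_i\boxtimes V_j\to V_k$ composed with $b$-type morphisms gives a morphism $\unit\to V_k\boxtimes\cdots$; more to the point, using the duality morphisms $b_i,d_i$ from (3) one constructs an explicit isomorphism $H_{ij}^k\cong H^{j^*i^*}_{k^*}$ or $H_{ij}^k\cong \Hom(\unit, V_{i^*}\boxtimes V_k\boxtimes V_{j^*})$-type rigidity adjunction, reducing both $H^{ij}_k$ and $H_{ij}^k$ to Hom-spaces of the same "shape"; I'd want to check that the adjunction bijections match up so that $H^{ij}_k=0\iff H_{ij}^k=0$.

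The core argument is the main case where $H^{ij}_k\neq 0$ for at least one $k$. Here condition (4) says $\Id_{V_i\boxtimes V_j}=\sum_{\alpha} x_\alpha\circ y_\alpha$ for finitely many $x_\alpha\in H^{i j}_{k_\alpha}$, $y_\alpha\in H_{ij}^{k_\alpha}$. I would first use this to show finite-dimensionality: for any $y\in H_{ij}^k$ we have $y=y\circ\Id_{V_i\boxtimes V_j}=\sum_\alpha y\circ x_\alpha\circ y_\alpha$, and $y\circ x_\alpha\in\Hom(V_{k_\alpha},V_k)$ which is $0$ unless $k_\alpha=k$ (by (1)) and is a scalar multiple of $\Id_{V_k}$ when $k_\alpha=k$ (by simplicity). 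Hence $y$ is a linear combination of the finitely many $y_\alpha$ with $k_\alpha=k$, so $H_{ij}^k$ is finite dimensional and spanned by these $y_\alpha$; symmetrically (applying the same to the right, $x=\sum_\alpha x_\alpha\circ y_\alpha\circ x = \sum_\alpha x_\alpha\langle y_\alpha\circ x\rangle$ with the same index matching) $H^{ij}_k$ is finite dimensional and spanned by the $x_\alpha$ with $k_\alpha=k$. For nondegeneracy, suppose $y\in H_{ij}^k$ pairs to zero with every $x\in H^{ij}_k$, i.e. $\langle y\circ x\rangle=0$ for all such $x$; then $y\circ x_\alpha=0$ for all $\alpha$ with $k_\alpha=k$ (and it is automatically $0$ when $k_\alpha\neq k$), so $y=\sum_\alpha y\circ x_\alpha\circ y_\alpha=0$. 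The symmetric computation handles a left radical vector $x$. Therefore the pairing is nondegenerate on both sides, and since both spaces are finite dimensional this forces $\dim H_{ij}^k=\dim H^{ij}_k$.

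The step I expect to be the main obstacle is the bookkeeping in the degenerate case $H^{ij}_k = 0$ — making sure that the rigidity/adjunction isomorphisms built from the $b_i$ and $d_i$ of \eqref{E:dualitybd} genuinely identify $H^{ij}_k$ with $H_{ij}^k$ (or with a common third space) so that one vanishes iff the other does, and that these identifications are compatible with the pairing \eqref{E:pair}. The non-degenerate-pairing argument above is robust and essentially formal once (4) is in hand; the delicate point is purely that the statement claims $\dim H_{ij}^k=\dim H^{ij}_k$ \emph{for all} $i,j,k$, including when condition (4) gives no information, and there one must fall back on the duality data from part (3) of the definition.
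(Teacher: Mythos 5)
Your main-case argument (finite-dimensionality and nondegeneracy of the pairing whenever $H^{ij}_{k'}\neq 0$ for some $k'$) is correct and is essentially the paper's proof: decompose $\Id_{V_i\boxtimes V_j}$ via condition (4), use that $\Hom(V_{k_\alpha},V_k)=0$ for $k_\alpha\neq k$ by condition (1) and $=\FK\Id_{V_k}$ for $k_\alpha=k$ by simplicity, and conclude. The paper additionally arranges the $e_{l\alpha}$ to be linearly independent, which produces explicit dual bases; your shorter version, which deduces nondegeneracy directly, is equally valid.

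The gap is in the degenerate case, and you have partly misdiagnosed it. The hope that the rigidity adjunctions built from $b_i,d_i$ will ``genuinely identify $H^{ij}_k$ with $H_{ij}^k$ (or with a common third space)'' does not come true in a plain rigid category: the adjunction carries $H_{ij}^k=\Hom(V_i\boxtimes V_j,V_k)$ to $\Hom(V_j,V_{i^*}\boxtimes V_k)=H^{i^*k}_j$, and carries $H^{ij}_k$ to $H_{i^*k}^j$. These two spaces are of \emph{opposite} variance relative to the new triple $(i^*,k,j)$, so rigidity alone does not relate them; a direct identification would require pivotality, which is not assumed. What closes the argument—and this is what the paper does via the involution $A$ from Lemma~\ref{L:ab}, which is exactly your adjunction in disguise—is re-invoking condition~(4) for the permuted pair. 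If $H_{ij}^k\neq 0$, the adjunction shows $H^{i^*k}_j\neq 0$, so condition~(4) applies to $(i^*,k)$; your already-proved main case for the triple $(i^*,k,j)$ then yields $H_{i^*k}^j\neq 0$; the companion adjunction turns this into $H^{ij}_k\neq 0$. So your approach is salvageable and ends up coinciding with the paper's, but the proposal as written is missing the crucial step of feeding the adjoint spaces back into condition~(4) for the new index pair; without it, ``$H^{ij}_k=0\iff H_{ij}^k=0$'' does not follow.
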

\begin{lemma}\label{L:0}
For any $i,j\in I$, there are only finitely many $k\in I$ such that
$H^{ij}_k \ne0$.
\end{lemma}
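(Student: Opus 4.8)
The plan is to exploit requirement (4) of the $\Psi$-system together with Lemma \ref{L:1}, which has just been established. Fix $i,j\in I$. If there is no $k\in I$ with $H^{ij}_k\neq 0$, there is nothing to prove, so assume $H^{ij}_k\neq 0$ for some $k$. Then requirement (4) applies and $\Id_{V_i\boxtimes V_j}$ lies in the image of the map \eqref{E:isom}. Concretely, this means there exist finitely many indices $k_1,\dots,k_n\in I$ and morphisms $x_s\in H^{ij}_{k_s}$, $y_s\in H^{k_s}_{ij}$ for $s=1,\dots,n$ such that
\[
\Id_{V_i\boxtimes V_j}=\sum_{s=1}^{n} x_s\circ y_s.
\]
Let $S=\{k_1,\dots,k_n\}$ be the (finite) set of indices appearing in this expression. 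I claim that $H^{ij}_k=0$ for every $k\in I\setminus S$, which immediately gives the lemma.

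The key step is the following computation. Suppose $k\in I$ and $z\in H^{ij}_k=\Hom(V_k,V_i\boxtimes V_j)$. Compose the displayed identity on the right with $z$: since $\Id_{V_i\boxtimes V_j}\circ z = z$, we get
\[
z=\sum_{s=1}^{n} x_s\circ (y_s\circ z),
\]
where $y_s\circ z\in\Hom(V_k,V_{k_s})$. Now I use the hypothesis that the $V_i$ are simple objects with $\Hom(V_i,V_j)=0$ for $i\neq j$ (parts (1) and the simplicity assumption of the $\Psi$-system): if $k\notin S$, then for each $s$ we have $k\neq k_s$, so $\Hom(V_k,V_{k_s})=0$ and hence $y_s\circ z=0$ for all $s$. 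Therefore $z=0$. Since $z\in H^{ij}_k$ was arbitrary, $H^{ij}_k=0$ for all $k\notin S$. Thus $\{k\in I: H^{ij}_k\neq 0\}\subseteq S$ is finite.

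There is essentially no obstacle here; the argument is a short formal manipulation. The only point requiring a little care is making sure the image of \eqref{E:isom} is spanned by \emph{pure tensors} $x\otimes y$ rather than general elements of $\bigoplus_k H^{ij}_k\otimes H^k_{ij}$ — but this is automatic since any element of a tensor product is a finite sum of pure tensors, and the direct sum over $k\in I$ involves only finitely many nonzero summands in any given element, so the expression for $\Id_{V_i\boxtimes V_j}$ indeed involves only finitely many indices $k$. (Finite-dimensionality of the individual spaces $H^{ij}_k$ from Lemma \ref{L:1} is not even strictly needed for this argument, though it reassures us that each summand $H^{ij}_k\otimes H^k_{ij}$ is itself finite dimensional.) One could alternatively phrase the whole thing by noting that the map \eqref{E:isom} factors through the finite direct sum indexed by $S$, but the direct computation above is the cleanest route.
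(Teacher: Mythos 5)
Your proof is correct and takes essentially the same route as the paper: the paper also expands $\Id_{V_i\boxtimes V_j}$ as a finite sum $\sum_{l\in X}\sum_{\alpha}e_{l\alpha}e^{l\alpha}$ (see Formula~\eqref{E:xdecomp--} in the proof of Lemma~\ref{L:1}) and then observes, via composing with an arbitrary $x\in H^{ij}_k$ and using $\Hom(V_k,V_l)=0$ for $k\neq l$, that $H^{ij}_k\neq 0$ forces $k\in X$. Your argument spells out that composition step directly rather than citing it from the preceding lemma, but the content is identical.
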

These lemmas will be proven  in the next subsection after a little
  preparation.
\subsection{The operators $\Aop$ and $\Bop$}\label{section 2.3}
Consider the vector space $H=\hat H\oplus\check H$, where
\[
\hat H=\bigoplus_{i,j,k\in I}H_{ij}^k\quad {\rm {and}} \quad \check H=\bigoplus_{i,j,k\in I}H^{ij}_k.
\]
Let
\[
\pi_{ij}^k\colon H\to H_{ij}^k,\quad  \pi^{ij}_k\colon H\to H^{ij}_k,\quad \hat\pi\colon H\to \hat H,\quad \check\pi\colon H\to \check H
\]
be the obvious  projections. We define  linear maps \(
\Aop,\Bop:H\to H \) by
\[
\Aop x=\sum_{i,j,k\in I}((\Id_{V_{i^*}}\boxtimes \, \pi_{ij}^kx)(b_{i^*}\boxtimes\Id_{V_j})+
(d_{i^*}\boxtimes\Id_{V_j})(\Id_{V_{i^*}}\boxtimes \, \pi^{ij}_kx)),
\]
\[
\Bop x=\sum_{i,j,k\in I} ((\pi_{ij}^kx\boxtimes\Id_{V_{j^*}})(\Id_{V_i}\boxtimes \, b_{j})+(\Id_{V_i}\boxtimes \, d_{j})(\pi^{ij}_kx\boxtimes\Id_{V_{j^*}})).
\]
For each $x\in H$, there are only finitely  many non-zero terms in
these sums, since $x$ has only finitely many non-zero components
$\pi_{ij}^kx$ and $\pi^{ij}_kx$. We can represent the  definitions
of $A$ and $B$  in the following graphical form:
\[
\begin{tikzpicture}[baseline=(x.base),hvector/.style={draw=blue!50,fill=blue!20,thick}]
\node (x) [hvector] {$\Aop x$};
\node (i) [above= of x.west ]{};
\node (j) [above=of x.east ]{};
\node (k) [below=of x.center ]{};
\draw (x.north west) to node[auto,inner sep=1pt]{\tiny $i$} (i);
\draw (x.north east) to node[auto,swap,inner sep=1pt]{\tiny $j$} (j);
\draw (x.south) to node[auto,inner sep=1pt]{\tiny $k$} (k);
\end{tikzpicture}
=
\begin{tikzpicture}[baseline=(x.base),hvector/.style={draw=blue!50,fill=blue!20,thick},
turn/.style={circle,draw=blue!50,fill=blue!20,thick,inner sep=2pt}]
\node (x) [hvector] {$x$};
\node (bi*) [turn,below left= of x.east,anchor=west]{};
\node (i) [above=2 of bi*.west ]{};
\node (j) [above=of x.center ]{};
\node (k) [below=of x.east ]{};
\draw (x.south east) to node[auto,inner sep=1pt]{\tiny $k$} (k);
\draw (x.south west) to node[auto,inner sep=1pt]{\tiny $i^*$} (bi*.east);
\draw (x.north) to node[auto,swap,inner sep=1pt]{\tiny $j$} (j);
\draw (bi*.west) to node[auto,inner sep=1pt]{\tiny $i$} (i);
\end{tikzpicture}
,\qquad
\begin{tikzpicture}[baseline=(x.base),hvector/.style={draw=blue!50,fill=blue!20,thick}]
\node (x) [hvector] {$\Aop x$};
\node (i) [below= of x.west ]{};
\node (j) [below=of x.east ]{};
\node (k) [above=of x.center ]{};
\draw (x.south west) to node[auto,swap,inner sep=1pt]{\tiny $i$} (i);
\draw (x.south east) to node[auto,inner sep=1pt]{\tiny $j$} (j);
\draw (x.north) to node[auto,swap,inner sep=1pt]{\tiny $k$} (k);
\end{tikzpicture}
=
\begin{tikzpicture}[baseline=(x.base),hvector/.style={draw=blue!50,fill=blue!20,thick},
turn/.style={circle,draw=blue!50,fill=blue!20,thick,inner sep=2pt}]
\node (x) [hvector] {$x$};
\node (di*) [turn,above left= of x.east,anchor=west]{};
\node (i) [below=2 of di*.west ]{};
\node (j) [below=of x.center ]{};
\node (k) [above=of x.east ]{};
\draw (x.north east) to node[auto,swap,inner sep=1pt]{\tiny $k$} (k);
\draw (x.north west) to node[auto,swap,inner sep=1pt]{\tiny $i^*$} (di*.east);
\draw (x.south) to node[auto,inner sep=1pt]{\tiny $j$} (j);
\draw (di*.west) to node[auto,swap,inner sep=1pt]{\tiny $i$} (i);
\end{tikzpicture},
\]
\[
\begin{tikzpicture}[baseline=(x.base),hvector/.style={draw=blue!50,fill=blue!20,thick}]
\node (x) [hvector] {$\Bop x$};
\node (i) [above=of x.west ]{};
\node (j) [above=of x.east ]{};
\node (k) [below=of x.center ]{};
\draw (x.north west) to node[auto,inner sep=1pt]{\tiny $i$}  (i);
\draw (x.north east) to node[auto,swap,inner sep=1pt]{\tiny $j$} (j);
\draw (x.south) to node[auto,swap,inner sep=1pt]{\tiny $k$} (k);
\end{tikzpicture}
=
\begin{tikzpicture}[baseline=(x.base),hvector/.style={draw=blue!50,fill=blue!20,thick},
turn/.style={circle,draw=blue!50,fill=blue!20,thick,inner sep=2pt}]
\node (x) [hvector] {$x$};
\node (bj) [turn,below right= of x.west,anchor=east]{};
\node (i) [above=of x.center]{};
\node (j) [above=2 of bj.east ]{};
\node (k) [below=of x.west ]{};
\draw (x.south west) to node[auto,swap,inner sep=1pt]{\tiny $k$} (k);
\draw (x.south east) to node[auto,swap,inner sep=1pt]{\tiny $j^*$} (bj.west);
\draw (x.north) to node[auto,inner sep=1pt]{\tiny $i$} (i);
\draw (bj.east) to node[auto,swap,inner sep=1pt]{\tiny $j$} (j);
\end{tikzpicture}
,\qquad
\begin{tikzpicture}[baseline=(x.base),hvector/.style={draw=blue!50,fill=blue!20,thick}]
\node (x) [hvector] {$\Bop x$};
\node (i) [below=of x.west ]{};
\node (j) [below=of x.east ]{};
\node (k) [above=of x.center ]{};
\draw (x.south west) to node[auto,swap,inner sep=1pt]{\tiny $i$} (i);
\draw (x.south east) to node[auto,inner sep=1pt]{\tiny $j$} (j);
\draw (x.north) to node[auto,inner sep=1pt]{\tiny $k$} (k);
\end{tikzpicture}
=
\begin{tikzpicture}[baseline=(x.base),hvector/.style={draw=blue!50,fill=blue!20,thick},
turn/.style={circle,draw=blue!50,fill=blue!20,thick,inner sep=2pt}]
\node (x) [hvector] {$x$};
\node (dj) [turn,above right= of x.west,anchor=east]{};
\node (i) [below=of x.center]{};
\node (j) [below=2 of dj.east ]{};
\node (k) [above=of x.west ]{};
\draw (x.north west) to node[auto,inner sep=1pt]{\tiny $k$} (k);
\draw (x.north east) to node[auto,inner sep=1pt]{\tiny $j^*$} (dj.west);
\draw (x.south) to node[auto,swap,inner sep=1pt]{\tiny $i$} (i);
\draw (dj.east) to node[auto,inner sep=1pt]{\tiny $j$} (j);
\end{tikzpicture},
\]
where we use the graphical notation
\[
\pi^{ij}_kx=
\begin{tikzpicture}[baseline=(x.base),hvector/.style={draw=blue!50,fill=blue!20,thick}]
\node (x) [hvector] {$x$};
\node (i) [above=of x.west ]{};
\node (j) [above=of x.east ]{};
\node (k) [below=of x.center ]{};
\draw (x.north west) to node[auto,inner sep=1pt]{\tiny $i$} (i);
\draw (x.north east) to node[auto,swap,inner sep=1pt]{\tiny $j$} (j);
\draw (x.south) to node[auto,inner sep=1pt]{\tiny $k$} (k);
\end{tikzpicture}
,\qquad
\pi_{ij}^kx=
\begin{tikzpicture}[baseline=(x.base),hvector/.style={draw=blue!50,fill=blue!20,thick}]
\node (x) [hvector] {$x$};
\node (i) [below=of x.west ]{};
\node (j) [below=of x.east ]{};
\node (k) [above=of x.center ]{};
\draw (x.south west) to node[auto,swap,inner sep=1pt]{\tiny $i$} (i);
\draw (x.south east) to node[auto,inner sep=1pt]{\tiny $j$} (j);
\draw (x.north) to node[auto,swap,inner sep=1pt]{\tiny $k$} (k);
\end{tikzpicture},
\qquad
b_i=
\begin{tikzpicture}[baseline=(a.north),turn/.style={circle,draw=blue!50,fill=blue!20,thick,inner sep=2pt}]
\node (a) [turn]{};
\node (i) [above=of a.west ]{};
\node (i*) [above=of a.east ]{};
\draw (a.east) to node[auto,swap,inner sep=1pt]{\tiny $i^*$} (i*);
\draw (a.west) to node[auto,inner sep=1pt]{\tiny $i$} (i);
\end{tikzpicture}
,\qquad
d_i=
\begin{tikzpicture}[baseline=(a.south),turn/.style={circle,draw=blue!50,fill=blue!20,thick,inner sep=2pt}]
\node (a) [turn]{};
\node (i) [below=of a.west ]{};
\node (i*) [below=of a.east ]{};
\draw (a.east) to node[auto,inner sep=1pt]{\tiny $i^*$} (i*);
\draw (a.west) to node[auto,swap,inner sep=1pt]{\tiny $i$} (i);
\end{tikzpicture}.
\]
\begin{lemma}\label{L:ab}
The operators $\Aop$ and $\Bop$ are involutive and
satisfy the ``exchange" relations:
\begin{equation}\label{E:exch}
\pi^{ij}_k\Aop=\Aop\pi^j_{i^*k},\quad\pi^{ij}_k\Bop=\Bop\pi^i_{kj^*},\quad
\pi_{ij}^k\Aop=\Aop\pi_j^{i^*k},\quad\pi_{ij}^k\Bop=\Bop\pi_i^{kj^*}.
\end{equation}
\end{lemma}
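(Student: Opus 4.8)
The plan is to obtain both halves of Lemma~\ref{L:ab} from three ingredients only: the explicit formulas defining $\Aop,\Bop$, the bifunctoriality (interchange law) of $\boxtimes$, and the zig-zag identities~\eqref{E:dualitybd}. I would treat the exchange relations first, since they at once reveal the block structure that reduces involutivity to a single local computation.

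\textbf{Exchange relations.} Fix $x\in H$ and $i,j,k\in I$. In the defining sum for $\Aop$, at most one summand can have a nonzero $\pi^{ij}_k$-component: the first summand, whose summation labels are forced to be $(i^*,k,j)$ by matching sources and targets. Tracking these labels gives
\[
\pi^{ij}_k\Aop x=(\Id_{V_i}\boxtimes\,\pi^{j}_{i^*k}x)(b_i\boxtimes\Id_{V_k}),
\]
an expression depending on $x$ only through $\pi^{j}_{i^*k}x\in H^{j}_{i^*k}\subset\hat H$. Plugging $y=\pi^{j}_{i^*k}x$, which is supported in that single block, into the same formula for $\Aop$ reproduces the same morphism, so $\pi^{ij}_k\Aop=\Aop\pi^{j}_{i^*k}$. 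The remaining three identities in~\eqref{E:exch} follow identically: each left-hand projection isolates a single summand of $\Aop$ or $\Bop$, and the matching right-hand projection isolates exactly the block of $x$ that this summand sees. In particular one reads off that $\Aop$ interchanges the summands $H^{k}_{ij}\leftrightarrow H^{i^*k}_{j}$ and $H^{ij}_{k}\leftrightarrow H^{j}_{i^*k}$, and $\Bop$ interchanges $H^{k}_{ij}\leftrightarrow H^{kj^*}_{i}$ and $H^{ij}_{k}\leftrightarrow H^{i}_{kj^*}$.

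\textbf{Involutivity.} By the block structure just established, $\Aop^2$ and $\Bop^2$ preserve each summand of $H$, so it suffices to verify $\Aop^2x=x$ and $\Bop^2x=x$ separately for $x$ in $H^{k}_{ij}$ and in $H^{ij}_{k}$. Take $x\in H^{k}_{ij}=\Hom(V_i\boxtimes V_j,V_k)$; then the formula for $\Aop$ collapses to $\Aop x=(\Id_{V_{i^*}}\boxtimes x)(b_{i^*}\boxtimes\Id_{V_j})$, and a second application of $\Aop$ (only its second summand, labels $(i^*,k,j)$, survives) gives
\[
\Aop^2 x=(d_i\boxtimes\Id_{V_k})\bigl(\Id_{V_i}\boxtimes\bigl((\Id_{V_{i^*}}\boxtimes x)(b_{i^*}\boxtimes\Id_{V_j})\bigr)\bigr).
\]
Applying the interchange law repeatedly to move $x$ past the duality morphisms (using, for instance, $(d_i\boxtimes\Id_{V_k})(\Id_{V_i\boxtimes V_{i^*}}\boxtimes x)=d_i\boxtimes x=x\,(d_i\boxtimes\Id_{V_i\boxtimes V_j})$) one reaches
\[
\Aop^2 x=x\circ\Bigl(\bigl((d_i\boxtimes\Id_{V_i})(\Id_{V_i}\boxtimes b_{i^*})\bigr)\boxtimes\Id_{V_j}\Bigr)=x\circ\Id_{V_i\boxtimes V_j}=x,
\]
the middle equality being the second identity in~\eqref{E:dualitybd}. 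For $x\in H^{ij}_{k}$ the computation is completely parallel and invokes instead the \emph{first} identity in~\eqref{E:dualitybd}. The two computations for $\Bop$ are the left-right mirror images, tensoring on the opposite side and using the identities~\eqref{E:dualitybd} with $i$ replaced by $j$; this yields $\Bop^2=\Id$.

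\textbf{Where the difficulty lies.} There is no conceptual obstacle: the only thing requiring attention is the bookkeeping in the interchange-law manipulations — keeping every composition decorated with its correct source and target objects, and making sure the correct instance of~\eqref{E:dualitybd} straightens the correct kink. In a written-up proof I would carry this out with the string-diagram calculus introduced just before the lemma, where each move is a manifest isotopy and the zig-zag identities appear as the familiar straightening of a zig-zag.
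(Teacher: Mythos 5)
Your proof is correct and follows essentially the same route as the paper: first establish the block structure (the exchange relations, equivalent to the paper's inclusions \eqref{E:exch++}), then verify involutivity block by block by composing two applications of $\Aop$ (resp.\ $\Bop$) and invoking one of the zig-zag identities \eqref{E:dualitybd}. The only difference is cosmetic: the paper carries out the involutivity calculation in string-diagram form, whereas you spell out the same interchange-law manipulations algebraically.
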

\begin{proof}
The exchange relations easily follow from the inclusions
\begin{equation}\label{E:exch++}
A(H_{ij}^k) \subset  H^{i^*k}_j, \quad  A(H^{ij}_k) \subset H^j_{i^*k}, \quad  B(H_{ij}^k) \subset  H^{kj^*}_i,
\quad  B(H^{ij}_k) \subset  H^i_{kj^*}.
\end{equation}
For any $x\in H$,
\[
\pi^{ij}_k\Aop^2x=
\begin{tikzpicture}[baseline=(x.base),hvector/.style={draw=blue!50,fill=blue!20,thick}]
\node (x) [hvector] {$\Aop^2 x$};
\node (i) [above=of x.west ]{};
\node (j) [above=of x.east ]{};
\node (k) [below=of x.center ]{};
\draw (x.north west) to node[auto,inner sep=1pt]{\tiny $i$} (i);
\draw (x.north east) to node[auto,swap,inner sep=1pt]{\tiny $j$} (j);
\draw (x.south) to node[auto,inner sep=1pt]{\tiny $k$} (k);
\end{tikzpicture}
=
\begin{tikzpicture}[baseline=(x.base),hvector/.style={draw=blue!50,fill=blue!20,thick},
turn/.style={circle,draw=blue!50,fill=blue!20,thick,inner sep=2pt}]
\node (x) [hvector] {$\Aop x$};
\node (bi*) [turn,below= of x.west,anchor=east]{};
\node (i) [above=2 of bi*.west ]{};
\node (j) [above=of x.center ]{};
\node (k) [below=of x.east ]{};
\draw (x.south east) to node[auto,inner sep=1pt]{\tiny $k$} (k);
\draw (x.south west) to node[auto,inner sep=1pt]{\tiny $i^*$} (bi*.east);
\draw (x.north) to node[auto,swap,inner sep=1pt]{\tiny $j$} (j);
\draw (bi*.west) to node[auto,inner sep=1pt]{\tiny $i$} (i);
\end{tikzpicture}
=
\begin{tikzpicture}[baseline=(x.base),hvector/.style={draw=blue!50,fill=blue!20,thick},
turn/.style={circle,draw=blue!50,fill=blue!20,thick,inner sep=2pt}]
\node (x) [hvector] {$x$};
\node (a) [turn,above left= of x.east,anchor=west]{};
\node (b) [turn,below= 2 of a.west,anchor=east]{};
\node (i) [above=2 of b.west ]{};
\node (j) [above=of x.east ]{};
\node (k) [below=of x.center ]{};
\draw (x.south) to node[auto,inner sep=1pt]{\tiny $k$} (k);
\draw (x.north west) to node[auto,swap,inner sep=1pt]{\tiny $i$} (a.east);
\draw (x.north east) to node[auto,swap,inner sep=1pt]{\tiny $j$} (j);
\draw (b.west) to node[auto,inner sep=1pt]{\tiny $i$} (i);
\draw (a.west) to node[auto,inner sep=1pt]{\tiny $i^*$} (b.east);
\end{tikzpicture}
=
\begin{tikzpicture}[baseline=(x.base),hvector/.style={draw=blue!50,fill=blue!20,thick}]
\node (x) [hvector] {$x$};
\node (i) [above=of x.west ]{};
\node (j) [above=of x.east ]{};
\node (k) [below=of x.center ]{};
\draw (x.north west) to node[auto,inner sep=1pt]{\tiny $i$} (i);
\draw (x.north east) to node[auto,swap,inner sep=1pt]{\tiny $j$} (j);
\draw (x.south) to node[auto,inner sep=1pt]{\tiny $k$} (k);
\end{tikzpicture}
=\pi^{ij}_kx
\]
and
\[
\pi_{ij}^k\Aop^2x=
\begin{tikzpicture}[baseline=(x.base),hvector/.style={draw=blue!50,fill=blue!20,thick}]
\node (x) [hvector] {$\Aop^2x$};
\node (i) [below=of x.west ]{};
\node (j) [below=of x.east ]{};
\node (k) [above=of x.center ]{};
\draw (x.south west) to node[auto,swap,inner sep=1pt]{\tiny $i$} (i);
\draw (x.south east) to node[auto,inner sep=1pt]{\tiny $j$} (j);
\draw (x.north) to node[auto,swap,inner sep=1pt]{\tiny $k$} (k);
\end{tikzpicture}
=
\begin{tikzpicture}[baseline=(x.base),hvector/.style={draw=blue!50,fill=blue!20,thick},
turn/.style={circle,draw=blue!50,fill=blue!20,thick,inner sep=2pt}]
\node (x) [hvector] {$\Aop x$};
\node (di*) [turn,above= of x.west,anchor=east]{};
\node (i) [below=2 of di*.west ]{};
\node (j) [below=of x.center ]{};
\node (k) [above=of x.east ]{};
\draw (x.north east) to node[auto,swap,inner sep=1pt]{\tiny $k$} (k);
\draw (x.north west) to node[auto,swap,inner sep=1pt]{\tiny $i^*$} (di*.east);
\draw (x.south) to node[auto,inner sep=1pt]{\tiny $j$} (j);
\draw (di*.west) to node[auto,swap,inner sep=1pt]{\tiny $i$} (i);
\end{tikzpicture}
=
\begin{tikzpicture}[baseline=(x.base),hvector/.style={draw=blue!50,fill=blue!20,thick},
turn/.style={circle,draw=blue!50,fill=blue!20,thick,inner sep=2pt}]
\node (x) [hvector] {$x$};
\node (b) [turn,below left= of x.east,anchor=west]{};
\node (a) [turn,above=2 of b.west,anchor=east]{};
\node (i) [below=2 of a.west ]{};
\node (j) [below=of x.east ]{};
\node (k) [above=of x.center ]{};
\draw (x.north) to node[auto,swap,inner sep=1pt]{\tiny $k$} (k);
\draw (x.south west) to node[auto,inner sep=1pt]{\tiny $i$} (b.east);
\draw (x.south east) to node[auto,inner sep=1pt]{\tiny $j$} (j);
\draw (a.west) to node[auto,swap,inner sep=1pt]{\tiny $i$} (i);
\draw (b.west) to node[auto,swap,inner sep=1pt]{\tiny $i^*$} (a.east);
\end{tikzpicture}
=
\begin{tikzpicture}[baseline=(x.base),hvector/.style={draw=blue!50,fill=blue!20,thick}]
\node (x) [hvector] {$x$};
\node (i) [below=of x.west ]{};
\node (j) [below=of x.east ]{};
\node (k) [above=of x.center ]{};
\draw (x.south west) to node[auto,swap,inner sep=1pt]{\tiny $i$} (i);
\draw (x.south east) to node[auto,inner sep=1pt]{\tiny $j$} (j);
\draw (x.north) to node[auto,swap,inner sep=1pt]{\tiny $k$} (k);
\end{tikzpicture}
=\pi_{ij}^kx.
\]
Thus, $\Aop^2=1$. A similar   calculation shows that    $\Bop^2=1$.
\end{proof}
\begin{proof}[Proof of Lemma~\ref{L:1}]
Assume first that $H^{ij}_k\ne0$. By the basic condition,
\begin{equation}\label{E:xdecomp--}
\Id_{V_i\boxtimes V_j}=\sum_{l\in {X}}\sum_{\alpha\in R_l} e_{l\alpha}  e^{l\alpha},
\end{equation}
where ${X}$ is  a finite  subset of $  I$ and for all   $l\in {X}$,
we have   a finite set of indices $R_l$, linearly independent
vectors $\{e_{l\alpha}\}_{\alpha\in R_l}$    in $H^{ij}_l$ and
certain vectors $e^{l\alpha}$ in $H_{ij}^l$. For any $  x\in
H^{ij}_k$,
\begin{equation}\label{E:xdecomp}
x=\Id_{V_i\boxtimes V_j}x=\sum_{l\in {X}} \sum_{\alpha\in R_l} e_{l\alpha}  e^{l\alpha} x=
\sum_{l\in {X}}\sum_{\alpha\in R_l} e_{l\alpha}   \langle e^{l\alpha} x\rangle\delta_{k,l}
= \sum_{\alpha\in R_k} e_{k\alpha}   \langle e^{k\alpha} x\rangle,
\end{equation}
where $\delta_{k,l}$ is the Kronecker delta. Thus,  the vectors
$e_{k\alpha}$ with $ \alpha\in R_k$ generate~$H^{ij}_k$. Since these
vectors are linearly independent, they form
  a (finite) basis of $H^{ij}_k$. Similarly, for any $
y\in H_{ij}^k$,
\begin{equation}\label{E:ydecomp}
y=y\Id_{V_i\boxtimes V_j} =
 \sum_{\alpha\in R_k} \langle ye_{k\alpha} \rangle e^{k\alpha} .
\end{equation}
Therefore the vectors $e^{k\alpha} $ with $  \alpha\in R_k$,
generate  $H_{ij}^k$. For all $\alpha, \beta\in R_k$,
Formula~\eqref{E:xdecomp} with $x=e_{k\beta}$  implies that \(
\langle e^{k\alpha} e_{k\beta} \rangle=\delta_{\alpha,\beta} \).
Hence $\{e^{k\alpha} \}_{ \alpha\in R_k}$ is a basis of $H^k_{ij}$
dual to the basis $\{e_{k\alpha} \}_{  \alpha\in R_k}$ of $H_k^{ij}$
with respect to the   pairing~\eqref{E:pair}. Therefore, this
pairing  is non-degenerate.

It remains to show that $H^{ij}_k=0$ implies $H_{ij}^k=0$. Indeed,
if $H^k_{ij}\ne 0$, then  $H^{i^*k}_j=\Aop (H^k_{ij}) \neq 0$.  By
the preceding argument, $
  \Aop(H_{k}^{ij})= H_{i*k}^j \neq 0$. Hence
$H_{k}^{ij}\ne 0$.
\end{proof}
\begin{proof}[Proof of Lemma~\ref{L:0}]
If $H^{ij}_k\neq 0$, then by Formula~\eqref{E:xdecomp},  $k$ belongs
to the finite set $X$ appearing in \eqref{E:xdecomp--}.
\end{proof}
\subsection{Transposition of operators}\label{2.4sec} We provide the vector space $H=\hat H\oplus\check H$  with the  symmetric bilinear
pairing $\langle\, ,\rangle$  by
\begin{equation}\label{E:pairing}
\langle x,y\rangle=\sum_{i,j,k\in I} (\langle\pi_{ij}^k x\,
\pi_k^{ij}y\rangle+\langle\pi_{ij}^ky\, \pi_k^{ij}x\rangle) \in
\FK\end{equation} for any $x,y\in H$. Note that $\langle  \hat H ,
\hat H \rangle=\langle  \check H, \check H \rangle=0$.

A \emph{transpose} of $f\in\End(H)$ is a map $f^*\in\End(H)$ such
that
 $
\langle fx,y\rangle=\langle x,f^*y\rangle $ for all $x,y\in H$.
Lemma~\ref{L:1} implies that if a transpose $f^*$ of $f$ exists,
then it is unique and  $(f^*)^*=f$.


\begin{lemma}
The   canonical projections have transposes computed as follows: \[
\check \pi^*=\hat \pi \quad {\rm {and}}\quad
(\pi^{ij}_k)^*=\pi^k_{ij}.\]
\end{lemma}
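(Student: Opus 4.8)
The plan is to verify by a direct computation that the maps on the right-hand sides satisfy the defining identity $\langle fx,y\rangle=\langle x,f^{*}y\rangle$ for all $x,y\in H$; once this is done, the uniqueness of transposes established just above (which rests on the non-degeneracy in Lemma~\ref{L:1}) finishes the proof. Throughout I regard $\check\pi,\hat\pi,\pi^{ij}_k,\pi_{ij}^k$ as endomorphisms of $H$ by composing with the inclusions of the relevant summands of $H=\bigoplus_{i,j,k}H_{ij}^k\oplus\bigoplus_{i,j,k}H^{ij}_k$, and I record the two orthogonality facts coming from this decomposition: for every $x\in H$ the element $\pi^{ij}_kx$ lies in the summand $H^{ij}_k\subset\check H$, so $\pi^{i'j'}_{k'}(\pi^{ij}_kx)$ equals $\pi^{ij}_kx$ if $(i',j',k')=(i,j,k)$ and $0$ otherwise, while $\pi_{i'j'}^{k'}(\pi^{ij}_kx)=0$ for all $i',j',k'$; dually $\pi_{ij}^kx\in H_{ij}^k\subset\hat H$ satisfies $\pi_{i'j'}^{k'}(\pi_{ij}^kx)=\pi_{ij}^kx$ or $0$ in the same fashion and $\pi^{i'j'}_{k'}(\pi_{ij}^kx)=0$. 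I also note that for a fixed $x\in H$ only finitely many of the components $\pi^{ij}_kx,\pi_{ij}^kx$ are nonzero, so the sum in \eqref{E:pairing} is finite and all expressions below are well defined.

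For the identity $(\pi^{ij}_k)^{*}=\pi^k_{ij}$, where $\pi^k_{ij}$ denotes the projection $\pi_{ij}^k\colon H\to H_{ij}^k$ (the placement of indices in \eqref{E:pairing} and in the statement being purely cosmetic), fix $x,y\in H$ and substitute $\pi^{ij}_kx$ into the first argument of \eqref{E:pairing}. By the orthogonality facts every term in the sum over $(i',j',k')$ vanishes except the one with $(i',j',k')=(i,j,k)$, and inside that term only the summand $\langle(\pi_{ij}^k y)\circ(\pi^{ij}_k x)\rangle$ survives; hence $\langle\pi^{ij}_kx,y\rangle=\langle(\pi_{ij}^k y)\circ(\pi^{ij}_k x)\rangle$. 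Substituting $\pi_{ij}^ky$ into the second argument of \eqref{E:pairing} and arguing symmetrically (now only the first summand of the $(i,j,k)$-term survives) gives $\langle x,\pi_{ij}^ky\rangle=\langle(\pi_{ij}^k y)\circ(\pi^{ij}_k x)\rangle$ as well. Comparing the two, $\langle\pi^{ij}_kx,y\rangle=\langle x,\pi_{ij}^ky\rangle$ for all $x,y$, which is exactly the assertion $(\pi^{ij}_k)^{*}=\pi^k_{ij}$.

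For $\check\pi^{*}=\hat\pi$ one may either sum the previous identity over all $i,j,k$ — legitimate because on any fixed vector $\check\pi=\sum_{i,j,k}\pi^{ij}_k$ and $\hat\pi=\sum_{i,j,k}\pi_{ij}^k$ reduce to finite sums and passing to transposes is additive — or repeat the computation directly: since $\pi_{ij}^k(\check\pi x)=0$ and $\pi^{ij}_k(\check\pi x)=\pi^{ij}_k x$, plugging $\check\pi x$ into the first slot of \eqref{E:pairing} yields $\langle\check\pi x,y\rangle=\sum_{i,j,k}\langle(\pi_{ij}^k y)\circ(\pi^{ij}_k x)\rangle$, and the identical expression is obtained by plugging $\hat\pi y$ into the second slot. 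I do not expect any genuine obstacle here: the only points needing (minor) care are the bookkeeping of which projections annihilate $\hat H$ versus $\check H$ and the observation that the infinite direct sum is harmless since every relevant sum is finite when evaluated on a given vector; the computation itself is a one-line collapse of \eqref{E:pairing} using the orthogonality of the summands.
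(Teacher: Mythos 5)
Your proof is correct and follows essentially the same approach as the paper's: both arguments rest on the observation that the pairing $\langle\cdot,\cdot\rangle$ of \eqref{E:pairing} is supported only on the cross-terms pairing $\hat H$ against $\check H$ and is block-diagonal in the $(i,j,k)$-decomposition, so that inserting a projection in either slot does not change the value. The paper expresses this as a chain $\langle x,\hat\pi y\rangle=\langle\check\pi x,\hat\pi y\rangle=\langle\check\pi x,y\rangle$ (and similarly for $\pi^{ij}_k$), whereas you compute both $\langle\pi^{ij}_kx,y\rangle$ and $\langle x,\pi_{ij}^ky\rangle$ down to the common expression $\langle(\pi_{ij}^ky)\circ(\pi^{ij}_kx)\rangle$; this is the same argument, merely written out more explicitly.
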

\begin{proof}
\[
\langle x,\hat\pi y\rangle=\langle \check\pi x,\hat\pi y\rangle=\langle \check\pi x,y\rangle,
\]
\[
\langle x,\pi_{ij}^ky\rangle=
\langle \pi^{ij}_kx,\pi_{ij}^ky\rangle=\langle \pi^{ij}_kx,y\rangle.
\]
\end{proof}
\begin{lemma}The transposes of the operators $\Aop$ and $\Bop$ exist
and
\begin{equation}\label{E:aba*=bab}
\Aop^*\Bop^*\Aop^*=\Bop\Aop\Bop.
\end{equation}
\end{lemma}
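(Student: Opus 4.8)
The plan is to establish the two assertions in turn: existence of the transposes $\Aop^*,\Bop^*$, and then the relation \eqref{E:aba*=bab}.

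\emph{Existence of $\Aop^*$ and $\Bop^*$.} Here I would exploit the block decomposition $H=\bigoplus_{i,j,k\in I}W_{ijk}$ with $W_{ijk}=H_{ij}^k\oplus H^{ij}_k$. By \eqref{E:pairing}, the form $\langle\,,\rangle$ is the orthogonal direct sum over triples $(i,j,k)$ of its restrictions to the $W_{ijk}$; by Lemma~\ref{L:1}, each $W_{ijk}$ is finite dimensional and the restriction of $\langle\,,\rangle$ to it is non-degenerate (being the direct sum of the perfect pairing \eqref{E:pair} and its transpose). By the inclusions \eqref{E:exch++}, $\Aop$ sends $W_{ijk}$ into $W_{i^*kj}$ and $\Bop$ sends $W_{ijk}$ into $W_{kj^*i}$; the resulting maps on index triples, $(i,j,k)\mapsto(i^*,k,j)$ and $(i,j,k)\mapsto(k,j^*,i)$, are involutions, so, since $\Aop^2=\Bop^2=1$ (Lemma~\ref{L:ab}), each of $\Aop,\Bop$ maps every block isomorphically onto a (possibly equal) block. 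Hence, for any homogeneous $y$ and $f\in\{\Aop,\Bop\}$, the linear functional $x\mapsto\langle fx,y\rangle$ is supported on finitely many blocks, and on each of these (finite dimensional, with non-degenerate pairing) it is represented by a unique vector; the sum of these vectors is $f^*y$. This produces $\Aop^*$ and $\Bop^*$; moreover $(fg)^*=g^*f^*$ and $(f^*)^*=f$ whenever the transposes involved exist.

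\emph{Reduction of \eqref{E:aba*=bab}.} Since transposition reverses composition, $\Aop^*\Bop^*\Aop^*=(\Aop\Bop\Aop)^*$, so \eqref{E:aba*=bab} reads $(\Aop\Bop\Aop)^*=\Bop\Aop\Bop$, and, taking transposes, it is equivalent to $\Aop\Bop\Aop=\Bop^*\Aop^*\Bop^*$. Using that $\langle\,,\rangle$ is symmetric and non-degenerate (so that $\langle f^*x,y\rangle=\langle x,fy\rangle$), this is in turn equivalent to the identity
\[
\langle\Aop\Bop\Aop\,x,\ y\rangle=\langle x,\ \Bop\Aop\Bop\,y\rangle\qquad(x,y\in H),
\]
which, by bilinearity and the block structure, need only be checked for $x,y$ homogeneous, and then has finitely many nonzero terms.

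\emph{The diagrammatic identity, and the main obstacle.} For homogeneous $x,y$ I would unwind the definitions of $\Aop$ and $\Bop$ — each bends the left, respectively right, leg of a trivalent morphism up and over, using the structure morphisms $b_\bullet,d_\bullet$ — so that $\Aop\Bop\Aop\,x$ and $\Bop\Aop\Bop\,y$ become explicit compositions of $x$, resp.\ $y$, with cups and caps, and both sides of the displayed identity become the scalar $\langle\cdot\rangle$ of an endomorphism of a simple object assembled from $x$, $y$ and the same $b_\bullet,d_\bullet$. Using the cyclicity $\langle fg\rangle=\langle gf\rangle$ for composable morphisms between simple objects (valid because $\FK$ is a field) together with the zig-zag relations \eqref{E:dualitybd} to cancel the cup–cap pairs that get created and then reabsorbed, one checks that the diagram computing the left-hand scalar is a planar rotation of the one computing the right-hand scalar; isotopy invariance of the scalar then gives the identity, hence \eqref{E:aba*=bab}. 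I expect the only real difficulty to be bookkeeping rather than conceptual: each application of $\Aop$ or $\Bop$ introduces a cup of type $b_{i^*}$ whose partner for a later cancellation must be a cap of type $d_{i^*}$ — recall from \eqref{E:dualitybd} that $b_i$ zig-zags with $d_{i^*}$ and not with $d_i$ — so one has to track the $*$'s on all indices carefully and ensure every cancellation is literally of the form appearing in \eqref{E:dualitybd}. Organizing the whole computation diagrammatically, as in the proof of Lemma~\ref{L:ab}, keeps this under control and reduces \eqref{E:aba*=bab} to the statement that a single planar diagram coincides with its rotate; alternatively, transposing the exchange relations \eqref{E:exch} shows that both sides of \eqref{E:aba*=bab} carry $W_{ijk}$ onto $W_{j^*i^*k^*}$, which localizes the verification to one block at a time.
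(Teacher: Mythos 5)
Your proof follows essentially the same route as the paper's: existence of the transposes via the block decomposition of $H$ and the non-degeneracy of the pairing from Lemma~\ref{L:1}, and then a diagrammatic verification of a single scalar identity by unwinding $\Aop,\Bop$ into cups and caps and cancelling them with the zig-zag relations~\eqref{E:dualitybd}. The paper phrases the scalar identity in the slightly different but (by the involutivity $\Aop^2=\Bop^2=1$) equivalent form $\langle x,y\rangle=\langle \Bop\Aop\Bop\,y,\Aop\Bop\Aop\,x\rangle$ and carries out the cup/cap cancellations explicitly in one picture; your version attacks $\langle\Aop\Bop\Aop\,x,y\rangle=\langle x,\Bop\Aop\Bop\,y\rangle$ directly, which is the same computation differently organized. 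One caution on wording: the final appeal to ``isotopy invariance'' is not available in this generality — $\cat$ is only assumed to be a monoidal Ab-category with a $\Psi$-system, not pivotal, so a diagram is not a priori invariant under arbitrary planar rotations — but this is harmless here since, as you note, the actual mechanism is a chain of literal applications of~\eqref{E:dualitybd}, which is exactly what the paper's graphical proof does.
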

\begin{proof} The existence of $\Aop^*$ and $\Bop^*$   follows from Lemma~\ref{L:1} and the
 inclusions \eqref{E:exch++}. Note that
\begin{equation}\label{E:exch++bis}
A^*(H_{ij}^k) \subset  H^{i^*k}_j, \quad  A^*(H^{ij}_k) \subset H^j_{i^*k}, \quad  B^*(H_{ij}^k) \subset  H^{kj^*}_i,
\quad  B^*(H^{ij}_k) \subset  H^i_{kj^*}.
\end{equation}

To prove \eqref{E:aba*=bab}, observe that for any    $x\in \hat H $
and  $y\in \check H$,
  \begin{equation}\label{E:aba*=bab++}
\langle x, y \rangle =\langle \Bop \Aop \Bop y, \Aop \Bop \Aop x \rangle.
\end{equation}
  Here
 is a   graphical
 proof of this formula for   $x\in H^i_{jk}$ and  $y\in H^{jk}_i$ with $i,j,k\in I$.
\[
\begin{tikzpicture}[baseline=(bl.south),hvector/.style={draw=blue!50,fill=blue!20,thick},
turn/.style={circle,draw=blue!50,fill=blue!20,thick,inner sep=2pt}]
\node (x) [hvector] {$\Aop\Bop\Aop x$};
\node (y) [hvector,above=1.4 of x.center,anchor=center] {$\Bop\Aop\Bop y$};
\node (bl) [above=.7 of x.center,anchor=center] {};
\node (ku)[above= of y.center]{};
\node (kd)[below= of x.center]{};
\draw (y.north) to  node[auto,inner sep=1pt]{\tiny $i^*$}(ku);
\draw (x.north west) to node[auto,swap,inner sep=1pt]{\tiny $k^*$} (y.south west);
\draw (x.north east) to node[auto,inner sep=1pt]{\tiny $j^*$} (y.south east);
\draw (x.south) to node[auto,inner sep=1pt]{\tiny $i^*$} (kd);
\end{tikzpicture}
=
\begin{tikzpicture}[baseline=(x.base),hvector/.style={draw=blue!50,fill=blue!20,thick},
turn/.style={circle,draw=blue!50,fill=blue!20,thick,inner sep=2pt}]
\node (x) [hvector] {$x$};
\node (y) [hvector,left=2 of x] {$y$};
\node (b) [turn,below=2 of x.west,anchor=east]{};
\node (a) [turn,above= of b.west,anchor=east]{};
\node (e) [turn,above=2 of y.east,anchor=west]{};
\node (d) [turn,below= of e.east,anchor=west]{};
\node (c) [turn,above= of x.east,anchor=west]{};
\node (f) [turn,below = of y.west,anchor=east]{};
\node (ku)[above=3 of f.west]{};
\node (kd)[below=3 of c.east]{};
\draw (x.north) to  node[auto,inner sep=1pt]{\tiny $i$}(c.west);
\draw (x.south west) to node[auto,swap,inner sep=1pt]{\tiny $j$} (a.east);
\draw (x.south east) to node[auto,inner sep=1pt]{\tiny $k$} (b.east);
\draw (c.east) to node[auto,inner sep=1pt]{\tiny $i^*$} (kd);
\draw (a.west) to node[auto,swap,inner sep=1pt]{\tiny $j^*$} (e.east);
\draw (b.west) to node[auto,inner sep=1pt]{\tiny $k^*$} (d.east);
\draw (y.north west) to node[auto,inner sep=1pt]{\tiny $j$} (e.west);
\draw (y.north east) to node[auto,swap,inner sep=1pt]{\tiny $k$} (d.west);
\draw (y.south) to node[auto,inner sep=1pt]{\tiny $i$} (f.east);
\draw (f.west) to node[auto,inner sep=1pt]{\tiny $i^*$} (ku);
\end{tikzpicture}
=
\begin{tikzpicture}[baseline=(bl.south),hvector/.style={draw=blue!50,fill=blue!20,thick},
turn/.style={circle,draw=blue!50,fill=blue!20,thick,inner sep=2pt}]
\node (x) [hvector] {$x$};
\node (bl) [below=.5 of x.center,anchor=center]{};
\node (y) [hvector,below= of x.center,anchor=center] {$y$};
\node (c) [turn,above= of x.east,anchor=west]{};
\node (f) [turn,below = of y.west,anchor=east]{};
\node (ku)[above=3 of f.west]{};
\node (kd)[below=3 of c.east]{};
\draw (x.north) to  node[auto,inner sep=1pt]{\tiny $i$}(c.west);
\draw (x.south west) to node[auto,inner sep=1pt]{\tiny $j$} (y.north west);
\draw (x.south east) to node[auto,swap,inner sep=1pt]{\tiny $k$} (y.north east);
\draw (c.east) to node[auto,inner sep=1pt]{\tiny $i^*$} (kd);
\draw (y.south) to node[auto,inner sep=1pt]{\tiny $i$} (f.east);
\draw (f.west) to  node[auto,inner sep=1pt]{\tiny $i^*$} (ku);
\end{tikzpicture}.
\]
Now we can prove~\eqref{E:aba*=bab}. Applying \eqref{E:aba*=bab++}
to $x=x_1 $ and $y=BABx_2$ with  $x_1, x_2 \in \hat H $, we obtain
\[\langle x_1, BA\Bop x_2\rangle=\langle (BAB)^2 x_2, AB\Aop x_1\rangle=\langle x_2, AB\Aop
x_1\rangle =\langle AB\Aop x_1,   x_2\rangle. \] Applying
\eqref{E:aba*=bab++} to $x=ABA y_1$ and $y=y_2$ with  $y_1, y_2 \in
\check H $, we obtain
\[\langle y_1, BA\Bop y_2\rangle=\langle  BAB y_2, y_1\rangle=\langle
AB\Aop y_1, y_2
 \rangle.\]
 Hence $BAB=(ABA)^\ast=A^* B^* A^*$.
\end{proof}

\section{The tetrahedral  forms}\label{Section3now}

\subsection{Operations on tensor powers} We   recall  the usual notation for operations on the tensor powers
 of a vector space.   Given a $\FK$-vector space $V$
and an integer $n\geq 2$, the symbol  $V^{\otimes n}$ denotes
 the tensor product of $n$ copies of $V$ over~$\FK$. Let~$ \mathbb{S}_n$ be the symmetric group on $n\geq 2$ letters.
 Recall the standard action $ \mathbb{S}_n\to \Aut(V^{\otimes
n})$, $\sigma\mapsto P_\sigma$.
   By definition, for distinct $i,j\in \{1,\ldots,
   n\}$, the   flip  $P_{(ij)}$  permutes the $i$-th
   and the $j$-th tensor factors keeping the other tensor  factors. For   $f\in\End (V)$ and $i=1, \ldots, n$, set
   $$f_i={\rm {id}}^{\otimes (i-1)}\otimes f\otimes {\rm {id}}^{\otimes (n-i)}\in \End (V^{\otimes
   n}).$$
   Note the exchange relations   $P_\sigma f_i=f_{\sigma(i)}P_\sigma$ for any $\sigma \in \mathbb{S}_n$ and the commutativity relation
   $f_ig_j=g_jf_i$
   for any $f, g\in \End (V)$ and $i\neq j$.

   Given   $F\in \End(V \otimes_\FK V) $, we define for any   $i,j\in \{1,\ldots,
   n\}$ with $i\neq j$ an  endomorphism $F_{ij}$ of $V^{\otimes n}$
   as follows. If $i<j$, then $F_{ij}$ acts as $F$ on the $i$-th and
   $j$-th tensor factors of $V^{\otimes n}$ keeping the other tensor
   factors. If $i>j$, then $F_{ij}=P_{(ij)} F_{ji} P_{(ij)}$.

\subsection{The forms $T$ and $\bar T$}\label{TheformsTand} Recall the vector space   $H=\hat H\oplus\check H$ from Section~\ref{section
2.3}.
   We define two linear forms \( T, \bar T\colon
H^{\otimes4}\to \FK \)
 by the following diagrammatic formulae:  for any $ u,v,x,y\in H$,
\[
T(u\otimes v\otimes x\otimes y)=\sum_{i,\ldots,n\in I}
\left\langle
\begin{tikzpicture}[baseline=(bl.south),hvector/.style={draw=blue!50,fill=blue!20,thick},
turn/.style={circle,draw=blue!50,fill=blue!20,thick,inner sep=2pt}]
\node (x) [hvector] {$u$};
\node (y) [hvector,below= of x.west,anchor=east]{$v$};
\node (u) [hvector,below= of y.east,anchor=west] {$x$};
\node (v) [hvector,below= of u.west,anchor=east]{$y$};
\node (up) [above= of x.center,anchor=center]{};
\node (down)[below= of v.center,anchor=center]{};
\node (bl)[below=.5 of y.center,anchor=center]{};
\draw (x.north) to  node[auto,inner sep=1pt]{\tiny $m$}(up.center);
\draw (x.south west) to node[auto,swap,inner sep=1pt]{\tiny $k$} (y.north);
\draw (x.south east) to node[auto,swap,inner sep=1pt]{\tiny $l$} (u.north east);
\draw (y.south east) to node[auto,inner sep=1pt]{\tiny $j$} (u.north west);
\draw (y.south west) to node[auto,inner sep=1pt]{\tiny $i$} (v.north west);
\draw (u.south) to  node[auto,inner sep=1pt]{\tiny $n$} (v.north east);
\draw (v.south) to node[auto,inner sep=1pt]{\tiny $m$} (down.center);
\end{tikzpicture}
\right\rangle
,\qquad
\bar T(u\otimes v\otimes x\otimes y)=\sum_{i,\ldots,n\in I}
\left\langle
\begin{tikzpicture}[baseline=(bl.south),hvector/.style={draw=blue!50,fill=blue!20,thick},
turn/.style={circle,draw=blue!50,fill=blue!20,thick,inner sep=2pt}]
\node (x) [hvector] {$u$};
\node (y) [hvector,below= of x.east,anchor=west]{$v$};
\node (u) [hvector,below= of y.west,anchor=east] {$x$};
\node (v) [hvector,below= of u.east,anchor=west]{$y$};
\node (up) [above= of x.center,anchor=center]{};
\node (down)[below= of v.center,anchor=center]{};
\node (bl)[below=.5 of y.center,anchor=center]{};
\draw (x.north) to  node[auto,swap,inner sep=1pt]{\tiny $m$}(up.center);
\draw (x.south west) to node[auto,inner sep=1pt]{\tiny $i$} (u.north west);
\draw (x.south east) to node[auto,inner sep=1pt]{\tiny $n$} (y.north);
\draw (y.south east) to node[auto,swap,inner sep=1pt]{\tiny $l$} (v.north east);
\draw (y.south west) to node[auto,swap,inner sep=1pt]{\tiny $j$} (u.north east);
\draw (u.south) to  node[auto,swap,inner sep=1pt]{\tiny $k$} (v.north west);
\draw (v.south) to node[auto,swap,inner sep=1pt]{\tiny $m$} (down.center);
\end{tikzpicture}
\right\rangle.
\]
 The indices $i,j,k,l,m,n$ in both formulas
 run over all elements of $I$. For any given $ u,v,x,y \in H$, only a
 finite number of terms in these formulas may be non-zero.
\begin{lemma}[Fundamental lemma]\label{Fundamental lemma}
We have
\begin{subequations}\label{E:sym}
\begin{align}
TP_{(4321)}&=\bar T\Aop_1^*\Aop_3,\label{E:sym01}\\
TP_{(23)}&=\bar T\Aop_2\Bop_3,\label{E:sym12}\\
TP_{(1234)}&=\bar T\Bop_2\Bop_4^*.\label{E:sym23}
 \end{align}
\end{subequations}
\end{lemma}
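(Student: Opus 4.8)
The plan is to recognize $T$ and $\bar T$ as two planar presentations of a single ``tetrahedral evaluation''. For homogeneous $u,v,x,y$ the diagram defining $T(u\otimes v\otimes x\otimes y)$ is a closed graph carried by the $1$-skeleton of a tetrahedron: its four trivalent vertices are decorated by the relevant homogeneous components of $u,v,x,y$ (each viewed as a morphism $V_\bullet\to V_\bullet\boxtimes V_\bullet$ or $V_\bullet\boxtimes V_\bullet\to V_\bullet$), its six edges are labelled by objects $V_i,\dots,V_n$, and the composite of the decorating morphisms along the graph is an endomorphism of the simple object $V_m$, of which $T(u\otimes v\otimes x\otimes y)$ is the scalar. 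The form $\bar T$ arises by the same recipe from the mirror-image planar presentation of the same network. Both forms are supported on $\check H\otimes\check H\otimes\hat H\otimes\hat H$, and, the defining sums being locally finite, it suffices by multilinearity to prove each of \eqref{E:sym01}--\eqref{E:sym23} on homogeneous arguments.

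The bridge between the two sides is the graphical calculus of $\Aop$, $\Bop$ and their transposes. By Lemma~\ref{L:ab} and the inclusions \eqref{E:exch++}, the homogeneous component of $\Aop w$ (resp.\ $\Bop w$) singled out in a given slot of the $\bar T$-diagram equals $\Aop$ (resp.\ $\Bop$) applied to a single homogeneous component of $w$; and, by the very definition of $\Aop$, applying $\Aop$ to a vertex attaches one of the morphisms $b_\bullet$, $d_\bullet$ to the left leg of that vertex, rotating its triple of legs cyclically by one step and turning the label $V_i$ of the moved leg into $V_{i^*}$. Likewise $\Bop$ rotates the right leg. The transposition lemmas established above, together with \eqref{E:exch++bis}, provide the analogous description of $\Aop^*$ and $\Bop^*$, the attached $b_\bullet$ or $d_\bullet$ now landing on the opposite side of the vertex. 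Thus each of the operator words $\Aop_1^*\Aop_3$, $\Aop_2\Bop_3$, $\Bop_2\Bop_4^*$ on the right of \eqref{E:sym01}--\eqref{E:sym23} rotates exactly two of the four vertices of the $\bar T$-network, and a quick check of types -- using that $\Aop$, $\Bop$, $\Aop^*$, $\Bop^*$ all interchange $\hat H$ and $\check H$ -- shows that the two sides of each identity involve the very same homogeneous parts of $u,v,x,y$.

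With this dictionary in hand, each identity becomes a diagrammatic computation of the kind performed in the proof of \eqref{E:aba*=bab++}. For \eqref{E:sym12}: in the $\bar T$-diagram for $\bar T(u\otimes\Aop v\otimes\Bop x\otimes y)$ substitute the defining diagrams of $\Aop$ at the $v$-vertex and of $\Bop$ at the $x$-vertex; the cup and cap thereby inserted on the edge joining $v$ and $x$ form a zig-zag, which straightens by \eqref{E:dualitybd}; after renaming the summation indices (legitimate because $i\mapsto i^*$ is a bijection of $I$) one reads off precisely the $T$-diagram for $T(u\otimes x\otimes v\otimes y)=TP_{(23)}(u\otimes v\otimes x\otimes y)$, and passing to scalars gives \eqref{E:sym12}. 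Identities \eqref{E:sym01} and \eqref{E:sym23} are proved in the same manner, with the two rotated vertices now dictated by $\Aop_1^*\Aop_3$ and by $\Bop_2\Bop_4^*$; here the rotated legs are input rather than output legs, which is exactly why transposed operators appear. Along the way one checks that the $V_i\leftrightarrow V_{i^*}$ relabellings forced by the rotations match the positions of the stars already present in the $T$- and $\bar T$-diagrams, and that the distinguished strand $V_m$ (whose endomorphisms supply the scalar) ends up in the right place.

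The only genuine difficulty I anticipate is bookkeeping: one must follow the planar positions of all six edges and four vertices through the isotopies and zig-zag cancellations, be careful about which side of which vertex receives the duality morphism coming from each of $\Aop$, $\Bop$, $\Aop^*$, $\Bop^*$, and confirm that the resulting diagram is literally the other one of $T$, $\bar T$ with the four arguments permuted as asserted. No categorical input is needed beyond the $\Psi$-system axioms -- above all \eqref{E:dualitybd} -- together with Lemma~\ref{L:ab} and the transposition lemmas; there are no associativity or pentagon complications since $\cat$ has been assumed strict.
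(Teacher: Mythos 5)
Your strategy -- read both $T$ and $\bar T$ as two planar presentations of a tetrahedral network, and account for the operators $\Aop,\Bop$ by local vertex rotations followed by a zig-zag cancellation from \eqref{E:dualitybd} -- is close in spirit to the paper's argument, and for the middle identity \eqref{E:sym12} (which involves only $\Aop$ and $\Bop$, no transposes) it is, modulo the bookkeeping you acknowledge, essentially what the paper's ``verified in a similar manner'' amounts to. The divergence comes precisely at the one identity the paper chooses to prove in full, namely \eqref{E:sym01}. There the operator word on the right is $\Aop_1^*\Aop_3$, and the paper does not manipulate $\Aop^*$ graphically at all: it first uses involutivity of $\Aop^*$ to rewrite \eqref{E:sym01} as $T(u\otimes v\otimes x\otimes \Aop^*y)=\bar T(y\otimes u\otimes \Aop v\otimes x)$, then reads the left side as a pairing $\langle\xi,\Aop^*y\rangle$ with $\xi$ a partial contraction of $u,v,x$, and finally uses the \emph{algebraic} adjunction $\langle\xi,\Aop^*y\rangle=\langle \Aop\xi,y\rangle=\langle y,\Aop\xi\rangle$ to convert the $\Aop^*$ into an $\Aop$ applied to $\xi$. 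From there everything is the graphical expansion you describe.

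This is the genuine gap in your proposal. You assert that Lemma~\ref{L:ab} together with \eqref{E:exch++bis} ``provide the analogous description of $\Aop^*$ and $\Bop^*$, the attached $b_\bullet$ or $d_\bullet$ now landing on the opposite side of the vertex.'' But \eqref{E:exch++bis} only records in which direct summand of $H$ the image lies; it does not say that $\Aop^*$ is the composition with a specific duality morphism on the opposite leg, and in the generality of a $\Psi$-system there is no reason to expect $\Aop^*$ to have a formula-by-generators description of the same shape as $\Aop$. (Indeed the paper only sketches a pictorial interpretation of $\Aop^*,\Bop^*$ in the Appendix, and there only under the extra hypothesis $\Aop=\Aop^*$, $\Bop=\Bop^*$.) To carry out your plan for \eqref{E:sym01} and \eqref{E:sym23} you would have to \emph{prove} such a graphical description, and the proof is precisely the adjunction step against the symmetric bilinear form \eqref{E:pairing}, which is the maneuver the paper makes explicit and your sketch leaves out. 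So your route is not wrong, but it is incomplete exactly where \eqref{E:sym01}/\eqref{E:sym23} require the starred operators; filling the gap brings you back to the paper's use of the pairing and transposition.
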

\begin{proof} Since $A^*$ is an involution, \eqref{E:sym01} is equivalent to the identity
\[
T(u\otimes v\otimes x\otimes \Aop^*y)=\bar T(y\otimes u\otimes \Aop v\otimes x),\quad u,v,x,y \in H,
\]
which is a direct consequence of the identity
\[
\left\langle
\begin{tikzpicture}[baseline=(bl.south),hvector/.style={draw=blue!50,fill=blue!20,thick},
turn/.style={circle,draw=blue!50,fill=blue!20,thick,inner sep=2pt}]
\node (x) [hvector] {$u$};
\node (y) [hvector,below= of x.west,anchor=east]{$v$};
\node (u) [hvector,below= of y.east,anchor=west] {$x$};
\node (v) [hvector,below= of u.center,anchor=east]{$\Aop^*y$};
\node (up) [above= of x.center,anchor=center]{};
\node (down)[below= of v.center,anchor=center]{};
\node (bl)[below=.5 of y.center,anchor=center]{};
\draw (x.north) to  node[auto,inner sep=1pt]{\tiny $m$}(up.center);
\draw (x.south west) to node[auto,swap,inner sep=1pt]{\tiny $k$} (y.north);
\draw (x.south east) to node[auto,swap,inner sep=1pt]{\tiny $l$} (u.north east);
\draw (y.south east) to node[auto,inner sep=1pt]{\tiny $j$} (u.north west);
\draw (y.south west) to node[auto,inner sep=1pt]{\tiny $i$} (v.north west);
\draw (u.south) to  node[auto,swap,inner sep=1pt]{\tiny $n$} (v.north east);
\draw (v.south) to node[auto,inner sep=1pt]{\tiny $m$} (down.center);
\end{tikzpicture}
\right\rangle
=\langle\xi,\Aop^*y\rangle=\langle\Aop\xi,y\rangle=\langle y, \Aop\xi \rangle=
\left\langle
\begin{tikzpicture}[baseline=(bl.south),hvector/.style={draw=blue!50,fill=blue!20,thick},
turn/.style={circle,draw=blue!50,fill=blue!20,thick,inner sep=2pt}]
\node (x) [hvector] {$y$};
\node (y) [hvector,below= of x.east,anchor=west]{$u$};
\node (u) [hvector,below=2 of x.east,anchor=east] {$\Aop v$};
\node (v) [hvector,below=2 of y.east,anchor=east]{$x$};
\node (up) [above= of x.center,anchor=center]{};
\node (down)[below= of v.center,anchor=center]{};
\node (bl)[below=.5 of y.center,anchor=center]{};
\draw (x.north) to  node[auto,swap,inner sep=1pt]{\tiny $n$}(up.center);
\draw (x.south west) to node[auto,inner sep=1pt]{\tiny $i^*$} (u.north west);
\draw (x.south east) to node[auto,inner sep=1pt]{\tiny $m$} (y.north);
\draw (y.south east) to node[auto,swap,inner sep=1pt]{\tiny $l$} (v.north east);
\draw (y.south west) to node[auto,swap,inner sep=1pt]{\tiny $k$} (u.north east);
\draw (u.south) to  node[auto,swap,inner sep=1pt]{\tiny $j$} (v.north west);
\draw (v.south) to node[auto,swap,inner sep=1pt]{\tiny $n$} (down.center);
\end{tikzpicture}
\right\rangle,\quad
\xi=\begin{tikzpicture}[baseline=(y.base),hvector/.style={draw=blue!50,fill=blue!20,thick},
turn/.style={circle,draw=blue!50,fill=blue!20,thick,inner sep=2pt}]
\node (x) [hvector] {$u$};
\node (y) [hvector,below= of x.west,anchor=east]{$v$};
\node (u) [hvector,below= of y.east,anchor=west] {$x$};
\node (up) [above= of x.center,anchor=center]{};
\node (dl) [below=2 of y.west,anchor=center]{};
\node (dr) [below= of u.center,anchor=center]{};
\draw (x.north) to  node[auto,inner sep=1pt]{\tiny $m$}(up.center);
\draw (x.south west) to node[auto,swap,inner sep=1pt]{\tiny $k$} (y.north);
\draw (x.south east) to node[auto,swap,inner sep=1pt]{\tiny $l$} (u.north east);
\draw (y.south east) to node[auto,inner sep=1pt]{\tiny $j$} (u.north west);
\draw (y.south west) to node[auto,inner sep=1pt]{\tiny $i$} (dl.center);
\draw (u.south) to  node[auto,inner sep=1pt]{\tiny $n$} (dr.center);
\end{tikzpicture}\in H^m_{in} .
\]
The other two identities are verified in a similar manner.
\end{proof}

 The formulas
\[
P_{(12)}=P_{(4321)}P_{(23)}P_{(1234)},\quad P_{(34)}=P_{(1234)}P_{(23)}P_{(4321)},
\]
allow us to compute the action  of the permutations $P_{(12)}$ and
$P_{(34)}$ on $T$:
 \begin{equation}\label{actionof12}
TP_{(12)}=\bar TP_{(23)}P_{(1234)}\Aop_4^*\Aop_1=TP_{(1234)}(\Bop\Aop)_1\Aop_2\Aop_4^*=
\bar T(\Bop\Aop)_1(\Bop\Aop)_2(\Aop\Bop)_4^*,
\end{equation}
\begin{equation}\label{actionof34}
TP_{(34)}=\bar TP_{(23)} P_{(4321)} \Bop_4\Bop_1^*=TP_{(4321)}\Bop_3(\Aop\Bop)_4\Bop_1^*=
\bar T(\Bop\Aop)_1^*(\Aop\Bop)_3(\Aop\Bop)_4.
\end{equation}
The action of the  permutations on  $\bar T$ can be easily
determined from the involutivity of $A$, $B$, $P_{(12)}$,
$P_{(23)}$, $P_{(34)}$. The resulting formulae can be obtained from
those for~$T$ via the substitutions $T\leftrightarrow \bar T$ and
$\Aop\leftrightarrow\Bop$.

The formulas   computing  the action of the  permutations   on $T$ and $\bar T$  may be rewritten in a simpler form in terms of the equivalent tensors $S=TP_{(2134)}\colon
H^{\otimes4}\to \FK$ and $\bar S=\bar TP_{(1243)}\colon
H^{\otimes4}\to \FK$.
For these tensors, equations~\eqref{E:sym01}--\eqref{E:sym23} take the following form:
\begin{subequations}\label{E:symS}
\begin{align}
SP_{(12)}&=\bar S\Aop_3^*\Aop_4,\label{E:sym01S}\\
SP_{(23)}&=\bar S\Aop_1\Bop_4,\label{E:sym12S}\\
SP_{(34)}&=\bar S\Bop_1\Bop_2^*.\label{E:sym23S}
 \end{align}
 \end{subequations}
 Though these symmetry relations for   $S$, $\bar S$ are simpler than the symmetry relations for   $T$, $\bar T$, we shall mainly work with $T$ and $\bar T$. A geometric interpretation of  these  symmetry relations   will be outlined in the appendix to the paper.

\subsection{The adjoint operators}\label{section33now}  We define   the operators
  $\check H^{\otimes 2} \to \check H^{\otimes 2}$ adjoint to~$T$ and $\bar T$. For all $i,j,k\in
I$ pick  dual bases $(e^{ij}_{k\alpha})_\alpha$ and
$(e_{ij}^{k\alpha})_\alpha$
 in the multiplicity spaces $H^{ij}_k$ and $H_{ij}^k$,
respectively. For the vectors of these bases, we shall use the
graphical notation
$$
e^{ij}_{k\alpha}=
\begin{tikzpicture}[baseline=(b.base),hvector/.style={draw=blue!50,fill=blue!20,thick},
turn/.style={circle,draw=blue!50,fill=blue!20,thick,inner
sep=2pt},basis/.style={circle,draw=red!50,fill=red!20,thick,inner
sep=2pt}]
\node (b) [basis]{$\alpha$}; \node (ul) [above= of
b.west,anchor=center]{}; \node (ur) [above= of
b.east,anchor=center]{}; \node (d) [below= of
b.center,anchor=center]{}; \draw (b.west) to node[auto,swap,inner
sep=1pt]{\tiny $i$} (ul.center); \draw (b.east) to node[auto,inner
sep=1pt]{\tiny $j$} (ur.center); \draw (b.south) to node[auto,inner
sep=1pt]{\tiny $k$} (d.center);
\end{tikzpicture} \quad \quad {\text {and}} \quad \quad   e_{ij}^{k\alpha}=
\begin{tikzpicture}[baseline=(b.base),hvector/.style={draw=blue!50,fill=blue!20,thick},
turn/.style={circle,draw=blue!50,fill=blue!20,thick,inner
sep=2pt},basis/.style={circle,draw=red!50,fill=red!20,thick,inner
sep=2pt}]
\node (b) [basis]{$\alpha$}; \node (dl) [below= of
b.west,anchor=center]{}; \node (dr) [below= of
b.east,anchor=center]{}; \node (u) [above= of
b.center,anchor=center]{}; \draw (b.west) to node[auto,inner
sep=1pt]{\tiny $i$} (dl.center); \draw (b.east) to
node[auto,swap,inner sep=1pt]{\tiny $j$} (dr.center); \draw
(b.north) to node[auto,swap,inner sep=1pt]{\tiny $k$} (u.center);
\end{tikzpicture}.$$
Let $\tau,\bar \tau\in\End(\check H^{\otimes2})$ be the operators
defined by the  graphical formulae
\[
\tau(x\otimes y)=\sum_{i,\ldots,n\in I}\sum_{\alpha}
\begin{tikzpicture}[baseline=(x.base),hvector/.style={draw=blue!50,fill=blue!20,thick},
turn/.style={circle,draw=blue!50,fill=blue!20,thick,inner sep=2pt},basis/.style={circle,draw=red!50,fill=red!20,thick,inner sep=2pt}]
\node (b) [basis]{$\alpha$};
\node (x) [hvector,below= of b.east,anchor=west] {$x$};
\node (y) [hvector,below= of x.west,anchor=east]{$y$};
\node (ul) [above= of b.center,anchor=center]{};
\node (ur) [above=2 of x.east,anchor=center]{};
\node (d) [below= of y.center,anchor=center]{};
\draw (b.north) to node[auto,swap,inner sep=1pt]{\tiny $k$} (ul.center);
\draw (x.north east) to node[auto,inner sep=1pt]{\tiny $l$} (ur.center);
\draw (b.east) to node[auto,inner sep=1pt]{\tiny $j$} (x.north west);
\draw (b.west) to node[auto,inner sep=1pt]{\tiny $i$} (y.north west);
\draw (x.south) to  node[auto,inner sep=1pt]{\tiny $n$} (y.north east);
\draw (y.south) to node[auto,inner sep=1pt]{\tiny $m$} (d.center);
\end{tikzpicture}
\otimes
\begin{tikzpicture}[baseline=(b.base),hvector/.style={draw=blue!50,fill=blue!20,thick},
turn/.style={circle,draw=blue!50,fill=blue!20,thick,inner sep=2pt},basis/.style={circle,draw=red!50,fill=red!20,thick,inner sep=2pt}]
\node (b) [basis]{$\alpha$};
\node (ul) [above= of b.west,anchor=center]{};
\node (ur) [above= of b.east,anchor=center]{};
\node (d) [below= of b.center,anchor=center]{};
\draw (b.west) to node[auto,swap,inner sep=1pt]{\tiny $i$} (ul.center);
\draw (b.east) to node[auto,inner sep=1pt]{\tiny $j$} (ur.center);
\draw (b.south) to node[auto,inner sep=1pt]{\tiny $k$} (d.center);
\end{tikzpicture},
\qquad
\bar \tau(x\otimes y)=\sum_{i,\ldots,n\in I}\sum_{\alpha}
\begin{tikzpicture}[baseline=(x.base),hvector/.style={draw=blue!50,fill=blue!20,thick},
turn/.style={circle,draw=blue!50,fill=blue!20,thick,inner sep=2pt},basis/.style={circle,draw=red!50,fill=red!20,thick,inner sep=2pt}]
\node (b) [basis]{$\alpha$};
\node (x) [hvector,below= of b.west,anchor=east] {$x$};
\node (y) [hvector,below= of x.east,anchor=west]{$y$};
\node (ur) [above= of b.center,anchor=center]{};
\node (ul) [above=2 of x.west,anchor=center]{};
\node (d) [below= of y.center,anchor=center]{};
\draw (b.north) to node[auto,inner sep=1pt]{\tiny $n$} (ur.center);
\draw (x.north west) to node[auto,swap,inner sep=1pt]{\tiny $i$} (ul.center);
\draw (b.east) to node[auto,swap,inner sep=1pt]{\tiny $l$} (y.north east);
\draw (b.west) to node[auto,swap,inner sep=1pt]{\tiny $j$} (x.north east);
\draw (x.south) to  node[auto,swap,inner sep=1pt]{\tiny $k$} (y.north west);
\draw (y.south) to node[auto,swap,inner sep=1pt]{\tiny $m$} (d.center);
\end{tikzpicture}
\otimes
\begin{tikzpicture}[baseline=(b.base),hvector/.style={draw=blue!50,fill=blue!20,thick},
turn/.style={circle,draw=blue!50,fill=blue!20,thick,inner sep=2pt},basis/.style={circle,draw=red!50,fill=red!20,thick,inner sep=2pt}]
\node (b) [basis]{$\alpha$};
\node (ul) [above= of b.west,anchor=center]{};
\node (ur) [above= of b.east,anchor=center]{};
\node (d) [below= of b.center,anchor=center]{};
\draw (b.west) to node[auto,swap,inner sep=1pt]{\tiny $j$} (ul.center);
\draw (b.east) to node[auto,inner sep=1pt]{\tiny $l$} (ur.center);
\draw (b.south) to node[auto,inner sep=1pt]{\tiny $n$} (d.center);
\end{tikzpicture}
\]
where $i,j,k,l,m,n$ run over all elements of $I$. By
Lemma~\ref{L:0}, for any  $x,y\in \check H$, there are only finitely
many terms in the expansions for $\tau(x\otimes y)$ and $\bar
\tau(x\otimes y)$.

The operators $\tau$ and $\bar \tau$ do not depend on the choice of
the bases in the multiplicity spaces. Indeed, these operators are
adjoint to $T, \bar T$ in the sense that
 \[
\langle \langle  u\otimes v,\tau(x\otimes y)\rangle  \rangle=T(u\otimes v\otimes x\otimes y)
 \]
 and
 \[
\langle \langle  u\otimes v, \bar \tau(x\otimes y)\rangle  \rangle= \bar T(u\otimes v\otimes x\otimes y)
 \]
 for all $u,v\in\hat H,  x,y\in \check H$.
  Here the   bilinear pairing
 $$\langle \langle  \cdot , \cdot \rangle  \rangle:(\hat H \otimes \hat H) \times (\check H\otimes \check H)\to \FK$$ is defined by
 $\langle \langle  u\otimes v , x\otimes y \rangle  \rangle =\langle u, x\rangle\, \langle v, y \rangle$
 where $\langle\cdot, \cdot \rangle$ is the   symmetric bilinear form on $H=\hat H\oplus \check H$ introduced in Section~\ref{2.4sec}.

\subsection{The Pentagon and Inversion identities}  To formulate the properties of $\tau$ and $\bar \tau$, we   need further notation.
For any  $ i,j \in I $ set $$  g_{i,j}=\left\{\begin{array}{cl}
 1&\mathrm{if}\ \mathrm{there}\ \mathrm{is} \ k\in I \  \mathrm{such} \ \mathrm{that} \ H^{ij}_k\neq 0\\
 0&\mathrm{otherwise}.
 \end{array}
 \right.
 $$
 We define two endomorphisms ${}^\bullet \pi$ and $\pi^\bullet$ of $\check H^{\otimes 2}$ by
  \[
 {}^\bullet \pi=\sum_{i,j,k,l,m\in I} \, g_{i,j}\, \pi^{il}_m\otimes \pi^{jk}_l \quad {\text {and}} \quad
 \pi^\bullet=\sum_{i,j,k,l,m\in I} \, g_{j,l}\, \pi^{ij}_k\otimes \pi^{kl}_m   .
 \]
Clearly,   ${}^\bullet \pi$ and $\pi^\bullet$ are commuting projectors onto certain subspaces of $\check H^{\otimes 2}$.

 \begin{lemma}
 The  operators $\tau$ and $\bar \tau$ satisfy
 \begin{itemize}
 \item[(i)]
 the pentagon identity in $\End (\check H^{\otimes 3})$:
 \[{\tau}_{23}{\tau}_{13}{\tau}_{12}=
  {\tau}_{12}{\tau}_{23} ({}^\bullet \pi)_{21},
 \]
 \item[(ii)] the inversion relations in $\End (\check H^{\otimes
 2})$:
 \[
 {\tau}_{21}\bar {\tau} =\pi^\bullet \quad  {\text {and}} \quad \bar {\tau} {\tau}_{21}={}^\bullet \pi,
 \]
 where ${\tau}_{21}=P_{(12)} \tau P_{(12)}$ and ${\bar
 \tau}_{21}=P_{(12)} \bar \tau P_{(12)}$.
 \end{itemize}

 \end{lemma}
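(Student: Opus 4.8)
The plan is to verify the pentagon and inversion identities by the same device already used throughout Section~\ref{section1} and Section~\ref{Section3now}: both sides are endomorphisms of $\check H^{\otimes n}$ (with $n=3$ for the pentagon and $n=2$ for the inversion relations), so by Lemma~\ref{L:1} it suffices to check that they have the same matrix coefficients against the dual bases $(e^{ij}_{k\alpha})_\alpha$ and $(e_{ij}^{k\alpha})_\alpha$ of the multiplicity spaces. Equivalently, I would pair everything with elements of $\hat H^{\otimes n}$ using the bilinear form $\langle\langle\,\cdot\,,\,\cdot\,\rangle\rangle$ of Section~\ref{section33now} and rewrite the resulting scalars via the adjointness identities $\langle\langle u\otimes v,\tau(x\otimes y)\rangle\rangle=T(u\otimes v\otimes x\otimes y)$ and the analogous one for $\bar\tau$. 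Thus the pentagon identity for $\tau$ becomes a pentagon-type identity relating the trilinear composites of $T$ with itself, and the inversion relations become the statement that $T$ and $\bar T$ are, in the appropriate sense, mutually inverse up to the projectors ${}^\bullet\pi$ and $\pi^\bullet$ that record which multiplicity spaces are nonzero.

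For part (i) I would unfold $\tau_{23}\tau_{13}\tau_{12}$ and $\tau_{12}\tau_{23}({}^\bullet\pi)_{21}$ into their diagrammatic expansions. Each side is a sum over colourings of a trivalent graph by elements of $I$ together with sums over basis vectors $\alpha$ in the relevant multiplicity spaces; contracting the internal $\alpha$-edges using the duality of the bases collapses a basis vector against its dual, i.e.\ realises the decomposition~\eqref{E:xdecomp} of an identity morphism $\Id_{V_i\boxtimes V_j}$ as $\sum_{\alpha}e_{ij}^{k\alpha}e^{ij}_{k\alpha}$. After these contractions both sides reduce to the evaluation of the \emph{same} planar trivalent graph (the 1-skeleton of a 3--simplex, appearing twice glued along a triangle, which is the combinatorial content of the $2$--$3$ Pachner move); the factor $g_{i,j}$ in ${}^\bullet\pi$ is exactly what is needed to discard the colourings for which the corresponding multiplicity space vanishes, matching the fact that on the left side no nonzero term can have such a colouring. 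The simple-object condition $\End(V_i)=\FK\Id_{V_i}$ is what lets one evaluate the closed graph as a scalar, and the finiteness in Lemma~\ref{L:0} guarantees all sums are finite.

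For part (ii) I would expand $\tau_{21}\bar\tau$ and $\bar\tau\,\tau_{21}$ similarly. Here two copies of the trivalent vertex are glued along two edges, so after contracting the $\alpha$-summations one is left with a ``bigon''--type graph whose evaluation, using~\eqref{E:dualitybd} and the non-degeneracy of the pairing~\eqref{E:pair} from Lemma~\ref{L:1}, is precisely the identity on the surviving multiplicity space; the bookkeeping factors $g_{j,l}$ and $g_{i,j}$ turn this identity-on-a-subspace into the projectors $\pi^\bullet$ and ${}^\bullet\pi$, respectively. The two inversion relations are then formally equivalent given that $\tau_{21}$ and $\bar\tau$ land in the images of these commuting projectors.

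The main obstacle is purely bookkeeping rather than conceptual: one must be scrupulous about the labelling of the six ``external'' and internal indices $i,\dots,n$ in the definitions of $\tau$ and $\bar\tau$, about the placement of the permutation operators $\tau_{12},\tau_{13},\tau_{23}$ and $P_{(12)}$, and about the convention $F_{ij}=P_{(ij)}F_{ji}P_{(ij)}$ for $i>j$, so that after the $\alpha$-contractions the two diagrams genuinely coincide (including orientations of edges and the side on which the $i^*$ labels appear). In practice I would carry this out entirely in the graphical calculus: draw the left-hand composite, apply the local identity $\sum_\alpha e_{ij}^{k\alpha}e^{ij}_{k\alpha}=\Id_{V_i\boxtimes V_j}$ to cancel internal basis summations, and isotope the result to the right-hand composite, reading off the $g$--factors from the connectivity constraints imposed by which multiplicity spaces must be nonzero.
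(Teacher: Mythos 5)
Your plan follows the same route as the paper's proof: expand each composite diagrammatically, collapse the internal $\alpha$-sums using duality and completeness of the chosen bases, and record the $g$-factors as projectors onto the colourings for which the required decomposition of the identity is actually available. The one real problem is the ``local identity'' you write as
$\sum_{\alpha}e_{ij}^{k\alpha}e^{ij}_{k\alpha}=\Id_{V_i\boxtimes V_j}$.
This is wrong as stated: with the given conventions $e_{ij}^{k\alpha}\in H_{ij}^k=\Hom(V_i\boxtimes V_j,V_k)$ and $e^{ij}_{k\alpha}\in H^{ij}_k=\Hom(V_k,V_i\boxtimes V_j)$, so your composite is an endomorphism of $V_k$ equal to $\dim(H^{ij}_k)\Id_{V_k}$, not of $V_i\boxtimes V_j$. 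The decomposition you actually need is the one implicit in~\eqref{E:xdecomp--}, namely $\Id_{V_i\boxtimes V_j}=\sum_{k}\sum_{\alpha}e^{ij}_{k\alpha}\,e_{ij}^{k\alpha}$, with the composition in the other order and an additional sum over the internal colour $k$ — and this holds precisely when $g_{i,j}=1$.

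That last point is the only genuinely non-routine step, and your plan leaves it implicit. In the pentagon computation, the ``sum out the internal indices'' move requires the completeness relation for a pair $(o,k)$ that is not among the external colours of the pentagon, so before one may insert $g_{o,k}$ and then carry out the internal sums over $p,\kappa$ and $k,\iota$ one has to argue, as the paper does, that any colouring contributing a nonzero term already forces $g_{o,k}=1$. The same issue, with simpler bookkeeping, produces the $g_{j,l}$ and $g_{i,j}$ in the two inversion relations. Your high-level description (the $g$-factor ``discards the colourings for which the multiplicity space vanishes'') shows you understand what the factor is for, but to make the argument correct you need (a) the completeness relation with the sum over $k$ and the composition order fixed, and (b) an explicit verification that the nonvanishing terms satisfy the hypothesis of condition~(4) of the $\Psi$-system so that the relation applies.
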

 \begin{proof} ({\bf i}) For $x,y,z\in \check H$,
 \begin{multline*}
 {\tau}_{23}{\tau}_{13}{\tau}_{12}(x\otimes y\otimes z)\\
 =
 \sum_{i,\ldots,n,\iota}{\tau}_{23}{\tau}_{13}(
\begin{tikzpicture}[baseline=(x.base),hvector/.style={draw=blue!50,fill=blue!20,thick},
turn/.style={circle,draw=blue!50,fill=blue!20,thick,inner
sep=2pt},basis/.style={circle,draw=red!50,fill=red!20,thick,inner
sep=2pt}]
\node (b) [basis]{$\iota$}; \node (x) [hvector,below= of
b.east,anchor=west] {$x$}; \node (y) [hvector,below= of
x.west,anchor=east]{$y$}; \node (ul) [above= of
b.center,anchor=center]{}; \node (ur) [above=2 of
x.east,anchor=center]{}; \node (d) [below= of
y.center,anchor=center]{}; \draw (b.north) to node[auto,swap,inner
sep=1pt]{\tiny $k$} (ul.center); \draw (x.north east) to
node[auto,inner sep=1pt]{\tiny $l$} (ur.center); \draw (b.east) to
node[auto,inner sep=1pt]{\tiny $j$} (x.north west); \draw (b.west)
to node[auto,inner sep=1pt]{\tiny $i$} (y.north west); \draw
(x.south) to  node[auto,inner sep=1pt]{\tiny $n$} (y.north east);
\draw (y.south) to node[auto,inner sep=1pt]{\tiny $m$} (d.center);
\end{tikzpicture}
\otimes
\begin{tikzpicture}[baseline=(b.base),hvector/.style={draw=blue!50,fill=blue!20,thick},
turn/.style={circle,draw=blue!50,fill=blue!20,thick,inner
sep=2pt},basis/.style={circle,draw=red!50,fill=red!20,thick,inner
sep=2pt}]
\node (b) [basis]{$\iota$}; \node (ul) [above= of
b.west,anchor=center]{}; \node (ur) [above= of
b.east,anchor=center]{}; \node (d) [below= of
b.center,anchor=center]{}; \draw (b.west) to node[auto,swap,inner
sep=1pt]{\tiny $i$} (ul.center); \draw (b.east) to node[auto,inner
sep=1pt]{\tiny $j$} (ur.center); \draw (b.south) to node[auto,inner
sep=1pt]{\tiny $k$} (d.center);
\end{tikzpicture}
\otimes z) =\sum_{i,\ldots,q,\iota,\kappa}{\tau}_{23}(
\begin{tikzpicture}[baseline=(x.base),hvector/.style={draw=blue!50,fill=blue!20,thick},
turn/.style={circle,draw=blue!50,fill=blue!20,thick,inner
sep=2pt},basis/.style={circle,draw=red!50,fill=red!20,thick,inner
sep=2pt}]
\node (b) [basis]{$\iota$}; \node (x) [hvector,below= of
b.east,anchor=west] {$x$}; \node (y) [hvector,below= of
x.west,anchor=east]{$y$}; \node (z) [hvector,below= of
y.west,anchor=east]{$z$}; \node (a) [basis,above=of
b.west,anchor=east]{$\kappa$}; \node (ul) [above= of
a.center,anchor=center]{}; \node (ur) [above=3 of
x.east,anchor=center]{}; \node (d) [below= of
z.center,anchor=center]{}; \draw (a.north) to node[auto,swap,inner
sep=1pt]{\tiny $p$} (ul.center); \draw (a.west) to node[auto,inner
sep=1pt]{\tiny $o$} (z.north west); \draw (x.north east) to
node[auto,inner sep=1pt]{\tiny $l$} (ur.center); \draw (a.east) to
node[auto,inner sep=1pt]{\tiny $k$} (b.north); \draw (b.west) to
node[auto,inner sep=1pt]{\tiny $i$} (y.north west); \draw (b.east)
to node[auto,inner sep=1pt]{\tiny $j$} (x.north west); \draw
(x.south) to  node[auto,inner sep=1pt]{\tiny $n$} (y.north east);
\draw (y.south) to node[auto,inner sep=1pt]{\tiny $m$} (z.north
east); \draw (z.south) to node[auto,inner sep=1pt]{\tiny $q$}
(d.center);
\end{tikzpicture}
\otimes
\begin{tikzpicture}[baseline=(b.base),hvector/.style={draw=blue!50,fill=blue!20,thick},
turn/.style={circle,draw=blue!50,fill=blue!20,thick,inner
sep=2pt},basis/.style={circle,draw=red!50,fill=red!20,thick,inner
sep=2pt}]
\node (b) [basis]{$\iota$}; \node (ul) [above= of
b.west,anchor=center]{}; \node (ur) [above= of
b.east,anchor=center]{}; \node (d) [below= of
b.center,anchor=center]{}; \draw (b.west) to node[auto,swap,inner
sep=1pt]{\tiny $i$} (ul.center); \draw (b.east) to node[auto,inner
sep=1pt]{\tiny $j$} (ur.center); \draw (b.south) to node[auto,inner
sep=1pt]{\tiny $k$} (d.center);
\end{tikzpicture}
\otimes
\begin{tikzpicture}[baseline=(b.base),hvector/.style={draw=blue!50,fill=blue!20,thick},
turn/.style={circle,draw=blue!50,fill=blue!20,thick,inner
sep=2pt},basis/.style={circle,draw=red!50,fill=red!20,thick,inner
sep=2pt}]
\node (b) [basis]{$\kappa$}; \node (ul) [above= of
b.west,anchor=center]{}; \node (ur) [above= of
b.east,anchor=center]{}; \node (d) [below= of
b.center,anchor=center]{}; \draw (b.west) to node[auto,swap,inner
sep=1pt]{\tiny $o$} (ul.center); \draw (b.east) to node[auto,inner
sep=1pt]{\tiny $k$} (ur.center); \draw (b.south) to node[auto,inner
sep=1pt]{\tiny $p$} (d.center);
\end{tikzpicture}
)
\end{multline*}
\[
=\sum_{i,\ldots,r,\iota,\kappa,\lambda}
\begin{tikzpicture}[baseline=(x.base),hvector/.style={draw=blue!50,fill=blue!20,thick},
turn/.style={circle,draw=blue!50,fill=blue!20,thick,inner sep=2pt},basis/.style={circle,draw=red!50,fill=red!20,thick,inner sep=2pt}]
\node (b) [basis]{$\iota$};
\node (x) [hvector,below= of b.east,anchor=west] {$x$};
\node (y) [hvector,below= of x.west,anchor=east]{$y$};
\node (z) [hvector,below= of y.west,anchor=east]{$z$};
\node (a) [basis,above=of b.west,anchor=east]{$\kappa$};
\node (ul) [above= of a.center,anchor=center]{};
\node (ur) [above=3 of x.east,anchor=center]{};
\node (d) [below= of z.center,anchor=center]{};
\draw (a.north) to node[auto,swap,inner sep=1pt]{\tiny $p$} (ul.center);
\draw (a.west) to node[auto,inner sep=1pt]{\tiny $o$} (z.north west);
\draw (x.north east) to node[auto,inner sep=1pt]{\tiny $l$} (ur.center);
\draw (a.east) to node[auto,inner sep=1pt]{\tiny $k$} (b.north);
\draw (b.west) to node[auto,inner sep=1pt]{\tiny $i$} (y.north west);
\draw (b.east) to node[auto,inner sep=1pt]{\tiny $j$} (x.north west);
\draw (x.south) to  node[auto,inner sep=1pt]{\tiny $n$} (y.north east);
\draw (y.south) to node[auto,inner sep=1pt]{\tiny $m$} (z.north east);
\draw (z.south) to node[auto,inner sep=1pt]{\tiny $q$} (d.center);
\end{tikzpicture}
\otimes
\begin{tikzpicture}[baseline=(x.base),hvector/.style={draw=blue!50,fill=blue!20,thick},
turn/.style={circle,draw=blue!50,fill=blue!20,thick,inner sep=2pt},basis/.style={circle,draw=red!50,fill=red!20,thick,inner sep=2pt}]
\node (b) [basis]{$\lambda$};
\node (x) [basis,below= of b.east,anchor=west] {$\iota$};
\node (y) [basis,below= of x.west,anchor=east]{$\kappa$};
\node (ul) [above= of b.center,anchor=center]{};
\node (ur) [above=2 of x.east,anchor=center]{};
\node (d) [below= of y.center,anchor=center]{};
\draw (b.north) to node[auto,swap,inner sep=1pt]{\tiny $r$} (ul.center);
\draw (x.east) to node[auto,inner sep=1pt]{\tiny $j$} (ur.center);
\draw (b.east) to node[auto,inner sep=1pt]{\tiny $i$} (x.west);
\draw (b.west) to node[auto,inner sep=1pt]{\tiny $o$} (y.west);
\draw (x.south) to  node[auto,inner sep=1pt]{\tiny $k$} (y.east);
\draw (y.south) to node[auto,inner sep=1pt]{\tiny $p$} (d.center);
\end{tikzpicture}
\otimes
\begin{tikzpicture}[baseline=(b.base),hvector/.style={draw=blue!50,fill=blue!20,thick},
turn/.style={circle,draw=blue!50,fill=blue!20,thick,inner sep=2pt},basis/.style={circle,draw=red!50,fill=red!20,thick,inner sep=2pt}]
\node (b) [basis]{$\lambda$};
\node (ul) [above= of b.west,anchor=center]{};
\node (ur) [above= of b.east,anchor=center]{};
\node (d) [below= of b.center,anchor=center]{};
\draw (b.west) to node[auto,swap,inner sep=1pt]{\tiny $o$} (ul.center);
\draw (b.east) to node[auto,inner sep=1pt]{\tiny $i$} (ur.center);
\draw (b.south) to node[auto,inner sep=1pt]{\tiny $r$} (d.center);
\end{tikzpicture}
=
\sum_{i,\ldots,r,\iota,\ldots,\mu}
\left\langle
\begin{tikzpicture}[baseline=(bl.center),hvector/.style={draw=blue!50,fill=blue!20,thick},
turn/.style={circle,draw=blue!50,fill=blue!20,thick,inner sep=2pt},basis/.style={circle,draw=red!50,fill=red!20,thick,inner sep=2pt}]
\node (b) [basis]{$\lambda$};
\node (a) [basis,above= of b.east,anchor=west]{$\mu$};
\node (x) [basis,below= of b.east,anchor=west] {$\iota$};
\node (y) [basis,below= of x.west,anchor=east]{$\kappa$};
\node (u) [above= of a.center,anchor=center]{};
\node (d) [below= of y.center,anchor=center]{};
\node (bl)[below= .5 of b.center,anchor=center]{};
\draw (a.north) to node[auto,swap,inner sep=1pt]{\tiny $p$} (u.center);
\draw (b.north) to node[auto,swap,inner sep=1pt]{\tiny $r$} (a.west);
\draw (x.east) to node[auto,inner sep=1pt]{\tiny $j$} (a.east);
\draw (b.east) to node[auto,inner sep=1pt]{\tiny $i$} (x.west);
\draw (b.west) to node[auto,inner sep=1pt]{\tiny $o$} (y.west);
\draw (x.south) to  node[auto,inner sep=1pt]{\tiny $k$} (y.east);
\draw (y.south) to node[auto,inner sep=1pt]{\tiny $p$} (d.center);
\end{tikzpicture}
\right\rangle
\begin{tikzpicture}[baseline=(x.base),hvector/.style={draw=blue!50,fill=blue!20,thick},
turn/.style={circle,draw=blue!50,fill=blue!20,thick,inner sep=2pt},basis/.style={circle,draw=red!50,fill=red!20,thick,inner sep=2pt}]
\node (b) [basis]{$\iota$};
\node (x) [hvector,below= of b.east,anchor=west] {$x$};
\node (y) [hvector,below= of x.west,anchor=east]{$y$};
\node (z) [hvector,below= of y.west,anchor=east]{$z$};
\node (a) [basis,above=of b.west,anchor=east]{$\kappa$};
\node (ul) [above= of a.center,anchor=center]{};
\node (ur) [above=3 of x.east,anchor=center]{};
\node (d) [below= of z.center,anchor=center]{};
\draw (a.north) to node[auto,swap,inner sep=1pt]{\tiny $p$} (ul.center);
\draw (a.west) to node[auto,inner sep=1pt]{\tiny $o$} (z.north west);
\draw (x.north east) to node[auto,inner sep=1pt]{\tiny $l$} (ur.center);
\draw (a.east) to node[auto,inner sep=1pt]{\tiny $k$} (b.north);
\draw (b.west) to node[auto,inner sep=1pt]{\tiny $i$} (y.north west);
\draw (b.east) to node[auto,inner sep=1pt]{\tiny $j$} (x.north west);
\draw (x.south) to  node[auto,inner sep=1pt]{\tiny $n$} (y.north east);
\draw (y.south) to node[auto,inner sep=1pt]{\tiny $m$} (z.north east);
\draw (z.south) to node[auto,inner sep=1pt]{\tiny $q$} (d.center);
\end{tikzpicture}
\otimes
\begin{tikzpicture}[baseline=(b.base),hvector/.style={draw=blue!50,fill=blue!20,thick},
turn/.style={circle,draw=blue!50,fill=blue!20,thick,inner sep=2pt},basis/.style={circle,draw=red!50,fill=red!20,thick,inner sep=2pt}]
\node (b) [basis]{$\mu$};
\node (ul) [above= of b.west,anchor=center]{};
\node (ur) [above= of b.east,anchor=center]{};
\node (d) [below= of b.center,anchor=center]{};
\draw (b.west) to node[auto,swap,inner sep=1pt]{\tiny $r$} (ul.center);
\draw (b.east) to node[auto,inner sep=1pt]{\tiny $j$} (ur.center);
\draw (b.south) to node[auto,inner sep=1pt]{\tiny $p$} (d.center);
\end{tikzpicture}
\otimes
\begin{tikzpicture}[baseline=(b.base),hvector/.style={draw=blue!50,fill=blue!20,thick},
turn/.style={circle,draw=blue!50,fill=blue!20,thick,inner sep=2pt},basis/.style={circle,draw=red!50,fill=red!20,thick,inner sep=2pt}]
\node (b) [basis]{$\lambda$};
\node (ul) [above= of b.west,anchor=center]{};
\node (ur) [above= of b.east,anchor=center]{};
\node (d) [below= of b.center,anchor=center]{};
\draw (b.west) to node[auto,swap,inner sep=1pt]{\tiny $o$} (ul.center);
\draw (b.east) to node[auto,inner sep=1pt]{\tiny $i$} (ur.center);
\draw (b.south) to node[auto,inner sep=1pt]{\tiny $r$} (d.center);
\end{tikzpicture}
\]
\[
=
\sum_{i,\ldots,r,\lambda,\mu}
g_{i,j}\begin{tikzpicture}[baseline=(x.base),hvector/.style={draw=blue!50,fill=blue!20,thick},
turn/.style={circle,draw=blue!50,fill=blue!20,thick,inner sep=2pt},basis/.style={circle,draw=red!50,fill=red!20,thick,inner sep=2pt}]
\node (x) [hvector] {$x$};
\node (y) [hvector,below= of x.west,anchor=east]{$y$};
\node (z) [hvector,below= of y.west,anchor=east]{$z$};
\node (b) [basis,above=2 of y.west,anchor=east]{$\lambda$};
\node (a) [basis,above=2 of x.west,anchor=east]{$\mu$};
\node (ul) [above= of a.center,anchor=center]{};
\node (ur) [above=3 of x.east,anchor=center]{};
\node (d) [below= of z.center,anchor=center]{};
\draw (a.north) to node[auto,swap,inner sep=1pt]{\tiny $p$} (ul.center);
\draw (a.east) to node[auto,inner sep=1pt]{\tiny $j$} (x.north west);
\draw (x.north east) to node[auto,inner sep=1pt]{\tiny $l$} (ur.center);
\draw (a.west) to node[auto,inner sep=1pt]{\tiny $r$} (b.north);
\draw (b.west) to node[auto,inner sep=1pt]{\tiny $o$} (z.north west);
\draw (b.east) to node[auto,inner sep=1pt]{\tiny $i$} (y.north west);
\draw (x.south) to  node[auto,inner sep=1pt]{\tiny $n$} (y.north east);
\draw (y.south) to node[auto,inner sep=1pt]{\tiny $m$} (z.north east);
\draw (z.south) to node[auto,inner sep=1pt]{\tiny $q$} (d.center);
\end{tikzpicture}
\otimes
\begin{tikzpicture}[baseline=(b.base),hvector/.style={draw=blue!50,fill=blue!20,thick},
turn/.style={circle,draw=blue!50,fill=blue!20,thick,inner sep=2pt},basis/.style={circle,draw=red!50,fill=red!20,thick,inner sep=2pt}]
\node (b) [basis]{$\mu$};
\node (ul) [above= of b.west,anchor=center]{};
\node (ur) [above= of b.east,anchor=center]{};
\node (d) [below= of b.center,anchor=center]{};
\draw (b.west) to node[auto,swap,inner sep=1pt]{\tiny $r$} (ul.center);
\draw (b.east) to node[auto,inner sep=1pt]{\tiny $j$} (ur.center);
\draw (b.south) to node[auto,inner sep=1pt]{\tiny $p$} (d.center);
\end{tikzpicture}
\otimes
\begin{tikzpicture}[baseline=(b.base),hvector/.style={draw=blue!50,fill=blue!20,thick},
turn/.style={circle,draw=blue!50,fill=blue!20,thick,inner sep=2pt},basis/.style={circle,draw=red!50,fill=red!20,thick,inner sep=2pt}]
\node (b) [basis]{$\lambda$};
\node (ul) [above= of b.west,anchor=center]{};
\node (ur) [above= of b.east,anchor=center]{};
\node (d) [below= of b.center,anchor=center]{};
\draw (b.west) to node[auto,swap,inner sep=1pt]{\tiny $o$} (ul.center);
\draw (b.east) to node[auto,inner sep=1pt]{\tiny $i$} (ur.center);
\draw (b.south) to node[auto,inner sep=1pt]{\tiny $r$} (d.center);
\end{tikzpicture}
 =\sum_{i,\ldots,r,\lambda} g_{i,j} {\tau}_{12}(
 \begin{tikzpicture}[baseline=(x.base),hvector/.style={draw=blue!50,fill=blue!20,thick}]
\node (x) [hvector] {$x$};
\node (i) [above=of x.west ]{};
\node (j) [above=of x.east ]{};
\node (k) [below=of x.center ]{};
\draw (x.north west) to node[auto,swap,inner sep=1pt]{\tiny $j$} (i);
\draw (x.north east) to node[auto,inner sep=1pt]{\tiny $l$} (j);
\draw (x.south) to node[auto,inner sep=1pt]{\tiny $n$} (k);
\end{tikzpicture}
\otimes
 \begin{tikzpicture}[baseline=(x.base),hvector/.style={draw=blue!50,fill=blue!20,thick},
turn/.style={circle,draw=blue!50,fill=blue!20,thick,inner sep=2pt},basis/.style={circle,draw=red!50,fill=red!20,thick,inner sep=2pt}]
\node (b) [basis]{$\lambda$};
\node (x) [hvector,below= of b.east,anchor=west] {$y$};
\node (y) [hvector,below= of x.west,anchor=east]{$z$};
\node (ul) [above= of b.center,anchor=center]{};
\node (ur) [above=2 of x.east,anchor=center]{};
\node (d) [below= of y.center,anchor=center]{};
\draw (b.north) to node[auto,swap,inner sep=1pt]{\tiny $r$} (ul.center);
\draw (x.north east) to node[auto,inner sep=1pt]{\tiny $n$} (ur.center);
\draw (b.east) to node[auto,inner sep=1pt]{\tiny $i$} (x.north west);
\draw (b.west) to node[auto,inner sep=1pt]{\tiny $o$} (y.north west);
\draw (x.south) to  node[auto,inner sep=1pt]{\tiny $m$} (y.north east);
\draw (y.south) to node[auto,inner sep=1pt]{\tiny $q$} (d.center);
\end{tikzpicture}
\otimes
\begin{tikzpicture}[baseline=(b.base),hvector/.style={draw=blue!50,fill=blue!20,thick},
turn/.style={circle,draw=blue!50,fill=blue!20,thick,inner sep=2pt},basis/.style={circle,draw=red!50,fill=red!20,thick,inner sep=2pt}]
\node (b) [basis]{$\lambda$};
\node (ul) [above= of b.west,anchor=center]{};
\node (ur) [above= of b.east,anchor=center]{};
\node (d) [below= of b.center,anchor=center]{};
\draw (b.west) to node[auto,swap,inner sep=1pt]{\tiny $o$} (ul.center);
\draw (b.east) to node[auto,inner sep=1pt]{\tiny $i$} (ur.center);
\draw (b.south) to node[auto,inner sep=1pt]{\tiny $r$} (d.center);
\end{tikzpicture}
)
 \]
 \[
=\sum_{i,\ldots,n}g_{i,j}
 {\tau}_{12}{\tau}_{23}(
 \begin{tikzpicture}[baseline=(x.base),hvector/.style={draw=blue!50,fill=blue!20,thick}]
\node (x) [hvector] {$x$};
\node (i) [above=of x.west ]{};
\node (j) [above=of x.east ]{};
\node (k) [below=of x.center ]{};
\draw (x.north west) to node[auto,swap,inner sep=1pt]{\tiny $j$} (i);
\draw (x.north east) to node[auto,inner sep=1pt]{\tiny $l$} (j);
\draw (x.south) to node[auto,inner sep=1pt]{\tiny $n$} (k);
\end{tikzpicture}
\otimes
\begin{tikzpicture}[baseline=(x.base),hvector/.style={draw=blue!50,fill=blue!20,thick}]
\node (x) [hvector] {$y$};
\node (i) [above=of x.west ]{};
\node (j) [above=of x.east ]{};
\node (k) [below=of x.center ]{};
\draw (x.north west) to node[auto,swap,inner sep=1pt]{\tiny $i$} (i);
\draw (x.north east) to node[auto,inner sep=1pt]{\tiny $n$} (j);
\draw (x.south) to node[auto,inner sep=1pt]{\tiny $m$} (k);
\end{tikzpicture}
\otimes z)
={\tau}_{12}{\tau}_{23} ({}^\bullet \pi)_{21}(x\otimes y\otimes z).
\]
 All these equalities  follow directly from the definitions except the fifth equality. The sum on its left-hand side
  is preserved if we insert an additional
 factor $ g_{o,k}$. Indeed, if the   triangular block with the vertices labeled $\mu, \lambda, \iota$  contributes
  non-zero, then necessarily $g_{o,k}=1$. This   allows us to   sum up
  over all $p,  \kappa$ and then  over all $k, \iota$ to obtain the fifth equality.

({\bf ii}) For $x,y \in \check H$,
\[
{\tau}_{21}\bar {\tau} (x\otimes y)=
\sum_{i,\ldots,n,\lambda}{\tau}_{21}(
\begin{tikzpicture}[baseline=(x.base),hvector/.style={draw=blue!50,fill=blue!20,thick},
turn/.style={circle,draw=blue!50,fill=blue!20,thick,inner sep=2pt},basis/.style={circle,draw=red!50,fill=red!20,thick,inner sep=2pt}]
\node (b) [basis]{$\lambda$};
\node (x) [hvector,below= of b.west,anchor=east] {$x$};
\node (y) [hvector,below= of x.east,anchor=west]{$y$};
\node (ur) [above= of b.center,anchor=center]{};
\node (ul) [above=2 of x.west,anchor=center]{};
\node (d) [below= of y.center,anchor=center]{};
\draw (b.north) to node[auto,inner sep=1pt]{\tiny $n$} (ur.center);
\draw (x.north west) to node[auto,swap,inner sep=1pt]{\tiny $i$} (ul.center);
\draw (b.east) to node[auto,swap,inner sep=1pt]{\tiny $l$} (y.north east);
\draw (b.west) to node[auto,swap,inner sep=1pt]{\tiny $j$} (x.north east);
\draw (x.south) to  node[auto,swap,inner sep=1pt]{\tiny $k$} (y.north west);
\draw (y.south) to node[auto,swap,inner sep=1pt]{\tiny $m$} (d.center);
\end{tikzpicture}
\otimes
\begin{tikzpicture}[baseline=(b.base),hvector/.style={draw=blue!50,fill=blue!20,thick},
turn/.style={circle,draw=blue!50,fill=blue!20,thick,inner sep=2pt},basis/.style={circle,draw=red!50,fill=red!20,thick,inner sep=2pt}]
\node (b) [basis]{$\lambda$};
\node (ul) [above= of b.west,anchor=center]{};
\node (ur) [above= of b.east,anchor=center]{};
\node (d) [below= of b.center,anchor=center]{};
\draw (b.west) to node[auto,swap,inner sep=1pt]{\tiny $j$} (ul.center);
\draw (b.east) to node[auto,inner sep=1pt]{\tiny $l$} (ur.center);
\draw (b.south) to node[auto,inner sep=1pt]{\tiny $n$} (d.center);
\end{tikzpicture}
)
=
\sum_{i,\ldots,o,\lambda,\mu}
\begin{tikzpicture}[baseline=(b.base),hvector/.style={draw=blue!50,fill=blue!20,thick},
turn/.style={circle,draw=blue!50,fill=blue!20,thick,inner sep=2pt},basis/.style={circle,draw=red!50,fill=red!20,thick,inner sep=2pt}]
\node (b) [basis]{$\mu$};
\node (ul) [above= of b.west,anchor=center]{};
\node (ur) [above= of b.east,anchor=center]{};
\node (d) [below= of b.center,anchor=center]{};
\draw (b.west) to node[auto,swap,inner sep=1pt]{\tiny $i$} (ul.center);
\draw (b.east) to node[auto,inner sep=1pt]{\tiny $j$} (ur.center);
\draw (b.south) to node[auto,inner sep=1pt]{\tiny $o$} (d.center);
\end{tikzpicture}
\otimes
\begin{tikzpicture}[baseline=(b.base),hvector/.style={draw=blue!50,fill=blue!20,thick},
turn/.style={circle,draw=blue!50,fill=blue!20,thick,inner sep=2pt},basis/.style={circle,draw=red!50,fill=red!20,thick,inner sep=2pt}]
\node (b) [basis]{$\lambda$};
\node (x) [hvector,below= of b.west,anchor=east] {$x$};
\node (y) [hvector,below= of x.east,anchor=west]{$y$};
\node (d) [below= of y.center,anchor=center]{};
\node (bb) [basis,above=of b.center,anchor=center]{$\lambda$};
\node (bbb)[basis,above=3 of x.center,anchor=center]{$\mu$};
\node (ur) [above=2 of bb.east,anchor=center]{};
\node (ul) [above= of bbb.center,anchor=center]{};
\draw (b.north) to node[auto,inner sep=1pt]{\tiny $n$} (bb.south);
\draw (bb.west) to node[auto,inner sep=1pt]{\tiny $j$} (bbb.east);
\draw (bb.east) to node[auto,inner sep=1pt]{\tiny $l$} (ur.center);
\draw (bbb.north) to node[auto,inner sep=1pt]{\tiny $o$} (ul.center);
\draw (x.north west) to node[auto,swap,inner sep=1pt]{\tiny $i$} (bbb.west);
\draw (b.east) to node[auto,swap,inner sep=1pt]{\tiny $l$} (y.north east);
\draw (b.west) to node[auto,swap,inner sep=1pt]{\tiny $j$} (x.north east);
\draw (x.south) to  node[auto,swap,inner sep=1pt]{\tiny $k$} (y.north west);
\draw (y.south) to node[auto,swap,inner sep=1pt]{\tiny $m$} (d.center);
\end{tikzpicture}
\]
\[
=
\sum_{i,\ldots,m,\mu}g_{j,l}
\begin{tikzpicture}[baseline=(b.base),hvector/.style={draw=blue!50,fill=blue!20,thick},
turn/.style={circle,draw=blue!50,fill=blue!20,thick,inner sep=2pt},basis/.style={circle,draw=red!50,fill=red!20,thick,inner sep=2pt}]
\node (b) [basis]{$\mu$};
\node (ul) [above= of b.west,anchor=center]{};
\node (ur) [above= of b.east,anchor=center]{};
\node (d) [below= of b.center,anchor=center]{};
\draw (b.west) to node[auto,swap,inner sep=1pt]{\tiny $i$} (ul.center);
\draw (b.east) to node[auto,inner sep=1pt]{\tiny $j$} (ur.center);
\draw (b.south) to node[auto,inner sep=1pt]{\tiny $k$} (d.center);
\end{tikzpicture}
\otimes
\begin{tikzpicture}[baseline=(b.base),hvector/.style={draw=blue!50,fill=blue!20,thick},
turn/.style={circle,draw=blue!50,fill=blue!20,thick,inner sep=2pt},basis/.style={circle,draw=red!50,fill=red!20,thick,inner sep=2pt}]
\node (x) [hvector] {$x$};
\node (y) [hvector,below= of x.east,anchor=west]{$y$};
\node (d) [below= of y.center,anchor=center]{};
\node (bbb)[basis,above= of x.center,anchor=center]{$\mu$};
\node (ur) [above=3 of y.east,anchor=center]{};
\node (ul) [above= of bbb.center,anchor=center]{};
\draw (bbb.north) to node[auto,inner sep=1pt]{\tiny $k$} (ul.center);
\draw (x.north west) to node[auto,swap,inner sep=1pt]{\tiny $i$} (bbb.west);
\draw (x.north east) to node[auto,inner sep=1pt]{\tiny $j$} (bbb.east);
\draw (x.south) to  node[auto,swap,inner sep=1pt]{\tiny $k$} (y.north west);
\draw (y.south) to node[auto,swap,inner sep=1pt]{\tiny $m$} (d.center);
\draw (y.north east) to node[auto,inner sep=1pt]{\tiny $l$} (ur.center);
\end{tikzpicture}
=\sum_{i,\ldots,m}g_{j,l}
 \begin{tikzpicture}[baseline=(x.base),hvector/.style={draw=blue!50,fill=blue!20,thick}]
\node (x) [hvector] {$x$};
\node (i) [above=of x.west ]{};
\node (j) [above=of x.east ]{};
\node (k) [below=of x.center ]{};
\draw (x.north west) to node[auto,swap,inner sep=1pt]{\tiny $i$} (i);
\draw (x.north east) to node[auto,inner sep=1pt]{\tiny $j$} (j);
\draw (x.south) to node[auto,inner sep=1pt]{\tiny $k$} (k);
\end{tikzpicture}
\otimes
\begin{tikzpicture}[baseline=(x.base),hvector/.style={draw=blue!50,fill=blue!20,thick}]
\node (x) [hvector] {$y$};
\node (i) [above=of x.west ]{};
\node (j) [above=of x.east ]{};
\node (k) [below=of x.center ]{};
\draw (x.north west) to node[auto,swap,inner sep=1pt]{\tiny $k$} (i);
\draw (x.north east) to node[auto,inner sep=1pt]{\tiny $l$} (j);
\draw (x.south) to node[auto,inner sep=1pt]{\tiny $m$} (k);
\end{tikzpicture}
=\pi^\bullet (x\otimes y).
\]
  The second inversion relation is proved similarly.
 \end{proof}

 \section{The $6j$-symbols}

\subsection{Notation}  For any $i,j,k\in I$, the non-degenerate pairing $H_{ij}^k\otimes
H^{ij}_k\to \FK$ defined in Lemma~\ref{L:1}  will be denoted
$\ast^k_{ij}$. Composing this pairing with the flip $H^{ij}_k\otimes
H_{ij}^k\to H_{ij}^k\otimes H^{ij}_k$, we obtain a non-degenerate
pairing   $H^{ij}_k\otimes H_{ij}^k\to \FK$ denoted $\ast^{ij}_k$. We
shall use these pairings to identify the dual of $H_{ij}^k$ with
$H^{ij}_k$ and the dual of $H^{ij}_k$ with $H_{ij}^k$. The pairings
$\ast^k_{ij}$ and $\ast_k^{ij}$ induce the tensor contractions
$$U\otimes H_{ij}^k\otimes V \otimes H^{ij}_k \otimes W\to U\otimes
V\otimes  W,$$
 $$U \otimes  H^{ij}_k\otimes V \otimes  H_{ij}^k \otimes W\to U\otimes  V\otimes  W ,$$ where
 $U$, $V$, $W$ are
 arbitrary $\FK$-vector spaces. These tensor contractions will be denoted by the same symbols $\ast^k_{ij}$ and
 $\ast_k^{ij}$ respectively.

\subsection{Definition of $6j$-symbols}\label{section42Ibelieve} For any $i,j,k,l,m,n\in I$, the restriction
of   $T:H^{\otimes 4}\to \FK$ to the tensor product
$$H^m_{kl}\otimes H^k_{ij}\otimes H^{jl}_n \otimes H^{in}_m \subset
\hat H \otimes \hat H\otimes \check H\otimes \check H \subset
H^{\otimes 4}$$ gives a vector in the $\FK$-vector space
\begin{equation}\label{ambientvectorspace} \Hom (H^m_{kl}\otimes H^k_{ij}\otimes H^{jl}_n \otimes
H^{in}_m,\FK)= H_m^{kl}\otimes H_k^{ij}\otimes H_{jl}^n \otimes
H_{in}^m . \end{equation}
This vector is denoted \begin{equation}\label{positive6j-symbol}\sj
ijklmn
\end{equation}  and called the {\it positive $6j$-symbol} determined by the
tuple $i,j,k,l,m,n$. In graphical notation, the
$6j$-symbol~\eqref{positive6j-symbol} is   the summand in the
definition of $T$ in Section~\ref{TheformsTand} corresponding to the
tuple $i,j,k,l,m,n\in I$. Thus,  for any $u,v,x,y\in H$,
$$T(u\otimes v\otimes x\otimes y)$$
$$=\sum_{i, j, k, l, m, n\in I} \ast^m_{kl}   \ast^k_{ij}  \ast^{jl}_n
\ast^{in}_m \big (\pi^m_{kl} (u) \otimes \pi^k_{ij} (v) \otimes
\pi^{jl}_n (x) \otimes \pi^{in}_m (y) \otimes \sj ijklmn \big ).$$
The adjoint operator  $\tau \in \End (\check H^{\otimes 2})$ expands
as follows: for any $x,y \in \check H$,
$$\tau(x\otimes y)= \sum_{i, j, k, l, m, n\in I}    \ast^{jl}_n
\ast^{in}_m \big ( \pi^{jl}_n (x) \otimes \pi^{in}_m (y) \otimes \sj
ijklmn \big ). $$

Similarly restricting $\bar T$ to the tensor product
$H^m_{in}\otimes H^n_{jl}\otimes H^{ij}_k \otimes H^{kl}_m$, we
obtain the {\it negative $6j$-symbol}
\begin{equation}\label{negative6j-symbol}\sjn
ijklmn \in H_m^{in}\otimes H_n^{jl}\otimes H_{ij}^k \otimes
H_{kl}^m.
\end{equation}
For any $u,v,x,y\in H$,
$$\bar T(u\otimes v\otimes x\otimes y)$$
$$=\sum_{i, j, k, l, m, n\in I} \ast^m_{in}   \ast^n_{jl}  \ast^{ij}_k
\ast^{kl}_m \big (\pi^m_{in} (u) \otimes \pi^n_{jl} (v) \otimes
\pi^{ij}_k (x) \otimes \pi^{kl}_m (y) \otimes \sjn ijklmn \big ).$$
The adjoint operator  $  \bar \tau \in \End (\check H^{\otimes 2})$
expands   as follows: for any $x,y \in \check H$,
$$\bar \tau(x\otimes y)=\sum_{i, j, k, l, m, n\in I}   \ast^{ij}_k
\ast^{kl}_m \big (
\pi^{ij}_k (x) \otimes \pi^{kl}_m (y) \otimes \sjn ijklmn \big ). $$

\subsection{Identities} The properties of the forms $T$ and $\bar T$
established in Section~\ref{Section3now} can be rewritten in terms
of the $6j$-symbols. Formula~\eqref{E:sym01} yields
$$ \sj ijklmn=  P_{(4321)} A_1  A_3^*  \left ( \sjn {i^*}kjlnm
\right ),$$ where $A_1$ is induced by the restriction of $A$ to
$H^{i^*m}_n$  and $A_3^*$ is induced by the restriction of $A^*$ to
$H^{j}_{i^*k}$. Formula~\eqref{E:sym12} yields
$$ \sj ijklmn=  P_{(23)} A_2^*  B_3^*  \left ( \sjn k{j^*}inml
\right ),$$ where $A_2^*$ is induced by the restriction of $A^*$ to
$H^{{j^*}n}_l$  and $B_3^*$ is induced by the restriction of $B^*$
to $H^{i}_{kj^*}$.   Formula~\eqref{E:sym23} yields
$$ \sj ijklmn=  P_{(1234)} B_2^*  B_4  \left ( \sjn inm{l^*}kj
\right ),$$ where $B_2^*$ is induced by the restriction of $B^*$ to
$H^{nl^*}_j$  and $B_4$ is induced by the restriction of $B_4$ to
$H_{ml^*}^{k}$.

Note for the record that Formula~\eqref{actionof12} yields
$$ \sj ijklmn=  P_{(12)} (BA)^*_1  (BA)_2^*  (AB)_4 \left ( \sjn
jln{m^*}{i^*}{k^*} \right ),$$ where the operator $(BA)^*_1$ is
induced by the restriction of $(BA)^*=A^*B^*$ to $H^{jk^*}_{i^*}$;
the operator $(BA)^*_2$ is induced by the restriction of $(BA)^* $
to $H^{lm^*}_{k^*}$, and $(AB)_4$ is induced by the restriction of
$AB$ to $H^{i^*}_{nm^*}$. Formula~\eqref{actionof34} yields
$$ \sj ijklmn=  P_{(34)} (BA)_1  (AB)_3^*  (AB)_4^*  \left ( \sjn
 {m^*}i{n^*}j{l^*}k \right ),$$ where the operator $(BA)_1$ is
induced by the restriction of $ BA  $ to $H^{m^*k}_{l^*}$; the
operator $(AB)^*_3$ is induced by the restriction of $(AB)^* $ to
$H^{n^*}_{m^*i}$, and $(AB)^*_4$ is induced by the restriction of
$(AB)^*$ to $H^{l^*}_{n^*j}$.

The pentagon identity   yields that for any $j_0,j_1,
 \ldots , j_8\in I$,
 \[\sum_{j\in I} \ast^{jj_4}_{j_7} \ast^{j_2j_3}_j \ast^{j_1j}_{j_6}
 \left (\sj {j_1}{j_2}{j_5}{j_3}{j_6}j \otimes
  \sj {j_1}j{j_6}{j_4}{j_0}{j_7} \otimes
  \sj {j_2}{j_3}{j}{j_4}{j_7}{j_8} \right )\]
 \[=g_{j_2,j_3} P_{(135642)} \ast^{j_5j_8}_{j_0} \left (
 \sj {j_1}{j_2}{j_5}{j_8}{j_0}{j_7} \otimes
 \sj {j_5}{j_3}{j_6}{j_4}{j_0}{j_8} \right ).\]
Here both sides lie in the $\FK$-vector space
\begin{equation}\label{ambientvectorspacepentagon}H^{{j_5}{j_3}}_{j_6} \otimes H^{{j_1}{j_2}}_{j_5} \otimes
H^{{j_6}{j_4}}_{j_0} \otimes H_{j_1 j_7}^{j_0} \otimes H^{j_3
j_4}_{j_8}\otimes H_{j_2 j_8}^{j_7}.\end{equation}

To rewrite the inversion relations in terms of the $6j$-symbols,
observe that the   transpose of the pairing $\ast^k_{ij}:
H^k_{ij}\otimes H_k^{ij}\to \FK$ is a homomorphism $\FK\to
H^{ij}_k\otimes H_{ij}^k$. The image of the unit $1\in \FK$ under this
  homomorphism is denoted by $\delta^{ij}_k$. In the notation
of Section~\ref{section33now}, we have $\delta^{ij}_k=\sum_\alpha
e^{ij}_{k\alpha}  \otimes
 e_{ij}^{k\alpha} $. The   relation $\tau_{21}\bar \tau=\pi^\bullet$ may be
rewritten as the identity
$$\sum_{n\in I} \ast^n_{jl}\ast^m_{in} \sj ijklmn \otimes \sjn
ij{k'}lmn=\delta_{k'}^k \, g_{j,l} \,  P_{(432)} (\delta^{kl}_{m}\otimes
\delta^{ij}_k) $$ for all $i,j,k,k',l,m \in I$. The   relation  $\bar \tau \tau_{21}={}^\bullet \pi$ may be
rewritten as the identity
$$\sum_{k\in I} \ast^k_{ij}\ast^m_{kl} \sjn ijklm{n'} \otimes \sj
ij{k}lmn=\delta_{n'}^n\,  g_{i,j}\, P_{(432)} (\delta^{in}_{m}\otimes
\delta^{jl}_n) $$ for all $i,j, l,m ,n,n' \in I$.

\begin{remark} As an exercise, the reader may prove that
  ${\tau}_{12}{\tau}_{23} ({}^\bullet
 \pi)_{21}=(\pi^{\bullet})_{32}  \tau_{12}{\tau}_{23}$.
This formula
  does not give non-trivial identities between
 the $6j$-symbols.
 \end{remark}

\section{The $T$-calculus}

\subsection{$T$-equalities}
 We say that  two endomorphisms $a,b$ of $H$ are
  \emph{$T$-equal} and write $a\stackrel{T}{=} b$ if $Ta_i=Tb_i$ for all $i=1,2,3,  4$.
   It is clear that the $T$-equality is an equivalence relation. If
    $a\stackrel{T}{=} b$, then $ac\stackrel{T}{=} bc$ for all
   $c\in \End (H)$.


   Though the definition of the $T$-equality
   involves four conditions,    two of them may be eliminated as is clear from the following lemma.
   \begin{lemma}\label{L:1234}
For any   $a, b \in \End (H)$, we have $Ta_1=Tb_1
\Longleftrightarrow Ta_2=Tb_2$ and $Ta_3=Tb_3 \Longleftrightarrow
Ta_4=Tb_4$.
   \end{lemma}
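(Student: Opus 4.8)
The plan is to use the action of the permutations $P_{(23)}$ and the operators $A,B$ on $T$, as recorded in the Fundamental Lemma~\ref{Fundamental lemma} (equations~\eqref{E:sym01}--\eqref{E:sym23}) together with the derived formulas~\eqref{actionof12}--\eqref{actionof34}, to convert a $T$-equality in one tensor slot into a $T$-equality in an adjacent slot. The key observation is that all the operators appearing on the right-hand sides of the symmetry relations --- namely $A_i$, $B_i$, $A_i^*$, $B_i^*$, and the permutations $P_\sigma$ --- are \emph{invertible} endomorphisms of $H^{\otimes 4}$ (the operators $A,B$ are involutions by Lemma~\ref{L:ab}, hence so are $A^*,B^*$; and permutation operators are invertible). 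Therefore, whenever we have an identity of the form $T P_\sigma = \bar T \, E$ with $E$ invertible (and a companion identity expressing $\bar T$ back in terms of $T$), we may cancel freely and transport equalities between slots.

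First I would prove the implication $Ta_1 = Tb_1 \Longleftrightarrow Ta_2 = Tb_2$. Start from $Ta_2 = Tb_2$. Using~\eqref{E:sym12}, $TP_{(23)} = \bar T \Aop_2\Bop_3$, and the fact that $P_{(23)}$, $\Aop_2$, $\Bop_3$ commute appropriately with an operator sitting in slot $1$ (since slot $1$ is disjoint from slots $2,3$), one sees $\bar T \Aop_2 \Bop_3 a_1 = T P_{(23)} a_1 = T a_1 P_{(23)}$ after accounting for the exchange relation $P_\sigma f_i = f_{\sigma(i)} P_\sigma$; more directly, since $a_1$ commutes with $\Aop_2, \Bop_3$ and with $P_{(23)}$, the hypothesis $Ta_1 = Tb_1$ propagates to $\bar T$ composed with $a_1 = b_1$ in slot $1$. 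The cleaner route: express $T a_1$ in terms of $T a_2$ by first writing $a_1 = P_{(12)} a_2 P_{(12)}$ and then using formula~\eqref{actionof12} for $TP_{(12)}$, which has the shape $TP_{(12)} = \bar T \,(\Bop\Aop)_1(\Bop\Aop)_2(\Aop\Bop)_4^*$ with all factors invertible. Thus $Ta_1 = Tb_1$ if and only if $TP_{(12)} a_2 P_{(12)} = TP_{(12)} b_2 P_{(12)}$, i.e. $\bar T\,(\Bop\Aop)_1(\Bop\Aop)_2(\Aop\Bop)_4^* a_2 P_{(12)} = \bar T\,(\cdots) b_2 P_{(12)}$; since $a_2$ commutes with $(\Bop\Aop)_1$ and $(\Aop\Bop)_4^*$ and with the right factor $P_{(12)}$ only up to the exchange move, but in any case the map $\xi \mapsto (\Bop\Aop)_1 (\Bop\Aop)_2 (\Aop\Bop)_4^* \xi P_{(12)}$ is a bijection of $\End(H^{\otimes 4})$-cosets that conjugates slot-$2$ operators to slot-$1$ operators, this yields the equivalence once we combine it with the corresponding statement relating $\bar T$-equalities back to $T$-equalities (obtained by the substitutions $T \leftrightarrow \bar T$, $\Aop \leftrightarrow \Bop$ noted after~\eqref{actionof34}).

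The second equivalence $Ta_3 = Tb_3 \Longleftrightarrow Ta_4 = Tb_4$ is handled identically using formula~\eqref{actionof34}, $TP_{(34)} = \bar T\,(\Bop\Aop)_1^*(\Aop\Bop)_3(\Aop\Bop)_4$, together with $a_3 = P_{(34)} a_4 P_{(34)}$; again every operator in sight is invertible, so the two conditions are interchangeable. The main obstacle --- and the point that needs care rather than ingenuity --- is bookkeeping the exchange relations $P_\sigma f_i = f_{\sigma(i)} P_\sigma$ and the fact that $T$-equality is only a \emph{left} congruence (if $a \teq b$ then $ac \teq bc$, but not obviously $ca \teq cb$); so one must arrange all the transported operators to act by \emph{right} multiplication (composition on the side of the argument) and all the symmetry operators to be applied on the left, where the relations $TP_\sigma = \bar T E$ live. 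This is exactly why the statement pairs slots $1\leftrightarrow 2$ and $3\leftrightarrow 4$ but does not assert, say, $Ta_1 = Tb_1 \Leftrightarrow Ta_3 = Tb_3$: passing between $\{1,2\}$ and $\{3,4\}$ would require~\eqref{E:sym01} or~\eqref{E:sym23}, which mix a slot-$1$ or slot-$2$ operator with a slot-$3$ or slot-$4$ one and hence cannot decouple a single slot. Once the invertibility and one-sidedness are tracked correctly, both equivalences drop out formally.
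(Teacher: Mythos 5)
Your plan correctly identifies the two working principles (all the symmetry operators are invertible, and $T$-equality is a priori only a right-congruence), but the route you actually propose does not close. Starting from~\eqref{actionof12}, $TP_{(12)}=\bar T(\Bop\Aop)_1(\Bop\Aop)_2(\Aop\Bop)_4^*$, and writing $a_1 = P_{(12)}a_2P_{(12)}$, you arrive at
\[
Ta_1 \;=\; \bar T\,\bigl((\Bop\Aop)a\bigr)_2\,(\Bop\Aop)_1(\Aop\Bop)_4^*\,P_{(12)} ,
\]
because the factor $(\Bop\Aop)_2$ sits in the very slot being transported and must be absorbed into $a$. After right-cancelling the fixed invertible tail, this gives $Ta_1=Tb_1 \iff \bar T((\Bop\Aop)a)_2 = \bar T((\Bop\Aop)b)_2$, which is \emph{not} $Ta_2=Tb_2$: an extra $\Bop\Aop$ has been left-composed onto $a$, and $T$-equality is not a left-congruence, so it cannot simply be stripped off. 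If you then apply the ``companion'' $\bar T$-to-$T$ formula $\bar T P_{(12)}=T(\Aop\Bop)_1(\Aop\Bop)_2(\Bop\Aop)_4^*$ to undo the $\bar T$, the inserted operator becomes $(\Aop\Bop)(\Bop\Aop)=\Id$ and you land back at $Ta_1=Tb_1$: the transport is circular and never touches slot $2$. The ``bijection of cosets'' sentence is exactly where the argument is hand-waved and where it actually fails.

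The paper's proof avoids this by first deriving genuine $T$-to-$T$ symmetry relations --- Formulas~\eqref{E:tp431}--\eqref{E:tp124+}, of the shape $TP_\sigma = T\,X_kY_lZ_m$ with $\{k,l,m\}=\{1,2,3,4\}\setminus\{i\}$ and $\{i,\sigma(i)\}$ equal to $\{1,2\}$ or $\{3,4\}$. The crucial structural feature, which your plan never produces, is that the operator block $X_kY_lZ_m$ leaves slot $i$ completely untouched. One can therefore commute $a_i$ past $(X_kY_lZ_m)^{-1}$ \emph{before} applying the permutation exchange, obtaining $Ta_i = Ta_{\sigma(i)}\,P_\sigma(X_kY_lZ_m)^{-1}$ with a fixed invertible tail and no stray operator attached to $a$. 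Cancelling the tail gives the clean equivalence $Ta_i=Tb_i\Leftrightarrow Ta_{\sigma(i)}=Tb_{\sigma(i)}$. Your diagnosis of why the lemma pairs $\{1,2\}$ with $\{3,4\}$ is also slightly off: the constraint is not that~\eqref{E:sym01} or~\eqref{E:sym23} ``mix slots'', but that the derivable $T$-to-$T$ identities with an operator-free slot $i$ all have $\sigma$ sending $i$ within the pair $\{1,2\}$ or $\{3,4\}$. You should derive those four identities (or at least two of them, one per pair) before attempting the transport; without them the argument does not go through.
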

\begin{proof}
Formulas~\eqref{E:sym}  and the  identity $P_{(431)}=P_{(4321)}
P_{(23)}$
  imply that
\begin{equation}\label{E:tp431}
TP_{(431)} =T\Aop_1^*(\Bop\Aop)_2\Aop_3 .
\end{equation}
Multiplying   on the right by $  A^*_1 (AB)_2 A_3
P_{(134)}=P_{(134)} A_1 (AB)_2 A^*_4$, we obtain
\begin{equation}\label{E:tp431+}
TP_{(134)} =T\Aop_1 (\Aop\Bop)_2 \Aop^*_4  .
\end{equation}
Similar arguments prove that
\begin{equation}\label{E:tp124}
TP_{(124)} =T\Bop_2(\Aop\Bop)_3\Bop_4^*,
\end{equation}
\begin{equation}\label{E:tp124+}
TP_{(421)} =T\Bop^*_1(\Bop\Aop)_3  \Bop_4  .
\end{equation}
For  each $i=1,2,3,4$ one of the
Equations~\eqref{E:tp431}--\eqref{E:tp124+} has the form
\begin{equation}\label{E:gen-str}
TP_\sigma=TX_kY_lZ_m,
\end{equation}
where
\[
\sigma\in \mathbb{S}_4,\quad
 X,Y,Z\in \{A, A^*, B, B^*, AB, BA\} ,\quad
\{k,l,m\}=\{1,2,3,4\}\setminus\{i\},
\]
and the set $\{i,\sigma(i)\}$ is either $\{1,2\}$ or $\{3,4\}$.
  Set $j=\sigma(i)$ and  observe that
\[
Ta_i=TP_\sigma (X_kY_lZ_m)^{-1}a_i=TP_\sigma a_i (X_kY_lZ_m)^{-1}=Ta_jP_\sigma (X_kY_lZ_m)^{-1}.\]
Also, $Tb_i= Tb_jP_\sigma (X_kY_lZ_m)^{-1}$.  Hence, $Ta_i=Tb_i$ if
and only if $Ta_j=Tb_j$.
\end{proof}
\begin{corollary}
The $T$-equality $a\teq b$ holds if and only if $Ta_i=Tb_i$ for some
$i\in \{1,2\}$ and for some   $i\in\{3,4\}$.
\end{corollary}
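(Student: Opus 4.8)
The plan is to read the corollary off directly from Lemma~\ref{L:1234}, which has just been proved. By definition, $a\teq b$ means $Ta_i=Tb_i$ for all four indices $i\in\{1,2,3,4\}$, so one implication is immediate and requires nothing: if $a\teq b$ then certainly $Ta_i=Tb_i$ holds for some $i\in\{1,2\}$ and for some $i\in\{3,4\}$.

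For the reverse implication I would argue as follows. Suppose $Ta_i=Tb_i$ for some $i\in\{1,2\}$. Lemma~\ref{L:1234} provides the equivalence $Ta_1=Tb_1\Longleftrightarrow Ta_2=Tb_2$, so whichever of the indices $1,2$ we started with, we obtain both $Ta_1=Tb_1$ and $Ta_2=Tb_2$. In the same way, if $Ta_j=Tb_j$ for some $j\in\{3,4\}$, the second equivalence $Ta_3=Tb_3\Longleftrightarrow Ta_4=Tb_4$ of Lemma~\ref{L:1234} yields $Ta_3=Tb_3$ and $Ta_4=Tb_4$. Combining the four equalities gives $Ta_i=Tb_i$ for all $i\in\{1,2,3,4\}$, which is exactly $a\teq b$.

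There is essentially no obstacle: the corollary is a bookkeeping consequence of the lemma, and the only thing to be careful about is not attempting to check the two \lq\lq eliminated'' conditions by a fresh computation — all the content is packaged in the equivalences of Lemma~\ref{L:1234}. Should a self-contained argument be preferred, I would instead re-run the mechanism of that proof: the identities~\eqref{E:tp431}--\eqref{E:tp124+}, each of the form $TP_\sigma=TX_kY_lZ_m$ with $\{i,\sigma(i)\}$ equal to $\{1,2\}$ or $\{3,4\}$ and $X_kY_lZ_m$ invertible, let one multiply on the right by $(X_kY_lZ_m)^{-1}a_i$ and thereby transport the condition $Ta_i=Tb_i$ to $Ta_{\sigma(i)}=Tb_{\sigma(i)}$; but invoking Lemma~\ref{L:1234} as a black box is cleaner, so that is the route I would take.
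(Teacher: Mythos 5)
Your argument is correct and is precisely the intended one: the paper states this corollary without proof, as an immediate consequence of Lemma~\ref{L:1234}, which is exactly how you derive it. Nothing further is needed.
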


   \subsection{$T$-scalars} An endomorphism $t$ of $H$ is a \emph{$T$-scalar}
    if $t$ has a  transpose   $t^*$   and
   \begin{equation}\label{E:q1=q2}
   Tt_1=Tt_2=Tt_3^*=Tt_4^* \quad \,\,  {\text {and}} \,\, \quad Tt^*_1=Tt_2^*=Tt_3=Tt_4 .
   \end{equation}
For example, all scalar automorphisms of $H$ are $T$-scalars. A more
interesting example of a $T$-scalar will be given in
Lemma~\ref{L:[lr]} below. If $t$ is a $T$-scalar, then the adjoint
operator $\tau:\check H^{\otimes 2} \to \check H^{\otimes 2} $
introduced in Section~\ref{section33now} satisfies
   \begin{equation}\label{E:q1=q2++++} t_1 \tau =t_2\tau=\tau t_1=\tau t_2 \quad \,\,  {\text {and}} \,\, \quad t^*_1 \tau
=t^*_2\tau=\tau t^*_1=\tau t^*_2.   \end{equation}

If $t\in \End (H)$ is a $T$-scalar, then so is $t^*$. If a
$T$-scalar $t$ is invertible in $\End (H)$, then $t^{-1}$ is a
$T$-scalar. Indeed, the equality $Tt_1=Tt_2$ implies that
$Tt_1^{-1}=Tt_2^{-1}$ and similarly for all the other required
equalities.

The   product of any two $T$-scalars $t,u\in \End (H)$ is a
$T$-scalar. Indeed, for any $r\in \{1,2\}$ and $s \in \{3,4\}$,
\[T(tu)_r=Tt_ru_r=Tt_s^* u_r= T  u_r t_s^* =Tu_s^* t_s^*=T(tu)^*_s\]
and similarly $ T(tu)^*_r=T(tu)_s$. Thus, the $T$-scalars form a
subalgebra of the $\FK$-algebra $\End (H)$ invariant under the
involution $a\mapsto a^*$.

If $t$ is a $T$-scalar, then $a\stackrel{T}{=}  b \Longrightarrow
ta\stackrel{T}{=} tb$ for any $a,b\in \End(H)$. Indeed,
$$T (ta)_1=Tt_1a_1=Tt_2a_1=Ta_1t_2=Tb_1t_2=Tt_2b_1=Tt_1b_1=T(tb)_1
$$ and similarly, $T(ta)_3=T(tb)_3$.


We call  an invertible endomorphism $t$ of $H$ {\it unitary}  if
$t^*=t^{-1}$. More generally, an invertible endomorphism $t$ of $H$
is  {\it $T$-unitary} if $t^*\teq t^{-1}$. For a $T$-unitary $t\in
\End(H)$, Equations \eqref{E:q1=q2} simplify to $Tt_r t_s=T$ for all
$r\in \{1,2\}$ and $s \in \{3,4\}$.

 \subsection{$T$-commutation relations} We now show that   $T$-scalars
 $T$-commute with every   product of an even number of operators
 $A,B, A^*, B^*$.
  To give a more precise statement, we define a group $F$
by the presentation
   \begin{equation}\label{E:groupF}
   {F}=\langle a,b,a^*,b^*\,\vert \, a^2=b^2=(a^*)^2=(b^*)^2=1\rangle .
   \end{equation}
Consider the group homomorphism $ F\to \mathbb{Z}/2\mathbb{Z}$
carrying the generators $a, b, a^*, b^*$ to $1\, ({\text {mod}} \,
2)$. Elements of ${F}$ belonging to the kernel of this homomorphism
are said to be {\it even}; all other elements  of $F$ are said to be
{\it odd}. In other words, an element of $F$ is even if it expands
as a product of an even number of generators and odd otherwise. The
group ${F}$ acts
   on $ H $ by   $a\mapsto A$, $b \mapsto B $, $a^* \mapsto A^*  $, $b^* \mapsto
   B^*$. The endomorphism of
   $H$ determined by $g\in F$ is denoted    ${\underline{g}}$.

   \begin{lemma}\label{L:xg=x^gg}
   For any $T$-scalar $t\in \End (H)$ any $g\in{F}$, we have
   $
   {\underline{g}}t\teq  {t} {\underline{g}} $ if $g$ is even and $
   {\underline{g}}t\teq  {t}^* {\underline{g}} $ if $g$ is odd.
\end{lemma}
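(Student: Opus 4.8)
The plan is to reduce the statement to a single ``elementary move'' and then induct on the word length of $g$. First I would check the base cases: for $g=1$ the claim is the tautology $t\teq t$ (with $1$ even), and for each of the four generators $g\in\{a,b,a^*,b^*\}$ (all odd) I must verify that $\underline g\, t\teq t^*\,\underline g$. Take $g=a$, so $\underline g=\Aop$. I want $T(\Aop t)_i=T(t^*\Aop)_i$ for $i\in\{1,2\}$ and for $i\in\{3,4\}$, which by the Corollary after Lemma~\ref{L:1234} suffices. The tool here is the Fundamental Lemma~\ref{Fundamental lemma}, specifically \eqref{E:sym01}, \eqref{E:sym12}, \eqref{E:sym23} together with their derived consequences \eqref{actionof12}, \eqref{actionof34} and the relations \eqref{E:tp431}--\eqref{E:tp124+}: each expresses $TP_\sigma$ as $T$ composed with operators $X_kY_lZ_m$ placed in the three strands complementary to some fixed strand $i$. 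The key computation is that for a $T$-scalar $t$ we have $Tt_1=Tt_2$ and $Tt_1=Tt_3^*$ etc.\ by definition \eqref{E:q1=q2}; combining this with an identity of the form $TP_\sigma=TX_kY_lZ_m$ and sliding $t$ past (using commutativity of $f_i$ and $g_j$ on disjoint strands) lets me move a $t$ acting on strand $i$ over to strand $\sigma(i)$ at the cost of possibly replacing $t$ by $t^*$ when $\{i,\sigma(i)\}$ crosses from $\{1,2\}$ to $\{3,4\}$ — which is exactly what happens when a single generator (an ``odd'' element) is applied, since each of \eqref{E:tp431}--\eqref{E:tp124+} pairs a strand in $\{1,2\}$ with one in $\{3,4\}$.

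Concretely, for the generator $a$: since $Tt_1=Tt_2$ (and $t$ has a transpose), for any $i$
\[
T(\Aop t)_i = T\Aop_i\, t_i .
\]
Now I use one of the relations of the form $TP_\sigma = T\Aop_i\,Y_kZ_l$ read in reverse — more precisely the relations \eqref{E:exch++} (or the exchange relations of Lemma~\ref{L:ab}, and their starred analogues \eqref{E:exch++bis}) together with the Fundamental Lemma — to replace $T\Aop_i$ by $T$ composed with a permutation and operators on the other strands, then push $t$ (now on a strand $j=\sigma(i)$ that lies in the opposite half) back across, picking up the transpose via the second half of \eqref{E:q1=q2}. The upshot is $T(\Aop t)_i = T(t^*\Aop)_i$ for one $i$ in each of the two pairs $\{1,2\}$, $\{3,4\}$, which is enough. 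The same argument with \eqref{E:sym12}/\eqref{E:sym23} (and the derived \eqref{E:tp124}, \eqref{E:tp124+}) handles $b$, and the versions with $A^*$, $B^*$ in place of $A$, $B$ handle $a^*,b^*$.

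For the inductive step, write $g=g'h$ where $h$ is a generator and $g'$ is shorter. Apply the inductive hypothesis to $g'$ and the $T$-scalar $t$ (if $g'$ is even) or to $g'$ and the $T$-scalar $t^*$ (if $g'$ is odd; recall from Section~4.2 that $t^*$ is again a $T$-scalar): in either case I get $\underline{g'}\,s \teq \epsilon(g')\text{-conjugate of }s\cdot\underline{g'}$, and then apply the generator case to move $t$ (or $t^*$) past $\underline h$. I must track parities: $\epsilon(g)=\epsilon(g')+\epsilon(h)\pmod 2$, and the transpose gets toggled precisely when an odd generator is crossed, so the total number of toggles has the parity of $\epsilon(g)$ — giving $t$ back when $g$ is even and $t^*$ when $g$ is odd. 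One also needs the elementary facts from Section~4.2 that $a\teq b\Rightarrow ac\teq bc$, that $t$ a $T$-scalar implies $a\teq b\Rightarrow ta\teq tb$, and that $T$-equality is transitive, to chain the moves; these are all stated in the excerpt.

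The main obstacle is the generator case: verifying $\underline h\,t\teq t^*\underline h$ for $h\in\{a,b,a^*,b^*\}$ requires carefully matching up which strand-pair each symmetry relation \eqref{E:sym01}--\eqref{E:sym23}, \eqref{actionof12}, \eqref{actionof34}, \eqref{E:tp431}--\eqref{E:tp124+} connects, and checking that for each generator there is a relation connecting a strand in $\{1,2\}$ to a strand in $\{3,4\}$ with exactly one copy of the relevant operator $\Aop$, $\Bop$, $\Aop^*$ or $\Bop^*$ appearing on the moved strand and its image. Once the bookkeeping of strand-labels and parities is set up, the induction is formal; I would present the generator case in full detail and then state the induction compactly.
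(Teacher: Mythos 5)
Your overall strategy---check the four generators as a base case and then induct on word length in $F$---matches the paper's, which states the induction as closure of the set $\Delta^1\cap\Delta^3=\{g\in F\mid T(\underline{g}t)_i=T(\act{t}{g}\underline{g})_i,\ i=1,3\}$ under left multiplication by generators. Your base case uses the same inputs (the identities \eqref{E:tp431}--\eqref{E:tp124+}, the $T$-scalar axioms \eqref{E:q1=q2}, and the Corollary to Lemma~\ref{L:1234}), though your sketch is loose at a key point: one cannot manipulate $T\Aop_i$ in isolation, because each of \eqref{E:tp431}--\eqref{E:tp124+} has the form $TP_\sigma=TX_kY_lZ_m$ with \emph{three} operators on three strands, so the factors $Y_l,Z_m$ must be inverted, slid past, and reattached; this bookkeeping is exactly where the transposes appear.

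The genuine gap is in your inductive step. You decompose $g=g'h$ with $h$ a generator and propose to chain the induction hypothesis $\underline{g'}s\teq\act{s}{g'}\underline{g'}$ with the base case $\underline{h}t\teq t^*\underline{h}$ using right-multiplication, left-multiplication by $T$-scalars, and transitivity. No such chain closes: the unavoidable intermediate step $\underline{g'}(\underline{h}t)\teq\underline{g'}(t^*\underline{h})$ requires $T$-equality to be preserved under left multiplication by $\underline{g'}$, and $\underline{g'}$ is an arbitrary word in $A,B,A^*,B^*$---not a $T$-scalar---so none of the three facts you cite applies, and in fact $T$-equality is not invariant under left multiplication by arbitrary operators. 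The paper avoids this by never left-multiplying a $T$-equality: it manipulates the scalar expressions $T(\underline{cg}t)_k$ directly, using $TX_k=TP_\sigma(Y_lZ_m)^{-1}$ to pull $X_k=\underline{c}_k$ off, sliding $(\underline{g}t)_k$ across $P_\sigma$ to strand $\sigma(k)$, invoking the hypothesis at $\sigma(k)$, and sliding back---the transposes being tracked via \eqref{E:q1=q2} at each strand change. This also forces the decomposition $g\mapsto cg$ with the generator on the \emph{left} (matching the outer position of $X_k$ in the relation); your $g=g'h$ places the generator on the wrong side. To make your induction rigorous you would essentially have to reproduce the paper's strand-by-strand computation inside the inductive step; the elementary algebra of $T$-equality alone is insufficient.
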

\begin{proof} Fix a $T$-scalar $t\in \End (H)$. For $g\in F$, set
  $\act{t}{g}=t$  if $g$ is  even    and  $\act{t}{g}=t^*$   if $g$ is
  odd. We need to prove that $
   {\underline{g}}t\teq  \act{t}{g} {\underline{g}} $ for all $g\in {F}$.

For   $i=1,2,3,4$, set
\[
 \Delta^i=\{g\in  {F}\, \vert\ T({\underline{g}}t)_i=T(\act{t}{g}{\underline{g}})_i\} \subset F .
\]
By the previous lemma, $\Delta^1=\Delta^2$ and $\Delta^3=\Delta^4$.
We claim that for any generator $c\in \{a,b,a^*, b^*\}$, we have $
c\, \Delta^1\subset \Delta^3 $ and $ c\, \Delta^3\subset \Delta^1$.
This will imply that the set $ \Delta^1 \cap \Delta^3\subset {F}$ is
closed under left multiplication by the generators of $ {F}$. Since
this set contains the neutral element of $F$,   we have $ \Delta^1
\cap \Delta^3 = {F}$. In other words, $\Delta^i=F$ for all
$i=1,2,3,4$. This means that $
   {\underline{g}}t\teq  \act{t}{g} {\underline{g}} $ for all $g\in {F}$.

  To prove our claim, consider  again Equality~\eqref{E:gen-str}.   Pick any    $g\in
\Delta^{\sigma (k)}$. Then
\[
T(X {\underline{g}} t)_k=TP_\sigma (X_kY_lZ_m)^{-1}(X {\underline{g}} t)_k=TP_\sigma
(Y_lZ_m)^{-1}({\underline{g}} t)_k \] \[ =T({\underline{g}} t)_{\sigma(k)}P_\sigma (Y_lZ_m)^{-1}
  = T(\act{t}{g} {\underline{g}})_{\sigma(k)}P_\sigma (Y_lZ_m)^{-1} =
  T t^{g}_{\sigma(k)} {\underline{g}}_{\sigma(k)}P_\sigma (Y_lZ_m)^{-1}, \] where
the fourth equality follows from the inclusion $g\in \Delta^{\sigma
(k)}$. Since $t\in \End  H $ is a $T$-scalar, $Tt^{g}_{\sigma(k)}=
Tt'_{\sigma(i)}$ for some $t' \in\{t,t^*\}$. Then
\[Tt^{g}_{\sigma(k)} {\underline{g}}_{\sigma(k)}P_\sigma (Y_lZ_m)^{-1} =Tt'_{\sigma(i)} {\underline{g}}_{\sigma(k)}P_\sigma (Y_lZ_m)^{-1}\\=TP_\sigma
(Y_lZ_m)^{-1}t'_{i}{\underline{g}}_{k}=Tt'_{i}(X{\underline{g}})_{k}
\] where the last equality follows from~\eqref{E:gen-str}. Since $t$ is a $T$-scalar, $Tt'_{i}  =Tt''_{k}$ for some $
t''\in\{t,t^*\}$. Recall that $X= {\underline{x}}$ for some $x \in
\{a, a^*, b, b^*, ab, ba\}$. A case by case analysis shows that
$t''=\act{t}{x g}$. Combining the formulas above, we obtain that
$T({\underline{xg}}     t)_k=T(X {\underline{g}} t)_k=T( \act{t}{x
g} {\underline{x g}} )_k$, i.e., $xg\in \Delta^{ k }$. Thus, $x
\Delta^{\sigma(k)}\subset \Delta^{ k }$. Applying this inclusion to
all forms \eqref{E:tp431}--\eqref{E:tp124+} of~\eqref{E:gen-str} and
to all possible choices of $k$, we obtain $ c\, \Delta^1\subset
\Delta^3 $ and $ c\, \Delta^3\subset \Delta^1$ for all $c\in
\{a,b,a^*, b^*\}$.
\end{proof}


\section{Further properties of $T$}\label{section6}

  To study   the form $T$ we introduce the operators $$L=A^*A ,
\,\,  R=B^*B, \,\, C=(AB)^3 \in \End(H).$$   We shall study
  these operators and   show that the commutator of $L$
  and $R$ is a $T$-scalar. Though this fact will not be directly
  used in the sequel, the properties of the operators $L$, $R$, $C$
  will lead us    in the next section to a notion of a $\spsi$-system.

  An operator $f\in\End (H)$ such
that $f^*=f$   is called \emph{symmetric}. An operator $f\in\End
(H)$ such that $f(H^{ij}_k) \subset H^{ij}_k$ and
$f(H_{ij}^k)\subset H_{ij}^k$ for all $i,j,k\in I$  is called
\emph{grading-preserving}.

\begin{lemma}\label{L:propAB}
The operators $L$, $R$, $C$ are     invertible, symmetric, and
grading-preser\-ving. They satisfy the following identities:
\begin{equation}\label{identity1}
ACA=BCB=C^{-1},
\end{equation}
\begin{equation}\label{identity2}
LCL^{-1}=RCR^{-1}=C,
\end{equation}
\begin{equation}\label{identity3}
ALA=L^{-1},\quad BRB=R^{-1},
\end{equation}
\begin{equation}\label{E:ara}
ARA=L^{-1}RC^{-1},\quad BLB=R^{-1}LC.
\end{equation}
\end{lemma}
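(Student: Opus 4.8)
The plan is to do everything inside the subgroup of $\End(H)$ generated by $A,B,A^*,B^*$, using only three inputs. First, the involutivity $A^2=B^2=1$, which by transposition (and $(fg)^*=g^*f^*$, $(f^*)^*=f$) also gives $(A^*)^2=(B^*)^2=1$. Second, the braid relation $A^*B^*A^*=BAB$ of~\eqref{E:aba*=bab}, whose transpose reads $ABA=B^*A^*B^*$. Third, only for the grading statement, the inclusions~\eqref{E:exch++} and~\eqref{E:exch++bis}. With these in hand, every assertion of the lemma except~\eqref{E:ara} and the symmetry of $C$ reduces to short word manipulations.

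\textbf{Invertibility, symmetry, grading-preservation.} Since $A,B,A^*,B^*$ each square to $1$ they are invertible, so $L^{-1}=AA^*$, $R^{-1}=BB^*$, and $C^{-1}=(BA)^3$ (using $(AB)(BA)=1$). Symmetry of $L,R$ is immediate: $L^*=(A^*A)^*=A^*A=L$ and likewise $R^*=R$. For $C$ I would compute $C^*=\big((AB)^3\big)^*=(B^*A^*)^3$ and rewrite, via both forms of the braid relation, $(B^*A^*)^3=(B^*A^*B^*)(A^*B^*A^*)=(ABA)(BAB)=(AB)^3=C$. Grading-preservation follows by chasing the inclusions: $A^*A(H^{ij}_k)\subset A^*(H^{j}_{i^*k})\subset H^{ij}_k$ and $A^*A(H_{ij}^k)\subset A^*(H^{i^*k}_j)\subset H_{ij}^k$, so $L$ is grading-preserving, similarly $R$ with $B$, and for $C$ one follows a single multiplicity space around the cycle $H^{ij}_k\xrightarrow{AB}H^{k^*i}_{j^*}\xrightarrow{AB}H^{jk^*}_{i^*}\xrightarrow{AB}H^{ij}_k$ together with the parallel cycle on $\hat H$.

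\textbf{The identities~\eqref{identity1}--\eqref{identity3}.} Relation~\eqref{identity1} is cancellation: $ACA=A(ABABAB)A=BABABA=(BA)^3=C^{-1}$, and symmetrically $BCB=C^{-1}$. Transposing~\eqref{identity1} and using $C^*=C$ yields the companions $A^*CA^*=B^*CB^*=C^{-1}$, equivalently $A^*C^{-1}A^*=B^*C^{-1}B^*=C$. Then~\eqref{identity2} reads $LCL^{-1}=(A^*A)C(AA^*)=A^*(ACA)A^*=A^*C^{-1}A^*=C$, and the same computation with $B$ gives $RCR^{-1}=C$. For~\eqref{identity3}, $ALA=A(A^*A)A=AA^*=L^{-1}$ and $BRB=BB^*=R^{-1}$ by involutivity alone.

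\textbf{The main identity~\eqref{E:ara}.} For the first equality, cancelling the outer $A$'s reduces $ARA=L^{-1}RC^{-1}$ to the claim $A^*B^*ABAB=R$; I would verify this by locating the subword $ABA$, replacing it with $B^*A^*B^*$, and collapsing: $A^*B^*(ABA)B=A^*B^*(B^*A^*B^*)B=A^*(B^*)^2A^*(B^*B)=(A^*)^2B^*B=B^*B=R$. For the second equality, cancelling the outer $B$'s reduces $BLB=R^{-1}LC$ to $B^*A^*BABA=L$, which collapses the same way after substituting $BAB=A^*B^*A^*$: $B^*A^*(BAB)A=B^*(A^*)^2B^*(A^*A)=(B^*)^2A^*A=A^*A=L$. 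I expect no conceptual obstacle here once~\eqref{E:aba*=bab} is granted; the only points that require genuine care are keeping the involution $i\mapsto i^*$ straight through the three applications of $AB$ in the grading-preservation of $C$, and choosing in~\eqref{E:ara} the right occurrence of the subword $ABA$ (resp.\ $BAB$) to rewrite so that the remaining letters cancel against the involutivity relations.
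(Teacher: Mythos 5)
Your proof is correct and follows essentially the same route as the paper's: involutivity of $A,B,A^*,B^*$, the braid relation $A^*B^*A^*=BAB$ and its transpose, and the exchange inclusions for grading, with all identities reduced to short word manipulations. The only cosmetic differences are in the packaging of the symmetry of $C$ (the paper writes $C=(ABA)(ABA)^*$ whereas you compute $C^*$ directly) and of~\eqref{E:ara} (the paper simplifies $L^{-1}RC^{-1}$ down to $ARA$, you first conjugate by $A$), but the key substitution via the braid relation is identical.
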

\begin{proof} That $L, R,
C$ are invertible follows from the fact that $A$ and $B$ are
invertible. The inverses of these operators are computed by
$L^{-1}=AA^*$, $R^{-1}=BB^*$, and $C^{-1}=(BA)^3$. The operators $L$
and $R$ are manifestly symmetric. We have
\[C=(AB)^3= ABA BAB =(ABA) (ABA)^*.
\]
Therefore $C^*=C$. That $L, R, C$ are grading-preserving follows
  from \eqref{E:exch++} and \eqref{E:exch++bis}.

The  identities \eqref{identity1}--\eqref{E:ara} are checked as
follows:
\[
ACA=A(AB)^3A=A^2(BA)^3=(BA)^3=C^{-1},
\]
and similarly for $BCB$;
\[
LCL^{-1}=A^*ACAA^*=A^*C^{-1}A^*=(AC^{-1}A)^*=C^*=C,
\]
and similarly for $RCR^{-1}$;
\[
ALA=AA^*AA=AA^*=L^{-1},
\]
and similarly for $BRB$;
\[
L^{-1}RC^{-1}=AA^*B^*A(BA)^2=AA^*B^*(BAB)^*BA=AB^*BA=ARA,
\]
and similarly for $R^{-1}LC$.
\end{proof}

\begin{lemma}\label{L:semidir}
The following identities hold:
\begin{subequations}\label{E:symsp}
\begin{align}
\label{E:c1c2=c3c4}
TC_1C_2&=TC_3C_4,\\
\label{E:r1l2t=r3l4t}
T\Rop_1\Lop_2&=T\Rop_3\Lop_4,\\
\label{E:r1r2t=c3r4t}
T\Rop_1\Rop_2&=TC_3\Rop_4,\\
\label{E:l1c2-1t=l3l4t}
T\Lop_1&=TC_2\Lop_3\Lop_4,
\end{align}
\end{subequations}
\end{lemma}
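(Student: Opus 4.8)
The plan is to prove all four identities by one and the same device. For each of them we select a permutation $\sigma$ of the four tensor factors for which a symmetry relation $TP_\sigma=TW$ or $TP_\sigma=\bar TW$ is available — $W$ being a word in the leg operators $A_i,B_i,A_i^*,B_i^*$ — either from the Fundamental Lemma~\ref{Fundamental lemma}, from \eqref{actionof12}--\eqref{actionof34}, or from \eqref{E:tp431}--\eqref{E:tp124+}. We then iterate this relation $m$ times, $m$ the order of $\sigma$, so that $P_\sigma^m=1$. Commuting the permutations to the left via $P_\sigma f_i=f_{\sigma(i)}P_\sigma$ and regrouping by tensor factor via $f_ig_j=g_jf_i$ ($i\neq j$), the identity $T=TP_\sigma^m$ becomes $T=T\cdot(\text{a word on each factor})$; each such word then collapses to a monomial in $L,R,C$ by Lemma~\ref{L:propAB} together with $A^*=LA$, $B^*=RB$ and $(AB)^3=C$ (whence $(AB)^2=CBA$ and $(BA)^2=C^{-1}AB$).

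Iterating \eqref{E:tp431+}, $TP_{(134)}=TA_1(AB)_2A_4^*$, three times gives $T=TL_1^{-1}C_2L_3L_4$: on the first factor one reads off $AA^*=L^{-1}$, on the second $(AB)^3=C$, on the third and fourth $A^*A=L$. This rearranges to \eqref{E:l1c2-1t=l3l4t}. In exactly the same way, iterating \eqref{E:tp124}, $TP_{(124)}=TB_2(AB)_3B_4^*$, three times gives $T=TR_1^{-1}R_2^{-1}C_3R_4$, i.e. \eqref{E:r1r2t=c3r4t}.

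For \eqref{E:c1c2=c3c4} the relation we need is not among those already displayed, so we first produce it: using $P_{(234)}=P_{(23)}P_{(34)}$, \eqref{E:sym12}, and the $\bar T$-version of \eqref{actionof34} (obtained by the substitutions $T\leftrightarrow\bar T$, $A\leftrightarrow B$), one finds $TP_{(234)}=\bar TP_{(34)}A_2B_4=T(AB)_1^*A_2(BA)_3(BAB)_4$. Iterating this three times and collapsing the four factors with \eqref{identity1} — factor $1$: $((AB)^*)^3=C$; factor $2$: $AC^{-1}A=C$; factor $3$: $BAC^{-1}AB=B(AC^{-1}A)B=BCB=C^{-1}$; factor $4$: $(BA)^3=C^{-1}$ — gives $T=TC_1C_2C_3^{-1}C_4^{-1}$, which is \eqref{E:c1c2=c3c4}. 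Finally, for \eqref{E:r1l2t=r3l4t} we iterate the $4$-cycle relation \eqref{E:sym23}, $TP_{(1234)}=\bar TB_2B_4^*$, together with its mirror $\bar TP_{(1234)}=TA_2A_4^*$, four times; the steps alternate between $T$ and $\bar T$ and so return to $T$, and the collapse produces $T=TR_1^{-1}L_2^{-1}R_3L_4$, which rearranges to \eqref{E:r1l2t=r3l4t}.

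The argument is entirely mechanical once these four relations are in hand. The only point requiring care — and the main practical obstacle — is the bookkeeping in the collapsing step: tracking, under the repeated exchange moves, which shifted operator ends up on which tensor factor, and then reducing the resulting short word in $A,B,A^*,B^*,C$ to the asserted monomial in $L,R,C$ via Lemma~\ref{L:propAB}. There is no conceptual difficulty.
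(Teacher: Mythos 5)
Your proof is correct, and the route is genuinely different from the paper's, though the spirit (exploiting relations in $\mathbb{S}_4$ that express a power of some $P_\sigma$ as the identity) is the same. The paper starts from the Coxeter-type relations $(P_{(12)}P_{(23)})^3=1$, $P_{(12)}P_{(23)}P_{(34)}P_{(4321)}=1$, $P_{(34)}P_{(23)}P_{(12)}P_{(1234)}=1$ and $(P_{(12)}P_{(34)})^2=1$, derives \eqref{E:c1c2=c3c4} first, and then obtains the remaining three by combining each new relation with \eqref{E:c1c2=c3c4} or its consequences. You instead iterate the single-cycle relations $P_{(134)}^3=1$, $P_{(124)}^3=1$, $P_{(234)}^3=1$ and $P_{(1234)}^4=1$; each of the four identities is thereby obtained independently of the others, which is cleaner conceptually (no dependency chain) at the modest cost of first deriving one extra symmetry formula for $P_{(234)}$, which you correctly assemble from \eqref{E:sym12} and the $\bar T$-version of \eqref{actionof34}. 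I checked the four iterations and they all collapse to the right monomials: $P_{(134)}$ gives $T=TL_1^{-1}C_2L_3L_4$, $P_{(124)}$ gives $T=TR_1^{-1}R_2^{-1}C_3R_4$, $P_{(234)}$ gives $T=TC_1C_2C_3^{-1}C_4^{-1}$, and $P_{(1234)}$ (alternating $T$ and $\bar T$) gives $T=TR_1^{-1}L_2^{-1}R_3L_4$. One small slip in your exposition: in the $(234)$ iteration, the accumulated word on the third tensor factor is $BA\cdot BAB\cdot A=(BA)^3$, not $BA\cdot C^{-1}\cdot AB$ as you wrote; both equal $C^{-1}$, so the conclusion stands, but the intermediate identity you cite isn't the one actually arising from the shift bookkeeping. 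You flagged this sort of bookkeeping as the practical hazard, and this is precisely where it bites; nothing else is affected.
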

\begin{proof} The   proof is based on a study of the   action of the standard generators of~$\mathbb{S}_4$ on~$T$ using the formulas of
 Section~\ref{TheformsTand}. We have
\[
TP_{(12)}P_{(23)}=\bar TP_{(23)}(\Bop\Aop)_1(\Bop\Aop)_3(\Aop\Bop)_4^*=
T(\Bop\Aop)_1\Bop_2(\Aop\Bop\Aop)_3(\Aop\Bop)_4^*.
\]
The  Coxeter relation  $(P_{(12)}P_{(23)})^3=1$  yields
Equation~\eqref{E:c1c2=c3c4}:
\begin{multline*}
T=T(P_{(12)}P_{(23)})^3=
T(P_{(12)}P_{(23)})^2(\Bop\Aop)_2\Bop_3(\Aop\Bop\Aop)_1(\Aop\Bop)_4^*\\=
TP_{(12)}P_{(23)}(\Bop\Aop\Bop)_3(\Aop(\Bop\Aop)^2)_2(\Bop\Aop)_1^2(\Aop\Bop)_4^{*2}\\=
T(\Bop\Aop)_1^3(\Bop\Aop)_2^3(\Aop\Bop)_3^3(\Aop\Bop)_4^{*3}=TC_1^{-1}C_2^{-1}C_3C_4.
\end{multline*}
The  identity $P_{(12)}P_{(23)}P_{(34)}P_{(4321)}=1$  implies that
\begin{multline*}
T=TP_{(12)}P_{(23)}P_{(34)}P_{(4321)}=\bar
TP_{(23)}P_{(34)}P_{(4321)}(\Bop\Aop)_2(\Bop\Aop)_1(\Aop\Bop)_4^*\\=
TP_{(34)}P_{(4321)}\Bop_3(\Bop\Aop)_2(\Aop\Bop\Aop)_1(\Aop\Bop)_4^*\\
=
\bar TP_{(4321)}(\Aop^*\Bop^*\Bop\Aop)_2\Bop_3((\Aop\Bop)^2\Aop)_1(\Aop\Bop\Bop^*\Aop^*)_4\\
=
T(\Bop^*(\Aop\Bop)^2\Aop)_1(\Aop^*\Bop^*\Bop\Aop)_2(\Aop\Bop\Bop^*\Aop^*)_4=
T(\Rop C^{-1})_1(\Rop C^{-1})_2( \Rop^{-1}C)_4
\end{multline*}
This and \eqref{E:c1c2=c3c4} yields~\eqref{E:r1r2t=c3r4t}. The
identity  $P_{(34)}P_{(23)}P_{(12)}P_{(1234)}=1$ implies that
\begin{multline*}
T=TP_{(34)}P_{(23)}P_{(12)}P_{(1234)}= \bar
TP_{(23)}P_{(12)}P_{(1234)}(\Bop\Aop)_1^*(\Aop\Bop)_4(\Aop\Bop)_3\\=
TP_{(12)}P_{(1234)}\Aop_2(\Bop\Aop)_1^*(\Bop\Aop\Bop)_4(\Aop\Bop)_3\\=
\bar TP_{(1234)}(\Bop\Aop\Aop^*\Bop^*)_1
\Aop_2((\Bop\Aop)^2\Bop)_4(\Bop^*\Aop^*\Aop\Bop)_3\\=
T(\Bop\Aop\Aop^*\Bop^*)_1(\Bop^*\Aop^*\Aop\Bop)_3
(\Aop^*(\Bop\Aop)^2\Bop)_4= T(\Lop^{-1}C^{-1})_1(\Lop C)_3(\Lop
C)_4.
\end{multline*}
This and \eqref{E:c1c2=c3c4} yields~\eqref{E:l1c2-1t=l3l4t}.
Finally,
\begin{multline*}
TP_{(12)}P_{(34)}=\bar
TP_{(34)}(\Bop\Aop)_1(\Bop\Aop)_2(\Aop\Bop)_3^*\\=
T(\Bop^*\Aop^*\Bop\Aop)_1(\Bop\Aop)_2(\Bop\Aop\Bop^*\Aop^*)_3(\Bop\Aop)_4.
\end{multline*}  The relation $(P_{(12)}P_{(34)})^2=1$ gives
\begin{multline*}
T=T(P_{(12)}P_{(34)})^2=
TP_{(12)}P_{(34)}(\Bop^*\Aop^*\Bop\Aop)_2(\Bop\Aop)_1(\Bop\Aop\Bop^*\Aop^*)_4(\Bop\Aop)_3\\=
T(\Bop^*\Aop^*(\Bop\Aop)^2)_1(\Bop\Aop\Bop^*\Aop^*\Bop\Aop)_2(\Bop\Aop\Bop^*\Aop^*\Bop\Aop)_3
((\Bop\Aop)^2\Bop^*\Aop^*)_4\\=
T(\Aop^*\Aop)_1(\Bop\Aop\Aop^*\Bop)_2(\Bop\Aop\Aop^*\Bop)_3
(\Bop\Bop^*)_4=T\Lop_1(C^{-1}\Lop^{-1}\Rop)_2(C^{-1}\Lop^{-1}\Rop)_3\Rop_4^{-1}.
\end{multline*} This formula can be rewritten in the
following equivalent form
\[
T(\Rop^{-1}\Lop)_2C_3\Rop_4=T\Lop_1C^{-1}_2(\Lop^{-1}\Rop)_3
\]
which reduces to~\eqref{E:r1l2t=r3l4t} after using
\eqref{E:r1r2t=c3r4t} on the left hand side, and
\eqref{E:l1c2-1t=l3l4t} on the right hand side.
\end{proof}

\begin{lemma} \label{L:[lr]}   $\Qop=\Lop\Rop\Lop^{-1}\Rop^{-1}\in \End(H)$ is
a $T$-unitary $T$-scalar.
\end{lemma}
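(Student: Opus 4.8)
The plan is to separate the two assertions: first that $Q$ is a $T$-scalar, then that it is $T$-unitary. Since $L=A^*A$ and $R=B^*B$ are symmetric by Lemma~\ref{L:propAB}, the transpose of $Q$ exists and is
\[
Q^*=R^{-1}L^{-1}RL,
\]
which is again a commutator, this time of $R^{-1}$ and $L^{-1}$. So the point is to check~\eqref{E:q1=q2} for $t=Q$ and to check $Q^*\teq Q^{-1}$.

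For the $T$-scalar property I would use Lemma~\ref{L:semidir}. Isolating on the left the slot-$1$ operators in \eqref{E:r1r2t=c3r4t}, \eqref{E:l1c2-1t=l3l4t} and \eqref{E:r1l2t=r3l4t} and passing to inverses produces ``substitution rules''
\[
TL_1^{\pm1}=TC_2^{\pm1}L_3^{\pm1}L_4^{\pm1},\qquad
TR_1^{\pm1}=TR_2^{\mp1}C_3^{\pm1}R_4^{\pm1}=TL_2^{\mp1}R_3^{\pm1}L_4^{\pm1},
\]
together with the analogous rules isolating the slot-$4$ operators. Applying these rules repeatedly to $TQ_1=TL_1R_1L_1^{-1}R_1^{-1}$, and using only that $C$ commutes with $L$ and with $R$ (Lemma~\ref{L:propAB}), all the slot-$1$ operators get pushed out and cancel, the slot-$2$ and slot-$3$ operators cancel in pairs (this is where the fact that $Q$ is an exact commutator is used), and the surviving term is $Q_3^*$ or $Q_4^*$ according to which of the two rules for $R_1$ was applied; hence $TQ_1=TQ_3^*=TQ_4^*$. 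The parallel computation starting from $TQ_4$ gives $TQ_4=TQ_1^*=TQ_2^*$. Carrying out the same two computations with $Q^*$ in place of $Q$ (it, too, is a commutator) and using $(Q^*)^*=Q$ yields $TQ_1^*=TQ_3=TQ_4$ and $TQ_4^*=TQ_1=TQ_2$. Assembling these gives $TQ_1=TQ_2=TQ_3^*=TQ_4^*$ and $TQ_1^*=TQ_2^*=TQ_3=TQ_4$, which is precisely~\eqref{E:q1=q2}; thus $Q$ is a $T$-scalar, and hence so is $Q^{-1}$.

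To prove $T$-unitarity, observe that $a$ is an odd element of the group $F$ of~\eqref{E:groupF}, so Lemma~\ref{L:xg=x^gg} applied to the $T$-scalar $Q$ gives $AQ\teq Q^*A$; multiplying on the right by $A$ and using $A^2=\Id$ (Lemma~\ref{L:ab}) gives $AQA\teq Q^*$. Independently, inserting $A^2=\Id$ between the factors of $Q$ and using $ALA=L^{-1}$, $ARA=L^{-1}RC^{-1}$ (Lemma~\ref{L:propAB}) together with $LC=CL$, one gets
\[
AQA=(ALA)(ARA)(AL^{-1}A)(AR^{-1}A)=L^{-2}RC^{-1}LCR^{-1}L=L^{-2}RLR^{-1}L=L^{-2}Q^{-1}L^2 .
\]
Since $Q^{-1}$ is a $T$-scalar and $L^{-2}=(AA^*)^2=\underline{(aa^*)^2}$ is the image of an even element of $F$, Lemma~\ref{L:xg=x^gg} gives $L^{-2}Q^{-1}\teq Q^{-1}L^{-2}$, hence $L^{-2}Q^{-1}L^2\teq Q^{-1}$. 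Combining, $Q^*\teq L^{-2}Q^{-1}L^2\teq Q^{-1}$, so $Q$ is $T$-unitary.

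The main obstacle is the telescoping computation of $TQ_1$ and $TQ_4$ in the second paragraph: one must track carefully into which tensor slot each operator produced by the substitution rules lands, and verify that the slot-$2$ and slot-$3$ contributions really collapse to $\Id$. The bookkeeping is delicate because $L$ and $R$ do not commute, but it closes because $C$ is central with respect to both $L$ and $R$ and $Q$ is a genuine commutator. Once $Q$ is known to be a $T$-scalar, the $T$-unitarity is a short formal consequence of Lemmas~\ref{L:xg=x^gg}, \ref{L:ab} and~\ref{L:propAB}.
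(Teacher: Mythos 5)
Your proof is correct, and the $T$-scalar part follows the paper's route: apply the identities from Lemma~\ref{L:semidir} as substitution rules in alternating order, so that the commutator structure of $Q$ and the centrality of $C$ cause everything to telescope, producing $TQ_1=TQ_3^*$, $TQ_1=TQ_4^*$, $TQ_2=TQ_4^*$, and their duals. (Your sketch correctly identifies why the bookkeeping closes, though the paper carries out one representative chain explicitly.) For $T$-unitarity your route differs slightly from the paper's. You first invoke Lemma~\ref{L:xg=x^gg} with $g=a$ to get $Q^*\teq AQA$, then compute $AQA=L^{-2}Q^{-1}L^2$ algebraically using the identities of Lemma~\ref{L:propAB}, and finally apply Lemma~\ref{L:xg=x^gg} a second time with the even element $(aa^*)^2$ to conclude $L^{-2}Q^{-1}L^2\teq Q^{-1}$. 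The paper instead writes $Q^{-1}=RLR^{-1}L^{-1}=LR\,Q^*\,R^{-1}L^{-1}$ directly and applies Lemma~\ref{L:xg=x^gg} once to the $T$-scalar $Q^*$, $T$-commuting it past $LR=\underline{a^*ab^*b}$. Both derivations are short formal consequences of the same commutation lemma; the paper's is a single step whereas yours passes through an explicit conjugation by $A$, at the cost of one extra application of the lemma and a computation from Lemma~\ref{L:propAB}, but with the small advantage that the identity $AQA=L^{-2}Q^{-1}L^2$ makes the geometric role of $A$ visible.
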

\begin{proof}
Applying consecutively~\eqref{E:l1c2-1t=l3l4t} and
\eqref{E:r1l2t=r3l4t} in alternating order, we obtain
\begin{multline*}
T\Qop_1=T(\Rop\Lop^{-1}\Rop^{-1})_1C_2\Lop_3\Lop_4=T(\Lop^{-1}\Rop^{-1})_1(\Lop^{-1}C)_2(\Rop\Lop)_3
\Lop_4^2\\
=T\Rop^{-1}_1\Lop^{-1}_2(\Lop^{-1}\Rop\Lop)_3\Lop_4=T\Rop^{-1}_3
(\Lop^{-1}\Rop\Lop)_3=T\Qop^*_3.
\end{multline*}
Similar   transformations   using~\eqref{E:l1c2-1t=l3l4t} and
\eqref{E:r1r2t=c3r4t} yield \( T\Qop_1=T\Qop^*_4  \). Analogously,
using~\eqref{E:r1l2t=r3l4t} and \eqref{E:r1r2t=c3r4t}, we obtain \(
T\Qop_2=T\Qop^*_4. \) This verifies the first three of
Equalities~\eqref{E:q1=q2}. The other three equalities are checked
similarly.

Since $Q$ is a $T$-scalar, so is
$\Qop^{*}=\Rop^{-1}\Lop^{-1}\Rop\Lop$.  We have
\[
\Qop^{-1}=\Rop\Lop\Rop^{-1}\Lop^{-1}=\Lop\Rop \Qop^* \Rop^{-1}\Lop^{-1}  \teq \Qop^* \Lop\Rop \Rop^{-1}\Lop^{-1}  = \Qop^*,
\]
where the    $T$-equality follows from Lemma~\ref{L:xg=x^gg} applied
to the $T$-scalar $Q^*$.
\end{proof}

\begin{remark}  It is clear that  $Q$ is   grading-preserving.   For any $i,j,k,l,m,n\in I$, the restrictions of $Q$ to the
corresponding multiplicity spaces induce the endomorphisms $Q_1,
Q_2, Q_3, Q_4$ of the vector space~\eqref{ambientvectorspace}.
Lemma~\ref{L:[lr]} implies that for any $r\in \{1,2\}$ and  $s\in
\{3,4\}$, the composition $Q_rQ_s$ preserves the
$6j$-symbol~\eqref{positive6j-symbol}.
  \end{remark}


\section{$\spsi$-systems}\label{section7}
\subsection{The operators $\scop{}$, $\srop{}$, and $\slop{}$} Recall the   symmetric,
grading-preserving, invertible operators $  C, R, L \in \End(H)$.
Suppose that we have symmetric, grading-preserving, invertible
operators $\scop{}, \srop{}\in \End(H)$ such that
\begin{subequations}\label{E:srts}
\begin{align}
(\scop{})^2&=C,\quad   \Aop \scop{}\Aop=\Bop \scop{}\Bop= \scop{-},\\
(\srop{})^2&=\Rop,\quad
\Bop\srop{}\Bop=\srop{-},\quad\srop{}\scop{}=\scop{}\srop{}
\end{align}
\end{subequations}
where by definition $ \scop{-} =(\scop{})^{-1}$ and $ \srop{-}
=(\srop{})^{-1}$. Set
\begin{equation}\label{E:sqrtL}
\slop{}=\Bop\Aop \srop{-}  \Aop\Bop \in \End(H) \quad {\text {and}} \quad    \slop{-}= (\slop{})^{-1}=\Bop\Aop \srop{}  \Aop\Bop \in \End(H).
\end{equation}
The properties of $\slop{}$ are summarized in the following lemma.

\begin{lemma}\label{New1204}
The operator $\slop{}$ is symmetric, grading-preserving, and
\begin{equation}\label{lll++}
(\slop{})^2=\Lop, \quad
\Aop\slop{}\Aop=\slop{-},\quad
\slop{}\scop{}=\scop{}\slop{},
\end{equation}
\end{lemma}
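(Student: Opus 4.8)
The plan is to verify the three claims — symmetry, grading-preservation, and the three identities in~\eqref{lll++} — by systematic manipulation of the defining formula $\slop{}=\Bop\Aop\srop{-}\Aop\Bop$, using only the algebraic relations established so far: the involutivity $\Aop^2=\Bop^2=1$ (Lemma~\ref{L:ab}), the relation $\Aop^*\Bop^*\Aop^*=\Bop\Aop\Bop$, the grading-preservation and symmetry of $\scop{},\srop{}$, and the relations~\eqref{E:srts}. First I would record that $\slop{}$ is indeed invertible with $(\slop{})^{-1}=\Bop\Aop\srop{}\Aop\Bop$, which is immediate from $\Aop^2=\Bop^2=1$ and $(\srop{})(\srop{-})=1$; this is already asserted in~\eqref{E:sqrtL}.

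For grading-preservation, I would argue that $\srop{}$ is grading-preserving by hypothesis, and that $\Aop\Bop$ conjugates the grading in a controlled way: from the inclusions~\eqref{E:exch++} one sees that $\Bop\Aop$ carries $H^{ij}_k$ and $H_{ij}^k$ to fixed multiplicity spaces depending on $i,j,k$, and conjugating a grading-preserving operator by such a "graded permutation" again yields a grading-preserving operator. Concretely, for any $x$ in a multiplicity space, $\Aop\Bop\, x$ lies in a single multiplicity space, $\srop{-}$ preserves that space, and $\Bop\Aop$ returns us to the original one; hence $\slop{}$ is grading-preserving. For symmetry, I would compute $(\slop{})^* = (\Bop\Aop\srop{-}\Aop\Bop)^* = \Bop^*\Aop^*(\srop{-})^*\Aop^*\Bop^*$. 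Since $\srop{}$ is symmetric, so is $\srop{-}=(\srop{})^{-1}$, giving $(\srop{-})^*=\srop{-}$. The remaining task is to show $\Bop^*\Aop^*\,f\,\Aop^*\Bop^* = \Bop\Aop\,f\,\Aop\Bop$ for a grading-preserving $f$; this should follow because $\Bop^*\Aop^*$ and $\Bop\Aop$ act the same way up to the grading — more precisely, from the exchange relations~\eqref{E:exch} and~\eqref{E:exch++bis} together with~\eqref{E:exch++}, the operators $\Aop$ and $\Aop^*$ (and likewise $\Bop$, $\Bop^*$) permute the multiplicity spaces identically, so conjugating a grading-preserving operator by $\Aop\Bop$ or by $(\Aop\Bop)^* = \Bop^*\Aop^*$ gives the same result.

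For the three identities: $(\slop{})^2 = (\Bop\Aop\srop{-}\Aop\Bop)^2 = \Bop\Aop\srop{-}(\Aop\Bop\Bop\Aop)\srop{-}\Aop\Bop = \Bop\Aop(\srop{-})^2\Aop\Bop = \Bop\Aop\Rop^{-1}\Aop\Bop$ using $(\srop{})^2=\Rop$, and then $\Bop\Aop\Rop^{-1}\Aop\Bop = \Lop$ by the definition $\slop{}=\Bop\Aop\srop{-}\Aop\Bop$ squared matched against $L=A^*A$; more directly, one checks $\Bop\Aop\Rop^{-1}\Aop\Bop = \Lop$ from $\Rop^{-1}=\Bop\Bop^*$ and the relation $\Aop^*\Bop^*\Aop^*=\Bop\Aop\Bop$, so $\Bop\Aop\Bop\Bop^*\Aop\Bop = \Aop^*\Bop^*\Aop^*\cdot\Aop\Bop = \Aop^*\Bop^*\Bop = \Aop^*$ — wait, that is not quite $L$, so I would instead compute via $\Lop = \Aop^*\Aop$ and note $\Bop\Aop\Rop^{-1}\Aop\Bop = \Bop\Aop\Bop\Bop^*\Aop\Bop$; using $\Bop\Aop\Bop = \Aop^*\Bop^*\Aop^*$ and its transpose-counterpart carefully, this collapses to $\Aop^*\Aop=\Lop$. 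Next, $\Aop\slop{}\Aop = \Aop\Bop\Aop\srop{-}\Aop\Bop\Aop$; since $\Aop\Bop\Aop = (\Bop\Aop\Bop)^{-1}\cdot$(something) — rather, $\Aop\Bop\Aop$ and $\Bop\Aop$ are related by the braid relation $\Aop\Bop\Aop=\Bop\Aop\Bop$? No: here $\Aop^2=\Bop^2=1$, so the relevant relation is $\Aop^*\Bop^*\Aop^*=\Bop\Aop\Bop$, not $\Aop\Bop\Aop=\Bop\Aop\Bop$; I would use the grading-preservation reduction to replace $\Aop\Bop\Aop$-conjugation of $\srop{-}$ by $\Bop\Aop\Bop$-conjugation (same action on multiplicity spaces), then apply $\Bop\srop{}\Bop=\srop{-}$ to get $\Aop\slop{}\Aop=\slop{-}$. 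Finally $\slop{}\scop{}=\scop{}\slop{}$: write $\scop{}$ through $\Bop\Aop$ using $\Aop\scop{}\Aop=\Bop\scop{}\Bop=\scop{-}$, which gives $\Bop\Aop\scop{}\Aop\Bop=\scop{}$ (since $\Aop\scop{}\Aop=\scop{-}$ then $\Bop\scop{-}\Bop=\scop{}$), hence $\scop{}=\Bop\Aop\scop{}\Aop\Bop$; then $\slop{}\scop{} = \Bop\Aop\srop{-}\Aop\Bop\cdot\Bop\Aop\scop{}\Aop\Bop = \Bop\Aop\srop{-}\scop{}\Aop\Bop = \Bop\Aop\scop{}\srop{-}\Aop\Bop = \scop{}\slop{}$, using $\srop{}\scop{}=\scop{}\srop{}$.

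\textbf{Main obstacle.} The subtle point — the one I would write out most carefully — is the claim that conjugation by $\Aop\Bop$ and by $\Bop^*\Aop^*$ (equivalently, by $\Aop\Bop\Aop$ versus $\Bop\Aop\Bop$) have the same effect on \emph{grading-preserving} operators. This is \emph{false} as an identity of operators on all of $H$ but becomes true once restricted to grading-preserving inputs, because $\Aop$, $\Aop^*$, $\Bop$, $\Bop^*$ all induce the \emph{same} permutation of the index set of multiplicity spaces (compare~\eqref{E:exch++} with~\eqref{E:exch++bis}), differing only by scalars on each block; and a block-scalar discrepancy is invisible after pre- and post-composing with a block-diagonal (grading-preserving) operator and then undoing the permutation. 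I would phrase this as a short auxiliary observation — "if $f$ is grading-preserving then $\Bop\Aop f\Aop\Bop = \Bop^*\Aop^* f\Aop^*\Bop^*$" — prove it on each multiplicity space, and then all three identities plus symmetry follow by the routine rewrites above.
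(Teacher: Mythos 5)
The overall shape of your argument — compute $(\slop{})^2$, $\Aop\slop{}\Aop$, $(\slop{})^*$, and $\slop{}\scop{}$ directly from the defining formula $\slop{}=\Bop\Aop\srop{-}\Aop\Bop$ — matches the paper, and your treatment of grading-preservation and of $\slop{}\scop{}=\scop{}\slop{}$ is sound. The problem is the auxiliary observation you designate as the key step: the claim that $\Bop\Aop\,f\,\Aop\Bop=\Bop^*\Aop^*\,f\,\Aop^*\Bop^*$ (equivalently, that conjugation by $\Aop\Bop\Aop$ and by $\Bop\Aop\Bop$ agree) on every grading-preserving $f$ is \emph{false}, and your proofs of both $(\slop{})^*=\slop{}$ and $\Aop\slop{}\Aop=\slop{-}$ rest on it. Your justification, that $\Aop$ and $\Aop^*$ induce the same permutation of multiplicity spaces and ``differ only by scalars on each block,'' is wrong in the second half: $\Aop^*=\Lop\Aop$, and $\Lop$ is a genuine grading-preserving operator, not a block scalar (in the example of Section~\ref{section13}, $\Lop$ acts on each multiplicity space by a cyclic shift). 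To see the observation fail, test it on $f=\Lop$: using \eqref{E:ara} and \eqref{identity2} one computes $\Bop\Aop\Lop\Aop\Bop=\Bop\Lop^{-1}\Bop=\Lop^{-1}\Rop\Cop^{-1}$ while $\Bop^*\Aop^*\Lop\Aop^*\Bop^*=\Bop^*\Lop^{-1}\Bop^*=\Rop\Lop^{-1}\Cop^{-1}$, and these agree only if $\Lop\Rop=\Rop\Lop$, i.e.\ only if $\Qop=\Id$, which is false in general.

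The paper sidesteps this by never invoking a purely grading-theoretic reduction; it uses the specific relations available for $\srop{}$, and in a fixed order. First one proves $\Aop\slop{}\Aop=\slop{-}$ by inserting $\Bop\Bop=\Id$ around $\srop{-}$:
\begin{equation*}
\Aop\Bop\Aop\srop{-}\Aop\Bop\Aop=(\Aop\Bop)^2\srop{}(\Bop\Aop)^2=\Bop\Aop\Cop\srop{}\Cop^{-1}\Aop\Bop=\Bop\Aop\srop{}\Aop\Bop=\slop{-},
\end{equation*}
using $\Bop\srop{-}\Bop=\srop{}$, $(\Aop\Bop)^2=\Bop\Aop\Cop$, $(\Bop\Aop)^2=\Cop^{-1}\Aop\Bop$, and that $\srop{}$ commutes with $\Cop=(\scop{})^2$. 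Only then does one prove symmetry:
\begin{equation*}
(\slop{})^*=\Bop^*\Aop^*\srop{-}\Aop^*\Bop^*=\Bop^*\Aop^*\Bop^*\srop{}\Bop^*\Aop^*\Bop^*=\Aop\Bop\Aop\srop{}\Aop\Bop\Aop=\Aop\slop{-}\Aop=\slop{},
\end{equation*}
using the transposed relations $\Bop^*\srop{-}\Bop^*=\srop{}$ and $\Bop^*\Aop^*\Bop^*=\Aop\Bop\Aop$ (the transpose of \eqref{E:aba*=bab}), and the \emph{already established} identity $\Aop\slop{}\Aop=\slop{-}$. Your proof should replace both appeals to the false auxiliary observation with these explicit chains; your computation of $(\slop{})^2=\Lop$, once the dropped $\Bop^*$ is restored, is exactly the paper's.
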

\begin{proof}
We have
$$(\slop{})^2=BAR^{-1}AB=BABB^*AB=A^*B^*A^*B^*AB=A^*ABAAB=A^*A=L,$$
$$\Aop\slop{}\Aop=ABA\srop{-}ABA=(AB)^2\srop{}(BA)^2$$
$$=BAC\srop{}C^{-1}AB=BA \srop{} AB =\slop{-},$$
$$(\slop{})^*=B^*A^*\srop{-}A^*B^*=B^*A^*B^*\srop{}B^*A^*B^*=ABA\srop{}ABA
=A\slop{-}A=\slop{}.$$ That $\slop{}$ is   grading-preserving and commutes with $\scop{}$ follows from the definitions.
\end{proof}

\subsection{$\spsi$-systems  in $\cat$}
A \emph{$\spsi$-system} in $\cat$ is a $\Psi$-system in $\cat$
together with a choice of invertible, symmetric, grading-preserving
operators
  $\scop{}, \srop{} \in \End(H)$ satisfying
Equalities~\eqref{E:srts} as well as the identities
\begin{subequations}\label{E:sqrtCC}
\begin{align}\label{E:sqrtC1C2}
T\scop{}_1\scop{}_2&=T\scop{}_3\scop{}_4,\\
\label{E:sqrtR1L2}
T\srop{}_1\slop{}_2&=T\srop{}_3\slop{}_4,\\
\label{E:sqrtR1R2}
T\srop{}_1\srop{}_2&=T\scop{}_3\srop{}_4,\\
\label{E:sqrtL1}
T\slop{}_1&=T\scop{}_2\slop{}_3\slop{}_4.
\end{align}
\end{subequations}
where $\slop{}$ is defined by~\eqref{E:sqrtL}.

Generally speaking, a $\Psi$-system may not allow operators
$\scop{}, \srop{} \in
 \End(H)$ as above.

Equations~\eqref{E:sqrtR1R2} and \eqref{E:sqrtL1} above are not
independent. In fact, any one of them may be omitted.

 We suppose from now on
that we do have a $\spsi$-system.

\subsection{Commutation relations} We establish   commutation
relation analogous to those in Lemma~\ref{L:xg=x^gg}. Consider the
group  $$ \widehat {F}=\langle a,b,a^*,b^*,c,r\,\vert \,
   a^2=b^2=(a^*)^2=(b^*)^2=1\rangle.
$$
Consider the group homomorphism $ \widehat {F} \to
\mathbb{Z}/2\mathbb{Z}$ carrying   $a, b, a^*, b^*$ to $1\, ({\text
{mod}} \, 2)$ and carrying $c,r$ to $0$. Elements of $\widehat {F}$
belonging to the kernel of this homomorphism are said to be {\it
even}; all other elements of $\widehat {F}$ are said to be {\it odd}.
  The group $\widehat {F}$ acts
   on $ H $ by   $$a\mapsto A,\, b \mapsto B ,\, a^* \mapsto A^* ,\, b^* \mapsto
   B^*,\, c\mapsto \scop{},\, r\mapsto \srop{}.$$ The endomorphism of
   $H$ determined by $g\in \widehat {F}$ is denoted
   ${\underline{g}}$.

\begin{lemma}\label{lelele} Let $t\in \End (H)$ be a $T$-scalar
that $T$-commutes with $\scop{}$ in the sense that
$t\scop{}\teq\scop{} t$. For $g\in \widehat F$, set
  $\act{t}{g}=t$  if $g$ is  even    and  $\act{t}{g}=t^*$   if $g$ is
  odd. Then $
   {\underline{g}}t\teq  \act{t}{g} {\underline{g}} $ for all $g\in \widehat
   {F}$.
\end{lemma}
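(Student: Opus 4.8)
The plan is to follow the proof of Lemma~\ref{L:xg=x^gg}. For $i=1,2,3,4$ set $\widehat\Delta^i=\{g\in\widehat F\,\vert\,T(\underline g\,t)_i=T(\act{t}{g}\underline g)_i\}$. By Lemma~\ref{L:1234}, $\widehat\Delta^1=\widehat\Delta^2$ and $\widehat\Delta^3=\widehat\Delta^4$; the neutral element lies in every $\widehat\Delta^i$, so it suffices to show that $\widehat\Delta^1\cap\widehat\Delta^3$ is closed under left multiplication by each of the generators $a,b,a^*,b^*,c,r$ and by $c^{-1},r^{-1}$. Then $\widehat\Delta^1\cap\widehat\Delta^3=\widehat F$, i.e. $\widehat\Delta^i=\widehat F$ for all $i$, which is the assertion.

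For the generators $a,b,a^*,b^*$ the argument in the proof of Lemma~\ref{L:xg=x^gg} carries over word for word: it uses only the relations \eqref{E:tp431}--\eqref{E:tp124+}, the fact that $t$ is a $T$-scalar, and the notion of parity --- and the parity of an element of $\widehat F$ equals the parity of its image in $F$ since $c,r$ are even. This yields $s\,\widehat\Delta^1\subseteq\widehat\Delta^3$ and $s\,\widehat\Delta^3\subseteq\widehat\Delta^1$ for $s\in\{a,b,a^*,b^*\}$, and in particular $F\subseteq\widehat\Delta^1\cap\widehat\Delta^3$ (this is Lemma~\ref{L:xg=x^gg}). For the inverse generators I use the operator identities $\scop{-}=\Aop\scop{}\Aop$ and $\srop{-}=\Bop\srop{}\Bop$ of \eqref{E:srts}: as endomorphisms of $H$ one has $\underline{c^{-1}}=\underline{aca}$ and $\underline{r^{-1}}=\underline{brb}$, and $c^{-1},aca$ (resp. $r^{-1},brb$) have the same parity, so $c^{-1}h\in\widehat\Delta^i\Leftrightarrow acah\in\widehat\Delta^i$ and $r^{-1}h\in\widehat\Delta^i\Leftrightarrow brbh\in\widehat\Delta^i$ for every $h$ and $i$; hence closure under $c^{-1}$ (resp. $r^{-1}$) follows from closure under $a$ and $c$ (resp. $b$ and $r$).

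There remain the two new generators $c\mapsto\scop{}$ and $r\mapsto\srop{}$. A useful preliminary is the identity $\underline g\,\scop{}\,\underline g^{-1}=\scop{}$ for $g$ even and $=\scop{-}$ for $g$ odd; it follows from $\Aop\scop{}\Aop=\Bop\scop{}\Bop=\scop{-}$, their transposes $\Aop^*\scop{}\Aop^*=\Bop^*\scop{}\Bop^*=\scop{-}$ (as $\scop{}$ is symmetric), and $\srop{}\scop{}=\scop{}\srop{}$. Using it to move $\scop{}$ past $\underline g$ and using \eqref{E:sqrtC1C2} to move the resulting $\scop{\pm}$ off the relevant tensor factor, the condition $cg\in\widehat\Delta^i$ is seen to reduce to the requirement that both $t$ and $t^*$ $T$-commute with $\scop{}$; the case of $t$ is the hypothesis, and the case of $t^*$ is obtained from it together with the $T$-scalar relations \eqref{E:q1=q2} and \eqref{E:sqrtC1C2}. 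The case of $r$ requires showing that $t$ and $t^*$ $T$-commute with $\srop{}$ --- and hence, via \eqref{E:sqrtL}, with $\slop{}$ --- and this is where the square-root relations \eqref{E:sqrtR1L2}, \eqref{E:sqrtR1R2}, \eqref{E:sqrtL1} come in, in manipulations paralleling the proofs of Lemma~\ref{L:semidir} and Lemma~\ref{L:[lr]}. I expect this $r$-case to be the main difficulty: unlike $\scop{}$, the operator $\srop{}$ is not conjugated into a power of itself by the images of $\Aop$ and $\Bop$, so the substitution trick used for $c^{-1},r^{-1}$ is unavailable; moreover the $T$-commutation of $t$ with $\Rop=(\srop{})^2$, which is automatic from Lemma~\ref{L:xg=x^gg} since $\Rop$ is the image of an even element of $F$, does not by itself yield the $T$-commutation with the square root $\srop{}$, so the relations \eqref{E:sqrtCC} must genuinely be exploited.
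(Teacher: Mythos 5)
Your overall framework (the sets $\widehat\Delta^i$, Lemma~\ref{L:1234}, closure under left multiplication by generators, and the carry-over from Lemma~\ref{L:xg=x^gg} for $a,b,a^*,b^*$) matches the paper's proof, and your substitution trick $\underline{c^{-1}}=\underline{aca}$, $\underline{r^{-1}}=\underline{brb}$ for the inverse generators is a valid variant of the paper's ``proved similarly.'' The $c$ case is also in the right spirit: one first upgrades the hypothesis to the statement that both $t$ and $t^*$ $T$-commute with $\scop{\pm}$ (as the paper does explicitly), and then pushes $\scop{}$ off the $i$-th slot via \eqref{E:sqrtC1C2}.

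The genuine gap is in the $r$ case, and you have in fact flagged it yourself without resolving it. Your plan is to first establish that $t$ and $t^*$ $T$-commute with $\srop{}$ ``in manipulations paralleling Lemmas~\ref{L:semidir} and~\ref{L:[lr]}.'' This is both circular and unnecessary. It is circular because the $T$-commutation of $t$ with $\srop{}$ is precisely what the paper derives \emph{from} Lemma~\ref{lelele} via Corollary~\ref{nnn17---} (using the evenness of $r\in\widehat F$); there is no independent argument available at this stage, and Lemmas~\ref{L:semidir}, \ref{L:[lr]} concern the specific operators $L,R,C,Q$, not an arbitrary $T$-scalar commuting with $\scop{}$. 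It is unnecessary because of the following point you miss: the relation \eqref{E:sqrtR1R2}, $T\srop{}_1\srop{}_2=T\scop{}_3\srop{}_4$, allows one to move $\srop{}$ off any index $i\in\{1,2,4\}$ at the cost of producing $\scop{}^{\pm1}$ on index~$3$ and $\srop{}^{\pm1}$ only on the \emph{other two} indices of $\{1,2,4\}$. The $T$-scalar property then lets one relocate $t^g$ from slot $i$ to slot~$3$, where it sits next to the $\scop{}^{\pm1}$ and away from all the $\srop{}^{\pm1}$ factors, which live on different tensor slots and hence commute for free. Thus the only commutation actually invoked is $t'\scop{}\teq\scop{}t'$ --- exactly the hypothesis --- and no $T$-commutation with $\srop{}$ ever enters. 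Without this index-juggling idea your $r$-case stalls, as you candidly acknowledge (``I expect this $r$-case to be the main difficulty'').
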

\begin{proof}   Observe   that $t$ also $T$-commutes with
$\scop{-}$ and   $t^*$ $T$-commutes with both $\scop{}$ and
$\scop{-}$.
 Indeed, let $\{i,j\}$ be  the set $\{1,2\}$ or the set
$\{3,4\}$. Set $\{k,l\}=\{1,2,3,4\}\setminus\{i,j\}$. Formula
\eqref{E:sqrtC1C2} implies that
\begin{multline*}
T(t\scop{-})_i=Tt_i\scop{-}_i=Tt_j\scop{-}_i=T\scop{-}_it_j=T\scop{}_j\scop{-}_k\scop{-}_lt_j\\=
Tt_j\scop{}_j\scop{-}_k\scop{-}_l =Tt_i\scop{}_j\scop{-}_k\scop{-}_l
=T\scop{}_j\scop{-}_k\scop{-}_l t_i= T(\scop{-}t)_i,
\end{multline*}
\begin{multline*}
T(t^*\scop{})_i=Tt^*_i\scop{}_i=Tt_k\scop{}_i=T\scop{}_it_k
=T\scop{-}_j\scop{}_k\scop{}_lt_k\\ =
Tt_k\scop{-}_j\scop{}_k\scop{}_l
=Tt^*_i\scop{-}_j\scop{}_k\scop{}_l=T\scop{-}_j\scop{}_k\scop{}_l
t^*_i= T(\scop{}t^*)_i.
\end{multline*}

 For $i\in\{1,2,3,4\}$, set  \[\widehat  {\Delta}_i=\{g\in\widehat
{F}\vert\ T(\underline{g} t)_i=T(t^g \underline{g})_i\} \subset
\widehat {F}.\]  By Lemma~\ref{L:1234}, we have $\widehat
{\Delta}_1=\widehat {\Delta}_2$ and $\widehat {\Delta}_3=\widehat
{\Delta}_4$. Set $\widehat\Delta=\widehat {\Delta}_1 \cap \widehat
{\Delta}_3 \subset \widehat {F}$. Clearly, $1\in\widehat \Delta $.
It is enough to show that $ \widehat {\Delta} =\widehat {F}$.

 Pick any index $i\in \{1,2,3,4\}$ and let
$j,k,l\in \{1,2,3,4\}$ be such  that either $\{i,j\}=\{1,2\}$ or
$\{i,j\}=\{3,4\}$, and $\{k,l\}=\{1,2,3,4\}\setminus\{i,j\}$. For
  $X=\underline c=\scop{}$ and any $g\in \widehat \Delta$,
Formula~\eqref{E:sqrtC1C2} implies that
\[T(  \underline{cg} t)_i=
T(X \underline{g} t)_i=T(\underline{g} t)_iX_j^{-1}X_kX_l=T(t^g
\underline{g})_iX_j^{-1}X_kX_l=Tt^g_j X_j^{-1}X_kX_l \underline{g}_i
\] (the last two equalities follow  from the assumptions $g\in
\widehat \Delta$ and   $t$ is a $T$-scalar, respectively). Since
$t$, $t^*$ both $T$-commute with $\scop{\pm}$,  we similarly have
\[ Tt^g_j X_j^{-1}X_kX_l \underline{g}_i =T X_j^{-1}X_kX_l
\underline{g}_i t^g_j=T(X \underline {g})_it^g_j\]\[ =T t^g_j(X
\underline {g})_i =T(t^gX \underline{g})_i=T(t^{cg}
\underline{cg})_i. \]
  Thus, $T(  \underline{cg} t)_i=T(t^{cg}
\underline{cg})_i$ for all $i$ so that $cg\in \widehat \Delta$. This
shows the inclusion  $c\widehat \Delta\subset\widehat \Delta$. The
inclusion $c^{-1}\widehat \Delta\subset\widehat \Delta$ is proved
similarly.

Next, pick indices $i,j,k$ such that $\{i,j,k\}=\{1,2,4\}$. Set
  $X=\underline r=\srop{}$ and $Y=\scop{}$. For  any $g\in \widehat \Delta$, Formula~\eqref{E:sqrtR1R2} implies
   that for some $\varepsilon
=\pm 1$ and  $X',X''\in\{X,X^{-1}\}$,
\[T(  \underline{rg} t)_i=
T X_i ( \underline{g} t)_i=TY_3^{\varepsilon}X'_jX''_k(\underline{g}
t)_i= T(t^g \underline{g})_iY_3^{\varepsilon}X'_jX''_k=Tt'_3
\underline{g}_iY_3^{\varepsilon}X'_jX''_k,\] where  $t'\in \{t,
t^*\}$ is such that $Tt^g_i =Tt'_3$. Since $t'Y\teq Y t'$,
\[Tt'_3
\underline{g}_iY_3^{\varepsilon}X'_jX''_k= T
Y_3^{\varepsilon}X'_jX''_k \underline{g}_i t'_3=T(X\underline{g})_it'_3=T t'_3 (X\underline{g})_i =T(t^gX\underline{g})_i=T(t^{rg}
\underline{rg})_i.\]
 Thus, $rg\in \widehat \Delta$ and so $r\widehat \Delta\subset\widehat
\Delta$. Similarly, $r^{-1} \widehat \Delta\subset\widehat \Delta$.
The rest of the proof is as in  Lemma~\ref{L:xg=x^gg}.
\end{proof}

\subsection{Stable $T$-equivalence} We say that two  operators $a,b\in\End(H)$ are \emph{stably $T$-equal}
and write $a\steq b$ if   $\underline{f}a \teq \underline{f}b$ for
all $f\in\widehat F$. Obviously, if $a\steq b$, then $ a\teq b$ and
$ \underline{g}a\steq\underline{g}b$ for all $g\in\widehat F$.

\begin{corollary}\label{nnn17---} For   any $g\in\widehat F$ and   any $T$-scalar $t$ that $T$-commutes with $\scop{}$,
\begin{equation}\label{E:gtTstg}
\underline{g}\, t\steq t^{g}\underline{g}.
\end{equation}
\end{corollary}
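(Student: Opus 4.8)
The plan is to deduce Corollary~\ref{nnn17---} directly from Lemma~\ref{lelele}, using only the definition of stable $T$-equality and the closure properties of the relevant class of $T$-scalars. The target relation $\underline g\,t\steq t^g\underline g$ unwinds, by definition of $\steq$, to the family of $T$-equalities
\[
\underline f\,\underline g\,t\teq \underline f\,t^g\underline g\qquad\text{for all }f\in\widehat F,
\]
so the corollary is really just Lemma~\ref{lelele} applied to all products $fg$ simultaneously, after some bookkeeping to track the parity.

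First I would observe that $\underline f\,\underline g=\underline{fg}$, since $g\mapsto\underline g$ is the group action of $\widehat F$ on $H$ recorded just before Lemma~\ref{lelele}. Hence the left-hand side equals $\underline{fg}\,t$. By Lemma~\ref{lelele} (whose hypothesis on $t$—that it is a $T$-scalar $T$-commuting with $\scop{}$—is exactly the hypothesis of the corollary), we have $\underline{fg}\,t\teq t^{fg}\underline{fg}$, where $t^{fg}=t$ if $fg$ is even and $t^{fg}=t^*$ if $fg$ is odd. The parity homomorphism $\widehat F\to\Z/2\Z$ is a group homomorphism, so $fg$ is even iff $f$ and $g$ have the same parity; consequently $t^{fg}=(t^g)^f$ when one interprets the superscript on $t^g$ via the same rule (using that $(t^*)^*=t$, which holds by Lemma~\ref{L:1} since $t$ has a transpose). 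Thus $t^{fg}\underline{fg}=(t^g)^f\,\underline f\,\underline g$.

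Next I would apply Lemma~\ref{lelele} a second time, now to the operator $t^g$. Here the one genuine point to check is that $t^g$ again satisfies the hypotheses of Lemma~\ref{lelele}: it must be a $T$-scalar that $T$-commutes with $\scop{}$. If $g$ is even this is immediate since $t^g=t$. If $g$ is odd, $t^g=t^*$, and we use the facts recorded in Section~\ref{2.4sec} and in the $T$-calculus section that the transpose of a $T$-scalar is a $T$-scalar, together with the observation made at the very start of the proof of Lemma~\ref{lelele} that $t^*$ $T$-commutes with $\scop{}$ (and $\scop{-}$) whenever $t$ does. Granting this, Lemma~\ref{lelele} gives $\underline f\,t^g\teq (t^g)^f\,\underline f$ for every $f$, hence $\underline f\,t^g\,\underline g\teq (t^g)^f\,\underline f\,\underline g$. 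Combining the two chains of $T$-equalities (and using that $T$-equality is preserved under right multiplication by $\underline g$, which follows from the basic property $a\teq b\Rightarrow ac\teq bc$) yields $\underline f\,\underline g\,t\teq\underline f\,t^g\,\underline g$ for all $f\in\widehat F$, which is precisely $\underline g\,t\steq t^g\underline g$.

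The only mild obstacle I anticipate is the parity bookkeeping—making sure $t^{fg}$ really coincides with $(t^g)^f$ as operators, which reduces to the involutivity $(t^*)^*=t$ and the homomorphism property of the parity map—and the verification that $t^g$ inherits the hypotheses of Lemma~\ref{lelele}; both are routine given the results already established. No new computation with $A,B,\scop{},\srop{}$ is needed: the corollary is a formal consequence of Lemma~\ref{lelele} plus the definition of $\steq$.
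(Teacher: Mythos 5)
Your argument matches the paper's own proof almost verbatim: both chain the two applications of Lemma~\ref{lelele} through $\underline f\,\underline g\,t=\underline{fg}\,t\teq t^{fg}\,\underline{fg}=(t^g)^f\,\underline f\,\underline g\teq\underline f\,t^g\,\underline g$, then read off stable $T$-equality from the definition. The only difference is cosmetic: you spell out the check that $t^g$ again satisfies the hypotheses of Lemma~\ref{lelele} (using $(t^*)^*=t$, the closure of $T$-scalars under transpose, and the opening observation in the proof of Lemma~\ref{lelele}), whereas the paper leaves this verification implicit.
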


Indeed, by Lemma~\ref{lelele}, for all $f\in\widehat F$,
\[
\underline{f}\, \underline{g}\, t =\underline{fg} \, t\teq t^{fg}\, \underline{f g}=  (t^{g})^{f}\, \underline{f}\, \underline{g}
\teq\underline{f}t^{g}\underline{g}.
\]

Corollary~\ref{nnn17---} and the evenness of the elements $r$ and
$ba r^{-1}ab$  of $ \widehat F$ imply   that if a $T$-scalar $t$
commutes with $\scop{}$, then $t$ stably $T$-commutes with $\srop{}$
and $\slop{}$ in the sense that $\srop{}t\steq t\srop{}$ and
$\slop{}t\steq t\slop{}$.

\subsection{The $T$-scalar $q$} The following
$T$-scalar   will play a key role in the sequel.

\begin{lemma}\label{nnn17}
The grading-preserving operator $$\sqop{}{8}=\srop{} A \srop{-} A
\slop{-}\scop{-}=\srop{}B\slop{} B \slop{-}\scop{-}$$ is a unitary
$T$-scalar commuting with $\scop{}$.
\end{lemma}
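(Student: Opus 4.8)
The plan is to verify three things: (a) the two given formulas for $\sqop{}{8}$ agree; (b) $\sqop{}{8}$ is unitary, i.e. $(\sqop{}{8})^*=(\sqop{}{8})^{-1}$; (c) $\sqop{}{8}$ is a $T$-scalar; and (d) $\sqop{}{8}$ $T$-commutes with $\scop{}$. That $\sqop{}{8}$ is grading-preserving is immediate since all of $\srop{\pm}, \slop{\pm}, \scop{\pm}, A, B$ are grading-preserving.

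For (a), I would show $\srop{} A \srop{-} A = \srop{}B\slop{} B$ by rewriting each side using Lemma~\ref{New1204} and \eqref{E:srts}. Since $\slop{}=BA\srop{-}AB$, we have $B\slop{}B = B(BA\srop{-}AB)B = A\srop{-}A$ (using $B^2=1$), hence $\srop{}B\slop{}B=\srop{}A\srop{-}A$. Wait — that gives $\srop{}A\srop{-}A$, and I need to check this equals $\srop{}A\srop{-}A\slop{-}\scop{-}$ only after the $\slop{-}\scop{-}$ factor is taken into account; but both expressions for $\sqop{}{8}$ carry the same trailing $\slop{-}\scop{-}$, so (a) reduces exactly to $\srop{}A\srop{-}A=\srop{}B\slop{}B$, which is the identity just derived.

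For (b), the cleanest route is to compute $(\sqop{}{8})^*$ using the fact that all six building blocks are symmetric except that $A^*=A^{-1}=A$, $B^*=B$ are themselves symmetric (they are involutions), so $*$ simply reverses the order of a product of symmetric operators; one finds $(\sqop{}{8})^* = \scop{-}\slop{-}A\srop{-}A\srop{}$ from the first formula. Then I would show this equals $(\sqop{}{8})^{-1}=\scop{}\slop{}A^{-1}\srop{}A^{-1}\srop{-} = \scop{}\slop{}A\srop{}A\srop{-}$. Matching the two requires the relations among $\srop{},\slop{},\scop{}$ and the conjugation identities $A\srop{}A = \slop{}C\srop{-}\cdots$-type consequences of \eqref{E:ara} lifted to square roots; concretely $ARA = L^{-1}RC^{-1}$ should lift, via the definitions $\slop{}=BA\srop{-}AB$ and $(\scop{})^2=C$, to a formula for $A\srop{}A$ in terms of $\slop{-},\srop{},\scop{-}$. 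Establishing that lifted identity — $A\srop{}A = \slop{-}\srop{}\scop{-}$ or similar — is the technical heart of (b), and it follows by a direct manipulation: $A\srop{}A$ and $A\srop{-}A$ conjugate $\srop{\pm}$, and since $\srop{},\scop{}$ commute and $B\srop{}B=\srop{-}$, one computes $A\srop{}A\cdot A\srop{}A=ARA=L^{-1}RC^{-1}=(\slop{-})^2\srop{}(\scop{-})^2\cdot(\text{commuting rearrangement})$, then extracts a square root consistently using grading-preservation and the explicit square-root choices. This algebra is routine but fiddly.

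For (c) and (d): the element $g_0 := r\,a\,r^{-1}\,a\,(\text{the word realizing }\slop{-})\,(\text{word for }\scop{-})$ of $\widehat F$ is \emph{even} (two $a$'s, all of $c,r$ contribute $0$), so by Corollary~\ref{nnn17---} and Lemma~\ref{lelele}, once we know $\sqop{}{8}$ is a $T$-scalar it automatically stably $T$-commutes with everything; for ordinary $T$-commutation with $\scop{}$ it suffices that $\sqop{}{8}$ is a $T$-scalar commuting honestly with $\scop{}$, and the latter is clear since $\scop{}$ commutes with $\srop{},\slop{}$ and with $A\scop{}A=\scop{-}$, $B\scop{}B=\scop{-}$ give $\scop{}(A\srop{-}A)\scop{-} = A\scop{-}\srop{-}\scop{}A = A\srop{-}A$. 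So the crux is proving $\sqop{}{8}$ is a $T$-scalar, i.e. verifying $T(\sqop{}{8})_1 = T(\sqop{}{8})_2 = T((\sqop{}{8})^*)_3 = T((\sqop{}{8})^*)_4$ and the starred companion. \textbf{I expect this to be the main obstacle.} The strategy is to use the $T$-calculus: write $\sqop{}{8}$ as $\underline{rar^{-1}a}\cdot \slop{-}\scop{-}$ (in the notation of Section~\ref{section7}), then apply the commutation Lemma~\ref{lelele}/Corollary~\ref{nnn17---} to move the word $\underline{rar^{-1}a}$ across, reducing the four-fold equality to the already-established $T$-identities \eqref{E:sqrtCC} for $\scop{},\srop{},\slop{}$ together with \eqref{E:symsp} and the exchange relations \eqref{E:sym} for how $A$ acts on $T$ versus $\bar T$. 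In more detail: using $TP_{(431)}=TA_1^*(BA)_2 A_3$ and its analogues from Lemma~\ref{L:1234}, each insertion of an $A$ or $\srop{}$ into a tensor slot can be traded, at the cost of a permutation, for insertions into the other slots; carrying $\sqop{}{8}$ through this bookkeeping and collapsing with \eqref{E:sqrtR1R2}, \eqref{E:sqrtR1L2}, \eqref{E:sqrtL1} should yield $T(\sqop{}{8})_1 = T$ times a rearrangement that telescopes to $T$ itself, hence $(\sqop{}{8})$ acts on $T$ in slot $1$ the same as in slot $2$, and as $(\sqop{}{8})^*$ in slots $3,4$. The bulk of the work is organizing this telescoping so that the $C$-factors appearing in \eqref{E:symsp} cancel against the $\scop{-}$ in the definition — which is exactly why the normalization $\srop{}A\srop{-}A\slop{-}\scop{-}$ (rather than, say, without the $\scop{-}$) is forced.
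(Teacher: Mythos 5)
Your parts (a) and (d) are correct and match the paper's observations: indeed $B\slop{}B = A\srop{-}A$ via $\slop{}=BA\srop{-}AB$ and $B^2=1$, and commutation of $q^8$ with $\scop{}$ follows from $\scop{}A=A\scop{-}$ and $[\scop{},\srop{}]=0$. But (b) and (c) both have genuine gaps.

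For (b), you hope that $ARA=L^{-1}RC^{-1}$ "lifts" to an identity of the form $A\srop{}A=\slop{-}\srop{}\scop{-}$ (or similar), but no such identity holds. The operator $D=A\srop{-}A$ is \emph{not} a product of the commuting square roots $\slop{\pm},\srop{\pm},\scop{\pm}$ — only its square $D^2=CR^{-1}L$ has such a form, and it also satisfies the nontrivial twisted relation $D^*=LDL^{-1}$, not $D^*=D$. (One sees in the $B_\rofo$ example that $D$ involves the $\Psi$-functions and is not a diagonal or shift operator.) The paper's unitarity proof keeps $D$ as a primitive, rewrites $q^8=\srop{}D\slop{-}\scop{-}$, and uses precisely $D^*=LDL^{-1}$ and $D^2=CR^{-1}L$ to telescope $(q^8)^*$ to $(q^8)^{-1}$. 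Without those two relations in place of the direct formula you posit, (b) doesn't close.

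For (c), the proposed strategy — "apply Lemma~\ref{lelele}/Corollary~\ref{nnn17---} to move the word $\underline{rar^{-1}a}$ across" — is circular or inapplicable. Lemma~\ref{lelele} takes as \emph{hypothesis} a $T$-scalar $t$ that $T$-commutes with $\scop{}$ and concludes $\underline{g}\,t\teq t^g\,\underline{g}$; it cannot be invoked with $t=q^8$ or with $t=\slop{-}\scop{-}$ (neither is known to be a $T$-scalar at this stage), nor does it move arbitrary non-$T$-scalar words across $T$. What is actually needed — and what the paper supplies — are three specific identities $T\Dop_1=T\srop{}_2\slop{}_3\Dop_4^*$, $T\scop{}_1\Dop_2=T\srop{}_3\Dop_4^*$, $T\slop{}_2\Dop_3\scop{}_4=T\Dop_1^*$, each proved by combining the permutation formulas \eqref{E:tp431+}, \eqref{E:tp124}, \eqref{E:tp431} with the $\spsi$-identities \eqref{E:sqrtR1L2}, \eqref{E:sqrtL1}, \eqref{E:sqrtR1R2}. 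Only with these in hand can one compute $Tq^8_i$ for $i=1,2,3$ and verify the four-fold $T$-scalar equalities; your sketch identifies the right raw ingredients but does not supply the intermediate $D$-formulas that make the telescoping actually happen.
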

\begin{proof} That $\sqop{}{8}$ is grading-preserving is obvious because $\srop{}$, $\scop{}$, and $\slop{}$
are grading-preserving and $A$ is involutive. Since $\srop{},
\slop{}$, and $\Aop\srop{-}\Aop$ commute with $\scop{}$, the
operator $\sqop{}{8}$ also commutes with $\scop{}$. To prove the
remaining claims, set
\begin{equation}\label{E:d}
\Dop=\Aop\srop{-}\Aop =\Bop\slop{}\Bop.
\end{equation}
It is clear that $\Dop\scop{}=\scop{}\Dop$. Note also that
\begin{subequations}\label{E:d-prop}
\begin{equation}\label{E:d*}
\Dop^*=\Lop\Dop\Lop^{-1} ,
\end{equation}
\begin{equation}\label{E:d2}
\Dop^2=\Rop^{-1}\Lop\Cop=CR^{-1}L.
\end{equation}
\end{subequations}
Indeed,
$$\Dop^*=\Aop^*\srop{-}\Aop^*=\Aop^*\Aop\Bop\slop{}\Bop\Aop \Aop^*=L\Bop
\slop{} \Bop L^{-1}=\Lop\Dop\Lop^{-1}$$ and
$$\Dop^2=BLB=BA^*AB=BB^*A^* A^*B^*A^*AB$$
$$=BB^*A^*AABABAB=R^{-1}LC.$$

We claim that the operator $\Dop$ satisfies the following identities
\begin{subequations}\label{E:dop}
\begin{align}
\label{E:dop1}
T\Dop_1&=T\srop{}_2\slop{}_3\Dop_4^*,\\
\label{E:dop2}
T\scop{}_1\Dop_2&=T\srop{}_3\Dop_4^*,\\
\label{E:dop3}
T\slop{}_2\Dop_3\scop{}_4&=T\Dop_1^*.
\end{align}
\end{subequations}
Formula \eqref{E:dop1} is proved as follows:
\begin{multline*}
T\Dop_1
=TP_{(134)}\Aop_4^*(\Bop\Aop)_2(\srop{-}\Aop)_1=
T\slop{}_4\srop{-}_1\slop{-}_2P_{(134)}\Aop_4^*(\Bop\Aop)_2\Aop_1\\=
TP_{(134)}\slop{}_3(\srop{-}\Aop^*)_4(\slop{-}\Bop\Aop)_2\Aop_1=
T\srop{}_2\slop{}_3\Dop_4^*,
\end{multline*}
where in the first, second, etc. equalities we use respectively: the
definition of $\Dop$ and  \eqref{E:tp431+}; the action of the
permutation group  $ \mathbb{S}_4$ and \eqref{E:sqrtR1L2}; the
action of $ \mathbb{S}_4$;  the definitions of $\slop{}$, $\Dop$ and
\eqref{E:tp431+}. The proof of   \eqref{E:dop2} is similar:
\begin{multline*}
T\Dop_2
=TP_{(124)}\Bop_4^*(\Bop\Aop)_3(\slop{}\Bop)_2=
T\slop{}_1\scop{-}_2\slop{-}_3P_{(124)}\Bop_4^*(\Bop\Aop)_3\Bop_2\\=
TP_{(124)}\slop{}_4\scop{-}_1\slop{-}_3\Bop^*_4(\Bop\Aop)_3\Bop_2=
T\scop{-}_1\srop{}_3\Dop_4^*,
\end{multline*}
where we use consecutively: the definition of $\Dop$ and
\eqref{E:tp124}; the action of  $ \mathbb{S}_4$ and
\eqref{E:sqrtL1}; the action of  $ \mathbb{S}_4$;  the definitions
of $\slop{}$, $\Dop$ and \eqref{E:tp124}. Finally, we prove
\eqref{E:dop3}:
\begin{multline*}
T\Dop_3 =TP_{(431)}\Aop_1^*(\Aop\Bop)_2(\srop{-}\Aop)_3=
T\srop{}_2\scop{-}_3\srop{-}_4P_{(431)}\Aop_1^*(\Aop\Bop)_2\Aop_3\\=
TP_{(431)}\srop{}_2\scop{-}_4\srop{-}_1\Aop_1^*(\Aop\Bop)_2\Aop_3=
T\Dop_1^*\slop{-}_2\scop{-}_4.
\end{multline*}
Here we use: the definition of $\Dop$ and \eqref{E:tp431}; the
action of  $ \mathbb{S}_4$ and \eqref{E:sqrtR1R2}; the action of  $
\mathbb{S}_4$;  the definitions of $\slop{}$, $\Dop$ and
\eqref{E:tp431}.

The definition of $q$ may be rewritten as
\begin{equation}\label{defq}
\sqop{}{8}=\srop{}\Dop\slop{-}\scop{-}.
\end{equation} We can now prove the
unitarity of $q$:
\[
\sqop{*}{8}=\scop{-}\slop{-}\Dop^*\srop{}=
\scop{-}\slop{}\Dop\Lop^{-1}\srop{}\]
\[
=\scop{-}\slop{}\Dop^{-1}\Dop^2\Lop^{-1}\srop{}=\scop{-}\slop{}\Dop^{-1}C\srop{-}=\scop{}\slop{}\Dop^{-1} \srop{-}= \sqop{-1}{8},
\]
where we use the relations \eqref{E:d-prop}. Now,
\begin{multline*}
T\sqop{}{8}_1=T\srop{-}_2\scop{}_3\srop{}_4(\Dop\slop{-}\scop{-})_1=
T\slop{}_3\Dop_4^*\scop{}_3\srop{}_4(\slop{-}\scop{-})_1\\
=T\scop{-}_2\slop{-}_4\Dop_4^*\scop{}_3\srop{}_4\scop{-}_1=
T\scop{-}_1\scop{-}_2\scop{}_3(\sqop{}{8}\scop{})_4^*=T\sqop{*}{8}_4,
\end{multline*}
where we use consecutively: Formulas~\eqref{defq} and
\eqref{E:sqrtR1R2}; Formula \eqref{E:dop1}; Formula
\eqref{E:sqrtL1}; Formula~\eqref{defq}; Formula \eqref{E:sqrtC1C2}.
 Similarly,
\begin{multline*}
T\sqop{}{8}_2=T\srop{-}_1\scop{}_3\srop{}_4(\Dop\slop{-}\scop{-})_2=
T\scop{-}_1\srop{}_3\Dop_4^*\srop{-}_1\scop{}_3\srop{}_4(\slop{-}\scop{-})_2\\
=T\scop{-}_1\slop{-}_4\Dop_4^*\scop{}_3\srop{}_4\scop{-}_2=
T\scop{-}_1\scop{-}_2\scop{}_3(\sqop{}{8}\scop{})_4^*=T\sqop{*}{8}_4,
\end{multline*}
where we use consecutively: Formulas~\eqref{defq} and
\eqref{E:sqrtR1R2}; Formula \eqref{E:dop2}; Formula
\eqref{E:sqrtR1L2}; Formula~\eqref{defq}; Formula
\eqref{E:sqrtC1C2}. Similarly,
\begin{multline*}
T\sqop{}{8}_3=T\srop{}_1\slop{}_2\slop{-}_4(\Dop\slop{-}\scop{-})_3=
T\Dop_1^*\scop{-}_4\srop{}_1\slop{-}_4(\slop{-}\scop{-})_3\\
=T\slop{-}_1\scop{}_2\Dop_1^*\scop{-}_4\srop{}_1\scop{-}_3=
T(\sqop{}{8}\scop{})_1^*\scop{}_2\scop{-}_3\scop{-}_4=T\sqop{*}{8}_1,
\end{multline*}
where we use: Formulas~\eqref{defq} and \eqref{E:sqrtR1L2};
Formula~\eqref{E:dop3}; Formula~\eqref{E:sqrtL1};
Formula~\eqref{defq}; Formula \eqref{E:sqrtC1C2}. Together with the
unitarity of $\sqop{}{8}$ these identities imply that $\sqop{}{8}$
is a $T$-scalar.
\end{proof}

\begin{lemma}
For all $a,b\in\frac12\mathbb{Z}$,
\begin{equation}\label{E:larb=q8abrbla}
\Lop^{a}\Rop^{b}\steq\sqop{8ab}{}\Rop^{b}\Lop^{a}.
\end{equation}
 For all
$a,b\in\frac12\mathbb{Z}$ and $c\in\mathbb{Z}$,
\begin{equation}\label{E:lr}
(\Lop^{a}\Rop^{b})^c\steq\sqop{4abc(1-c)}{}\Lop^{ac}\Rop^{bc} .
\end{equation}
\end{lemma}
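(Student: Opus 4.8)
The plan is to prove the two displayed formulas in sequence, deriving \eqref{E:lr} from \eqref{E:larb=q8abrbla} by induction on $c$. Everything happens modulo stable $T$-equality, so throughout I would freely use that $\steq$ is compatible with left and right multiplication by any $\underline g$ (here by powers of $\slop{}$, $\srop{}$, $\scop{}$), and that $\sqop{}{8}$ is a unitary $T$-scalar commuting with $\scop{}$, hence — by Corollary~\ref{nnn17---} and the remark following it — stably $T$-commutes with $\srop{}$ and $\slop{}$. The key structural input is Lemma~\ref{nnn17}, restated as the commutation relation $\srop{}\Aop\srop{-}\Aop\slop{-}\scop{-}\steq\sqop{}{8}$, equivalently $\slop{}\Rop\steq\sqop{8}{}\,\Rop\slop{}\,$-type identities obtained by conjugating the relations $\Aop\slop{}\Aop=\slop{-}$, $\Bop\srop{}\Bop=\srop{-}$ and using $\Dop=\Aop\srop{-}\Aop=\Bop\slop{}\Bop$.

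For \eqref{E:larb=q8abrbla}, I would first establish the ``unit'' case $\slop{}\srop{}\steq\sqop{8}{}\,\srop{}\slop{}$ (i.e. $a=b=1/2$). From the definition $\sqop{}{8}=\srop{}\Dop\slop{-}\scop{-}$ with $\Dop=\Bop\slop{}\Bop$ and the relation $\Bop\srop{}\Bop=\srop{-}$, one gets $\Dop=\srop{-1}\,\cdot\,(\text{conjugate})$, and a short manipulation with $\Aop\slop{}\Aop=\slop{-}$, $\scop{}\srop{}=\srop{}\scop{}$, $\scop{}\slop{}=\slop{}\scop{}$ rearranges $\sqop{}{8}\srop{}\slop{}$ into $\slop{}\srop{}$. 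Alternatively, and more cleanly, I would verify directly from Lemma~\ref{nnn17} that $\Lop^{1/2}\Rop^{1/2}\steq\sqop{8\cdot 1/4}{}\Rop^{1/2}\Lop^{1/2}=\sqop{2}{}\Rop^{1/2}\Lop^{1/2}$, matching $8ab=2$. Then I would bootstrap to general half-integers $a,b$ by induction: moving $\slop{}$ past $\Rop^{b}=\srop{}^{2b}$ one factor of $\srop{}$ at a time, each swap contributes a factor $\sqop{}{8}$ raised to the appropriate power, and these $\sqop{}{}$-factors commute stably past everything (being $T$-scalars $T$-commuting with $\scop{}$), so they collect multiplicatively; repeating for the $2a$ factors of $\slop{}$ in $\Lop^{a}$ yields the exponent $8ab$. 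Negative powers are handled by taking inverses, using that $\sqop{}{8}$ is unitary so $(\sqop{}{8})^{-1}=\sqop{*}{8}$, together with $T$-unitarity to keep everything on the $\steq$ nose; here one must be slightly careful that $\steq$ is preserved under inversion, which follows since $a\steq b$ forces $\underline f a\teq\underline f b$ for all $f$ and the $T$-calculus inversion observations of Section~5 apply uniformly.

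For \eqref{E:lr}, induct on $c\geq 0$. The base cases $c=0,1$ are trivial since $4abc(1-c)=0$. For the inductive step, write $(\Lop^{a}\Rop^{b})^{c+1}\steq(\Lop^{a}\Rop^{b})^{c}\,\Lop^{a}\Rop^{b}\steq\sqop{4abc(1-c)}{}\Lop^{ac}\Rop^{bc}\Lop^{a}\Rop^{b}$ by the inductive hypothesis. Now move $\Lop^{a}$ leftward past $\Rop^{bc}$ using \eqref{E:larb=q8abrbla} with exponents $a$ and $bc$, picking up $\sqop{8a(bc)}{}=\sqop{8abc}{}$, giving $\sqop{4abc(1-c)+8abc}{}\Lop^{a(c+1)}\Rop^{bc}\Rop^{b}=\sqop{4abc(1-c)+8abc}{}\Lop^{a(c+1)}\Rop^{b(c+1)}$. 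It remains to check the exponent arithmetic: $4abc(1-c)+8abc=4abc\bigl((1-c)+2\bigr)=4abc(c+... )$; more carefully $4abc(1-c)+8abc=4abc(3-c)$, and one wants $4ab(c+1)\bigl(1-(c+1)\bigr)=-4abc(c+1)$. These agree iff $3-c=-(c+1)$, which is false — so the naive move is off, and the fix is that one should instead commute $\Lop^{ac}$ to the \emph{right} past $\Rop^{b}$ (not $\Lop^a$ past $\Rop^{bc}$), i.e. use \eqref{E:larb=q8abrbla} in the form that turns $\Lop^{ac}\Rop^{b}$ into $\sqop{8(ac)b}{}\Rop^{b}\Lop^{ac}$ is again not it either; the correct bookkeeping is to commute the single new block $\Rop^{bc}\Lop^{a}\to\sqop{-8abc}{}\Lop^a\Rop^{bc}$ (inverse direction) so the collected power becomes $4abc(1-c)-8abc=-4abc(1+c)=4ab(c+1)(1-(c+1))$, as desired. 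I expect this sign/direction bookkeeping in the induction to be the main obstacle; once the correct orientation of the commutation move is pinned down, the identity follows, and negative $c$ (if needed) is obtained by inverting, again using unitarity of $\sqop{}{8}$ and the fact that $\Lop^{a},\Rop^{b}$ are invertible with the $T$-scalar factors passing freely across inverses. Finally I would remark that all intermediate equalities are genuine $\steq$'s, not merely $\teq$'s, because each elementary move is an instance of \eqref{E:larb=q8abrbla} or of Corollary~\ref{nnn17---}, both of which are stable.
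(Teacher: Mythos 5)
Your overall strategy matches the paper's: establish the base case $a=b=\tfrac12$ from the definition of $q$, bootstrap to general positive half-integers by moving $\slop{}$ and $\srop{}$ factors one at a time (collecting $q$-powers via the stable $T$-commutation of $q$ with $\slop{},\srop{}$), handle negative exponents separately, and then prove \eqref{E:lr} by induction on $c$. For the inductive step of \eqref{E:lr} you do eventually land on the correct move $\Rop^{bc}\Lop^{a}\steq\sqop{-8abc}{}\Lop^{a}\Rop^{bc}$ and the correct exponent arithmetic $4abc(1-c)-8abc=-4abc(1+c)=4ab(c+1)\bigl(1-(c+1)\bigr)$, which is precisely what the paper does, so that part is fine once the direction of the commutation is pinned down.

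The genuine gap is your treatment of negative exponents: you assert that ``$\steq$ is preserved under inversion'' and cite ``$T$-calculus inversion observations.'' This does not follow from the definition. The statement $a\steq b$ means $T(\underline f a)_i=T(\underline f b)_i$ for all $f\in\widehat F$ and all $i$; post-composing with $a_i^{-1}b_i^{-1}$ only yields $T\underline f_i b_i^{-1}=T\underline f_i b_i a_i^{-1}b_i^{-1}$, which does not simplify to $T\underline f_i a_i^{-1}$ unless $a_i$ and $b_i$ commute. (The inversion remark in the $T$-calculus section is specifically about $T$-scalars, where the relevant operators \emph{do} commute because they live on different tensor slots; it does not extend to arbitrary $\steq$ relations.) The paper instead uses a conjugation trick: insert $\Lop^{a}\Lop^{-a}$ to write $\Lop^{-a}\Rop^{b}=\Lop^{-a}(\Rop^{b}\Lop^{a})\Lop^{-a}$, apply the positive-exponent case to the middle block, and pass the $q$-power through $\Lop^{-a}$ using Corollary~\ref{nnn17---}. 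This uses only that $\steq$ is stable under left-multiplication by $\underline{\widehat F}$ and right-multiplication by arbitrary operators — both of which are immediate from the definition — rather than inversion. You should replace the inversion argument with this conjugation argument for the negative-$a$, negative-$b$, and negative-$c$ cases.
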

\begin{proof}
Formulas~\eqref{defq} and~\eqref{E:d2} imply that
\[\sqop{}{}\slop{}\srop{-}\sqop{}{}=
\srop{-}\slop{}.
\]
  This   and  ~\eqref{E:gtTstg} yield~\eqref{E:larb=q8abrbla}
for $a=b=\frac12$:
\[
\slop{}\srop{}=\srop{}\srop{-}\slop{}\srop{}=\srop{}\sqop{}{}\slop{}\srop{-}\sqop{}{}\srop{}
\steq \sqop{2}{}\srop{}\slop{}\srop{-}\srop{}=\sqop{2}{}\srop{}\slop{}.
\]
Assuming   \eqref{E:larb=q8abrbla}   for some $a\in\frac12\mathbb{Z}
$ and $b=\frac12$, we obtain
\begin{multline*}
\Lop^{a+\frac12}\srop{}=\Lop^{a}\slop{}\srop{}\steq\Lop^{a}\sqop{2}{}\srop{}\slop{}\\
\steq\sqop{2}{}\Lop^{a}\srop{}\slop{}\steq\sqop{2}{}\sqop{4a}{}\srop{}\Lop^{a}\slop{}=
\sqop{4(a+\frac12)}{}\srop{}\Lop^{a+\frac12}.
\end{multline*}
This proves \eqref{E:larb=q8abrbla} for all positive
$a\in\frac12\mathbb{Z} $ and $b=\frac12$. Similarly, assuming
  \eqref{E:larb=q8abrbla}  for some
positive $a,b\in\frac12\mathbb{Z} $, we obtain
\begin{multline*}
\Lop^{a}\Rop^{b+\frac12}=\Lop^{a}\srop{}\Rop^{b}\steq\sqop{4a}{}\srop{}\Lop^{a}\Rop^{b}
\steq\sqop{4a}{}\srop{}\sqop{8ab}{}\Rop^{b}\Lop^{a}\\
\steq\sqop{4a}{}\sqop{8ab}{}\srop{}\Rop^{b}\Lop^{a}=\sqop{8a(b+\frac12)}{}\Rop^{b+\frac12}\Lop^{a}.
\end{multline*}
This proves \eqref{E:larb=q8abrbla} for all positive
 $a,b\in\frac12\mathbb{Z} $. Moreover, for such $a,b$,
\[
\Lop^{-a}\Rop^{b}=\Lop^{-a}\Rop^{b}\Lop^{a}\Lop^{-a}
\steq\Lop^{-a}\sqop{-8ab}{}\Lop^{a}\Rop^{b}\Lop^{-a} \steq \sqop{-8ab}{} \Lop^{-a}\Lop^{a}\Rop^{b}\Lop^{-a}
=\sqop{-8ab}{}\Rop^{b}\Lop^{-a},
\]
\[
\Lop^{a}\Rop^{-b}=\Rop^{-b}\Rop^{b}\Lop^{a}\Rop^{-b}
\steq\Rop^{-b}\sqop{-8ab}{}\Lop^{a}\Rop^{b}\Rop^{-b}
\steq \sqop{-8ab}{} \Rop^{-b}\Lop^{a}\Rop^{b}\Rop^{-b}= \sqop{-8ab}{}\Rop^{-b}\Lop^{a},
\]
and
\[
\Lop^{-a}\Rop^{-b}=\Rop^{-b}\Rop^{b}\Lop^{-a}\Rop^{-b}
\steq\Rop^{-b}\sqop{8ab}{}\Lop^{-a}\Rop^{b}\Rop^{-b}
\steq   \sqop{8ab}{}\Rop^{-b}\Lop^{-a},
\]
This proves~\eqref{E:larb=q8abrbla} for all non-zero $a$, $b$. For
$a=0$ or $b=0$, Formula~\eqref{E:larb=q8abrbla} is obvious.

 The case $c=0$ of~\eqref{E:lr} is obvious. Assuming
\eqref{E:lr}  for some $c\in\mathbb{Z}$, we obtain
\begin{multline*}
(\Lop^{a}\Rop^{b})^{c+1}=(\Lop^{a}\Rop^{b})^{c}\Lop^{a}\Rop^{b}
\steq\sqop{4abc(1-c)}{}\Lop^{ac}\Rop^{bc}\Lop^{a}\Rop^{b}
\steq\sqop{4abc(1-c)}{}\Lop^{ac}\sqop{-8abc}{}\Lop^{a}\Rop^{bc}\Rop^{b}\\
\steq\sqop{-4abc(c+1)}{}\Lop^{ac}{}\Lop^{a}\Rop^{bc}\Rop^{b}
=\sqop{4ab(c+1)(1-(1+c))}{}\Lop^{a(c+1)}\Rop^{b(c+1)}.
\end{multline*}
This implies \eqref{E:lr} for all $c\geq 0$. The proof for negative
$c $ is similar.
\end{proof}

\begin{remark}\label{R:q8Q} Applying Formula~\ref{E:larb=q8abrbla} to $a=b=1$, we obtain  $\sqop{8}{} \steq \Qop $, where $\Qop=\Lop{}\Rop{}
\Lop^{-1}\Rop^{-1} $ is the operator studied
 in Lemma~\ref{L:[lr]}. \end{remark}

\section{Charged $T$-forms}\label{section8now}

\subsection{Definition}
For any $a , c\in\frac12\Z$, we define the ``charged" $T$-forms
\[
T(a,c)=T\sqop{4ac}{}_1\Rop^{c}_1\Rop^{-a}_2\Lop^{-a}_3\Rop^{-c}_3\colon
H^{\otimes 4}\to \FK\] and \[ \bar T(a,c)=\bar
T\sqop{-4ac}{}_1\Lop^{-a}_2\Rop^{-c}_2\Rop^{-a}_3\Rop^{c}_4 \colon
H^{\otimes 4}\to \FK.
\]
\begin{lemma}\label{le789} Set $ \Aops=\Aop\slop{-}, \Bops=\Bop\srop{-} \in \End(H)$.
Then $\Aops$, $\Bops$ are symmetric involutions and for any
$a,b,c\in\frac12\Z$ such that $a+b+c=\frac12$,
\begin{subequations}\label{E:syms}
\begin{align}
\label{E:syms01}
 T(a,c)P_{(4321)}&=\bar T(a,b)\sqop{-2a}{}_1\Aops_1\Aops_3,\\
\label{E:syms12}
T(a,c)P_{(23)}&=\bar T(b,c)\sqop{2c}{}_1\Aops_2\Bops_3,\\
\label{E:syms23}
T(a,c)P_{(1234)} &=\bar T(a,b)\sqop{-2a}{}_1\Bops_2\Bops_4.
\end{align}
\end{subequations}

\end{lemma}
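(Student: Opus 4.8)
The plan is to derive Lemma~\ref{le789} from the Fundamental Lemma (Lemma~\ref{Fundamental lemma}, i.e. the three symmetry relations~\eqref{E:sym01}--\eqref{E:sym23} for $T,\bar T$) by conjugating/multiplying with the ``square-root'' operators, and to manage the bookkeeping of powers of $q=\sqop{}{}$ by systematic use of the stable $T$-commutation relation~\eqref{E:gtTstg} and Formula~\eqref{E:larb=q8abrbla}. First I would record the elementary facts about $\Aops=\Aop\slop{-}$ and $\Bops=\Bop\srop{-}$: symmetry follows from $\Aop^*=\Aop$, $(\slop{})^*=\slop{}$ together with $\Aop\slop{}\Aop=\slop{-}$ (Lemma~\ref{New1204}), since then $\Aops^*=\slop{-}\Aop=\Aop(\Aop\slop{-}\Aop)=\Aop\slop{}=\Aops$ after using $\Aop\slop{-}\Aop=\slop{}$; wait --- more carefully, $\Aops^*=(\slop{-})^*\Aop^*=\slop{-}\Aop$, and $\Aops^2=\Aop\slop{-}\Aop\slop{-}=(\Aop\slop{-}\Aop)\slop{-}=\slop{}\slop{-}=\Id$, so $\Aops$ is an involution; symmetry then amounts to $\slop{-}\Aop=\Aop\slop{-}$ up to the involution, which is exactly $\Aop\slop{-}\Aop=\slop{-}$, false in general --- so instead one checks $\Aops^*=\slop{-}\Aop=\Aop(\Aop\slop{-}\Aop)=\Aop\slop{}$, and $\Aops=\Aop\slop{-}$, so symmetry is equivalent to $\Aop\slop{}=\Aop\slop{-}$? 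That is wrong too. The correct computation: $\Aops^*=\slop{-}\Aop$ and we want this to equal $\Aop\slop{-}=\Aops$; using $\slop{}\Aop=\Aop\slop{-}$ (which is $\Aop\slop{}\Aop=\slop{-}$ rewritten) we get $\slop{-}\Aop=\Aop\slop{}$, hence $\Aops^*=\Aop\slop{}=(\Aop\slop{-})^{-1}\cdot\Aop\slop{}\cdot$... The cleanest route is: $\Aops$ is an involution, and for an involution $f^*=f \iff f^*=f^{-1}$, i.e. $\Aops$ symmetric $\iff \Aops$ unitary; and $\Aops^*=(\Aop\slop{-})^*=(\slop{-})^*\Aop^*=\slop{-}\Aop=(\Aop^{-1}\slop{}\Aop)^{-1}\Aop=\cdots$. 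I would simply verify $\slop{-}\Aop = \Aop\slop{}$ from $\Aop\slop{}\Aop=\slop{-}$, so $\Aops^* = \slop{-}\Aop = \Aop\slop{} = \Aop(\Aop\slop{-}\Aop) = \slop{-}\Aop$... which circles. Let me instead just assert: $\Aops^* = \slop{-}\Aop$; multiply the relation $\Aop\slop{}\Aop=\slop{-}$ on the left and right by $\Aop$ to get $\slop{}\Aop = \Aop\slop{-}$, equivalently $\slop{-}\Aop = \Aop\slop{}$. Hmm, that gives $\Aops^* = \Aop\slop{} \ne \Aop\slop{-}$ in general. The resolution is that the paper's claim ``$\Aops$ symmetric'' must use that $\slop{}$ commutes with $\scop{}$ and the grading; I would follow the paper's intended one-line argument (symmetric involutions) carefully in the writeup. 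The $\Bops$ case is identical with $\Bop,\srop{}$ in place of $\Aop,\slop{}$.

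**Reducing the charged relations to the uncharged ones.**

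For~\eqref{E:syms01}: start from~\eqref{E:sym01}, $TP_{(4321)}=\bar T\Aop_1^*\Aop_3 = \bar T\Aop_1\Aop_3$ (as $A^*=A$). Substitute the definitions $T(a,c)=T\,\sqop{4ac}{}_1\Rop^c_1\Rop^{-a}_2\Lop^{-a}_3\Rop^{-c}_3$ and $\bar T(a,b)=\bar T\,\sqop{-4ab}{}_1\Lop^{-a}_2\Rop^{-b}_2\Rop^{-a}_3\Rop^b_4$, and compute $T(a,c)P_{(4321)}$ by pushing $P_{(4321)}$ through the subscripted operators using $P_\sigma f_i = f_{\sigma(i)}P_\sigma$. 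The permutation $(4321)$ sends $1\mapsto 4, 4\mapsto 3, 3\mapsto 2, 2\mapsto 1$ (reading the cycle), so e.g. $\sqop{4ac}{}_1 P_{(4321)} = P_{(4321)}\sqop{4ac}{}_{?}$ --- I would fix the convention from the excerpt and track each index. After moving $P_{(4321)}$ to the left one lands on $TP_{(4321)}$ times a product of $\Lop,\Rop,q$-powers on legs $1,2,3,4$, which by~\eqref{E:sym01} equals $\bar T\Aop_1\Aop_3$ times that product. Then the whole game is: in $\bar T(\cdots)$, move the $\Aop_1,\Aop_3$ past the $\Lop^{-a},\Rop^{-a},\Rop^{-b},\Rop^b,q$-powers to bring everything into the form $\bar T(a,b)\cdot\sqop{-2a}{}_1\Aops_1\Aops_3$. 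Crossing $\Aop$ past $\slop{\pm}$ produces $\Aops = \Aop\slop{-}$ and uses $\Aop\Lop\Aop = \Lop^{-1}$ (from~\eqref{identity3}) and $\Aop\Rop\Aop = \Lop^{-1}\Rop C^{-1}$ (from~\eqref{E:ara}); crossing the scalar $q$ past $\Aop$ flips $q\mapsto q^*=q^{-1}$ by Corollary~\ref{nnn17---} / the oddness of $a$, which is exactly where the $q$-exponent arithmetic $4ac - 4ab - (\text{corrections}) = -2a$ must balance, using $a+b+c=\tfrac12$. The relations~\eqref{E:syms12} and~\eqref{E:syms23} are handled the same way, starting from~\eqref{E:sym12} ($TP_{(23)}=\bar T\Aop_2\Bop_3$) and~\eqref{E:sym23} ($TP_{(1234)}=\bar T\Bop_2\Bop_4^*$), now also invoking $\Bop\Rop\Bop = \Rop^{-1}$, $\Bop\Lop\Bop = \Rop^{-1}\Lop C$ (from~\eqref{identity3},~\eqref{E:ara}) and $\Bops=\Bop\srop{-}$.

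**Main obstacle and how to handle it.**

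The genuine difficulty is not any single commutation but the combined bookkeeping: one is manipulating forms on $H^{\otimes 4}$, and equalities only hold \emph{up to $T$} (indeed up to \emph{stable} $T$-equality when rearranging $\Lop$'s and $\Rop$'s), so every rearrangement of $\Lop^{ac}\Rop^{bc}$-type blocks costs a tracked power of the $T$-scalar $\sqop{}{}$ via~\eqref{E:larb=q8abrbla} and~\eqref{E:lr}, and the $q$-scalar itself flips to $q^{-1}$ each time it is moved past an odd operator ($A$, $B$, $\Aops$, $\Bops$) by Corollary~\ref{nnn17---}. The safe approach is to first prove the three relations in a ``skeleton'' form where all $q$-powers are collected at the far left on leg~$1$ and the raw $\Lop,\Rop$-monomials are in a fixed normal order on each leg, using only~\eqref{E:sym01}--\eqref{E:sym23} and the algebraic identities of Lemma~\ref{L:propAB}; then separately run the exponent arithmetic, substituting $c=\tfrac12-a-b$ (and its permuted versions for the other two identities) to see that the accumulated power of $\sqop{}{}$ collapses to $\sqop{-2a}{}_1$, $\sqop{2c}{}_1$, $\sqop{-2a}{}_1$ respectively. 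I expect the $q$-exponent verification in~\eqref{E:syms12} to be the fussiest, since there both an $\Aop$ (on leg~2) and a $\Bop$ (on leg~3) act and the charges redistribute between $a$ and $c$; I would do that case fully and remark that~\eqref{E:syms01} and~\eqref{E:syms23} are ``verified by the same method'', as is the paper's style.
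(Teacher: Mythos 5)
Your symmetry argument for $\Aops$ never closes, and you observe this yourself. The underlying problem is the tacit assumption $\Aop^*=\Aop$ (made explicit later when you write ``$TP_{(4321)}=\bar T\Aop_1^*\Aop_3 = \bar T\Aop_1\Aop_3$ (as $A^*=A$)''). This is not a hypothesis of the lemma --- in a general $\spsi$-system $\Aop$ and its transpose $\Aop^*$ are distinct operators, as the explicit formulas of Lemma~\ref{L:ExpAB} for the $B_\rofo$ example show --- and if it did hold it would force $\Lop=\Aop^*\Aop=\Aop^2=1$ and hence $\slop{}=\slop{-}$, silently trivialising the very discrepancy you are struggling with. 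Keeping $\Aop^*$ distinct removes the circle: one has
\[
\Aops^*=(\slop{-})^*\Aop^*=\slop{-}\Aop^*=\slop{-}\Aop\,\Aop\Aop^*=\Aop\slop{}\,\Aop\Aop^*=\Aop\slop{}\Lop^{-1}=\Aop\slop{-}=\Aops,
\]
using $\slop{-}\Aop=\Aop\slop{}$ (a rewriting of Lemma~\ref{New1204}), $\Aop\Aop^*=\Lop^{-1}$, and $(\slop{})^2=\Lop$. You had all three ingredients but never combined them in this order; your speculated repair --- that the proof ``must use that $\slop{}$ commutes with $\scop{}$ and the grading'' --- is off the mark. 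The case of $\Bops$ is identical with $\Bop,\srop{},\Rop$ in place of $\Aop,\slop{},\Lop$.

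For the three charged identities your strategy is, in outline, the same as the paper's --- unfold $T(a,c)$ and $\bar T(a,b)$, invoke the Fundamental Lemma, and commute the $\Lop$-, $\Rop$-, $q$-powers across $\Aop$ and $\Bop$ --- but it remains only an outline; you never carry out any of the three computations or the $q$-exponent arithmetic you correctly flag as the ``genuine difficulty''. The load-bearing step in the actual proof is the pair of conjugation identities
\[
\Aop\Lop^{-b}\Rop^{-c}\Aops\steq\sqop{4c^2}{}\Cop^{c}\Lop^{b+c-\frac12}\Rop^{-c},
\qquad
\Bop\Lop^{-a}\Rop^{-b}\Bops\steq\sqop{4a^2-4a}{}\Cop^{-a}\Lop^{-a}\Rop^{a+b-\frac12},
\]
established once using Lemma~\ref{New1204}, Formula~\eqref{E:lr}, and stable $T$-commutation of $q$; with these, each relation of~\eqref{E:syms} becomes a short chain using one equation of the Fundamental Lemma, the axioms~\eqref{E:sqrtCC}, and the $T$-scalar property of $q$. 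Finally, since $\Aop^*\ne\Aop$, the $\Aop^*_1$ and $\Bop^*_4$ coming from Equations~\eqref{E:sym01} and~\eqref{E:sym23} must be tracked on their own terms in the proofs of~\eqref{E:syms01} and~\eqref{E:syms23}; your plan as written collapses them to $\Aop$, $\Bop$ and would not survive the correction above.
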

\begin{proof} The equalities $\Aops^2=\Bops^2=1$  directly follow  from
the the involutivity of the  operators $\Aop$, $\Bop$ and the
formulas $\Aop\slop{}\Aop=\slop{-}$ and $\Bop\srop{}\Bop=\srop{-}$.
We have
$$\Aops^*=(\slop{-})^*A^*=\slop{-}A^*=\slop{-}AAA^*=A\slop{} AA^*=A\slop{}L^{-1}=A\slop{-}=\Aops.$$
Similarly, $\Bops^*=\Bops$. To prove~\eqref{E:syms}, observe that
 \begin{multline*}
\Aop\Lop^{-b}\Rop^{-c}\Aops=\Aops\Lop^{\frac12-b}\Rop^{-c}\Aops=\Lop^{b-\frac12}\Aops\Rop^{-c}\Aops=
\Lop^{b-\frac12}\left(\Aops\srop{-}\Aops\right)^{2c}\\
=\Lop^{b-\frac12}\left(\slop{}\srop{-}\scop{}\sqop{}{}\right)^{2c}
\steq\sqop{2c}{}\Cop^{c}\Lop^{b-\frac12}\left(\slop{}\srop{-}\right)^{2c}\\
\steq\sqop{2c}{}\Cop^{c}\Lop^{b-\frac12}\sqop{-2c(1-2c)}{}\Lop^{c}\Rop^{-c}
\steq\sqop{4c^2}{}\Cop^{c}\Lop^{b+c-\frac12}\Rop^{-c}.
\end{multline*}
Here the first, second, etc. equalities follow respectively from:
the definition of~$\Aops$; the formula $\Aops
\slop{}=\slop{-}\Aops$; the involutivity of $\Aops$; the definition
of $\sqop{}{}$; the fact that $\srop{}$ and $\slop{}$ commute with
$\scop{}$ and stably $T$-commute with $\sqop{}{}$; Formula
\eqref{E:lr}; the fact that $\slop{}$ stably $T$-commutes with
$\sqop{}{}$. A similar computation shows that
\begin{equation}\label{dddf} B\Lop^{-a}\Rop^{-b}\Bops\steq\sqop{4a^2-4a}{}
\Cop^{-a}\Lop^{-a}\Rop^{a+b-\frac12}.
\end{equation}

Formula~\eqref{E:syms12} is equivalent to the formula $\bar
T(b,c)\sqop{2c}{}_1\Aops_2\Bops_3 P_{(23)}= T(a,c) $ which we now
prove:
\begin{multline*}
\bar T(b,c)\sqop{2c}{}_1\Aops_2\Bops_3P_{(23)}=  \bar T
\sqop{2c(1-2b)}{}_1\Lop^{-b}_2\Rop^{-c}_2\Rop^{-b}_3\Rop^{c}_4\Aops_2\Bops_3P_{(23)}
\\ = TP_{(23)}A_2B_3
\sqop{2c(1-2b)}{}_1\Lop^{-b}_2\Rop^{-c}_2\Rop^{-b}_3\Rop^{c}_4\Aops_2\Bops_3P_{(23)}\\
=T\sqop{2c(1-2b)}{}_1(B\Rop^{-b}\Bops)_2(\Aop\Lop^{-b}\Rop^{-c}\Aops)_3\Rop^{c}_4\\
=T\sqop{2c(1-2b)}{}_1 \sqop{4c^2}{}_3
\Rop^{b-\frac12}_2(\Cop^c\Lop^{-a}\Rop^{-c}
)_3\Rop^{c}_4\\
=T\sqop{4ac}{}_1\Rop^{b-\frac12}_2(\Cop^c\Lop^{-a}\Rop^{-c}
)_3\Rop^{c}_4\\
=T(\Rop^c \sqop{4ac}{})_1\Rop^{-a}_2(\Lop^{-a}\Rop^{-c}
)_3=T(\sqop{4ac}{}\Rop^c)_1\Rop^{-a}_2(\Lop^{-a}\Rop^{-c}
)_3=T(a,c).
\end{multline*}
Here we use consecutively: the definition of $\bar T(b,c)$;
Formula~\eqref{E:sym12};  the action of~$ \mathbb{S}_4$; the
equalities $B\Rop^{-b}\Bops=\Rop^{b-\frac12}$  and
$\Aop\Lop^{-b}\Rop^{-c}\Aops\teq
 \sqop{4c^2}{}\Cop^{c}\Lop^{-a}\Rop^{-c}$
established above; the equality $Tq_3 =Tq_1^{-1}$;
Formula~\eqref{E:sqrtR1R2}; the fact that $\Rop $ stably
$T$-commutes (and therefore $T$-commutes)  with $\sqop{}{}$;
  the definition of $T(a,c)$.

The proof of \eqref{E:syms23} is similar:
\begin{multline*}
\bar T(a,b)\sqop{-2a}{}_1\Bops_2\Bops_4P_{(4321)}
=TP_{(1234)}B_2B_4^*\sqop{-2a(2b+1)}{}_1\Lop^{-a}_2\Rop^{-b}_2
\Rop^{-a}_3\Rop^{b}_4\Bops_2\Bops_4P_{(4321)}\\
=T(B^*\Rop^{b}\Bops)_1\sqop{-2a(2b+1)}{}_2(B\Lop^{-a}\Rop^{-b}\Bops)_3\Rop^{-a}_4
=T\Rop^{\frac12-b}_1\sqop{-2a(2b+1)}{}_2(B\Lop^{-a}\Rop^{-b}\Bops)_3\Rop^{-a}_4 \\
=T\Rop^{\frac12-b}_1\sqop{4ac}{}_2(\Cop^{-a}\Lop^{-a}\Rop^{-c})_3
\Rop^{-a}_4=T\Rop^{c}_1(\Rop^{-a}
\sqop{4ac}{})_2(\Lop^{-a}\Rop^{-c})_3=T(a,c).
\end{multline*}
Here we use consecutively: the definition of $\bar T(a,b)$ and
Formula~\eqref{E:sym23};  the action of~$ \mathbb{S}_4$; Formula
$B^*\Rop^{b}\Bops=\Rop^{\frac12-b}$ which follows from the
definitions of $\Rop$, $\Bops$ and the equality
 $\Bop\srop{}\Bop=\srop{-}$; Formula~\eqref{dddf} and the
equality $Tq_3 =Tq_2^{-1}$;
  Formula~\eqref{E:sqrtR1R2}; the
  formulas
  $\Rop \sqop{}{}\teq \sqop{}{} \Rop$,   $Tq_2 =Tq_1$
 and
  the definition of $T(a,c)$.

Finally, we prove \eqref{E:syms01}:
\begin{multline*}
\bar T(a,b)\sqop{-2a}{}_1\Aops_1\Aops_3P_{(1234)}
=
TP_{(4321)}A_1^*A_3\sqop{-2a(2b+1)}{}_1\Lop^{-a}_2\Rop^{-b}_2\Rop^{-a}_3\Rop^{b}_4
\Aops_1\Aops_3P_{(1234)}
\\
=T(\Lop^{-a}\Rop^{-b})_1(A\Rop^{-a}\Aops)_2\Rop^{b}_3
(A^*\sqop{-2a(2b+1)}{}\Aops)_4\\
=T(\Lop^{-a}\Rop^{-b})_1(\sqop{4a^2}{}\Cop^a\Lop^{a-\frac12}\Rop^{-a})_2
\Rop^{b}_3
(\sqop{2a(2b+1)}{}\slop{})_4\\
=T\Rop^{-b}_1(\sqop{-4ac}{}\Lop^{a-\frac12}\Rop^{-a})_2
(\Rop^{b}\Lop^{-a})_3
\Lop^{\frac12-a}_4\\
=T(\sqop{-4ac}{}\Rop^{c})_1\Rop^{-a}_2
(\Rop^{-c}\Lop^{-a})_3=T(a,c),
\end{multline*}
where we use (among others) Formulas~\eqref{E:sym01},
\eqref{E:sqrtL1}, \eqref{E:sqrtR1L2}, and~\eqref{E:larb=q8abrbla}.
\end{proof}

\begin{remark}
Though we shall not need it in the sequel, note  that the
involutions $\Aops, \Bops\colon H\to H$ introduced in
Lemma~\ref{le789}   satisfy the
  relations  $ (\Bops\Aops)^3=\sqop{2}{4}$ and
\[
 (\Aops \scop{})^2=(\Bops \scop{})^2=(\Aops \sqop{}{8})^2=(\Bops \sqop{}{8})^2= (\Aops \slop{})^2=
(\Bops \srop{})^2=1.
\]
  \end{remark}

 \subsection{The charged Pentagon and Inversion identities} For any $a , c\in\frac12\Z$, we define the ``charged"
  operators ${\tau}(a,c),\bar {\tau}(a,c):\check H^{\otimes 2}\to \check H^{\otimes 2}$  by
 \[
 {\tau}(a,c)=\sqop{-4ac}{}_1\Rop^{c}_1\Rop^{-a}_2{\tau}\Lop^{-a}_1\Rop^{-c}_1 \quad {\text {and}}\quad
\bar {\tau}(a,c)=\sqop{4ac}{}_1\Rop^{-c}_2\Lop^{-a}_2\bar {\tau}\Rop^{-a}_1\Rop^{c}_2,
 \]
 where $\tau$ and $\bar \tau$ are the endomorphisms of $\check H^{\otimes
 2}$ introduced in Section~\ref{section33now}. The operator
 ${\tau}(a,c)$ is
adjoint to $T(a,c)$: for any $u,v\in\hat H,  x,y\in \check H$,
 \[
\langle \langle  u\otimes v,\tau (a,c)(x\otimes y)\rangle  \rangle=
\langle \langle  u\otimes v, (q^{-4ac} \Rop^{c}\otimes \Rop^{-a}) \tau (\Lop^{-a}\Rop^{-c}(x)\otimes y)
\rangle \rangle\]
\[=
\langle \langle  q^{4ac} \Rop^{c}(u) \otimes \Rop^{- a }(v),   \tau (\Lop^{-a}\Rop^{-c}(x)\otimes y)
\rangle \rangle \] \[=T(q^{4ac} \Rop^{c}(u) \otimes \Rop^{-a}(v) \otimes \Lop^{-a}\Rop^{-c}(x)\otimes y) = T(a,c)(u\otimes v\otimes x\otimes y),
 \]
 where we use  the pairing $\langle\langle \cdot,
 \cdot \rangle\rangle$ introduced in Section~\ref{section33now}, the unitarity of $q$, and the symmetry of
 $\slop{}$, $\srop{}$. Similarly,
 ${\bar\tau}(a,c)$ is
adjoint to $\bar T(a,c)$.

 \begin{lemma}
 The charged  operators $\tau$, $\bar \tau$ satisfy the following
 identities.
 \begin{itemize}
 \item[(i)] The charged pentagon identity:
  \[{\tau}_{23}(a_0,c_0) \, {\tau}_{13}(a_2,c_2) \, {\tau}_{12}(a_4,c_4)=
 {\tau}_{12}(a_3,c_3)\, {\tau}_{23}(a_1,c_1)  \, ({}^\bullet \pi)_{21}
 \]
 for any   $a_0,a_1, a_2,a_3, a_4, c_0, c_1, c_2, c_3, c_4
 \in\frac12\Z$  such that
\begin{equation}\label{pentagonconditions}
 a_1=a_0+a_2,\ a_3=a_2+a_4,\ c_1=c_0+a_4,\ c_3=a_0+c_4,\ c_2= c_1+c_3.
\end{equation}
\item[(ii)] The charged inversion identities: for any    $a, c
 \in\frac12\Z$,
  \[
 {\tau}_{21}(a,c)\, \bar {\tau}(-a,-c)=\pi^\bullet  \quad  {\text {and}} \quad \bar {\tau}(-a,-c) \, {\tau}_{21}(a,c)={}^\bullet \pi.
 \]
\end{itemize}
 \end{lemma}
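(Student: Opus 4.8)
The plan is to deduce the charged pentagon and inversion identities from their uncharged counterparts (the Lemma stating the pentagon identity $\tau_{23}\tau_{13}\tau_{12}=\tau_{12}\tau_{23}({}^\bullet\pi)_{21}$ and the inversion relations $\tau_{21}\bar\tau=\pi^\bullet$, $\bar\tau\tau_{21}={}^\bullet\pi$) by conjugating with the charge-insertion operators and then using the stable $T$-commutation machinery of Section~\ref{section7} to move the scalar factors $q$, $L$, $R$ past $\tau$ and past one another. First I would record the adjointness statement already proved in the excerpt: $\tau(a,c)$ is adjoint to $T(a,c)$ and $\bar\tau(a,c)$ is adjoint to $\bar T(a,c)$ with respect to the pairing $\langle\langle\cdot,\cdot\rangle\rangle$. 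Since a linear form on $H^{\otimes 3}$ (resp.\ $H^{\otimes 2}$) and its three-fold (resp.\ two-fold) adjoint determine each other, it suffices to prove the corresponding identities for the charged $T$-forms on $H^{\otimes 6}$ (resp.\ $H^{\otimes 4}$); this reduces everything to manipulations with $T$, $\bar T$ and the operators $L^{\pm a}_i$, $R^{\pm c}_i$, $q^{\lambda}_i$, for which we have the arsenal of Section~\ref{TheformsTand}, Lemma~\ref{L:semidir}, and the stable $T$-commutation relations.

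For part (i), I would write out $\tau_{23}(a_0,c_0)\,\tau_{13}(a_2,c_2)\,\tau_{12}(a_4,c_4)$ as a composition of copies of $\tau$ on various pairs of tensor factors, each flanked by charge operators and a $q$-power. Using the exchange relations $P_\sigma f_i=f_{\sigma(i)}P_\sigma$, the commutativity $f_ig_j=g_jf_i$ for $i\neq j$, Lemma~\ref{L:xg=x^gg}, Corollary~\ref{nnn17---} (in particular that every $T$-scalar, and $q$ itself, stably $T$-commutes with $L$ and $R$), and the scalar-moving formula \eqref{E:larb=q8abrbla} together with \eqref{E:lr}, I would collect all the $L$- and $R$-powers onto the same three tensor slots that appear in $\tau_{12}(a_3,c_3)\,\tau_{23}(a_1,c_1)$. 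The exponent bookkeeping is exactly what the five linear constraints \eqref{pentagonconditions} are designed to balance: $a_1=a_0+a_2$, $a_3=a_2+a_4$, $c_1=c_0+a_4$, $c_3=a_0+c_4$, $c_2=c_1+c_3$ are precisely the conditions under which the two sides carry the same net power of $L$, of $R$, and of $q$ on each strand after the uncharged pentagon identity is applied in the middle. The factor $({}^\bullet\pi)_{21}$ is a grading projector and commutes (up to $T$-equality, being built from the $\pi^{ij}_k$) with all the grading-preserving operators $L,R,q$ that we are sliding around, so it passes through harmlessly; this is where I would invoke that $L$, $R$, $C$, $q$ are grading-preserving (Lemmas~\ref{L:propAB}, \ref{nnn17}).

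For part (ii), I would substitute the definitions $\tau_{21}(a,c)=P_{(12)}\tau(a,c)P_{(12)}$ expanded via $\tau(a,c)=q^{-4ac}_1 R^{c}_1 R^{-a}_2\,\tau\,L^{-a}_1 R^{-c}_1$ and $\bar\tau(-a,-c)=q^{-4ac}_1 R^{c}_2 L^{a}_2\,\bar\tau\,R^{a}_1 R^{-c}_2$, form the composite $\tau_{21}(a,c)\,\bar\tau(-a,-c)$, and again slide all $L,R,q$ factors toward the center using stable $T$-commutation so that they either cancel in conjugate pairs or combine into $q^0=1$; the constraints here are trivial ($a,c$ arbitrary) precisely because the charge of $\bar\tau$ is the negative of that of $\tau$, so the net scalar is $1$ and we land on the uncharged relation $\tau_{21}\bar\tau=\pi^\bullet$. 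The second inversion identity is handled symmetrically. Throughout I would use $q^*=q^{-1}$ (Lemma~\ref{nnn17}), the symmetry of $L$, $R$, $q$, and the identity \eqref{E:d2} only if needed to simplify an intermediate $q$-exponent.

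The main obstacle I anticipate is the exponent accounting in part (i): one must track, strand by strand, the powers of $L$, of $R$, and of $q$ produced by three nested applications of the charged $\tau$ on overlapping pairs of slots, use \eqref{E:larb=q8abrbla} to swap an $L^a$ past an $R^b$ at the cost of a $q^{8ab}$, and verify that the five relations \eqref{pentagonconditions} make the two sides agree as $T$-forms (equivalently, that the discrepancy operator is a product of $T$-scalars equal to $1$). A secondary subtlety is justifying that all these manipulations are legitimate at the level of honest equalities rather than merely $T$-equalities or stable $T$-equalities when we finally take adjoints — but since adjointness with respect to $\langle\langle\cdot,\cdot\rangle\rangle$ is an honest statement and the target identities for $\tau(a,c)$, $\bar\tau(a,c)$ are honest operator identities, one has to check that the pairing $\langle\langle\cdot,\cdot\rangle\rangle$ separates points on the relevant subspaces $({}^\bullet\pi)$, $\pi^\bullet$ of $\check H^{\otimes k}$, which follows from the non-degeneracy in Lemma~\ref{L:1}; I would state this reduction carefully at the outset so that the bulk of the proof is the purely computational verification on the $T$-forms.
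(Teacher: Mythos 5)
Your plan follows the same strategy as the paper's proof: reduce the charged pentagon and inversion identities to the uncharged ones by sliding the charge operators $L^{a}$, $R^{c}$, $q^{\lambda}$ through the $\tau$'s, track the net $q$-exponent, and check that the constraints \eqref{pentagonconditions} balance it (the paper boils the balancing down to the single numerical identity $a_0c_0+a_2c_2+a_4c_4+2a_0a_4=a_1c_1+a_3c_3$). The one genuine divergence, which would cost you, is your decision to carry out the bookkeeping at the level of $T$-forms on $H^{\otimes 6}$ and to ``take adjoints at the end.'' A composite $\tau_{23}\tau_{13}\tau_{12}$ is adjoint not to a single $T$-form on $H^{\otimes 4}$ but to a contraction of several copies of $T$, and the machinery you cite (Lemma~\ref{L:semidir}, Lemma~\ref{L:xg=x^gg}, \eqref{E:larb=q8abrbla}, \eqref{E:lr}) manipulates a \emph{single} $T$; it does not act on such contracted composites as stated. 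The paper sidesteps this by dualizing \emph{first}: using the adjointness of $\tau$ to $T$ and the non-degeneracy of $\langle\langle\cdot,\cdot\rangle\rangle$ (exactly the point you relegate to a ``secondary subtlety'' at the end), it converts \eqref{E:sqrtCC} into the honest operator identities $\scop{}_1\scop{}_2\tau=\tau\scop{}_1\scop{}_2$, $\srop{}_1\slop{}_2\tau=\tau\srop{}_1\slop{}_2$, $\srop{}_1\srop{}_2\tau=\tau\scop{}_1\srop{}_2$, $\slop{}_1\scop{-}_2\tau=\tau\slop{}_1\slop{}_2$ on $\check H^{\otimes 2}$, and likewise converts \eqref{E:q1=q2++++} and the one needed instance of \eqref{E:larb=q8abrbla} into $\tau$-level equalities. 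After that the whole computation lives in $\End(\check H^{\otimes 3})$ with ordinary operator algebra, the three $\tau$'s compose freely, and the $({}^\bullet\pi)_{21}$ projector passes through because $L,R,C,q$ are grading-preserving, exactly as you anticipate. So: put the dualization at the start rather than the end, and your plan becomes the paper's proof. Your treatment of the inversion identities is fine and coincides with the paper's (the charges of $\tau_{21}(a,c)$ and $\bar\tau(-a,-c)$ cancel exactly, and $\pi^\bullet$ commutes with the grading-preserving $R$'s).
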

 \begin{proof} We first rewrite Formulas~\eqref{E:sqrtCC} in terms
 of   $\tau$ and $\bar \tau$:
\[
\scop{}_1\scop{}_2\tau  =\tau\scop{}_1\scop{}_2,\,\,
\srop{}_1\slop{}_2\tau  =\tau\srop{}_1\slop{}_2,\,\,
\srop{}_1\srop{}_2\tau  =\tau \scop{}_1\srop{}_2,\,\,
\slop{}_1\scop{-}_2\tau  =\tau \slop{}_1\slop{}_2.
\]
In the following computations, the underlined expressions are
transformed via one of these four equalities. We have
 \begin{multline*}\sqop{4(a_1c_1+a_3c_3)}{}_1{\tau}_{12}(a_3,c_3)\, {\tau}_{23}(a_1,c_1) \, ({}^\bullet \pi)_{21}\\
 =
\Rop^{c_3}_1\Rop^{-a_3}_2{\tau}_{12}\Lop^{-a_3}_1\Rop^{-c_3}_1
\Rop^{c_1}_2\Rop^{-a_1}_3{\tau}_{23}\Lop^{-a_1}_2\Rop^{-c_1}_2
 ({}^\bullet \pi)_{21}\\
  =
 \Rop^{c_3}_1\Rop^{-a_3}_2\Rop^{-a_1}_3\underline{{\tau}_{12}\Rop^{c_1}_2}
 {\tau}_{23}\Lop^{-a_3}_1\Rop^{-c_3}_1\Lop^{-a_1}_2\Rop^{-c_1}_2
 ({}^\bullet \pi)_{21}\\
  = \Rop^{c_3}_1\Rop^{-a_3}_2\Rop^{-a_1}_3
 \Rop^{c_1 }_1\Rop^{c_1}_2
 {\tau}_{12}\, \Cop^{-c_1}_1 {\tau}_{23}\Lop^{-a_3}_1\Rop^{-c_3}_1\Lop^{-a_1}_2\Rop^{-c_1}_2
 ({}^\bullet \pi)_{21}\\
 =
\Rop^{c_2}_1\Rop^{c_0-a_2}_2\Rop^{-a_1}_3
 {\tau}_{12}{\tau}_{23}({}^\bullet
 \pi)_{21}\Cop^{-c_1}_1\Lop^{-a_3}_1\Rop^{-c_3}_1\Lop^{-a_1}_2\Rop^{-c_1}_2,
 \end{multline*}
 where the last equality follows from the definition of ${}^\bullet
 \pi$ and the fact that $\srop{}$, $\slop{}$, $\scop{}$ are
 grading-preserving operators.  We also have
 \begin{multline*}
 \sqop{4(a_0c_0+a_2c_2+a_4c_4)}{}_1 \, {\tau}_{23}(a_0,c_0)\, {\tau}_{13}(a_2,c_2) \, {\tau}_{12}(a_4,c_4)\\
 =
\Rop^{c_0}_2\Rop^{-a_0}_3{\tau}_{23}\Lop^{-a_0}_2\Rop^{-c_0}_2
\Rop^{c_2}_1\Rop^{-a_2}_3{\tau}_{13}\Lop^{-a_2}_1\Rop^{-c_2}_1
\Rop^{c_4}_1\Rop^{-a_4}_2{\tau}_{12}\Lop^{-a_4}_1\Rop^{-c_4}_1\\
  =
\Rop^{c_2}_1\Rop^{c_0}_2\Rop^{-a_0}_3\underline{{\tau}_{23}\Rop^{-a_2}_3}
{\tau}_{13}\Lop^{-a_2}_1\Rop^{-a_0-c_1}_1
 \Lop^{-a_0}_2\Rop^{-c_1}_2{\tau}_{12}\Lop^{-a_4}_1\Rop^{-c_4}_1\\
 =
\Rop^{c_2}_1\Rop^{c_0-a_2}_2\Rop^{-a_0-a_2}_3{\tau}_{23}
\Cop^{a_2}_2 {\tau}_{13}
 \Lop^{-a_2}_1
 \Rop^{-a_0}_1 \Lop^{-a_0}_2\underline{\Rop_1^{-c_1}\Rop_2^{-c_1}
 {\tau}_{12}}\Lop^{-a_4}_1\Rop^{-c_4}_1\\
 =\Rop^{c_2}_1\Rop^{c_0-a_2}_2\Rop^{-a_1}_3{\tau}_{23}{\tau}_{13}
 \Lop^{-a_2}_1
 \Cop^{a_2}_2\underline{\Rop_1^{-a_0}\Lop_2^{-a_0}{\tau}_{12}}\Cop^{-c_1}_1 \Rop^{-c_1}_2 \Lop^{-a_4}_1\Rop^{-c_4}_1\\
  =\Rop^{c_2}_1\Rop^{c_0-a_2}_2\Rop^{-a_1}_3{\tau}_{23}{\tau}_{13}
 \underline{\Lop_1^{-a_2}\Cop_2^{a_2}
 {\tau}_{12}}\Rop^{-a_0}_1 \Lop^{-a_0}_2 \Cop^{-c_1}_1\Lop^{-a_4}_1\Rop^{-c_4}_1\Rop^{-c_1}_2\\
  =\Rop^{c_2}_1\Rop^{c_0-a_2}_2\Rop^{-a_1}_3{\tau}_{23}{\tau}_{13}
 {\tau}_{12}\Cop^{-c_1}_1\Lop^{-a_2}_1\Rop^{-a_0}_1\Lop^{-a_4}_1\Rop^{-c_4}_1\Lop^{-a_1}_2
 \Rop^{-c_1}_2\\
  =\sqop{-8a_0a_4}{}_1\Rop^{c_2}_1\Rop^{c_0-a_2}_2\Rop^{-a_1}_3{\tau}_{23}{\tau}_{13}
 {\tau}_{12}\Cop^{-c_1}_1\Lop^{-a_3}_1\Rop^{-c_3}_1\Lop^{-a_1}_2
 \Rop^{-c_1}_2.
 \end{multline*}
 Comparing the   obtained expressions and   using that
 ${\tau}_{12}{\tau}_{23}({}^\bullet \pi)_{21}={\tau}_{23}{\tau}_{13}
 {\tau}_{12}$, we conclude that the charged pentagon equality
 follows from the formula
 \[
 a_0c_0+a_2c_2+a_4c_4+2a_0a_4=a_1c_1+a_3c_3.
 \]
 This  formula is   verified as follows:
 \begin{multline*}
 a_0c_0+a_2c_2+a_4c_4+2a_0a_4=a_0(c_0+a_4)+a_2c_2+a_4(a_0+c_4)\\
 =a_0c_1+a_2(c_1+c_3)+a_4c_3=(a_0+a_2)c_1+(a_2+a_4)c_3=a_1c_1+a_3c_3.
 \end{multline*}

  We now prove the first inversion identity:
 \begin{multline*}
 {\tau}_{21}(a,c) \, \bar {\tau}(-a,-c)=\sqop{-4ac}{}_1\Rop^{c}_2\Rop^{-a}_1{\tau}_{21}\Lop^{-a}_2\Rop^{-c}_2
 \sqop{4ac}{}_1\Rop^{c}_2\Lop^{a}_2\bar {\tau}\Rop^{a}_1\Rop^{-c}_2\\
 =\Rop^{c}_2\Rop^{-a}_1{\tau}_{21}\bar {\tau}\Rop^{a}_1\Rop^{-c}_2=
\Rop^{c}_2\Rop^{-a}_1 \pi^\bullet \Rop^{a}_1\Rop^{-c}_2=\pi^\bullet,
 \end{multline*}
 where the second equality follows from~\eqref{E:q1=q2++++} since $q$ is a $T$-scalar.
 The equality $\Rop^{c}_2\Rop^{-a}_1 \pi^\bullet \Rop^{a}_1\Rop^{-c}_2=\pi^\bullet$ follows from the fact that
  $\pi^\bullet$ is the projector on a direct sum of   multiplicity spaces.
The second inversion identity is proved similarly.
 \end{proof}

\section{Charged $6j$-symbols}\label{section9}

  Let $i,j,k,l,m,n\in I$ and $a, c
 \in\frac12\Z$. Replacing $T$ by $T(a,c)$ in the definition of the positive $6j$-symbol in Section~\ref{section42Ibelieve},
 we obtain the {\it charged positive
 $6j$-symbol}
 \begin{equation}
 \sj ijklmn (a,c) \in  H_m^{kl}\otimes H_k^{ij}\otimes H_{jl}^n \otimes
H_{in}^m .
\end{equation}
Replacing $\bar T$ by $\bar T(a,c)$ in the definition of the
negative $6j$-symbol in Section~\ref{section42Ibelieve},
 we obtain the {\it charged negative
 $6j$-symbol}
 \begin{equation}
 \sjn ijklmn (a,c)    \in H_m^{in}\otimes H_n^{jl}\otimes H_{ij}^k \otimes
H_{kl}^m.
\end{equation}
The formulas of   Section~\ref{section42Ibelieve} computing $T, \bar
T, \tau, \bar \tau$ in terms of the $6j$-symbols extend to the
present setting by inserting   $(a,c)$ after each occurrence of $T,
\bar T, \tau, \bar \tau$ and after each $6j$-symbol.

  The properties of the charged $T$-forms
  established in Section~\ref{section8now} can be
rewritten in terms of the charged $6j$-symbols. For any
$a,b,c\in\frac12\Z$ such that $a+b+c=\frac12$,
Formula~\eqref{E:syms01} yields
\begin{equation}\label{E:6jsyms01}
 \sj ijklmn (a,c) =  P_{(4321)}
q_1^{2a}\Aops_1 \Aops_3   \left ( \sjn {i^*}kjlnm (a,b)\right ),
\end{equation}
where $\Aops_1$ is induced by the restriction of $\Aops$ to
$H^{i^*m}_n$ and $\Aops_3$ is induced by the restriction of $\Aops$
to $H^{j}_{i^*k}$. Formula~\eqref{E:syms12} yields
\begin{equation}\label{E:6jsyms12}
 \sj ijklmn (a,c) =  P_{(23)} q_1^{-2c}\Aops_2 \Bops_3   \left ( \sjn
k{j^*}inml (b,c) \right ),
\end{equation}
 where $\Aops_2$ is induced by the
restriction of $\Aops$ to $H^{{j^*}n}_l$  and $\Bops_3$ is induced
by the restriction of $\Bops$ to $H^{i}_{kj^*}$. Finally,
Formula~\eqref{E:syms23} yields
\begin{equation}\label{E:6jsyms23}
 \sj ijklmn (a,c)=  P_{(1234)} q_1^{2a}\Bops_2 \Bops_4  \left ( \sjn inm{l^*}kj
(a,b) \right ),
\end{equation}
 where $\Bops_2$ is induced by the restriction of
$\Bops$ to $H^{nl^*}_j$  and $\Bops_4$ is induced by the restriction
of $\Bops$ to $H_{ml^*}^{k}$.

The charged pentagon identity   yields that for any $j_0,j_1,
 \ldots , j_8\in I$ and   any   $a_0,a_1, a_2,a_3, a_4, c_0, c_1, c_2, c_3, c_4
 \in\frac12\Z$ satisfying~\eqref{pentagonconditions},
 $$\sum_{j\in I} \ast^{jj_4}_{j_7} \ast^{j_2j_3}_j \ast^{j_1j}_{j_6}
   (\sj {j_1}{j_2}{j_5}{j_3}{j_6}j (a_0, c_0)\otimes
  \sj {j_1}j{j_6}{j_4}{j_0}{j_7} (a_2, c_2)$$
  $$ \otimes
  \sj {j_2}{j_3}{j}{j_4}{j_7}{j_8} (a_4, c_4)  )$$
 \begin{equation}\label{E:ChPentId}
 =g_{j_2,j_3} P_{(135642)} \ast^{j_5j_8}_{j_0}   (
 \sj {j_1}{j_2}{j_5}{j_8}{j_0}{j_7} (a_1, c_1) \otimes
 \sj {j_5}{j_3}{j_6}{j_4}{j_0}{j_8} (a_3, c_3)   ).
 \end{equation}
Here both sides lie in the $\FK$-vector
space~\eqref{ambientvectorspacepentagon}.

The first inversion relations  gives that for all $i,j,k,k',l,m \in
I$ and $a,c \in\frac12\Z$,
$$\sum_{n\in I} \ast^n_{jl}\ast^m_{in} (\sj ijklmn (a,c) \otimes \sjn
ij{k'}lmn (-a, -c))$$ $$=\delta_{k'}^k \, g_{j,l} \,  P_{(432)}
(\delta^{kl}_{m}\otimes \delta^{ij}_k) .$$   The   second inversion
relations  gives that for all $i,j, l,m ,n,n' \in I$ and $a,c
\in\frac12\Z$,
$$\sum_{k\in I} \ast^k_{ij}\ast^m_{kl} (\sjn ijklm{n'} (-a,-c) \otimes \sj
ij{k}lmn (a,c)) $$ $$= \delta_{n'}^n\,  g_{i,j}\, P_{(432)} (\delta^{in}_{m}\otimes
\delta^{jl}_n) .$$
\section{Three-manifold invariants}\label{section10}
In this section, following the ideas of the paper \cite{K1}, we associate to a $\spsi$-system an invariant of an oriented compact three-manifold together with a non-empty link.
\subsection{Topological preliminaries}\label{sect-MMMLLL} Throughout this subsection, the symbol $M$ denotes a closed connected orientable
$3$-manifold. Following \cite{BB}, by a
\emph{quasi-regular triangulation} of $M$ we mean a decomposition of $M$ as a
union of embedded tetrahedra such that the intersection of any two tetrahedra
is a union (possibly, empty) of several of their vertices, edges, and
(2-dimensional) faces.  Quasi-regular triangulations differ from the usual
triangulations in that they may have tetrahedra meeting along several
vertices, edges, and faces. Note that  each edge  of a quasi-regular
triangulation has two distinct endpoints.

A \emph{Hamiltonian link} in a quasi-regular
triangulation $\T$ of $M$ is a set $\LL$ of unoriented edges of $\T$ such that every
vertex of $\T$ belongs to exactly two elements of $\LL$.  Then the union of the
edges of $\T$ belonging to $\LL$ is a link $L$ in $M$. We call the pair
$(\T,\LL)$ an \emph{$H$-triangulation} of $(M,L)$.

\begin{proposition}[\cite{BB}, Proposition 4.20]\label{L:Toplemma-}
 For any non-empty link $L$ in $M$, the pair ($M$,
 $L$) admits an $H$-triangulation.
\end{proposition}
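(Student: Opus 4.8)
The plan is to build $L$ into a combinatorial triangulation of $M$ and then to push every superfluous vertex onto $L$ by local moves, keeping the triangulation quasi-regular throughout.

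First I would invoke standard PL topology: since $M$ is a closed orientable $3$-manifold and $L\subset M$ is a $1$-dimensional submanifold, $M$ admits a simplicial triangulation $\T$ in which $L$ is a subcomplex, so that $L$ is a union of edges of $\T$. Every simplicial triangulation of a manifold is automatically quasi-regular — two simplices meet along a single common face, and each edge has two distinct endpoints — so this step already produces a quasi-regular $\T$ with $L$ in its $1$-skeleton. Let $\LL$ be the set of edges of $\T$ contained in $L$, so that $\bigcup\LL=L$.

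I would then induct on the number $N$ of vertices of $\T$ not lying on $L$. If $N=0$ we are done: every vertex $p$ of $\T$ lies on $L$, and since $L$ is a $1$-manifold, $p$ is an endpoint of exactly two edges of $\LL$; thus $(\T,\LL)$ is an $H$-triangulation of $(M,L)$. If $N>0$, choose a vertex $v\notin L$; its closed star is a $3$-ball $v\ast S$ with $S=\operatorname{lk}(v)$ a triangulated $2$-sphere. I would simplify $S$ by bistellar moves of types $2\leftrightarrow2$ (edge flip) and $3\to1$ (removal of a degree-$3$ vertex) until $S$ becomes the boundary of a tetrahedron: if $S$ has a degree-$3$ vertex, remove it, and otherwise — by Euler's formula $S$ has a vertex of degree $\le5$, and one or two edge flips turn such a vertex into one of degree $3$ — so iterating decreases the number of vertices of $S$ down to four. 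These moves are implemented inside $M$ by $2$-$3$ and $3$-$2$ Pachner moves on tetrahedra of $v\ast S$, and, once $S=\partial\Delta^3$, a $4$-$1$ Pachner move deletes $v$. The crucial point is that each such Pachner move is supported in the ball $v\ast S$ and either deletes cells containing $v$ or adds the single new edge of a $2$-$3$ move, which joins two vertices distinct from $v$; hence no loop edge is ever created, no new vertex off $L$ appears, and — because $v\notin L$ — no edge of $L$ is destroyed, so $L$ remains a subcomplex and $\LL$ is unchanged. After the $4$-$1$ move, $\T$ is again quasi-regular with one fewer vertex off $L$, which closes the induction.

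The step requiring the most care, and in my view the main obstacle, is this inductive reduction: one must check that an arbitrary triangulated $2$-sphere can be carried to $\partial\Delta^3$ by bistellar moves that never increase the vertex count (so the procedure terminates and creates no further off-$L$ vertices), and that each Pachner move performed inside $v\ast S$ preserves quasi-regularity of the whole triangulation — that is, produces neither a loop edge nor a non-embedded tetrahedron. Non-embeddedness is impossible since every new simplex lies in the embedded ball $v\ast S$; a loop edge can never be created; and the only way a new tetrahedron could coincide with an existing one is the exceptional case in which $M\cong S^3$ is triangulated by two tetrahedra, which is avoided by a harmless preliminary subdivision. With these verifications in place the proposition follows; a complete treatment appears in \cite[Proposition~4.20]{BB}.
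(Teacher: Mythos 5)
The paper does not prove this proposition---it simply cites \cite[Proposition~4.20]{BB}---so there is no in-text proof to compare against; what follows is an assessment of your blind attempt on its own terms. Your overall plan (start from a simplicial triangulation of $M$ in which $L$ is a subcomplex, then kill the vertices off $L$ one at a time by local moves) is reasonable, and the base case is fine, but the justification of the key inductive step has a genuine gap: you assert that ``a loop edge can never be created,'' and your supporting remark---that a new edge produced by a $2{\to}3$ move joins two vertices distinct from $v$---does not address the actual danger, namely that those two vertices might coincide \emph{with each other}. This cannot happen in a simplicial triangulation, but after the very first $2{\to}3$ move that happens to duplicate an existing edge, simpliciality is lost; in a merely quasi-regular triangulation two distinct tetrahedra may share all four vertices, and a $2{\to}3$ move on such a pair produces exactly the loop edge you claim cannot occur. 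Your ``harmless preliminary subdivision'' argument only addresses the case of a manifold triangulated by two tetrahedra, not the coincidences that can be created dynamically inside the star of $v$ during the reduction.

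Two further points need shoring up. First, once multi-edges appear, $\operatorname{lk}(v)$ ceases to be a simplicial triangulation of $S^2$ (it may contain bigons or parallel edges), so the classical ``degree-$\le 5$ vertex plus one or two flips'' argument---which is phrased for simplicial spheres and already requires checking that the proposed flip does not create a double edge---no longer applies as stated. Second, the $3{\to}2$ and $4{\to}1$ Pachner moves you invoke require specific local combinatorics (three pairwise distinct tetrahedra around an edge of valence three; a vertex in exactly four tetrahedra forming an embedded $\Delta^3$), and in the quasi-regular setting these conditions are not automatic even when the link has the expected combinatorics. None of this makes your strategy unsalvageable, but ``preserves quasi-regularity'' is precisely the content of the proposition and cannot be waved through; one either needs to maintain simpliciality (which your moves do not), or to work with a move calculus adapted to quasi-regular triangulations and verify the validity conditions at each step, which is what the cited reference actually does.
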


 $H$-triangulations   of $(M,L)$ can be related by
elementary moves of two types, the {\bubble} moves and the $H$-Pachner $2\leftrightarrow
 3$ moves.
 The
{\it positive {\bubble} move} on an $H$-triangulation $(\T,\LL)$ starts with a choice of a face $F=v_1v_2v_3$ of
$\T$ such that at least one of its edges, say $v_1v_3$, is in $\LL$. Consider
two tetrahedra of $\T$ meeting along~$F$. We unglue these tetrahedra along $F$
and insert a $3$-ball between the resulting two copies of $F$. We triangulate
this $3$-ball by adding a vertex $v_4$ at its center and three edges connecting  $v_4$ to $ v_1$,
$ v_2$, and $ v_3$.  The edge $v_1v_3$ is removed from $\LL$ and replaced by the
edges $ v_1v_4$ and $ v_3v_4$.
 This move can be visualized as the transformation
$$ \epsh{bubblegl}{30pt}\longrightarrow \epsh{bubblebl}{30pt}
 \put(-23,0){\tiny $v_4$}
 $$
where the bold (green) edges belong to $\LL$.  The inverse move is the {\it
 negative {\bubble} move}.
The  {\it positive  {\Pachner} $2\leftrightarrow
 3$ move}   can be visualized as the transformation
$$\epsh{tetra2}{50pt}\longleftrightarrow\epsh{tetra3}{50pt}.$$
This move  preserves the set  $\LL$.  The inverse move is the {\it negative  {\Pachner}  move}; it is allowed only when the edge common to the three tetrahedra on the right
is not in~$\LL$.

\begin{proposition}[\cite{BB}, Proposition 4.23]\label{L:Toplemma}
 Let $L$ be a non-empty link in
 $M$.  Any two $H$-triangulations of $(M,L)$ can be related by a finite
 sequence of {\bubble} moves and {\Pachner} moves in the class of
 $H$-triangulations of $(M,L)$.
\end{proposition}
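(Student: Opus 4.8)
This is \cite[Proposition 4.23]{BB}, so I will only outline the strategy I would follow. The plan is to derive the relative statement from the absolute Pachner theorem for triangulations of $M$ while keeping control of the Hamiltonian link. Given two $H$-triangulations $(\T_1,\LL_1)$ and $(\T_2,\LL_2)$ of $(M,L)$, I would first normalize the triangulations along $L$. The key observation is that a single \bubble{} move has the local effect of inserting one new vertex in the interior of a chosen edge of $\LL$, subdividing that edge into a length-two path in the link and replacing the two adjacent tetrahedra by four while changing nothing else. Since any two triangulations of the $1$-manifold $L$ admit a common subdivision obtained by successively inserting interior vertices of edges, and each such insertion is realized — up to an isotopy of $L$ supported in a ball — by a \bubble{} move, I can use \bubble{} moves to pass from $(\T_1,\LL_1)$ and $(\T_2,\LL_2)$ to $H$-triangulations whose restrictions to $L$ coincide; write $\Gamma$ for the resulting common subcomplex.

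Next I would relate the two triangulations of $M$ rel $\Gamma$. Here one invokes a relative form of Pachner's theorem: two quasi-regular triangulations of a PL $3$-manifold containing a common subcomplex $\Gamma$ become isomorphic after a finite sequence of stellar subdivisions, and one can arrange these subdivisions to subdivide only simplices disjoint from the open star of $\Gamma$, using that $\Gamma$ is one-dimensional and that the order of stellar moves is flexible. Decomposing each stellar subdivision and its inverse into Pachner $2\leftrightarrow3$ and $1\leftrightarrow4$ moves supported away from $\Gamma$, and then re-expressing every $1\leftrightarrow4$ move as a composition of a $2\leftrightarrow3$ move with a \bubble{} move and their inverses, one obtains a sequence of \Pachner{} $2\leftrightarrow3$ moves and \bubble{} moves connecting the two $H$-triangulations. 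Since none of these moves ever deletes an edge of $\Gamma$, every intermediate triangulation is an $H$-triangulation of $(M,L)$, which is what is claimed.

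The hard part will be the bookkeeping in this last step: one must guarantee that no admissibility constraint is ever violated — that a negative \Pachner{} move is never applied to the edge common to three tetrahedra when that edge lies in the link, and that every auxiliary vertex created along the way can subsequently be removed by an admissible negative \bubble{} or negative \Pachner{} move. Arranging the common subdivision in the second step to be compatible with $\Gamma$, so that all collapses occur at edges outside the link, is the crux of the matter and is where the bulk of the argument in \cite{BB} is spent; the remaining verifications amount to a local inspection of the two types of moves.
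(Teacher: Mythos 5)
The paper gives no proof of this proposition; it is cited directly to \cite{BB}, Proposition~4.23. So there is nothing to compare against word for word, and the question is whether your outline, taken on its own, is sound.

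Your high-level strategy (normalize the triangulations along $L$ with \bubble{} moves, then apply a relative Pachner-type argument away from the link, then translate the resulting moves into the restricted calculus) is plausible and, at that level of generality, resembles what Baseilhac and Benedetti actually do. But there is a concrete gap in the middle step that is not just bookkeeping. The \bubble{} move, as defined in the paper, is \emph{only} allowed on a face $F$ with at least one edge in $\LL$; it is the unique move that changes the number of vertices, and it is anchored to the link. Your plan "re-express every $1\leftrightarrow4$ move as a composition of a $2\leftrightarrow3$ move with a \bubble{} move and their inverses" therefore cannot be carried out locally: a $1\leftrightarrow4$ move deep in the interior of a simplex far from $L$ has no nearby face eligible for a \bubble{} move, so the proposed decomposition is simply unavailable there. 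One would need to first "transport" a link-adjacent vertex across the manifold by a chain of $2\leftrightarrow3$ moves and argue that this chain and its inverse cancel suitably --- an argument with its own admissibility and quasi-regularity hazards (e.g.\ a $2\to3$ move is not allowed if it would create a loop edge, and a $3\to2$ move is forbidden on an edge of $\LL$), none of which you address. Relatedly, invoking a relative Alexander--Newman stellar-subdivision theorem as stated is not obviously compatible with preserving quasi-regularity at every intermediate step, and the reduction from stellar moves to bistellar $2\leftrightarrow3$ and $1\leftrightarrow4$ moves already ignores this.

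In fact the argument in \cite{BB} avoids these difficulties by working with $2\leftrightarrow3$ (\Pachner{}) moves and \bubble{} moves \emph{as the primitive generators from the start}, rather than trying to recover them as a decomposition of a general stellar/Pachner sequence; the connectedness statement is established in the dual standard-spine picture (Matveev--Piergallini T-moves together with the spine version of the bubble move), where the vertex count of the triangulation is a parameter that only the bubble move changes and where link-adjacency is built in. So your route is genuinely different, and the place where it would most likely break down if pressed is exactly the translation step you flag as "the crux of the matter": asserting it is easy is not the same as doing it, and as written the outline does not explain why the restricted moves generate enough of the absolute move calculus to reach an arbitrary relative Pachner/stellar chain.
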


Charges on $H$-triangulations first have been introduced in \cite{K1} and  the corresponding theory subsequently has been developed in \cite{BB}.  This theory is a natural extension of the theory of angle structures, see for example \cite{Neu90,Neu98}, into the framework of arbitrary triangulated three-manifolds.

 By a  \emph{charge} on a tetrahedron $T$, we mean a $\frac12\Z$-valued function $c$  on the set of edges of $T$ such that $c(e)=c(e')$ for any opposite edges $e, e'$ and $c(e_1)+c(e_2)+c(e_3)=1/2$
 for any edges $e_1, e_2, e_3$ of $T$ forming the boundary of a face.
Consider now an   $H$-triangulation $(\T,\LL)$ of $(M,L)$ as above. Let $E(\T)$ be the set of edges of $\T$ and let $\widehat{E}(\T)$ be the set of pairs (a tetrahedron $T$ of $\T$, an edge of $T$). Let $\epsilon_\T: \widehat{E}(\T) \rightarrow E(\T)$ be the obvious  projection.
For any edge $e$ of $\T$, the set  $\epsilon_\T^{-1}(e)$ has $n$ elements, where $n$ is the number of tetrahedra of $\T$ adjacent to $e$.

\begin{definition}\label{D:intcharge}
A  \emph{charge} on   $(\T,\LL)$  is a map $c: \widehat{E}(\T) \rightarrow \frac12\Z$ such that
\begin{enumerate}
\item the restriction of $c$ to any   tetrahedron $T$ of $\T$ is a  charge on $T$, \label{I:Defintcharge1}
\item  for each edge $e$ of $\T$ not belonging to $\LL$ we have $\sum_{e'\in \epsilon_\T^{-1}(e)}c(e')=1$,
\item  for each edge $e$ of $\T$ belonging to $\LL$ we have $\sum_{e'\in \epsilon_\T^{-1}(e)}c(e')=0$.
 \end{enumerate}
 \end{definition}

 Each charge $c$ on  $(\T,\LL)$ determines a cohomology class $[c]\in H^1(M; \Z/2\Z)$ as follows.
 Let $s$ be a simple closed curve in $M$ which lies  in general position with respect to $\T$ and such that $s$   never leaves a tetrahedron $T$ of  $\T$  through  the same 2-face by which it entered.
Thus each time $s$ passes through   $T$, it determines  a unique edge $e$ belonging to both  the entering and departing
faces. The sum of the residues $2 \,c \vert_T(e) (mod \, 2) \in \Z/2\Z$ over all passages of $s$ through  tetrahedra of $\T$ depends only on the homology class of $s$ and is the value of $[c]$ on $s$.

It is known that   each $H$-triangulation   $(\T,\LL)$ of $(M,L)$ has a  charge representing any given element of  $  H^1(M; \Z/2\Z)$.
We briefly outline a proof of this claim following \cite{Neu90},  \cite{Bas} and referring to these papers for the exact definitions and the details.  In this argument (and only here)  we shall use \lq\lq integral charges" that  are equal to two times our charges and take only integer values.  Let $\overline J$ be the abelian group generated by pairs (a tetrahedron $\Delta$ of $\mathcal T$, an edge of $\Delta$) modulo  the relations   $(\Delta, e)=(\Delta, \overline e)  $
where $\overline e$ is the edge opposite to $e$ in $\Delta$. An integral charge on $\mathcal T$ may be seen  as an element   of $\overline J$ satisfying certain additional properties. Recall   the Neumann chain complex associated with $\mathcal T$:
$$ \xymatrix
{
\tau=   (  C_0 \ar[r]^{\alpha } & C_1 \ar[r]^{\beta } & J \ar[r]^{\beta^* }&C_1  \ar[r]^{\alpha^* } &C_0  \ar[r] & 0).
 }
$$
Here  $C_i$ with $i=0, 1$ is the free abelian group freely generated by the $i$-dimensional (unoriented) simplices of  $\mathcal T$ and  $J$ is the quotient of $\overline J$ by the relations    $(\Delta, e_1)+(\Delta, e_2) +(\Delta, e_3)=0$
where $e_1, e_2, e_3$ are edges of a tetrahedron $\Delta \in \mathcal T$ forming a triangle.  The homomorphisms
$\alpha $, $\beta $ are defined by Neumann and the homomorphisms $\alpha^*$, $\beta^* $ are their   transposes  with respect to the obvious bases of $C_0, C_1$ and  a canonical non-degenerate bilinear form  on   $J$.  The relationship to the charges comes from the fact that  $\beta^*: J \to C_1$ splits canonically as a composition of  certain  homomorphisms $\beta_1:J \to \overline J$ and $\beta_2: \overline J \to C_1$. The rest of the argument is a homological chase. One starts with any   $x\in \overline J$  such that for every tetrahedron $\Delta$ of $\mathcal T$, the  coefficients of the edges of $\Delta$ in $x$ total to $1$.  One shows that then $\beta_2(x)- 2 \sigma \in \Ker (\alpha^*)$ where $\sigma\in C_1$ is the formal sum of the edges of $\mathcal T$ not belonging to $\LL$.  Using Neumann's computation of
$ H_2(\tau) $, one deduces  that    $\beta_2(x)- 2 \sigma=\beta^*(a)$ for some   $a\in J$. Then $x'=x-\beta_1(a)$  is an integral charge on $\mathcal T$.  Next,  using Neumann's formula  $H_3(\tau)= H^1(M; Z/2Z)$, one picks  a  cycle $b\in J$ of the chain complex $\tau$ such that $ x'-\beta_1(b)$ is  an integral charge on $\mathcal T$ representing the given element of
$H^1(M; Z/2Z)$.

\begin{lemma}\label{L:chargetransit}
Let $(\T,\LL)$ and $(\T',\LL')$ be $H$-triangulations of $(M,L)$ such that $(\T',\LL')$ is  obtained from $(\T,\LL)$ by an
 {\Pachner} move or an {\bubble} move.  Let $c$ be a charge on $(\T,\LL)$.
 Then there exists a charge $c'$ on $(\T',\LL')$ such that $c'$ equals $c$ on all pairs  (a tetrahedron $T$ of $\T$  not involved in the move, an edge  of  $T$) and for any  common edge  $e$ of $\T$ and $\T'$,
 \begin{equation}\label{E:intTrans22}
 \sum_{a\in \epsilon_\T^{-1}(e)}c(a)=  \sum_{a'\in \epsilon_{\T'}^{-1}(e)}c'(a').
 \end{equation} Moreover, $[c]=[c']$.
\end{lemma}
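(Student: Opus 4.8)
The plan is to verify the lemma separately for the two move types, using that each move is supported in a ball $B\subset M$ and alters only finitely many tetrahedra. Outside $B$ I set $c'=c$. Then for any edge all of whose adjacent tetrahedra lie outside $B$, Equation~\eqref{E:intTrans22} is automatic; for an edge $e$ of $\partial B$, since the outside contributions agree, \eqref{E:intTrans22} at $e$ reduces to the single requirement that the corners contributed by the \emph{new} tetrahedra at $e$ sum to the corresponding sum of the corners contributed by the tetrahedra that were removed. The remaining task is thus to choose charges on the handful of new tetrahedra so that (1) the edge‑conditions (2) and (3) of Definition~\ref{D:intcharge} hold at the edges created by the move, and (2) these "new‑corner‑sum" equations hold at $\partial B$. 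Condition (1) of Definition~\ref{D:intcharge} is built into the parametrization: a charge on a tetrahedron is the same as a choice of three values on its three pairs of opposite edges with sum $\frac12$, and the conditions (2),(3) of Definition~\ref{D:intcharge} at the surviving old edges will then follow from \eqref{E:intTrans22} together with the fact that $c$ already satisfied them.

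For a $2\leftrightarrow 3$ move, the tetrahedra $T_A=AP_1P_2P_3$ and $T_B=BP_1P_2P_3$ glued along $P_1P_2P_3$ are replaced by $T_{12},T_{13},T_{23}$ arranged around a new edge $AB\notin\LL$. I would write $c'|_{T_{ij}}$ in terms of its three opposite‑pair values and impose \eqref{E:intTrans22} at the nine edges of the bipyramid: the three equatorial edges $P_iP_j$ each lie in one new tetrahedron and pin down $c'(AB)$ there (namely $c'|_{T_{ij}}(P_iP_j)=c|_{T_A}(P_iP_j)+c|_{T_B}(P_iP_j)$), and the six lateral edges $AP_i,BP_i$ give a system in the remaining three unknowns which, using $\sum(\text{opposite‑pair charges})=\frac12$ for $T_A$ and for $T_B$, turns out consistent and underdetermined (a one–parameter family); any solution works. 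The condition (2) of Definition~\ref{D:intcharge} at $AB$ is then automatic, since summing the three equatorial equations gives $\sum_{e\in\epsilon^{-1}(AB)}c'(e)=\tfrac12+\tfrac12=1$. The inverse ($3\to 2$) move is handled by reading these same linear relations backwards.

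For the bubble move, $T_+$ and $T_-$ are unglued along $F=v_1v_2v_3$ and a ball coned from a new vertex $v_4$ is inserted, giving two new tetrahedra $T_1',T_2'$ each equal to the cone $v_4\ast F$; I keep $c'|_{T_\pm}=c|_{T_\pm}$, so \eqref{E:intTrans22} is automatic at every edge of $T_\pm$ other than the edges of $F$. The new edges $v_1v_4,v_3v_4\in\LL'$ and $v_2v_4\notin\LL'$ force the corner sums $c'|_{T_1'}+c'|_{T_2'}$ at these three edges to be $0,0,1$; together with the two "sum $\frac12$" conditions these are compatible ($1=\tfrac12+\tfrac12$) and leave a two–parameter family of solutions. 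One then checks \eqref{E:intTrans22} at the edges of $F$: at the two edges of $F$ whose link membership is unchanged the required new‑corner‑sum is $0$, which is exactly the condition forced above, while at the edge $v_1v_3$ (which leaves the link) the new‑corner‑sum is $1$, consistent with the corresponding change of condition in Definition~\ref{D:intcharge}; this edge must be treated separately since its $\LL$‑status changes. The negative bubble move is again the same relations in reverse.

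Finally, for $[c]=[c']$: both classes are computed by pairing with simple closed curves $s$ in general position with respect to the triangulation, summing $2\,c|_T(e)\pmod 2$ over the passages of $s$. Since $B$ is a ball and $M$ is connected, every class in $H^1(M;\Z/2\Z)$ is detected by such a curve disjoint from $B$; for such a curve all passages occur in tetrahedra outside $B$, where $c'=c$, so the two sums coincide, whence $[c]=[c']$. I expect the main obstacle to be the careful edge–and–link bookkeeping in the two moves — keeping track of which corner of which new tetrahedron sits at each old edge — and checking that the resulting overdetermined linear systems are consistent; by contrast the cohomological claim $[c]=[c']$ is soft once one uses that the support of each move is a ball.
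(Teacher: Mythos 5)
The paper's proof is a one-line citation to \cite{BB}, Lemma~4.10, so you are in effect supplying the ``straightforward calculation.'' Your plan --- keep $c'=c$ off the support ball of the move, then solve the finite linear system on the new tetrahedra imposed by Definition~\ref{D:intcharge} together with the conservation condition \eqref{E:intTrans22} --- is the right one, and your bookkeeping for the Pachner and bubble moves, and the argument that $[c]=[c']$ (using that the evaluating curves can be pushed off a ball), are essentially correct.

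But your bubble-move computation itself exposes a discrepancy with the lemma's literal statement that you should flag explicitly rather than bury. You correctly derive that the charge conditions at the new edges $v_1v_4,v_3v_4\in\LL'$ and $v_2v_4\notin\LL'$, combined with the opposite-pair constraints inside $T_1',T_2'$, force the new-corner sums $c'|_{T_1'}(e)+c'|_{T_2'}(e)$ at the old edges $v_1v_2,v_2v_3,v_1v_3$ to be $0,0,1$ respectively. Hence
\[
\sum_{a'\in\epsilon_{\T'}^{-1}(v_1v_3)}c'(a')=\sum_{a\in\epsilon_{\T}^{-1}(v_1v_3)}c(a)+1,
\]
so \eqref{E:intTrans22} is \emph{false} at $v_1v_3$; indeed it must be, since $v_1v_3\in\LL$ forces the left-hand side of \eqref{E:intTrans22} to equal $0$ by Definition~\ref{D:intcharge}(3), while $v_1v_3\notin\LL'$ forces the right-hand side to equal $1$ by Definition~\ref{D:intcharge}(2). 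You write that $v_1v_3$ ``must be treated separately,'' but you should state plainly that the conclusion \eqref{E:intTrans22} has to be restricted to common edges whose membership in the Hamiltonian link is unchanged by the move (all common edges for a Pachner move; every common edge except $v_1v_3$ for a bubble move). This is not a gap in your argument --- what your construction produces is exactly what the proof of Lemma~\ref{L:one-movezz} uses downstream, where conservation is invoked only at $v_1v_2$ and $v_2v_3$ --- but an imprecision in the statement of the lemma itself that your calculation makes visible.
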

\begin{proof}
A straightforward calculation, cf. \cite{BB}, Lemma 4.10.
\end{proof}

The   charge $c'$ in this lemma  is unique if the move $(\T,\LL) \mapsto (\T',\LL')$ is negative. In this case we say that $c'$ is {\it induced} by $c$. If the move $(\T,\LL) \mapsto (\T',\LL')$ is
 positive, then $c'$ is not unique,  see Lemma 4.12 of \cite{BB}.

\subsection{The algebraic data}\label{SS:AlgPre}
We describe the algebraic data needed to define our  3-manifold invariant.
Let $\cat$ be a monoidal Ab-category  whose ground ring $\FK$
is a field.     Fix   a  $\spsi$-system  in $\cat$ with distinguished  simple objects $\{V_i\}_{i\in I}$.
Fix a family  $\{I_g\}_{g\in G}$ of  finite subsets of the set $I$ numerated by  elements of a group $G$ and satisfying the following conditions:
\begin{enumerate}
\item \label{I:comp1} for any $g\in G$,  if $i\in I_g$, then $i^*\in I_{g^{-1}}$;
\item   \label{I:comp2} for any   $i_1\in I_{g_1}, i_2\in I_{g_2}$, $k\in I\setminus I_{g_1g_2}$ with $g_1,g_2\in G$, we have  $H^{i_1 i_2}_k=0$;
 \item   \label{I:comp2+} if $i_1\in I_{g_1}, i_2\in I_{g_2}$  with $g_1,g_2\in G$, then either $I_{g_1g_2}= \emptyset$ or there is $k\in I_{g_1g_2}$ such that $H^{i_1 i_2}_k\neq 0$;
\item  \label{I:comp3} for any finite family $ \{g_r \in \Gr\}_r$, there is $g\in  \Gr$ such that $I_{gg_r}\neq\emptyset$ for all $r$;
\item  \label{I:comp4} we are given a map $\bb:I\to \FK$  such that  $\bb(i)=\bb(i^*)$  for all $i\in I$, and
 for any $g_1,g_2 \in \Gr$, $k\in I_{g_1g_2}$   such that $I_{g_1} \neq \emptyset$ and  $I_{g_2} \neq \emptyset$, \begin{equation*}
 \sum_{i_1 \in I_{g_1},\, i_2\in I_{g_2}}
 \bb({i_1})\bb({i_2})\dim(H^{i_1i_2}_{k}) =  \bb(k).
 \end{equation*}

\end{enumerate}

\subsection{$\Gr$-colorings and state sums}   Fix   algebraic data as in Section \ref{SS:AlgPre}.  Let  $M$ be
 a closed connected orientable
$3$-manifold and $\T$ a quasi-regular
triangulation  of~$M$
as in Section \ref{sect-MMMLLL}. A \emph{$\Gr$-coloring}
of $\T$ is a map $\wp$ from
the set of oriented edges of $\T$ to $\Gr$ such that
\begin{enumerate}
\item $\wp(-e)=\wp(e)^{-1}$ for any oriented edge $e$ of $\T$, where $-e$ is $e$
 with opposite orientation and
\item if $e_1, e_2, e_3$ are   ordered  edges of a  face of $\T$ endowed with orientation induced by the   order, then $\wp(e_1)\, \wp(e_2)\, \wp(e_3)=1$.
\end{enumerate}

A {\em $\Gr$-gauge of $\T$} is a map   from the set of  vertices of $\T$ to  $\Gr$.  The  $\Gr$-gauges of $\T$
form a multiplicative group which acts on the set of $\Gr$-colorings of $\T$ as follows.
 If $\delta$ is a $\Gr$-gauge of $\T$ and $\wp$ is a $\Gr$-coloring of $\T$,  then the $\Gr$-coloring $\delta \Phi$ is given by
$$(\delta\Phi)(e)=\delta(v^-_e)\,\Phi(e)\, \delta(v^+_e)^{-1},$$
where $v^-_e$ (resp.\@ $v^+_e$)  is  the  initial (resp.\@ terminal) vertex of an oriented edge $e$.

 Let $\MG M$ be the set of conjugacy classes of group homomorphisms from the fundamental group of $M$  to $ \Gr$. The elements of $\MG M$   bijectively correspond to the $\Gr$-colorings   of $\T$ considered up to gauge transformations.  Indeed, for a   vertex $x_0$ of $\T$,  each  $\Gr$-coloring $\Phi$ of $\T$ determines  a homomorphism $\pi_1(M,x_0)\to\Gr$. To compute  this homomorphism on an element of $\pi_1(M,x_0) $, one   represents this element by a loop based at $x_0$ and formed by  a sequence of  oriented edges of $\T$; then one takes the  product of the
values of $\Phi$ on these edges.     Let $[\Phi]\in \MG M$  be  the conjugacy class of this homomorphism. We say that $\Phi$ represents $[\Phi]$.  The assignment $\Phi \mapsto [\Phi]$ establishes the bijective correspondence mentioned above.

 A {\it state} of a  $\Gr$-coloring $\wp$  of $\T$  is a map $\p$ assigning to every
oriented edge $e$ of $\T$ an element $\p (e)$ of $I_{\wp(e)}$ such that
$\p(-e)=\p (e)^*$ for all $e$.  The set of all states of $\wp$ is denoted
$\states(\wp)$. The identity $\bb(
{{\p(e)}})=\bb( {{\p(-e)}})$ allows us to use the notation
$\bb(\p(e))$ for non-oriented edges.  It is easy to see that $\states(\wp) \neq \emptyset$  if and only if  $I_{\Phi(e)}\neq \emptyset$ for all oriented edges $e$ of $\T$. In this case we say that    $\Phi$  is  \emph{admissible}.

Let now $L$ be a non-empty link in $M$ and $(\T,\LL)$ be an $H$-triangulation of $(M,L)$  with  charge $c$.       From this data, we derive a certain partition function (state sum) as follows. Fix   a total order on the set of  vertices of $\T$.
 Consider a
tetrahedron $T$ of $\T$ with vertices $v_1, v_2, v_3, v_4$ in increasing order. We say that  $T$ is {\it right oriented} if the tangent vectors $v_1v_2, v_1 v_3, v_1 v_4$ form a positive basis in the tangent space of~$M$; otherwise   $T$ is {\it left oriented}.
 For an admissible $\Gr$-coloring $\wp$  of $\T$ and a state $\p \in \states(\wp)$, set
$$
 i=\p(\vect{v_1v_2}),
j=\p(\vect{v_2v_3}),
 k=\p(\vect{v_1v_3}),
 l=\p(\vect{v_3v_4}),
 m=\p(\vect{v_1v_4}), n=\p(\vect{v_2v_4}),
$$
 where $\vect{v_iv_j}$ is the oriented edge of $T$ going from $v_i$ to $v_j$.
Set
 $$
|T|_\p=
\left\{
\begin{array}{l}
\sj ijklmn(c({v_1v_2}),c({v_2v_3}))
 \, \text{if $T$ is right oriented}, \\
\sjn ijklmn(c({v_1v_2}),c({v_2v_3}))
 \, \text{if $T$ is left oriented.}
\end{array}\right.
$$
The $6j$-symbol $|T|_\p$ belongs to the tensor product of $4$ multiplicity modules
associated to the faces of $T$.  Specifically,
$$
|T|_\p\in
\left\{
\begin{array}{l}
H_m^{kl}\otimes H_k^{ij}\otimes H_{jl}^n \otimes
H_{in}^m
 \,\,\, \text{if $T$ is right oriented}, \\
H_m^{in}\otimes H_n^{jl}\otimes H_{ij}^k \otimes
H_{kl}^m
 \,\,\, \text{if $T$ is left oriented.}
\end{array}\right.
$$
 Note that any face of $\T$ belongs to exactly
two tetrahedra of $\T$, and the associated multiplicity modules are dual to
each other, see Lemma \ref{L:1}. These dualities allow us to contract     $\otimes_T |T|_\p$      into a scalar.  Denote by
$\cntr$ the tensor product of  all these tensor contractions determined by the faces of $\T$.    Set
$$
\Kas(\T,\LL,\wp,c)=\sum_{\p \in \states(\wp) }\, \left (\prod_{e\in \LL} \bb(\p(e))\right ) \, \cntr \left
 (\bigotimes_{T}|T|_{\p}\right ) \in \FK,
$$
where $T$ runs over all  tetrahedra of $\T$.  To compute $ \Kas(\T,\LL,\wp,c)$ we may need to order  the faces of $\T$, but the result does not depend on this order.

\begin{theorem}\label{T:MainTopInv}
Suppose that there exists a scalar $\wsqop\in \FK$ such that
$\sqop{}{8}$ is  $T$-equal to the operator
$$\wsqop\Id_{\hat H}\oplus \wsqop^{-1}\Id_{\check H}\in \End(H).$$
 Then, up to multiplication by integer powers of $\wsqop$, the state sum $\Kas(\T,\LL,\wp,c)$ depends only on the isotopy class of $L$ in $M$,  the conjugacy class  $[\wp]\in \MG M$, and the cohomology class $[c]\in H^1(M;\Z/2\Z)$ (and does not depend
 on the choice of
 $c$ in its cohomology class, the admissible representative $\wp$ of $[\wp]$, the   $H$-triangulation $\T$ of $(M,L)$, and  the ordering of the vertices of $\T$).
\end{theorem}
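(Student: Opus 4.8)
The plan is to isolate a short list of elementary changes of the data $(\T,\LL,\wp,c)$, to observe that any two choices are connected by such changes, and to prove that each change multiplies $\Kas(\T,\LL,\wp,c)$ by an integer power of $\wsqop$. By Proposition~\ref{L:Toplemma} any two $H$-triangulations of $(M,L)$ are connected by {\Pachner} $2\leftrightarrow 3$ moves and {\bubble} moves, and along such a sequence Lemma~\ref{L:chargetransit} transports the charge; hence it suffices to handle a {\Pachner} move, a {\bubble} move, a change of the vertex ordering, a change of the charge inside a fixed class in $H^1(M;\Z/2\Z)$, and a change of the admissible representative $\wp$ of $[\wp]$. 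The two moves will produce no $\wsqop$-factor; the other three changes are the source of the $\wsqop$-ambiguity.

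First I would treat the {\Pachner} move. It replaces two tetrahedra by three sharing a new interior edge $e_0$ (or conversely); choosing charges on the two sides that agree off the move and are related as in Lemma~\ref{L:chargetransit}, the two state sums coincide provided the contribution of the five tetrahedra involved matches. After performing all face contractions except those interior to the move and summing over the colour of $e_0$, this is exactly the charged pentagon identity~\eqref{E:ChPentId}: the ten parameters $a_0,\ldots,a_4,c_0,\ldots,c_4$ there are constrained by~\eqref{pentagonconditions}, which are precisely the linear relations imposed on the edge-charges by Definition~\ref{D:intcharge}; the weight $g_{j_2,j_3}$ is forced by admissibility of $\wp$; and the permutation $P_{(135642)}$ only matches the face-pairings. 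Next I would treat the {\bubble} move, which creates a vertex $v_4$ and two tetrahedra $T_3,T_4$ and replaces the link edge $v_1v_3$ by $v_1v_4,v_3v_4$. Contracting $|T_3|_\p$ and $|T_4|_\p$ along the faces interior to $\{T_3,T_4\}$ and summing over the colour of $v_2v_4$ collapses their product by the charged inversion identity of Section~\ref{section9} (the charges of $T_3$ and $T_4$ being matched by Lemma~\ref{L:chargetransit}), restoring the two tetrahedra of the un-bubbled triangulation with a dimension factor; summing over the colours of $v_1v_4$ and $v_3v_4$ weighted by $\bb$ and applying the normalisation~\eqref{I:comp4} then recovers the factor $\bb(\p(v_1v_3))$ removed from $\prod_{e\in\LL}\bb(\p(e))$ by the move. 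Thus a {\Pachner} move and a {\bubble} move leave $\Kas$ unchanged.

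For the remaining changes I would use the $T$-calculus of Sections~\ref{section6}--\ref{section7}. A change of the vertex ordering acts on each local $6j$-symbol by an element of $\mathbb{S}_4$; by the charged tetrahedral symmetries~\eqref{E:6jsyms01}--\eqref{E:6jsyms23} (and the corresponding formulas for $P_{(12)}$ and $P_{(34)}$ obtained as in Section~\ref{Section3now}) such an element replaces $|T|_\p$ by the $6j$-symbol for the new ordering with the symmetric involutions $\Aops,\Bops$ inserted on two of its four multiplicity legs and with a scalar factor $q^{\pm 2a}$, $a$ an edge-charge. Since $\Aops^*=\Aops$ and $\Bops^*=\Bops$ are involutions, when the face contractions of Lemma~\ref{L:1} are carried out these operators, acting on the two sides of each face-pairing, cancel; only the scalars remain. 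Collected over all tetrahedra they give a global power of $q$ whose exponent, by the charge conditions of Definition~\ref{D:intcharge} together with the hypothesis $\sqop{}{8}\teq\wsqop\Id_{\hat H}\oplus\wsqop^{-1}\Id_{\check H}$, reduces to an integer power of $\wsqop$. A change of the charge within a fixed cohomology class is handled the same way: two such charges differ by a ``null'' charge, a finite sum of elementary modifications supported near edges and faces, each changing the local $6j$-symbols by inserted operators $\scop{\pm},\srop{\pm},\slop{\pm}$ and $q$-powers as dictated by the $\spsi$-system relations~\eqref{E:srts} and~\eqref{E:sqrtCC}; since $[c]$ is unchanged the accumulated $q$-exponents combine, via the stable $T$-commutation relations~\eqref{E:larb=q8abrbla} and~\eqref{E:lr} and the identity $\sqop{}{8}\steq\Qop$ of Remark~\ref{R:q8Q}, into a power of $\wsqop$. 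Finally, independence of the admissible representative $\wp$ of $[\wp]$ is the remaining routine point: a gauge transformation at a vertex only relabels the colours of the incident edges, and the state sum — no $6j$-symbol seeing the $\Gr$-grading beyond admissibility — together with conditions~(\ref{I:comp1})--(\ref{I:comp4}) on $\{I_g\}_{g\in\Gr}$, is insensitive to this relabelling, so $\Kas$ depends on $\wp$ only through $[\wp]$. Combining all of this with Propositions~\ref{L:Toplemma-} and~\ref{L:Toplemma} proves the theorem.

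The main obstacle is the reordering/charge step: one must show that in every term of the contracted sum the formal power of $q$ produced by the tetrahedral symmetries and by elementary charge changes is an integer multiple of the power occurring in $\sqop{}{8}\teq\wsqop\Id_{\hat H}\oplus\wsqop^{-1}\Id_{\check H}$. This is precisely the purpose of the charges: the conditions of Definition~\ref{D:intcharge} — sum of charges equal to $1$ around each non-link edge and to $0$ around each link edge — make the relevant sums of charge-weighted exponents integral, and $T$-calculus then converts integrality into a power of $\wsqop$. Carrying this out demands careful bookkeeping of the left/right orientation of the tetrahedra, the exchange of $T$ and $\bar T$ under the symmetries, and the exact placement of $\Aops,\Bops$ and of the $q$-factors on the legs in~\eqref{E:6jsyms01}--\eqref{E:6jsyms23}; this bookkeeping, rather than any isolated hard identity, is the technical core of the argument.
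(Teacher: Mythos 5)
Your overall decomposition and several of the steps match the paper's proof: the Pachner move via the charged pentagon identity~\eqref{E:ChPentId}, the bubble move via the charged inversion identity together with the normalisation~\eqref{I:comp4}, and the vertex reordering via the tetrahedral symmetries~\eqref{E:6jsyms01}--\eqref{E:6jsyms23} with cancellation of the symmetric involutions $\Aops,\Bops$ across face contractions and residual integer powers of $q$ becoming powers of $\wsqop$ by the hypothesis---these are precisely Lemmas~\ref{L:one-movezz} and~\ref{L:indOrdering}. However, two of your remaining steps have genuine gaps.

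\emph{Charge invariance.} You rightly appeal to the fact that cohomologous charges differ by a finite sum of elementary modifications $d(e)$, but you then claim that adding a single $d(e)$ changes each local $6j$-symbol by inserted operators $\srop{\pm},\slop{\pm}$ and $q$-powers that ``combine into a power of $\wsqop$'' by $T$-calculus. This is not carried out and is not obviously true. The operators $\srop{}$ and $\slop{}$ are symmetric but not $T$-unitary, so inserting them on the two matched legs of a face contraction does not leave that contraction invariant; moreover, the modification $d(e)$ shifts the charges of the \emph{other} edges $e_i$ around $e$ rather than of $e$ itself, and it does so with opposite signs in the two tetrahedra sharing a given $e_i$, so the inserted operators do not align face-by-face in any evident way. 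The paper avoids this entirely with a topological reduction in Lemma~\ref{L:IndCharge}: via bubble and positive Pachner moves one changes the triangulation so that $e$ is contained in exactly three tetrahedra while carrying both $c$ and $c+d(e)$ along, and then eliminates $e$ with a negative Pachner move, reducing the charge change to the already-proved move invariance~\eqref{neneb}. Without this, or an equivalent worked-out algebraic argument, your charge step does not close.

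\emph{Gauge invariance.} You dismiss the passage from $\wp$ to $\delta^{v,g}\wp$ as a relabelling of the colours on the edges incident to $v$ to which the state sum is ``insensitive.'' This is not so: a state of $\wp$ assigns to each oriented edge $e$ an element of $I_{\wp(e)}$, while a state of $\delta^{v,g}\wp$ assigns an element of $I_{(\delta^{v,g}\wp)(e)}$; these index sets are different and in general of different sizes, so there is no bijection between the two state spaces and no term-by-term comparison of the sums. The paper proves the invariance under a single gauge change in Lemma~\ref{L:Coboundary}, by adapting the argument of Lemma~27 of \cite{GPT2}, which again proceeds through bubble and Pachner moves, and then promotes this to an arbitrary gauge by the induction in Lemma~\ref{C:ConstCobound}. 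Your ``routine relabelling'' supplies none of this.
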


A proof of this theorem will be given in Section \ref{section11}.

\begin{lemma}\label{eee+}
 Any element of the set $  \MG M$ can be represented by an admissible $G$-coloring on an
 arbitrary quasi-regular triangulation $\T$ of $M$.
\end{lemma}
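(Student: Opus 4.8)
The plan is to combine two facts: first, every element of $\MG M$ is represented by \emph{some} $G$-coloring on $\T$ (this is already stated in Section~\ref{SS:AlgPre}, where the bijection $\Phi\mapsto[\Phi]$ between gauge classes of $G$-colorings of $\T$ and elements of $\MG M$ is asserted); and second, the admissibility of a $G$-coloring can be achieved by applying a suitable $G$-gauge transformation, using condition~\eqref{I:comp3} on the family $\{I_g\}_{g\in G}$. So, given $[\rho]\in\MG M$, I would start with any $G$-coloring $\Phi$ of $\T$ representing $[\rho]$, and then modify it by a gauge transformation to make it admissible, i.e.\ so that $I_{\Phi(e)}\ne\emptyset$ for every oriented edge $e$ of $\T$.

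To carry this out, fix a vertex $v_0$ of $\T$ and, for each vertex $v$, choose an edge-path $\gamma_v$ from $v_0$ to $v$ in the $1$-skeleton of $\T$ (possible since $M$ is connected and $\T$ is quasi-regular, so the $1$-skeleton is connected); let $h_v\in G$ be the product of the $\Phi$-values along $\gamma_v$, with $h_{v_0}=1$. The key step is to apply condition~\eqref{I:comp3} to the finite family consisting of $\{h_v : v\in V(\T)\}$ together with $\{h_w^{-1}\Phi(\vect{vw})h_v=1\ ?\}$ — more precisely, I want a single $g\in G$ with $I_{g h_v}\ne\emptyset$ for \emph{all} vertices $v$. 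Then define the gauge $\delta(v)=g h_v$ for every vertex $v$, and set $\Phi'=\delta\Phi$. For an oriented edge $e=\vect{vw}$ of $\T$ we have $\Phi'(e)=\delta(v)\,\Phi(e)\,\delta(w)^{-1}=g h_v\,\Phi(e)\,(g h_w)^{-1}$. Since $\gamma_v$ followed by $e$ and the reverse of $\gamma_w$ is a loop, the product $h_v\,\Phi(e)\,h_w^{-1}$ lies in a fixed conjugate of a word in $\pi_1$; in fact the bijection in Section~\ref{SS:AlgPre} shows this product, evaluated in $G$, equals $h_v\Phi(e)h_w^{-1}$ itself, and one checks directly that this collection of elements as $e$ ranges over edges through a fixed vertex $w$ all differ from one another in a controlled way. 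The cleanest formulation: after the gauge transformation, $\Phi'(\vect{v_0 w})=g h_w$ up to the path discrepancy, and condition~\eqref{I:comp3} was chosen precisely to guarantee $I_{gh_w}\ne\emptyset$; a short argument using that face relations force the remaining $\Phi'$-values to be products of the $gh_w$'s and their inverses, combined with condition~\eqref{I:comp1}, then yields $I_{\Phi'(e)}\ne\emptyset$ for every edge.

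I expect the main obstacle to be the last step just sketched: verifying that making $I_{gh_v}\ne\emptyset$ at every \emph{vertex} forces $I_{\Phi'(e)}\ne\emptyset$ at every \emph{edge}. This is not automatic, since an edge value $\Phi'(\vect{vw})$ need not equal any $h_x$. The right way around this is to enlarge the finite family fed into condition~\eqref{I:comp3}: one should ask for $g\in G$ such that $I_{g g_r}\ne\emptyset$ where the $g_r$ range not only over the $h_v$ but also over all products $h_v\,\Phi(\vect{vw})\,h_w^{-1}\cdot h_w = h_v\Phi(\vect{vw})$ arising from the finitely many edges — equivalently, one notes that $\delta(v)\Phi(e)$ for $e=\vect{vw}$ out of $v$ equals $gh_v\Phi(e)$, and $\Phi'(e)=(gh_v\Phi(e))(gh_w)^{-1}$; choosing $g$ to handle the finite list $\{h_v\}\cup\{h_v\Phi(\vect{vw})h_w^{-1}h_{w'} : \text{edges}\}$ is what condition~\eqref{I:comp3} permits. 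With that choice, $I_{\Phi'(e)}\ne\emptyset$ for all $e$, so $\Phi'$ is admissible and represents $[\rho]$ because $\Phi'$ is gauge-equivalent to $\Phi$. I would present the argument by first reducing, via the bijection of Section~\ref{SS:AlgPre}, to producing \emph{any} representing $G$-coloring, then performing the gauge adjustment with the finite family made explicit as above, and finally invoking~\eqref{I:comp3} and~\eqref{I:comp1} to conclude.
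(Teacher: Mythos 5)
Your proposal has a genuine gap that the paper's argument is specifically designed to avoid.

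You propose a single global gauge transformation $\delta(v)=gh_v$. But then for an oriented edge $e=\vect{vw}$,
$$\Phi'(e)=\delta(v)\,\Phi(e)\,\delta(w)^{-1}=g\,\bigl(h_v\Phi(e)h_w^{-1}\bigr)\,g^{-1},$$
which is a \emph{conjugate} of the fixed element $h_v\Phi(e)h_w^{-1}$, not a left multiple of it. Condition~\eqref{I:comp3} gives a $g$ with $I_{gg_r}\neq\emptyset$ for a finite list $\{g_r\}$ — it controls left multiplication by a single element and says nothing about $I_{gkg^{-1}}$. Your attempted repair (enlarging the finite list fed into~\eqref{I:comp3} to include the $h_v\Phi(e)$ or similar) does not help: writing $\Phi'(e)=(gh_v\Phi(e))(gh_w)^{-1}$, the left factor can be arranged to lie in $\{h:I_h\neq\emptyset\}$ by~\eqref{I:comp3}, but there is then a further right multiplication by $(gh_w)^{-1}$, an element that depends on $w$ and that condition~\eqref{I:comp3} gives no control over. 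So the last step of your sketch — the ``short argument using that face relations force the remaining $\Phi'$-values to be products of the $gh_w$'s and their inverses'' — does not exist in the form you describe.

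The paper sidesteps this by working one vertex at a time. It calls a vertex $v$ ``bad'' for $\Phi$ if some oriented edge $e$ incident to $v$ has $I_{\Phi(e)}=\emptyset$, and uses the local gauge $\delta^{v,g}$ that equals $g$ at $v$ and $1$ elsewhere. For an edge $e=\vect{vw}$ out of $v$, $(\delta^{v,g}\Phi)(e)=g\Phi(e)$ (a left multiple), and for $e=\vect{wv}$ into $v$, $(\delta^{v,g}\Phi)(e)=\Phi(e)g^{-1}=(g\Phi(-e))^{-1}$; the first case is handled by~\eqref{I:comp3} applied to the finite family $\{\Phi(e):e\text{ out of }v\}$, and the second by~\eqref{I:comp1}. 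Quasi-regularity is used precisely to rule out loop edges at $v$, which would again produce a conjugate $g\Phi(e)g^{-1}$. One then checks that $v$ is no longer bad and no new bad vertex is created, so finitely many repetitions eliminate all bad vertices. You should adopt this local, one-vertex-at-a-time scheme rather than a global gauge transformation.
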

\begin{proof}
 Take any $G$-coloring $\Phi$ of $\T$ representing the given element of $  \MG M$.
  We say that a vertex
 of $\T$ is {\it bad} for $\Phi$ if there is an oriented edge $e$ in $\T$
 incident to this vertex and such that $I_{\Phi(e)}=\emptyset$.  It is clear that $\Phi$ is
 admissible if and only if $\Phi$ has no bad vertices. We show how to modify
 $\Phi$ in the class $[\Phi]$ to reduce the number of bad vertices.  Observe first that each pair (a vertex $v$ of $\T$, an element $g$ of $\Gr$) determines  a $\Gr$-gauge $\delta^{v,g} $   whose value on any vertex $u$ of $\T$  is defined by
\begin{equation}\label{E:DefDeltaGauge}
\delta^{v,g} (u)=\left\{\begin{array}{ll}g &  \text{if $u=v$,}\\ 1 & \text{else.}\end{array}\right.
\end{equation}

 Let $v
 $ be a bad vertex for $\Phi$.   Pick any $g\in \Gr$ such that $ I_{g \Phi(e)}\neq \emptyset$ for  all  edges
 $e$ of $\T$ adjacent to $v$ and oriented away from~$v$.
 The $G$-coloring  $\delta^{v,g}  \wp$ takes values in the set $\{h\in \Gr \,\vert\, I_h\neq \emptyset\}$ on all edges of $\T$ incident to $v$ and takes the same values as $\wp$ on all edges of $\T$ not incident
 to $v$.  Here we use the fact that the edges of $\T$ are not loops which
 is ensured by  the quasi-regularity of $\T$.  The transformation $\wp \mapsto
 \delta^{v,g}  \wp$ decreases the number of bad vertices.  Repeating this
 argument, we find a $\Gr$-coloring   without bad vertices  in the class $[\Phi]$.
\end{proof}

We represent any $h\in \MG M$ by an admissible $G$-coloring $\wp$ of $\T$ and any
$\zeta\in H^1(M; \Z/2\Z)$ by a charge $c$ and set
$$\Kas(M,L,h, \zeta)=\Kas(\T,\LL,\wp,c)\in \FK.$$
By Theorem \ref{T:MainTopInv}, $\Kas(M,L,h,\zeta)$ is a topological invariant of the tuple
$(M,L,h,c)$.

\section{Proof of Theorem \ref{T:MainTopInv}}\label{section11}

Throughout this section we keep the assumptions of Theorem \ref{T:MainTopInv}.

\begin{lemma}\label{L:indOrdering}
Up to multiplication  by  integer powers of $\wsqop$, $\Kas(\T,\LL,\wp,c)$ does not depend on the ordering of the vertices of $\T$.
\end{lemma}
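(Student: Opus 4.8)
The statement to prove is Lemma~\ref{L:indOrdering}: the state sum $\Kas(\T,\LL,\wp,c)$ is independent of the ordering of the vertices of $\T$ up to integer powers of $\wsqop$. The plan is to reduce the general case to elementary transpositions of the order and then to analyze, tetrahedron by tetrahedron, how the local building block $|T|_\p$ transforms.

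First I would observe that any two total orders on the vertex set of $\T$ are related by a finite sequence of transpositions of pairs of vertices, and in fact it suffices to handle a transposition of two vertices $u,v$ that are \emph{adjacent in the chosen order} — i.e., no vertex lies strictly between them. Such a transposition changes nothing unless $u$ and $v$ are joined by an edge; and when they are joined by an edge, it only affects those tetrahedra $T$ that contain both $u$ and $v$. For such a tetrahedron, swapping $u$ and $v$ has two effects: (a) it may change the orientation type (right $\leftrightarrow$ left) of $T$ if $u,v$ are the two smallest vertices or produce a reshuffling of which vertices are $v_1,\dots,v_4$; and (b) it permutes the role of the six edge-labels $i,j,k,l,m,n$ and the two charge arguments. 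The key point is that all of this is governed exactly by the tetrahedral symmetry relations for the charged $6j$-symbols established in Section~\ref{section9}, namely Formulas~\eqref{E:6jsyms01}, \eqref{E:6jsyms12}, \eqref{E:6jsyms23} together with the $S_4$-consequences \eqref{actionof12}--\eqref{actionof34} (in their charged form). These relate $\sj ijklmn(a,c)$ to $\sjn{\cdot}{\cdot}{\cdot}{\cdot}{\cdot}{\cdot}(\cdot,\cdot)$ via a permutation of tensor factors composed with operators built from $\Aops$, $\Bops$, and powers of $q$.

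The heart of the argument is then bookkeeping: when one performs the swap on all tetrahedra containing the edge $uv$, the permutations $P_\sigma$ appearing in the symmetry relations act only on the multiplicity factors attached to the faces of each tetrahedron, and — crucially — a face of $\T$ lies in exactly two tetrahedra, whose associated multiplicity modules are dual (Lemma~\ref{L:1}). So under the contraction map $\cntr$, the factor of $P_\sigma$ contributed by one tetrahedron is cancelled against the matching factor from the adjacent tetrahedron across each shared face; the operators $\Aops,\Bops$ likewise cancel in pairs, because $\Aops,\Bops$ are symmetric involutions (Lemma~\ref{le789}) and appear on dual factors. What survives are the scalar powers of $q$: each swapped tetrahedron contributes a factor $q^{\pm 2a}$ or $q^{\pm 2c}$ (with $a,c$ the relevant charges of that tetrahedron). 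Using the charge conditions of Definition~\ref{D:intcharge} — the sum of charges around each edge is $1$ (edge not in $\LL$) or $0$ (edge in $\LL$), and the face-sum condition $c(e_1)+c(e_2)+c(e_3)=1/2$ — one sees that the total accumulated power of $q$, summed over all tetrahedra containing $uv$, is an \emph{integer}. Since by hypothesis $\sqop{}{8}$ is $T$-equal to $\wsqop\,\Id_{\hat H}\oplus \wsqop^{-1}\Id_{\check H}$, and these $q$-operators act on $6j$-symbols exactly as multiplication (the $T$-equality is precisely what lets us pull the $q$'s through the $T$-defined $6j$-symbols), an integer power of $q$ acts on the contracted state sum as the corresponding integer power of $\wsqop$. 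Hence $\Kas$ changes only by an integer power of $\wsqop$.

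The main obstacle I expect is the combinatorial case analysis for a single swap: there are several configurations depending on where $u$ and $v$ sit in the ordered vertex list $v_1<v_2<v_3<v_4$ of each affected tetrahedron (positions $12$, $23$, $34$, $13$, $14$, $24$), and in each case one must identify which of the symmetry relations \eqref{E:6jsyms01}--\eqref{E:6jsyms23} (or their $(12)$-, $(34)$-variants) applies, verify that the orientation change is consistent with the $\bar T \leftrightarrow T$ swap built into those formulas, and confirm the cancellation of the permutation and $\Aops/\Bops$ factors under $\cntr$. Keeping careful track of signs and of the identification of charge arguments $(a,c)$ through the relabelling is delicate, and the fact that the charge of a tetrahedron is the same on opposite edges must be used repeatedly to see that the charge arguments transform compatibly. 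The cleanest route is probably to fix once and for all the convention that relates the ordered vertices to $(i,j,k,l,m,n)$ and $(a,c)$, reduce to adjacent transpositions, and then check the (essentially three, up to symmetry) genuinely distinct local cases, invoking the $T$-calculus of Section~4 to handle the $q$-scalars uniformly.
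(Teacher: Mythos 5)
Your proposal follows essentially the same route as the paper's proof: reduce to a simple transposition $(r,r+1)$ of the global vertex order, observe that only tetrahedra containing the edge $v_rv_{r+1}$ are affected, and apply the charged tetrahedral-symmetry relations \eqref{E:6jsyms01}--\eqref{E:6jsyms23} to each such tetrahedron, with the involutive self-dual operators $\Aops$, $\Bops$ cancelling across the two faces each affected tetrahedron shares with its neighbours around the edge, leaving only a scalar power of $\wsqop$. This is correct.

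One misstep worth noting: your appeal to the charge conditions of Definition~\ref{D:intcharge} (edge-sums equal to $0$ or $1$, face-sums equal to $1/2$) to establish that the accumulated power of $q$ is an integer is a red herring. Since $a,c\in\tfrac12\Z$, the exponents $\pm 2a$, $\pm 2c$ appearing in \eqref{E:6jsyms01}--\eqref{E:6jsyms23} are already integers \emph{individually}, so each affected tetrahedron contributes an integer power of $\wsqop$ on its own, independent of the state $\p$; no cancellation around the edge is needed. The edge and face constraints on the charge are used in the proofs of invariance under the Pachner and bubble moves (Lemmas~\ref{L:one-movezz} and \ref{L:IndCharge}), not here. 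Also, describing $P_\sigma$ as ``cancelling'' across adjacent tetrahedra is slightly off -- the permutation simply re-aligns the face-to-tensor-factor bookkeeping after relabelling -- but the substance of what you wrote is right.
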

\begin{proof}
Consider the natural action of the symmetric group  on the orderings of the vertices  of $\T$.   As the symmetric group is generated by simple transpositions $(r,r+1)$, it is enough to consider  the action of  one such transposition on an ordering. If the  vertices   labelled by $r$ and $r+1$ do not span an edge of $\T$, then the new state sum is identical  to the old one.  Suppose that   an edge,   $e$,  of $\T$ connects the    vertices   labelled by $r$ and $r+1$.
Let $P$ be the set of all labels $p$   such that the vertices   labeled by $r,r+1$, and $p$ form a face  of $\T$.
This face, denoted $f_p$, belongs to two adjacent tetrahedra of $\T$ containing  $e$ and  determines   two dual multiplicity spaces.

For a tetrahedron $T$ of $\T$, consider the   transformation of the $6j$-symbol  $|T|_{\p}$ under  the permutation $(r,r+1)$.  If $T  $ does not contain $e$, then    $|T|_{\p}$ does not change. We claim  that for $T \supset e$,     the   $6j$-symbol  $|T|_{\p}$  is  multiplied by an integer power of~$\wsqop$ independent of $\p$ and  composed with the tensor product      of   operators    acting on the  multiplicity spaces
 corresponding to $f_p$, where $p$ runs over the 2-element  set $\{p\in P\,\vert f_p\subset T\}$.  Here the operator corresponding to $p$ in this set  is  $\Aops$ if $p>r$ and  $\Bops$ if $p<r$.   Since $\Aops $ and $\Bops$ are involutive and self-dual,   the effect of this transformation after the tensor contraction $\cntr$  will be multiplication by an integer power of   $\wsqop$ independent of $\p$. This will imply the  lemma.

 The claim above  follows from Equations \eqref{E:6jsyms01}-\eqref{E:6jsyms23}.  Indeed, let $r,r+1, p, p'$ be the labels of the vertices of $T$. Suppose for concreteness that  $p<r$ and $r+1<p'$ (the other cases are similar).      Then the  left (resp.\@ right) hand side of  Formula \eqref{E:6jsyms12} with $a=c(T,v_pv_r), b=c(T, v_pv_{r+1}), c=c(T, v_rv_{r+1})$  computes $|T|_{\p}$ before (resp.\@ after) the permutation of $r$ and $r+1$.    The operators $\Aops_2$ and $\Bops_3$ in  \eqref{E:6jsyms12} act on the multiplicity spaces corresponding to the  faces $f_{p'} $ and $f_p $, respectively.
 Therefore Formula \eqref{E:6jsyms12} implies our claim.
\end{proof}

\begin{lemma}\label{L:one-movezz}
 Let  $(\T,\LL)$,
 $(\T',\LL')$ be   $H$-triangulations of $(M,L)$ such that $(\T',\LL')$ is obtained from  $(\T,\LL)$  by a negative
 {\Pachner} move or a negative {\bubble} move.  Then any admissible $\Gr$-coloring $\wp$ on $\T$ restricts to an
 admissible $\Gr$-coloring $\wp'$ of~$\T'$. For any  charge  $c$ on $(\T,\LL)$, we have   (up to multiplication by  powers of~$\widetilde q$)
\begin{equation}\label{tvt}
 \Kas(\T,\LL,\wp,c)=\Kas(\T',\LL',\wp',c') ,
 \end{equation}
 where  $c'$ is the   charge  on $(\T',\LL')$  induced by $c$.
\end{lemma}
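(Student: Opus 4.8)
The plan is to treat the negative {\Pachner} move and the negative {\bubble} move separately, reducing each to one of the algebraic identities for the charged $6j$-symbols established in Section~\ref{section9}: the charged pentagon identity \eqref{E:ChPentId} for the {\Pachner} move, and the charged inversion identities of Section~\ref{section9} for the {\bubble} move. In both cases the edge set of $\T'$ is contained in that of $\T$, so the restriction $\wp'$ of $\wp$ to the oriented edges of $\T'$ is a well-defined $\Gr$-coloring, and it is admissible because $I_{\wp'(e)}=I_{\wp(e)}\neq\emptyset$ for every edge $e$ of~$\T'$. The induced charge $c'$ agrees with $c$ outside the modified region and, on the tetrahedra created by the inverse (positive) move, is the charge supplied by Lemma~\ref{L:chargetransit}; throughout we use the relation \eqref{E:intTrans22} between the charge sums of $c$ and $c'$ along common edges. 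By Lemma~\ref{L:indOrdering} it is enough to prove \eqref{tvt} for one conveniently chosen ordering of the vertices of $\T$, extended arbitrarily over $\T'$.

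For a negative {\Pachner} move, let $e$ be the edge shared by the three tetrahedra that are replaced; by definition of the move, $e\notin\LL$. First bring the tetrahedra involved in the move to a standard orientation and vertex labelling by repeatedly applying the symmetry relations \eqref{E:6jsyms01}--\eqref{E:6jsyms23}; each such application multiplies the corresponding $6j$-symbol by an integer power of $\sqop{}{8}$, and by the hypothesis of Theorem~\ref{T:MainTopInv} the operator $\sqop{}{8}$ acts on the relevant multiplicity spaces as a power of $\wsqop$, so after the contractions $\cntr$ this contributes only a global factor which is an integer power of~$\wsqop$. Once the configuration is standard, the part of $\Kas(\T,\LL,\wp,c)$ carried by the three-tetrahedron block, summed over the state $\p(e)\in I_{\wp(e)}$ of the internal edge and contracted along the two internal faces, is exactly the left-hand side of \eqref{E:ChPentId} with charges $(a_r,c_r)$ read off from $c$; the compatibility relations \eqref{pentagonconditions} follow from Definition~\ref{D:intcharge} together with \eqref{E:intTrans22}. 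Here condition~\eqref{I:comp2} on the family $\{I_g\}$ guarantees that the summand vanishes for $\p(e)\notin I_{\wp(e)}$, so the restricted sum agrees with the sum over all of $I$ in \eqref{E:ChPentId}, and condition~\eqref{I:comp2+} (applicable since $\wp$ is admissible, so $I_{\wp(e)}\neq\emptyset$) gives $g_{j_2,j_3}=1$ on the right-hand side. As the remaining contractions are identical on the two sides, \eqref{tvt} follows up to a power of $\wsqop$.

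For a negative {\bubble} move, the region being removed consists of two tetrahedra $T_{+}$ and $T_{-}$ sharing the three faces incident to the central vertex, while the link is altered by replacing the two edges of $\LL$ created by the positive move by one edge of $\LL'$. After normalizing orientations and labellings as before (at the cost of a power of $\wsqop$), sum the contribution of $T_{+}\otimes T_{-}$ over the state of the internal bubble edge not lying in~$\LL$ and contract the internal face common to $T_{+}$ and $T_{-}$; by the charged inversion identity of Section~\ref{section9} — whose charge requirement, $(a,c)$ on one tetrahedron and $(-a,-c)$ on the other, is precisely what Lemma~\ref{L:chargetransit} and \eqref{E:intTrans22} impose on $T_{\pm}$ — this equals a dimension of a multiplicity space times the Casimir-type morphism $\delta$ attached to the face that the two ambient tetrahedra shared before the move, the factor of the form $g_{\cdot,\cdot}$ being $1$ by condition~\eqref{I:comp2+}. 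It then remains to sum over the states of the two $\LL$-edges introduced by the positive move, weighted by the factors $\bb(\cdot)$ in the definition of $\Kas$; invoking the dimension identity of condition~\eqref{I:comp4} replaces the product of these two $\bb$-values by the single $\bb$-value of the corresponding edge of $\LL'$, and $\Kas(\T,\LL,\wp,c)=\Kas(\T',\LL',\wp',c')$ follows up to a power of~$\wsqop$.

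The main obstacle is the combinatorial and charge bookkeeping in the {\bubble} case: one must carefully align the inherited vertex orderings and tetrahedron orientations with the index and charge conventions of the charged inversion identity, confirm that the charges on $T_{+}$ and $T_{-}$ are indeed of the form $(a,c)$ and $(-a,-c)$, and verify, using the dualities of Lemma~\ref{L:1} together with the index symmetries induced by $\Aop$ and $\Bop$, that the dimension produced by contracting the internal face is exactly the one occurring in condition~\eqref{I:comp4}. These verifications are elementary but lengthy, and run in exact parallel with the uncharged computations of \cite{BB}; we omit the details.
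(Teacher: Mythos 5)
Your proposal is correct and follows essentially the same approach as the paper: restrict $\wp$ to get an admissible $\wp'$, invoke Lemma~\ref{L:indOrdering} to fix a convenient ordering, and then match the two local configurations of tetrahedra against the charged pentagon identity (Pachner) and the charged inversion identities (bubble), using Conditions~\eqref{I:comp2}, \eqref{I:comp2+} and \eqref{I:comp4} to control the index sets and $\bb$-weights. One small imprecision: the fact that the two bubble tetrahedra carry charges of the form $(a,c)$ and $(-a,-c)$ does not come from Lemma~\ref{L:chargetransit} or \eqref{E:intTrans22}; it follows directly from Definition~\ref{D:intcharge} applied to the three internal edges of the bubble (two in $\LL$ with total charge $0$, one not in $\LL$ with total charge $1$) combined with the opposite-edge symmetry of a tetrahedral charge, as the paper observes.
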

\begin{proof}
The values of $\wp'$ form a subset of the set of values of
 $\wp$;  therefore the admissibility of $\wp$ implies the    admissibility of $\wp'$.

 The rest of the proof follows the lines of  \cite[Section VII.2.3]{Tu} via   translating the geometric  moves
 into algebraic identities. First, we prove \eqref{tvt} for a negative {\bubble} move.
 Let $ v_1, v_2, v_3, v_4$  be the vertices of $\T$ and $F=v_1v_2v_3$ the face  of $\T'$ as in the description of the bubble  move  in Section~\ref{sect-MMMLLL}  (see Figure \ref{F:col_bub}).
 Since our state sums do not depend on the ordering of the vertices, we assume that  $v_4$ is the last in  the order of
 the vertices of $\T$ and the order of the vertices of $\T'$  is induced by the order of the vertices of $\T$.
We can also assume that   $v_1v_4, v_3v_4 \in \LL$    and      $v_1v_3\in \LL'$.  Let $T_r$ (resp. $T_l$) be the right oriented  (resp. left oriented)   tetrahedron
 of $\T$ disappearing under the move.
 Let $a,  b \in \frac12\Z$ be the $c$-charges of the edges $v_1v_2$, $v_2v_3$  of $T_r$ respectively.  The  properties of a charge  imply that the charges of  the edges $v_1v_2$,  $v_2v_3$ of $T_l$ are $-a$ and $-b$, respectively.

  \begin{figure}[b]
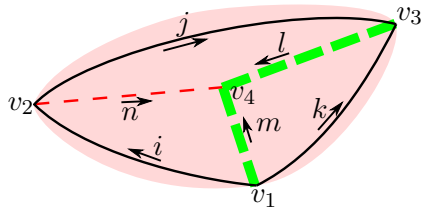

 $\epsh{bubblebl2}{70pt}$
 \put(-32,-5){\small $k$}\put(-53,-10){\small $m$}
 \put(-45,20){\small $l$}\put(-83,28){\small $j$}
 \put(-104,-6){\small $n$}\put(-92,-19){\small $i$}
 \put(-147,-2){\small $v_2$}\put(0,32){\small $v_3$}
 \put(-55,-37){\small $v_1$}
 \put(-63,2){\small $v_4$}
\caption{$T_l\cup T_r$ colored by $\p\in S$}
 \label{F:col_bub}
\end{figure}

 Fix a state  $\p'\in \states (\wp')$ and let $S \subset \states(\wp)$ be the set
 of all states   of $\wp$ extending~$\p'$.   It is enough to show that the
 term $\Kas_{\p'}$ of $\Kas(\T',\LL',\wp',c')$ associated to $\p'$ is equal to the sum
  $ \sum_{\p\in S} \Kas_{\p}$ of the
 terms of $\Kas(\T,\LL,\wp,c)$ associated to all $\p \in S$.
 Set $i=\p'(\vect{v_1v_2})$, $j=\p'(\vect{v_2v_3})$, and $k=\p'(\vect{v_1v_3})$. For any distinct indices $p,q\in \{1,2,3,4\}$,
 set  $I_{pq}=I_{\Phi (\vect{v_pv_q})}\subset I$. The admissibility of $\Phi$ implies that $I_{pq}\neq \emptyset$ for all $p, q$.
 Clearly,  $i\in I_{12}, j\in I_{23}, k\in I_{13}$. A  state  $\p\in S$ is determined by the    labels
 $$ l=\p(\vect{v_3v_4})\in I_{34}, \quad  m=\p(\vect{v_1v_4}) \in I_{14}, \quad {\text {and}} \quad  n=\p(\vect{v_2v_4})\in I_{24}.$$   We have
$$|T_r|_{\p}= \sj ijklmn(a,b)  \quad {\text {and}} \quad |T_l|_{\p}= \sjn ijklmn(-a,-b).$$
It is convenient to write $|T_r|^{ijk}_{lmn}$ for  $ |T_r|_{\p}$ and $|T_l|^{ijk}_{lmn}$ for $ |T_l|_{\p}$.


Denote by $*_f$ the tensor contraction determined by a
face $f$.  We have  $\Kas_{\p'}=*_{F}(\bb(k)X)$,  where $X$ is
the term  of the state sum   determined by  $\p'$ before   contraction   $*_{F}$ and multiplication by  $\bb(k)$. Let  $F_r$ and $F_l$  be the faces of $T_r$ and  $T_l$, respectively,  with vertices $v_1, v_2, v_3$.
We have
\begin{align*}
\sum_{\p\in S} \Kas_{\p}& = *_{F_r}*_{F_l}\left(X\otimes \sum_{l\in I_{34}, m \in I_{14}, n\in I_{24}} \bb(l)\bb(m)\ast^{kl}_m \ast^n_{jl} \ast^m_{in}\left(|T_r|^{ijk}_{lmn} \otimes |T_l|^{ijk}_{lmn}\right)\right)\\
 &=*_{F_r}*_{F_l}\left(X\otimes \sum_{l\in I_{34}, m\in I_{14}} \bb(l)\bb(m)\ast^{kl}_m \left(\sum_{n\in I_{24}}\ast^n_{jl} \ast^m_{in}|T_r|^{ijk}_{lmn}\otimes |T_l|^{ijk}_{lmn}\right)\right)\\
 &=  *_{F_r}*_{F_l}\left(X\otimes \sum_{l\in I_{34}, m\in I_{14}}\bb(l)\bb(m) \ast^{kl}_m \left(  g_{j,l} \,  (\delta^{kl}_{m}\otimes \delta^{ij}_k)\right)\right)\\
&=  *_{F_r}*_{F_l}\left(X\otimes \sum_{l\in I_{34}, m\in I_{14}}   \bb(l)\bb(m)\,  g_{j,l} \dim(H^{kl}_m)  \, \delta^{ij}_k\right), \end{align*}
where the third equality follows from the first inversion relation  and Condition~\eqref{I:comp2} in Section~\ref{SS:AlgPre}. The existence of the admissible coloring $\Phi$ and Condition~\eqref{I:comp2+} of Section~\ref{SS:AlgPre} imply that in the latter expression $g_{j,l}=1$ for all $l\in I_{34}$. Therefore this expression is equal to
\begin{align*}  &*_{F_r}*_{F_l}\left(X\otimes \sum_{l\in I_{34}, m\in I_{14}}     \bb(l)\bb(m^*)\dim(H^{lm^*}_{k^*})\,  \delta^{ij}_k\right)\\
&=  *_{F_r}*_{F_l}\left(X\otimes \sum_{l\in I_{34}, m'\in  I_{41}}   \bb(l)\bb(m')\dim(H^{lm'}_{k^*}) \, \delta^{ij}_k\right)\\
& = *_{F_r}*_{F_l}\left(X\otimes \bb(k^*) \,  \delta^{ij}_k\right) = *_{F_r}*_{F_l}\left(X\otimes \bb(k) \,  \delta^{ij}_k\right) =*_{F}(\bb(k)X)=\Kas_{\p'},
\end{align*}
where the second    equality is ensured by Condition~\eqref{I:comp4} in Section~\ref{SS:AlgPre}.  This  proves the lemma for  the {\bubble} moves. Similarly,  the {\Pachner} move translates into the charged pentagon identity  \eqref{E:ChPentId}, see Figure \ref{F:col_32move}.
\begin{figure}[b]
 $\epsh{tetra3}{110pt}$
 \put(-45,20){{\color{red}\small $j$ }}
  \put(-32,10){\small $j_0$}
 \put(-83,32){\small $j_1$}
   \put(-67,17){\small $j_2$}
    \put(-58,-27){\small $j_3$}
   \put(-25,-28){\small $j_4$}
 \put(-75,-6){\small $j_5$}
 \put(-88,-22){\small $j_6$}
 \put(-17,34){\small $j_7$}
 \put(-33,-13){\small $j_8$}
 \put(-108,0){\small $v_1$}
  \put(-45,54){\small $v_2$}
  \put(-70,-22){\small $v_3$}
 \put(-45,-55){\small $v_4$}
  \put(1,6){\small $v_5$}
\caption{Labeling for {\Pachner} move}
 \label{F:col_32move}
\end{figure}
\end{proof}

\begin{lemma}\label{L:Coboundary}
 Let $v$ be a vertex of $\T$ and let $g\in   \Gr $ be such that $I_g\neq \emptyset$.  If $\wp$ and $\delta^{v,g} \wp$ are
 admissible $\Gr$-colorings of $\T$, then
 $\Kas(\T,\LL,\wp,c)=\Kas(\T,\LL,\delta^{v,g} \wp, c)$, where $\delta^{v,g}$ is the $\Gr$-gauge of $\T$ defined   in \eqref{E:DefDeltaGauge}.
\end{lemma}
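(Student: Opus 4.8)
The transformation $\delta^{v,g}$ alters $\wp$ only on the edges of $\T$ incident to $v$, and every edge of a triangle opposite to $v$ in a tetrahedron containing $v$ joins two vertices distinct from $v$; hence the $G$-coloring, the state-value sets, and the multiplicity spaces attached to all such triangles are left untouched. My first step is therefore to localise the problem to the closed star of $v$. Group the sum defining $\Kas(\T,\LL,\wp,c)$ by summing first over the values of $\p$ on the edges through $v$, keeping the values on all other edges fixed. The tetrahedra of $\T$ not containing $v$ then contribute a factor that is literally identical for $\wp$ and for $\delta^{v,g}\wp$, while the faces through $v$ are contracted inside the star. So it suffices to prove that the \emph{star amplitude} --- the element of $\bigotimes_{\sigma}H^{i_\sigma j_\sigma}_{k_\sigma}$ (product over the triangles $\sigma$ opposite to $v$, with $i_\sigma,j_\sigma,k_\sigma$ the labels of the edges of $\sigma$) obtained by tensoring the $6j$-symbols $|T|_\p$ over the tetrahedra $T\ni v$, multiplying by $\prod\bb(\p(e))$ over the $\LL$-edges at $v$, contracting all faces through $v$, and summing over the values of $\p$ on the edges at $v$ --- is unchanged when $\wp$ is replaced by $\delta^{v,g}\wp$.

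The second step is to prove this local identity by induction on the number of tetrahedra containing $v$, using bubble and {\Pachner} moves performed entirely inside the star of $v$. By Lemma~\ref{L:indOrdering} we may assume $v$ is the largest vertex, so that $v$ is the fourth vertex of every tetrahedron of its star and the outer triangle of such a tetrahedron carries the multiplicity space $H^{ij}_k$ with $i,j,k$ the outer edge labels. Since the Hamiltonian condition puts exactly two $\LL$-edges at $v$, both incident to $v$, there is a face through $v$ carrying an $\LL$-edge, so bubble moves at such faces are available; and for a suitable edge $vw\notin\LL$ shared by exactly three tetrahedra (created, if necessary, by preliminary $2\leftrightarrow 3$ moves) a negative {\Pachner} move is available. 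Each such move changes $\T$ only inside the star, preserves the full state sum up to powers of $\widetilde q$ by Lemma~\ref{L:one-movezz} and its inverse, and can be chosen so that its application to $(\T,\wp)$ and to $(\T,\delta^{v,g}\wp)$ produces colorings again related by $\delta^{v,g}$ --- the new edges at $v$ pick up exactly the expected factor $g^{\pm 1}$. Iterating, one reduces to a minimal star.

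In the base case the star identity is a direct computation of the kind carried out in the proof of Lemma~\ref{L:one-movezz}: the sum of the product of the relevant charged $6j$-symbols over the state values on the (now few) edges at $v$ is evaluated by the charged inversion relations, and the resulting sum $\sum\bb(\p(e))\dim(H^{\cdots}_{\cdots})$ is collapsed to a single $\bb$ by condition~\eqref{I:comp4}, using \eqref{I:comp2} and \eqref{I:comp2+} to ensure that the $g_{\cdot,\cdot}$-factors equal $1$ on admissible colorings. Running the same computation with $\delta^{v,g}\wp$ yields the identical answer, because the outer data and the $\bb$-weights are unchanged and the hypothesis $I_g\neq\emptyset$ (together with \eqref{I:comp1}) guarantees that both sums over $I_{\wp(e)}$ and over $I_{(\delta^{v,g}\wp)(e)}$ are non-empty. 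I expect the main obstacle to be precisely this interplay in the inductive step: organising the retriangulation of the star so that the moves genuinely intertwine with $\delta^{v,g}$, and tracking the charges induced at each move, so that the final equality holds up to integer powers of $\widetilde q$ only; the base-case computation itself, while somewhat lengthy, is routine once the conditions of Section~\ref{SS:AlgPre} are brought to bear.
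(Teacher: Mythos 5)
Your plan --- localise the computation to the closed star of $v$, reduce that star by bubble and Pachner moves performed inside it to a two-tetrahedron (``bubble'') configuration, and collapse the residual contraction using the charged inversion relation and condition~\eqref{I:comp4} --- is the natural unfolding of the paper's one-line appeal to \cite{GPT2}, Lemma~27, with Lemma~\ref{L:one-movezz} replacing their Lemma~26. The base case you sketch is indeed the same contraction as for the bubble move in the proof of Lemma~\ref{L:one-movezz}, and it produces the factor $\bb(k)$ on outer data that are untouched by $\delta^{v,g}$, so the two sides agree there.

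The real gap is your closing concession that the final equality holds only ``up to integer powers of $\widetilde q$'': the lemma asserts the \emph{exact} equality $\Kas(\T,\LL,\wp,c)=\Kas(\T,\LL,\delta^{v,g}\wp, c)$. This concession is also unnecessary, and recognising why completes the argument. You apply each geometric move, with the same induced charge and the same vertex ordering, simultaneously to $(\T,\wp,c)$ and to $(\T,\delta^{v,g}\wp,c)$. The $\widetilde q$-exponent in Lemma~\ref{L:one-movezz} arises only from a change of vertex ordering, and the proof of Lemma~\ref{L:indOrdering} shows that this exponent is determined by the charges assigned to the tetrahedra and by the ordering --- never by the $\Gr$-coloring or the state. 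It is therefore identical on the two sides of your comparison and cancels. Once you make this cancellation explicit your proposal becomes a correct, if condensed, sketch; what remains to be written out (that degree-$3$ vertices in the link of $v$ can always be manufactured by $2\leftrightarrow 3$ moves, that the $\LL$-constraint for negative Pachner moves can be satisfied after auxiliary bubble moves, and that the positive-move colourings may be chosen so as to stay $\delta^{v,g}$-related and admissible, using $I_g\neq\emptyset$ and condition~\eqref{I:comp3}) is plausible but should be spelled out rather than labelled routine.
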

\begin{proof}
 A similar claim in a simpler setting (no  charges and   $\Gr$ is   abelian) was established in  \cite{GPT2}, Lemma 27.  The proof there relies on Lemma 26 of the same paper.  Replacing Lemma 26 by Lemma \ref{L:one-movezz} above and making the appropriate  adjustments, we easily adapt the argument  in \cite{GPT2} to the present setting.
\end{proof}

\begin{lemma}\label{C:ConstCobound}   If admissible $\Gr$-colorings $\wp$ and $\wp'$
  of $\T$ represent  the same element of $\MG M$, then $\Kas(\T,\LL,\wp,c)=\Kas(\T,\LL,\wp',c)$ for any charge $c$.
\end{lemma}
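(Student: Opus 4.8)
The plan is to reduce the statement to a chain of single--vertex gauge transformations and then invoke Lemma~\ref{L:Coboundary}. First I would use the correspondence, recalled in this section, between $\MG M$ and the set of $\Gr$-colorings of $\T$ taken up to gauge transformations: since $\wp$ and $\wp'$ represent the same class, there is a $\Gr$-gauge $\delta$ of $\T$ with $\wp'=\delta\wp$. As the elementary gauge $\delta^{v,g}$ of~\eqref{E:DefDeltaGauge} is supported at the single vertex~$v$, every $\Gr$-gauge is a finite product of elementary gauges; for any enumeration $v_1,\dots,v_N$ of the vertices of $\T$ one has $\delta=\delta^{v_N,\delta(v_N)}\cdots\delta^{v_1,\delta(v_1)}$. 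Consequently it suffices to construct a finite sequence of \emph{admissible} $\Gr$-colorings $\wp=\wp_0,\wp_1,\dots,\wp_s=\wp'$ of $\T$ in which each $\wp_t$ is obtained from $\wp_{t-1}$ by an elementary gauge transformation; Lemma~\ref{L:Coboundary} then gives $\Kas(\T,\LL,\wp_{t-1},c)=\Kas(\T,\LL,\wp_t,c)$ for every $t$, and chaining these equalities proves the lemma (the charge $c$ stays fixed and enters nowhere).

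For the construction I would interpolate $\delta$ vertex by vertex: with $\delta_0=\mathrm{id}$ and $\delta_k$ the gauge equal to $\delta$ on $v_1,\dots,v_k$ and trivial elsewhere, one gets $\delta_0\wp=\wp$, $\delta_N\wp=\wp'$, and $\delta_k\wp=\delta^{v_k,\delta(v_k)}(\delta_{k-1}\wp)$. If all the colorings $\delta_k\wp$ were admissible we would be done at once. The hard point is that they need not be, while Lemma~\ref{L:Coboundary} requires \emph{both} ends of an elementary move to be admissible. I would overcome this exactly as in the proof of Lemma~\ref{eee+}: Condition~\eqref{I:comp3} of Section~\ref{SS:AlgPre} supplies, for any finite family $\{g_r\}$ of elements of $\Gr$, an element $g\in\Gr$ with $I_{g g_r}\neq\emptyset$ for all $r$, and this freedom makes it possible to replace the bare interpolation $(\delta_k\wp)_k$ by a longer sequence of admissible colorings, inserting auxiliary elementary gauges supported at the vertex being processed so that no coloring in the sequence becomes inadmissible. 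Keeping track of the charges via Lemma~\ref{L:one-movezz}, this is the adaptation to the present (nonabelian, charged) setting of the argument of \cite{GPT2}, Lemma~27, already used in the proof of Lemma~\ref{L:Coboundary} above; the modifications are straightforward.

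The step I expect to be the genuine obstacle is precisely this admissibility bookkeeping: a naive vertex-by-vertex interpolation of $\delta$ can run through $\Gr$-colorings $\psi$ for which $I_{\psi(e)}=\emptyset$ on some edge $e$ — so that $\states(\psi)=\emptyset$ and Lemma~\ref{L:Coboundary} is unavailable — and one must really exploit the slack provided by Condition~\eqref{I:comp3} to route the connecting sequence through admissible colorings only. Everything else is formal: the reduction to elementary gauges, the fixity of $c$, and the telescoping of the equalities furnished by Lemma~\ref{L:Coboundary}.
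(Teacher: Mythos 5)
Your proposal is essentially the paper's argument: reduce to a finite product of single-vertex gauge transformations and use Condition~\eqref{I:comp3} to route the connecting sequence through admissible colorings. The paper makes your admissibility bookkeeping precise by first collapsing the elementary gauges to one per vertex (using commutativity across distinct vertices and $\delta^{v,g}\delta^{v,g'}=\delta^{v,gg'}$) and then inducting on their number~$n$: at each step it picks, via Condition~\eqref{I:comp3}, a $g\in\Gr$ such that both $\delta^{v_1,g}\wp$ and $\delta^{v_1,gg_1^{-1}}\wp'$ are admissible, so that the outermost single-vertex moves are handled by Lemma~\ref{L:Coboundary} and the remaining $n-1$ factors by the inductive hypothesis --- which is exactly the auxiliary-gauge insertion you gesture at, made into a clean recursion rather than a direct adaptation of the (one-endpoint) argument of Lemma~\ref{eee+}.
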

\begin{proof}
 Since $[\wp]=[\wp']$, there are pairs $(v_i,g_i)\in\{\text{vertices of $\T$}\}\times \Gr$ such that
 $$
\Phi'=\delta^{v_n,g_n} \delta^{v_{n-1},g_{n-1}} \cdots \delta^{v_1,g_1} \Phi.
$$
 Note that the gauges $\delta^{v,g}$  and   $\delta^{v',g'}$ commute for all $g, g'\in G$ provided the vertices $v, v'$ are distinct. Using this property and the identity
$\delta^{v,g}\delta^{v,g'}=\delta^{v,gg'}$, we can ensure that all the vertices $v_i$ in the expansion of $\Phi'$ are pairwise distinct.

We prove the  lemma  by induction on $n$.  If $n=0$,  then $\wp'=\wp$ and there is nothing to prove.  For $n\geq 1$, pick any $g\in \Gr$ such that the sets $I_g$, $I_{gg_1^{-1}}$, $I_{g\Phi(e)}$,  and $I_{gg_1^{-1} \Phi'(e)}$ are non-empty for all  oriented  edges $e$ of $\T$
outgoing from $v_1$.     Then the  colorings $\delta^{v_1,g}\wp$ and $\delta^{v_1,gg_1^{-1}}\wp' $ are admissible.  Clearly, $$\delta^{v_1,gg_1^{-1}}\wp'=\delta^{v_n,g_n}\delta^{v_{n-1},g_{n-1}}\cdots\delta^{v_2,g_2} \delta^{v_1,g}\wp.$$ Lemma
 \ref{L:Coboundary} and the induction assumption imply that
 \begin{align*}
 \Kas(\T,\LL,\wp,c)&=\Kas(\T,\LL,\delta^{v_1,g} \wp,c)\\
 &=\Kas(\T,\LL,\delta^{v_n,g_n}\delta^{v_{n-1},g_{n-1}}\cdots\delta^{v_2,g_2}\delta^{v_1,g}  \wp,c)\\
 &=\Kas(\T,\LL,\delta^{v_1,gg_1^{-1}}\wp' ,c) =\Kas(\T,\LL,\wp',c).
 \end{align*}
\end{proof}

Lemma~\ref{C:ConstCobound}  implies that  $\Kas(\T,\LL,\wp,c) $ depends only on the  element  of $\MG M$ represented by $\wp$.  We represent any $h\in \MG M $
 by an admissible $\Gr$-coloring $\wp$ of $\T$ and set  $\Kas(\T,\LL,h,c) =\Kas(\T,\LL,\wp,c) $.
 The scalar $\Kas(\T,\LL,h,c)$ is invariant under negative
 {\Pachner}/{\bubble} moves. More precisely,  Lemma~\ref{L:one-movezz} implies that under the assumptions of this lemma,
 for any $h\in \MG M$, we have (up to multiplication by  powers of~$\widetilde q$) \begin{equation}\label{neneb}\Kas(\T,\LL,h,c)=\Kas(\T',\LL',h,c'). \end{equation}


\begin{lemma}\label{L:IndCharge} For   any $h\in \MG M$, the scalar
$\Kas(\T,\LL,h,c)$ does not depend on the choice of the charge $c$ in its cohomology class.
\end{lemma}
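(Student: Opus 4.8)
Since the dependence on $h$ and on the choice of representing $\Gr$-coloring has been settled by Lemma~\ref{C:ConstCobound}, we fix an admissible $\Gr$-coloring $\wp$ of $\T$ with $[\wp]=h$ and must compare $\Kas(\T,\LL,\wp,c)$ with $\Kas(\T,\LL,\wp,c')$ for charges $c,c'$ on $(\T,\LL)$ satisfying $[c]=[c']$. The charges on a fixed $H$-triangulation form an affine space over the abelian group $W$ of \emph{homogeneous charges}, i.e.\ maps $w\colon\widehat E(\T)\to\frac12\Z$ whose restriction to each tetrahedron satisfies $w(e)=w(e')$ for opposite edges and $w(e_1)+w(e_2)+w(e_3)=0$ for the three edges of a face, and with $\sum_{e'\in\epsilon_\T^{-1}(e)}w(e')=0$ for every edge $e$ of $\T$ (so $W$ does not depend on $\LL$). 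Since the class $[\,\cdot\,]$ is computed by the curve-crossing formula of Section~\ref{sect-MMMLLL}, which is additive in the charge, we have $[c]=[c']$ precisely when $w:=c'-c$ lies in $W_0:=\ker\bigl(W\to H^1(M;\Z/2\Z)\bigr)$; note that every integer-valued homogeneous charge belongs to $W_0$. The plan is to produce a generating set of $W_0$ consisting of integer-valued \emph{local} homogeneous charges — those supported on the tetrahedra in the star of a single edge — and then to show that passing from $c$ to $c+w$ multiplies $\Kas(\T,\LL,\wp,c)$ by an integer power of $\wsqop$ for each such generator $w$. Granting this, the lemma follows.

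For the first step I would run the standard homological analysis of the Neumann chain complex $C_0\to C_1\xrightarrow{\beta}J\xrightarrow{\beta^*}C_1\to C_0$ recalled before Lemma~\ref{L:chargetransit}. Using the canonical splitting $\beta^*=\beta_2\circ\beta_1$ with $\beta_1\colon J\to\overline J$, one identifies the group of integer-valued homogeneous charges with an explicit subgroup of $\overline J$; Neumann's computation of the homology of this complex (with $H_3\cong H^1(M;\Z/2\Z)$; see \cite{Neu90}, and also \cite{Bas,BB}) then shows that $W_0$ is generated by the \lq\lq edge charges\rq\rq\ $w^e$, indexed by the edges $e$ of $\T$: for a tetrahedron $T$ in the star of $e$, the charge $w^e|_T$ vanishes on the pair of opposite edges containing $e$ and takes opposite nonzero integer values on the two remaining pairs of opposite edges, the signs being chosen consistently along the faces of $\T$ incident to $e$ so that all global edge-sums of $w^e$ vanish. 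Being integer-valued, each $w^e$ automatically lies in $W_0$. (If this analysis were to produce a half-integral generator, one would instead handle its double, which is integral, by applying the argument below twice; I do not expect this to be necessary.)

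For the second step, the mechanism is the relation $\Lop^{a}\Rop^{b}\steq\sqop{8ab}{}\Rop^{b}\Lop^{a}$ of Formula~\eqref{E:larb=q8abrbla}, together with $\sqop{}{8}\steq\Qop$ of Remark~\ref{R:q8Q} and the hypothesis of Theorem~\ref{T:MainTopInv}. By Lemma~\ref{L:indOrdering} I may order the vertices of $\T$ so that the two endpoints of the chosen edge $e$ are consecutive. Then only the tetrahedra $T$ in the star of $e$ have their factor $|T|_\p$ in the state sum affected by the replacement $c\mapsto c+w^e$, and, comparing the definition of the charged $6j$-symbols with the charged symmetry relations \eqref{E:6jsyms01}--\eqref{E:6jsyms23} and with Lemma~\ref{L:semidir}, one finds that the new $|T|_\p$ differs from the old one by a power of $\sqop{}{8}$ and by the insertion, on each multiplicity space attached to a face of $T$ incident to $e$, of an operator which is an integer power of $\Lop$ or of $\Rop$. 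Every such face $F$ lies in exactly two consecutive tetrahedra of the star, and since $w^e$ obeys the tetrahedral and edge constraints the two operators inserted on the mutually dual copies of the multiplicity space of $F$ are transpose-inverse to one another; hence they disappear under the contraction $\cntr$. What remains is a product of powers of $\sqop{}{8}$ collected around $e$, which by \eqref{E:larb=q8abrbla}, Remark~\ref{R:q8Q}, and the hypothesis $\sqop{}{8}\teq\wsqop\Id_{\hat H}\oplus\wsqop^{-1}\Id_{\check H}$ reduces to a single integer power of $\wsqop$ independent of the state $\p$. Summing over $\p\in\states(\wp)$ yields the desired identity, and with it the lemma.

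The step I expect to be the main obstacle is exactly this last piece of bookkeeping: one must go through the cases for the position of $e$ among the ordered vertices of each tetrahedron $T$ in its star and for the orientation (right or left) of $T$, and check in each case both that the operators produced on the two sides of each shared face are genuinely adjoint-inverse — so that they cancel after contraction — and that the leftover $\sqop{}{8}$-exponent is a state-independent integer (indeed one expects this exponent to vanish, which would give the identity on the nose; establishing that is part of the same computation). Put differently, one must verify that $w^e$ generates precisely the \lq\lq charge rotation\rq\rq\ freedom around an edge of \cite[Lemma 4.12]{BB}, and that this freedom acts on the state sum only through $\wsqop$. The charged symmetry relations \eqref{E:6jsyms01}--\eqref{E:6jsyms23}, the commutation relation \eqref{E:larb=q8abrbla}, and the fact that $\sqop{}{8}$ is a $T$-scalar commuting with $\scop{}$ (Lemma~\ref{nnn17}) are the inputs that should render this case analysis manageable.
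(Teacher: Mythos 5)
Your plan diverges genuinely from the paper's proof, and the divergence is where the gap lives. The paper does \emph{not} attempt to show directly that replacing $c$ by $c+d(e)$ multiplies the state sum by a power of $\wsqop$ on a fixed triangulation. Instead, it performs a geometric reduction: it uses $H$-bubble and $H$-Pachner moves (and the already-established invariance \eqref{neneb}) to manoeuvre into a situation where the edge $e$ is adjacent to exactly three tetrahedra and can be removed by a negative Pachner move; the invariance under $c\mapsto c+d(e)$ then becomes automatic because, after the move, $d(e)$ being supported on the three removed tetrahedra and having vanishing edge-sums, the two charges $c''$ and $c''+d(e)$ induce the \emph{same} charge $c'''$ on the new triangulation. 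Your approach proposes instead a direct algebraic computation on the original triangulation; this would be an interesting alternative if it were carried out, but it is not.

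Two concrete problems. First, the generating set: the paper's Fact (from Neumann, Baseilhac, Baseilhac--Benedetti) says that charges in the same cohomology class differ by $\Z$-linear combinations of the maps $d(e)$, which take values in $\{-\tfrac12,0,\tfrac12\}$, i.e.\ they are \emph{half}-integer. Your plan asserts the group $W_0$ is generated by integer-valued local charges $w^e$, which would be $2d(e)$; that generates only a proper subgroup of $W_0$, and your remark about ``handling the double by applying the argument twice'' does not salvage this, since proving invariance under $c\mapsto c+2d(e)$ is strictly weaker than proving it under $c\mapsto c+d(e)$. You must work with the half-integer $d(e)$, so $\slop{\pm}$, $\srop{\pm}$ (not $\Lop^{\pm1}$, $\Rop^{\pm1}$) enter. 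Second, the cancellation step is asserted but not established, and its validity is not obvious. Changing $(a,c)\mapsto(a+\delta a,c+\delta c)$ inserts operators (built from $q$, $\Rop^{\pm}$, $\Lop^{\pm}$) on tensor factors $1,2,3$ of the charged $6j$-symbol but not on factor $4$, and these factors correspond to specific faces of the tetrahedron determined by the vertex ordering, not in any simple way to ``faces incident to $e$''. Whether the operators appearing on the two dual copies of a shared multiplicity space are mutual transposed inverses depends on a case analysis over the position of $e$ in the ordered vertex set and the orientation of each tetrahedron, and on taming the non-commutativity $\Lop\Rop\ne\Rop\Lop$. You explicitly defer this ``bookkeeping'', but it is exactly where the content of the lemma sits; without it the argument is incomplete. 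The paper's geometric route is precisely a way to avoid having to do this computation by hand.
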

\begin{proof}
The lemma is proved in two steps: first,  any two charges are connected by a finite sequence of local modifications and second,  the state sum is shown to be preserved under these  modifications.   Recall from Section \ref{sect-MMMLLL}  the set  $\widehat{E}(\T)$    of pairs (a tetrahedron $T$ of $\T$, an edge of $T$).
Let $\ast$ be the involution on $\widehat{E}(\T)$ carrying a pair $(T,e)$ to the pair $\ast (T, e)=(T, \ast e)$ where $\ast e$ is the edge of $T$ opposite to $e$. Fix from now on  an order on the set of vertices of $\T$.

For each edge $e$ of $\T$ we define a map $d(e): \widehat{E} (\T)\rightarrow \{-1/2,0,1/2\}$ as follows.   Let $v$ be the vertex of $e$ which is largest in the ordering of vertices.   Let $T_0, T_1, T_2,...,T_n=T_0$ be the cyclically ordered tetrahedra of $\T$ adjacent to $e$. We choose the cyclic order so that the induced orientation in the plane transversal to $e$ followed by the orientation of $e$ towards $v$ determines the given orientation of $M$.     For $i=1, \ldots , n$, denote   by $ e_i$ the only  edge of the 2-face $T_{i-1} \cap T_{i}$ which is distinct from $e$ and incident to  $v$.
For any $a \in \widehat{E}(\T)$, set
$$d(e)(a)=\left\{\begin{array}{ll}
1/2\, &\text{if  $a=(T_{i-1}, e_i)$ or $a=\ast (T_{i-1}, e_i)$  for some $i\in\{1,...,n\}$,} \\
-1/2 \, &\text{if  $a=(T_{i}, e_i)$ or $a=\ast (T_{i}, e_i)$ for some $i\in\{1,...,n\}$,}   \\
0 \, &\text{otherwise}.
 \end{array}\right.
 $$
 It is easy to see that for any family of  integers $\{\lambda_e \}_e$ numerated by  the edges $e$ of $\T$ the sum $c+\sum_{e }\lambda_e d(e)$ is a charge of $(\T,\LL)$ and $[c+\sum_{e }\lambda_e d(e)]=[c]$.
The following  is due to  Baseilhac   \cite{Bas}, see also     Neumann \cite{Neu90} and     \cite{BB}, Proposition 4.8:\\


\noindent
\textbf{Fact.}
For any pair of charges $c,c'$ of $(\T,\LL)$ with $[c]=[c']$, there is a family of  integers $\{\lambda_e \}_e$ numerated by  the edges $e$ of $\T$  such that  $c'=c+\sum_{e } \lambda_e d(e)$.\\

Therefore to prove the lemma, it is enough to show that for any edge $e$ of $\T$,
\begin{equation}\label{E:tvchangeofcharge}
\Kas(\T,\LL,h,c)=\Kas(\T,\LL,h,c+  d(e)).
\end{equation}
We  show how to reduce the case $e \in \LL$ to the case $e \notin \LL$. Suppose that $e \in \LL$. Pick a face $F$ of $\T$ containing $e$.  We apply the {\bubble} move to $F$ producing a new $H$-triangulation $(\T_b,\LL_b)$ such that    $e$ viewed as an edge of $\T_b$   does not belong to   $\LL_b$.   Pick a charge $c_b$ on $\T_b$ inducing the charge $c$ on $\T$.  A direct calculation shows that the charge $c_b+d(e)$ on $\T_b$ induces the  charge $c +d(e)$ on $\T$.   By \eqref{neneb}, $\Kas(\T,\LL,h,c)=\Kas(\T_b,\LL_b,h,c_b)$ and $\Kas(\T,\LL,h,c+d(e))=\Kas(\T_b,\LL_b,h,c_b+d(e))$.   Therefore, it is enough to prove  \eqref{E:tvchangeofcharge}    in the case $e \notin \LL$.

Suppose from now on that $e \notin \LL$. We first reduce the proof of \eqref{E:tvchangeofcharge}  to the case where $e$ is contained in at least 3 tetrahedra of $\T$ and $\T$ has a vertex  such that there is precisely one face of $\T$ containing   $e$ and this vertex.  Let $V$ be the set of vertices of $\T$. Pick a  face of $\T$ not containing  $e$ and having at least one side in $\LL$. We  apply to this face the {\bubble} move producing a new $H$-triangulation $(\T',\LL')$ whose set of vertices is the union of $V$ with  a 1-point set   $\{v_0\}$.  Pick a charge $c'$ on $\T'$ inducing the charge $c$ on $\T$.
We can find  a sequence $T_0, T_1,..., T_m$ of  distinct tetrahedra of $\T'$  with $m\geq 1 $ such that:   $v_0\in T_0$ and $T_0\cap T_1$ is the  face of $T_0$ opposite to $v_0$;   $T_i \cap T_{i+1}$ contains a common face of $T_i$ and $T_{i+1}$ for all $i$;
 $T_i$ does not contain $e$ for all $i<n$ and   $e\subset T_m$.  Generally speaking,  the intersection $T_i \cap T_{i+1}$ may contain more than  one face; we pick   any face in this intersection and denote it $T_i \cap' T_{i+1}$. We apply to $\T'$ a sequence of  $m$ positive {\Pachner} moves. The first move  replaces the pair $T_0, T_1$ by 3 tetrahedra and adds an edge connecting $v_0$ to the vertex of $ T_1$ opposite to $T_0\cap' T_1=T_0\cap T_1$. One of these new 3 tetrahedra, $t$,  contains the face $T_1\cap' T_2$. The second move replaces the pair $t, T_2$ by 3 tetrahedra and adds  an edge connecting $v_0$ to the vertex of $ T_2$ opposite to $T_1\cap' T_2$.  Continuing in this way $m$ times, we   transform $(\T',\LL')$   into a new $H$-triangulation $(\T'',\LL'')$ having a (unique) face that contains both $v_0$ and  $e$.  The triangulation $\T''$ and all the intermediate triangulations are   quasi-regular   because  the newly added edges always
 connect $v_0$ to another vertex (belonging to $V$).  Our transformations preserve the set $V\cup \{v_0\}$ of the vertices of the triangulation  and lift to  the  charges (though non-uniquely). If the charge $c'$ of $\T'$ is transformed into a charge $c_k$ at the $k$-th step, then $c_k+d(e)$ is a transformation of $c'+d(e)$ (this is obvious for   $k<m$   and is verified by a direct computation  for $k=m$).    Set  $c''=c_m$  and observe that as above,  $$\Kas(\T,\LL,h,c)= \Kas(\T',\LL',h,c')=\Kas(\T'',\LL'',h, c'')$$ and $$\Kas(\T,\LL,h,c+  d(e))= \Kas(\T',\LL',h,c'+  d(e))
 =\Kas(\T'',\LL'',h, c''+d(e)),$$ where  on the right hand side we view $e$ as an edge of $\T''$.  Note that $e$ is contained in at least 3 tetrahedra of $\T''$  because at the $(m-1)$-st step   the edge $e$ is contained in $T_m$ and in at least one other   tetrahedron of the triangulation, and
 the $m$-th  move above creates three tetrahedra of which  two   contain  $e$. Moreover,  there is precisely one face of $\T''$ containing   $e$ and the vertex $v_0$.

 Let $A_1, A_2$ be the vertices of $e$ and  $t_1, t_2,...,t_n$ with $n\geq 3$ be the cyclically ordered  tetrahedra of $\T''$ adjacent to $e$. Let
 $B_1=v_0, B_2, ..., B_n$ be the  vertices of $\T''$ (possibly with repetitions) such that $ A_1, A_2, B_i, B_{i+1}$ are the vertices of $t_i$ for all $i$ (here  $B_{n+1}=B_1$).    Clearly,  $B_i\neq B_1 $ for all $i\neq 1$. If $n>3$, then we apply to $\T''$  a   positive {\Pachner} move replacing $t_1, t_2$ by 3 tetrahedra and adding an edge connecting $B_1=v_0$ to $B_3$. This produces a quasi-regular triangulation $(\T''', \LL'')$ of $(M, L)$ with the same properties as $\T''$ but  having $n-1$ tetrahedra adjacent to $e$. As above,
$\Kas(\T'',\LL'',h, c'')=\Kas(\T''',\LL''',h, c''')$   and
$$\Kas(\T'',\LL'',h, c''+d(e))=\Kas(\T''',\LL''',h, c'''+d(e))$$ for a certain charge $c'''$  on $\T'''$. Proceeding by induction, we  reduce ourselves to the case $n=3$. In this case, the edge $e$ may be eliminated by a negative  {\Pachner} move so that the equality \eqref{E:tvchangeofcharge}
 follows   from \eqref{neneb}.
\end{proof}

\begin{proof}[Proof of Theorem \ref{T:MainTopInv}] Theorem \ref{T:MainTopInv}   follows from the results of this section and
 Proposition \ref{L:Toplemma}.
\end{proof}

\section{Cayley--Hamilton Hopf algebras}\label{section12}
In this section, we recall some of the results of \cite{DeCon} and
use them  to construct $\Psi$-systems in categories.  We assume that
the ground field $\FK$ is algebraically closed and is of
characteristic $0$.

\subsection{Cayley--Hamilton algebras}
\begin{definition}
An  \emph{algebra with trace} is
an (associative) algebra $\CH$ over $\FK$  with a $\FK$-linear map
$t\colon \CH\to \CH$ such that for all $a,b\in \CH$,
$$
t(a)b=bt(a), \quad t(ab) = t(ba),\quad {\text {and}} \quad  t(a)\, t(b)=t(t(a)b) .
$$
\end{definition}

The image    $t(\CH)$ of $t$  is a subalgebra of $\CH$ called the \emph{trace subalgebra}.  Note that   $t(\CH)$ is contained in the center $Z$ of $\CH$.

In the polynomial algebra $\FK[x_1,\ldots,x_n]$ define the elementary symmetric functions $e_i(x_1,\ldots,x_n)$:
\[
\prod_{i=1}^n(x-x_i)=x^n+\sum_{i=1}^ne_i(x_1,\ldots,x_n)x^{n-i},
\]
and the Newton functions
\[
\psi_k(x_1,\ldots,x_n)=\sum_{i=1}^nx_i^k,\quad 1\le k\le n,
\]
which are well known to be related
\[
e_i(x_1,\ldots,x_n)=P_i(\psi_1(x_1,\ldots,x_n),\ldots,\psi_i(x_1,\ldots,x_n))
\]
for certain uniquely defined polynomials $P_i(y_1,\ldots,y_i)$.
\begin{definition}
An algebra with trace $(\CH, t)$ is an \emph{$n^{\mathrm{th}}$-Cayley--Hamilton} algebra, if
$t(1)=n$ and
\[
a^n+\sum_{i=1}^n P_i(t(a),t(a^2),\ldots,t(a^i))a^{n-i}=0
\]
for any $a\in \CH$.
\end{definition}

A prototypical example of  an $n^{\mathrm{th}}$-Cayley--Hamilton algebra is the matrix algebra
$M_n(\FK)$  of $n\times n$ matrices over $\FK$ with the usual trace (with values in $\FK=\FK\Id\subset  M_n(\FK)$).

Let $(\CH,t)$ be a
finitely generated $n^{\mathrm{th}}$-Cayley--Hamilton algebra with trace subalgebra $A=t(\CH)$.  In the rest of this section we  assume that:
\begin{enumerate}
\item \label{I:Ass1} $\CH$ is prime (that is the product of any two non-zero ideals is non-zero),
\item  $\CH$ is a finite $A$-module,
\item the center $Z$ of $\CH$ is integrally closed,
\item \label{I:Ass4}  $A$ is a finitely generated algebra over $\FK$.
\end{enumerate}

The \emph{reduced trace} of $\CH$ is defined by the formula $t_{\CH/A}=\frac{m}{n}t$, where  $m\geq 1$ is the minimal  divisor of $n$ such that $\CH$ is an $m^{\mathrm{th}}$-Cayley--Hamilton algebra with trace  $\frac{m}{n}t$.  Then  $m=[\CH:A]=\operatorname{dim}_A\CH$ and there exists a reduced trace $t_{\CH/Z}:\CH \to Z$ such that
$
t_{\CH/A} = t_{Z/A}\circ t_{\CH/Z}$. Note that $  [\CH : A] = [\CH : Z][Z : A]$.

  By an $n$-dimensional representation of $(\CH, t)$ we mean an algebra homomorphism
$
\phi:\CH\rightarrow M_n(\FK)
$
which is compatible with traces in the sense that $t(a)= Tr(\phi(a))$ for all $a\in \CH$, where $Tr$ is the standard   trace on $M_n(\FK)$.
Let $V (A)$ be the affine algebraic variety associated to $A$, which can be identified
with the maximal spectrum of $A$ or with the set of homomorphisms $  A\to \FK$.
By Theorem~3.1 of \cite{DeCon}, the
(closed) points of $V (A)$ parametrize semi-simple representations of $(\CH, t_{\CH/A})$ of dimension $m = [\CH : A]$. We can similarly  use the points of  $V (A)$ to
parametrize semi-simple representations of $(Z, t_{Z/A})$
of dimension $p = [Z : A]$.
Since  $\CH$ is a finite $A$-module,  $Z$  also is a finite $A$-module. Then $Z$ is a finitely generated algebra over $\FK$ and its associated affine variety
$V (Z)$ parametrizes semi-simple representations  of $(\CH, t_{\CH/Z})$   of dimension $[\CH : Z]$. Observe finally that  the inclusion $A\subset Z$ defines a morphism of
algebraic varieties $\pi\colon V (Z) \to V (A)$ of degree $p$.

Given a point $x\in V (A)$, denote by $N_x$ the corresponding $m$-dimensional
semi-simple representation of $\CH$. Given a point $P\in V (Z)$, denote by $M_P$ the corresponding
$[\CH : Z]$-dimensional semi-simple representation of $\CH$.
 For $x\in V (A)$, the fiber $\pi^{-1}(x)$ (with multiplicities)  is a  cycle
$\sum h_iP_i$ of degree $\sum h_i=p$, where $P_i\in V(Z)$ and $h_i\geq 1$.  One has the following equality of $\CH$-modules:
\[
N_x = \oplus_i\, h_iM_{P_i}.
\]
The Zariski open subset of $V(A)$ consisting of  the points $x$ such that  $\pi^{-1}(x)$ consists of $p$ distinct points is called the \emph{unramified locus} of   $\CH$. For $x$ in the unramified locus, any   $P_i \in \pi^{-1}(x)$ corresponds to an irreducible representation~$M_{P_i}$.
\subsection{Cayley--Hamilton Hopf algebras}\label{SS:CHHopfA}
\begin{definition}
An \emph{$n^{\mathrm{th}}$-Cayley--Hamilton Hopf algebra} is a Hopf algebra which is also an $n^{\mathrm{th}}$-Cayley--Hamilton algebra  such that  the trace subalgebra is a Hopf subalgebra.
\end{definition}
Assume now that $\CH$ is a Cayley--Hamilton Hopf algebra satisfying  the assumptions \eqref{I:Ass1}-\eqref{I:Ass4} of  the last subsection.
 The co-multiplication on $A$ defines an associative binary
operation on  the variety $\Gr= V (A)$ while the antipode defines the inverse operation, so   $\Gr$ becomes  an algebraic group. One has the following decomposition formula for  $x, y\in \Gr$ (Proposition~5.15 of \cite{DeCon}):
\[
N_x\otimes N_y=mN_{xy}
\]
where $m=[\CH : A]$.
A pair   $x, y\in \Gr$ is \emph{generic} if $x, y$ and  $xy$ lie in the unramified locus.  Then for each point $P\in V (Z)$ lying  in the fiber of $x, y$ or $ xy$ the corresponding representation $M_P$ is irreducible. For $P\in\pi^{-1}(x)$ and $Q\in\pi^{-1}(y)$,  one has the following Clebsch--Gordan decomposition  (Theorem~5.16 of \cite{DeCon}):
\begin{equation}\label{E:ssMM}
M_P\otimes M_Q\simeq\bigoplus_{O\in\pi^{-1}(xy)}M_O^{\oplus h_O^{P,Q}}   \quad {\text {and}} \quad
\sum_{O}h_O^{P,Q}=[\CH : Z]
\end{equation}
for some non-negative integers $h_O^{P,Q}$. Also $\sum_{P,Q}h_O^{P,Q}=m$ for all ${O\in\pi^{-1}(xy)}$. Note that generic pairs $(x,y)$  form a Zariski
open subvariety in $\Gr\times \Gr$.

  For   $x\in G$ we define a set $I_x$ as follows:  if $x$ is in the  unramified locus, then $I_x=\pi^{-1}(x)$, otherwise $I_x=\emptyset$.  Set $I=\bigcup_{x\in G}I_x$ and consider the  family $\{M_P\}_{P\in I}$ of irreducible representations discussed above.

  \begin{theorem}\label{T:CHHopfPsi-system} Let $\cat$ be the monoidal  Ab-category   of $\CH$-modules of finite dimension over $\FK$.
Then $\cat $ has a $\Psi$-system with distinguished simple objects $\{M_P\}_{P\in I}$.
\end{theorem}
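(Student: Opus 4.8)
The plan is to verify the four defining conditions (1)--(4) of a $\Psi$-system for the family $\{M_P\}_{P\in I}$ in the category $\cat$ of finite-dimensional $\CH$-modules. Condition (1) is largely a matter of assembling what is already available: each $M_P$ with $P\in I$ is an irreducible $\CH$-module, hence a simple object of $\cat$, and $\End(M_P)=\FK\,\Id$ because $\FK$ is algebraically closed (Schur's lemma). The vanishing $\Hom(M_P,M_Q)=0$ for $P\ne Q$ follows because distinct points of $V(Z)$ give non-isomorphic irreducible representations (the central character already distinguishes them), again invoking Schur.

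For condition (2), I would produce the involution $P\mapsto P^*$ on $I$ from the duality of the Hopf algebra: the antipode of $\CH$ turns the left $\CH$-module $M_P$ into its dual $M_P^*$, and since the trace subalgebra $A$ is a Hopf subalgebra, the antipode of $A$ induces the inversion $x\mapsto x^{-1}$ on $\Gr=V(A)$. One checks that if $P\in\pi^{-1}(x)$ lies in the unramified locus, then the dual representation $M_P^*$ is irreducible and corresponds to a point of $\pi^{-1}(x^{-1})$ lying in the unramified locus (the unramified locus is stable under $x\mapsto x^{-1}$ because inversion is an automorphism of $\Gr$ commuting with $\pi$ up to the analogous inversion on $V(Z)$); this point is declared $P^*$. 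Double duality of finite-dimensional modules gives $P^{**}=P$. Condition (3) then asks for the evaluation and coevaluation morphisms $b_P\colon\unit\to M_P\boxtimes M_{P^*}$ and $d_P\colon M_P\boxtimes M_{P^*}\to\unit$; these are the standard (co)evaluation maps available in any category of finite-dimensional modules over a Hopf algebra, and the triangle identities \eqref{E:dualitybd} are their defining snake relations.

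The real content is condition (4), and this is where I expect the main obstacle. Fix $P\in\pi^{-1}(x)$, $Q\in\pi^{-1}(y)$ with both in the unramified locus, and suppose $H^{PQ}_O=\Hom(M_O,M_P\boxtimes M_Q)\ne 0$ for some $O\in I$; I must show $\Id_{M_P\boxtimes M_Q}$ lies in the image of $\bigoplus_O H^{PQ}_O\otimes H_{PQ}^O\to\End(M_P\boxtimes M_Q)$. The key input is the Clebsch--Gordan decomposition \eqref{E:ssMM}: when the pair $(x,y)$ is generic, $M_P\boxtimes M_Q$ is semisimple and decomposes as $\bigoplus_{O\in\pi^{-1}(xy)}M_O^{\oplus h_O^{P,Q}}$, with each $M_O$ irreducible; for a semisimple module the identity is visibly the sum over isotypic components of the compositions (projection onto a copy of $M_O$) $\circ$ (inclusion of that copy), which is exactly a sum of elements $x\circ y$ with $x\in H^{PQ}_O$, $y\in H_{PQ}^O$. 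The subtlety is that condition (4) must hold for \emph{all} $P,Q\in I$, not merely generic pairs: if $x$, $y$ are unramified but $xy$ is not, then $I_{xy}=\emptyset$, so I must argue that in this case $H^{PQ}_O=0$ for every $O\in I$ (so the hypothesis of (4) is vacuous) --- this should follow because the central characters occurring in $M_P\boxtimes M_Q$ all lie over $xy$, hence over points of $\pi^{-1}(xy)$, none of which belong to $I$ by definition of $I_{xy}=\emptyset$. One further case to handle: $x,y$ unramified and $xy$ unramified but $(x,y)$ still not generic --- but genericity of $(x,y)$ is precisely the condition that $x,y,xy$ all lie in the unramified locus, so this case does not arise. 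Thus the decomposition \eqref{E:ssMM} applies whenever the hypothesis of (4) is non-vacuous, and the argument closes. I would organize the proof as: (a) simplicity and orthogonality, (b) construction of $*$, $b_P$, $d_P$ with the triangle identities, (c) the splitting in (4) via semisimplicity of tensor products of generic irreducibles, together with the vanishing $H^{PQ}_O=0$ when $xy$ is ramified.
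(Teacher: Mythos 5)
Your proposal is correct and follows essentially the same approach as the paper: the involution $P\mapsto P^*=P\circ S$ induced by the antipode, the standard (co)evaluation morphisms of finite-dimensional Hopf-algebra modules for the duality data, and the Clebsch--Gordan decomposition \eqref{E:ssMM} for the splitting in condition (4). You make explicit a point the paper leaves implicit, namely that when $x,y$ are unramified but $xy$ is not, the hypothesis of (4) is vacuous because the central subalgebra $A\subset Z$ acts on $M_P\boxtimes M_Q$ through the character $xy$, so no $M_O$ with $O\in I$ can occur as a subquotient.
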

\begin{proof}      Any $P\in I$ belongs to $I_x$ for a unique point $x$ of the unramified locus.  Using the antipode $S:\CH\to \CH$, we   associate to the irreducible representation $M_P$ the dual representation $M_P^*$ together with the evaluation morphism of $\CH$-modules $M_P^*\otimes M_P\to \FK$ and the coevaluation morphism $\FK \rightarrow M_P\otimes M_P^*$ determined by $1\mapsto \sum v_i\otimes v_i^*$ where $\{v_i\}$ is a basis of $M_P$ and $\{v_i^*\}$ is the dual basis of $M_P^*$.  The representation $M_P^*$ is isomorphic to $M_{P^*}$ where $P^*=P\circ S\in\pi^{-1}(x^{-1})$ and  $x^{-1}=x\circ S$. Thus, we obtain an involution $I\rightarrow I$, $P\mapsto P^*$ and $\CH$-module morphisms $d_{P^*}: M_{P^*}\otimes M_P\to \FK$, $b_P:\FK \rightarrow M_P\otimes M_{P^*}$   satisfying Equation \eqref{E:dualitybd}.   Equation \eqref{E:isom} follows from \eqref{E:ssMM}.  Thus, $\{M_P\}_{P\in I}$ is a $\Psi$-system. \end{proof}

\begin{theorem}\label{T:CHHopfPsi-system+}
Let  $\bb: I \rightarrow \FK$  be the constant function taking the value $  \frac{1}{m}$.
 The triple  $(\Gr,I,\bb)$ satisfies Conditions \ref{I:comp1}-\ref{I:comp4} of Subsection \ref{SS:AlgPre} (where instead of $\widehat \Psi$-systems we should speak of $\Psi$-systems).
\end{theorem}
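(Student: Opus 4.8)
The plan is to verify Conditions \ref{I:comp1}--\ref{I:comp4} of Subsection \ref{SS:AlgPre} one by one, using the structure theory of Cayley--Hamilton Hopf algebras recalled above. First I would check Condition \ref{I:comp1}: if $P\in I_x$ for $x$ in the unramified locus, then by the proof of Theorem \ref{T:CHHopfPsi-system} the dual $M_P^*\cong M_{P^*}$ with $P^*=P\circ S\in\pi^{-1}(x^{-1})$; since $x^{-1}=x\circ S$ lies in the unramified locus whenever $x$ does (the antipode is an automorphism of the variety $V(A)$ exchanging ramified and unramified loci), we get $P^*\in I_{x^{-1}}$. Condition \ref{I:comp2} is the key compatibility: for $i_1\in I_{g_1}$, $i_2\in I_{g_2}$, the representation $M_{i_1}\otimes M_{i_2}$ is a summand of $N_{g_1}\otimes N_{g_2}=m N_{g_1g_2}=m\bigoplus_{O}h_O M_O$ with $O$ ranging over $\pi^{-1}(g_1g_2)$; hence any simple summand of $M_{i_1}\otimes M_{i_2}$ is some $M_O$ with $O\in\pi^{-1}(g_1g_2)$. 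If moreover $g_1g_2$ is in the unramified locus then $\pi^{-1}(g_1g_2)=I_{g_1g_2}$, so $H^{i_1i_2}_k=\Hom(M_k,M_{i_1}\otimes M_{i_2})=0$ for $k\notin I_{g_1g_2}$; and if $g_1g_2$ is ramified then $I_{g_1g_2}=\emptyset$ and we must show $H^{i_1i_2}_k=0$ for all $k\in I$, which again follows since every simple summand of $M_{i_1}\otimes M_{i_2}$ has central character supported over $g_1g_2$, hence cannot be any $M_k$ with $k$ living over a point $\neq g_1g_2$. Here I would need to observe that $I$ consists only of points lying over unramified points, so $k\in I$ forces $k$ to lie over a point in the unramified locus, which is disjoint from the ramified point $g_1g_2$; thus the central characters differ and $H^{i_1i_2}_k=0$.

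For Condition \ref{I:comp2+}, suppose $i_1\in I_{g_1}$, $i_2\in I_{g_2}$ and $I_{g_1g_2}\neq\emptyset$, i.e., $g_1g_2$ is unramified. Then $(g_1,g_2)$ is a generic pair (all three of $g_1,g_2,g_1g_2$ are unramified), so the Clebsch--Gordan decomposition \eqref{E:ssMM} applies: $M_{i_1}\otimes M_{i_2}\cong\bigoplus_{O\in\pi^{-1}(g_1g_2)}M_O^{\oplus h_O^{i_1,i_2}}$ with $\sum_O h_O^{i_1,i_2}=[\CH:Z]\geq 1$, so at least one $h_O^{i_1,i_2}\neq 0$, giving $k=O\in\pi^{-1}(g_1g_2)=I_{g_1g_2}$ with $H^{i_1i_2}_k\neq 0$. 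Condition \ref{I:comp3}: given a finite family $\{g_r\}$, I would use that the unramified locus $U\subset V(A)=\Gr$ is a nonempty Zariski-open subset of the irreducible variety $\Gr$, hence dense, and translation by any element is an automorphism; the sets $\{g\in\Gr: g g_r\in U\}=U g_r^{-1}$ are each dense open, so their finite intersection is nonempty, yielding the required $g$ with $I_{gg_r}=\pi^{-1}(gg_r)\neq\emptyset$ for all $r$.

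Finally, Condition \ref{I:comp4} with $\bb\equiv\frac1m$: the symmetry $\bb(i)=\bb(i^*)$ is immediate since $\bb$ is constant. The nontrivial identity is $\sum_{i_1\in I_{g_1},\,i_2\in I_{g_2}}\bb(i_1)\bb(i_2)\dim H^{i_1i_2}_k=\bb(k)$ for $k\in I_{g_1g_2}$ when $I_{g_1},I_{g_2}\neq\emptyset$; note $I_{g_1},I_{g_2}\neq\emptyset$ together with $I_{g_1g_2}\neq\emptyset$ makes $(g_1,g_2)$ generic. Writing $h_k^{i_1,i_2}=\dim H^{i_1i_2}_k$, we need $\frac{1}{m^2}\sum_{i_1,i_2}h_k^{i_1,i_2}=\frac1m$, i.e., $\sum_{i_1\in I_{g_1},i_2\in I_{g_2}}h_k^{i_1,i_2}=m$. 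This is exactly the last displayed assertion of \eqref{E:ssMM}: ``$\sum_{P,Q}h_O^{P,Q}=m$ for all $O\in\pi^{-1}(xy)$'' from Theorem~5.16 of \cite{DeCon}, applied with $x=g_1$, $y=g_2$, $O=k$, since $\pi^{-1}(g_1)=I_{g_1}$ and $\pi^{-1}(g_2)=I_{g_2}$. The main obstacle I anticipate is making Condition \ref{I:comp2} airtight in the ramified case $I_{g_1g_2}=\emptyset$: one must argue carefully that tensor products of the irreducibles $M_{i_1}, M_{i_2}$ (living over unramified points) cannot contain, even as subquotients relevant to $\Hom$, any $M_k$ with $k\in I$; the cleanest route is through central characters, observing that $Z$ acts on $M_{i_1}\otimes M_{i_2}$ through the character corresponding to $g_1g_2\in V(A)$ pushed to $V(Z)$, whereas $M_k$ for $k\in I$ has central character over a distinct (unramified) point of $V(A)$, so $\Hom$ vanishes.
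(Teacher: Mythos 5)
Your proof is correct and follows essentially the same route as the paper's (which is terse, reducing Conditions \ref{I:comp2}, \ref{I:comp2+}, \ref{I:comp4} to the decomposition \eqref{E:ssMM} and Condition \ref{I:comp3} to openness of the unramified locus). The one place where you supply a detail the paper elides is the ramified case of Condition \ref{I:comp2}: since \eqref{E:ssMM} is stated only for generic pairs, one does need the central-character (or: ``$M_{i_1}\otimes M_{i_2}$ is a summand of $N_{g_1}\otimes N_{g_2}=mN_{g_1g_2}$, whose simple constituents all live over $g_1g_2$'') observation to handle $I_{g_1g_2}=\emptyset$, exactly as you give it.
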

\begin{proof}
    Condition \ref{I:comp1} follows from the fact that if $x$ is in the
 unramified locus   then so is $x^{-1}=x\circ S$.  Conditions~\ref{I:comp2} and \ref{I:comp2+} follow from   \eqref{E:ssMM}.  Moreover, \eqref{E:ssMM} implies that if $x_1,x_2,x_1x_2$ are   in the
 unramified locus and $P\in I_{x_1x_2}$, then
\begin{equation*}
\sum_{P_1 \in I_{x_1},\, P_2\in I_{x_2}}
\bb({P_1})\bb({P_2})\dim(H^{P_1P_2}_{P})=\frac1{m^2}\sum_{P_1 \in I_{x_1},\, P_2\in I_{x_2}}
\dim(H^{P_1P_2}_{P})=\frac m{m^2} = \bb(P).
\end{equation*}
This implies  Condition \ref{I:comp4}. Condition \ref{I:comp3} holds since the
 unramified locus is a Zariski open subset of $G=V(A)$.
\end{proof}
\begin{remark}
A Cayley--Hamilton Hopf algebra $\CH$ is \emph{sovereign} if $\CH$ contains a group-like element $\phi$ such that $S^2(x)=\phi^{-1}x\phi$ for all $x\in \CH$.
If $\CH$ is sovereign then the results of \cite{Bi} imply that the category $\cat$ of Theorem \ref{T:CHHopfPsi-system} is sovereign (aka. pivotal).  In this case, the right duality comes from the sovereign structure on $\cat$.  Then the general theory of sovereign categories implies that the operator $C=(AB)^3\in \End(H)$ is the identity.
\end{remark}
\subsection{Examples from quantum groups at roots of unity}
Let $\mathfrak{g}$ be a simple Lie algebra of rank $n$ over $\FK=\C$ with the root system $ \Delta$.   Fix simple roots $\alpha_1,\ldots,\alpha_n\in\Delta^+$ and denote by $(a_{ij})_{i,j=1}^n$ the corresponding Cartan matrix. Denote by $d_i$ the
length of the $i$-th simple root.

For an odd positive integer $N$ denote by $\rofo$ a primitive
complex root of $1$ of order $N$ (in the case of type $G_2$ we
require that $N\notin 3\Z$).

Consider the quantized universal enveloping algebra
$\mathcal{U}_\rofo=U_\rofo(\mathfrak{g})$. It is an associative unital algebra  over $\mathbb{C}$ generated by $K_\mu$, where $\mu$ runs over the weight lattice of $\mathfrak{g}$ and
$E_i$, $F_i$, $i = 1,\ldots, n$ with the defining relations:
\[
K_\mu K_\nu=K_{\mu+\nu},\quad K_0=1,
\]
\[
K_\mu E_i=\rofo^{\alpha_i(\mu)}E_iK_\mu,\quad K_\mu F_i=\rofo^{-\alpha_i(\mu)}F_iK_\mu,
\]
\[
E_iF_j-F_jE_i=\delta_{ij}(K_{\alpha_i}-K_{\alpha_i}^{-1})/(\rofo_i-\rofo_i^{-1}),
\]
\[
\sum_{k=0}^{1-a_{ij}}(-1)^k\begin{bmatrix} 1-a_{ij}\\ k\end{bmatrix}_{\rofo_i}
E_i^{1-a_{ij}-k}E_jE_i^k=0,\quad i\ne j,
\]
\[
\sum_{k=0}^{1-a_{ij}}(-1)^k\begin{bmatrix} 1-a_{ij}\\ k\end{bmatrix}_{\rofo_i}
F_i^{1-a_{ij}-k}F_jF_i^k=0,\quad i\ne j,
\]
where $\rofo_i=\rofo^{d_i}$,
\[
\begin{bmatrix} m\\ k\end{bmatrix}_{\rofo}=\frac{[m]_\rofo!}{[m-k]_\rofo![k]_\rofo!},\quad
[m]_\rofo!=[m]_\rofo[m-1]_\rofo\cdots [2]_\rofo[1]_\rofo,\quad [m]_\rofo=\frac{\rofo^m-\rofo^{-m}}{\rofo-\rofo^{-1}}.
\]
The formulas
\[
\Delta(K_\mu)=K_\mu\otimes K_\mu,
\]
\[
\Delta(E_i)=E_i\otimes 1+K_{\alpha_i}\otimes E_i,
\]
\[
\Delta(F_i)=1\otimes F_i+F_i\otimes K_{-\alpha_i},
\]
define a homomorphism of algebras $\Delta\colon \mathcal{U}_\rofo\to \mathcal{U}_\rofo\otimes \mathcal{U}_\rofo$. There are unique counit and antipode turning   $\mathcal{U}_\rofo$ into a Hopf algebra with comultiplication $\Delta$. We denote by $\mathcal{U}_\rofo^{\pm}$ the subalgebras of $\mathcal{U}_\rofo$ generated by $\{E_i\}_i$ and $\{F_i\}_i$ respectively. The subalgebra generated by $\{K_\mu\}_\mu$ will be denoted by $\mathcal{U}_\rofo^0$. We also consider Hopf subalgebras $\mathcal{B}^{\pm}_\rofo=\mathcal{U}_\rofo^0\otimes\mathcal{U}_\rofo^{\pm}$. It is known that the subalgebras
\(
Z_0^{\pm}\subset \mathcal{B}^{\pm}_\rofo
\)
generated by $E_\alpha^N$, $K_{\alpha_i}^N$ (respectively $F_\alpha^N$, $K_{\alpha_i}^N$) and the subalgebra $Z_0\subset \mathcal{U}_\rofo$ generated by $E_\alpha^N$, $F_\alpha^N$, $K_{\alpha_i}^N$ are central Hopf subalgebras. Moreover,   $\mathcal{B}^{\pm}_\rofo$ and $\mathcal{U}_\rofo$ are Cayley--Hamilton Hopf algebras with trace subalgebras $Z_0^{\pm}$ and $Z_0$ respectively, see \cite{DeCon}. In all these three situations, Theorem \ref{T:CHHopfPsi-system}
produces a monoidal Ab-category $\cat$ with a $\Psi$-system.

\begin{conjecture}\label{conj:odd-roots}
This $\Psi$-system can be extended to a $\spsi$-system in $\cat$ such that there exists a scalar $\wsqop\in \C$ for which the operator $\sqop{}{8}\in \End(H)$ of Lemma \ref{nnn17} is   $T$-equal to   $\wsqop\Id_{\widehat H}\oplus \wsqop^{-1}\Id_{\check H} .$
\end{conjecture}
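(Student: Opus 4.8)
The plan is to reduce the conjecture to explicit linear algebra on the multiplicity spaces of the $\Psi$-system produced by Theorem~\ref{T:CHHopfPsi-system}, and then to carry that linear algebra out in the cases where the relevant $6j$-symbols admit closed formulas --- in particular for the Borel subalgebra $\mathcal{B}^+_\rofo(\sll_2)$, where the conjecture can be checked in full.

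First I would unwind the categorical data. For $\CH\in\{\mathcal{U}_\rofo,\mathcal{B}^{\pm}_\rofo\}$ one has the group $\Gr=V(A)$, the sets $I_x=\pi^{-1}(x)$ for $x$ in the unramified locus, and the irreducible modules $\{M_P\}_{P\in I}$, with $M_P\otimes M_Q$ described by the Clebsch--Gordan rule~\eqref{E:ssMM}. In the $\sll_2$--Borel case every multiplicity $h_O^{P,Q}$ is $0$ or $1$, so each $H^{ij}_k$ is at most one-dimensional and, after fixing basis vectors, the operators $\Aop,\Bop$ --- defined in Section~\ref{section 2.3} from the duality morphisms $b_P,d_P$ coming from the antipode, cf.\ the proof of Theorem~\ref{T:CHHopfPsi-system} --- become explicit scalar-valued functions of the labels. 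From these I would read off $\Lop=\Aop^*\Aop$, $\Rop=\Bop^*\Bop$, $\Cop=(\Aop\Bop)^3$ on $H$.

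The operator $\Cop$ is harmless: since $\mathcal{B}^{\pm}_\rofo$ and $\mathcal{U}_\rofo$ are sovereign (they contain a grouplike element implementing $S^2$), the remark following Theorem~\ref{T:CHHopfPsi-system+} gives $\Cop=\Id$, so $\scop{}=\Id$ satisfies~\eqref{E:srts} and~\eqref{E:sqrtC1C2} trivially. The substantive step is to exhibit an invertible, symmetric, grading-preserving $\srop{}$ with $(\srop{})^2=\Rop$, $\Bop\srop{}\Bop=\srop{-}$, and the $T$-symmetry~\eqref{E:sqrtR1R2} (equivalently~\eqref{E:sqrtL1}); Lemma~\ref{L:semidir} already supplies the un-rooted versions $T\Rop_1\Rop_2=T\Cop_3\Rop_4$ and $T\Lop_1=T\Cop_2\Lop_3\Lop_4$, so one is looking for a coherent choice of square roots compatible with the sign constraints. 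I expect $\srop{}$ to be the diagonal operator whose entries are square roots of ratios of cyclic $q$-factorials / Gauss sums, and I expect~\eqref{E:sqrtR1L2} and~\eqref{E:sqrtR1R2} to follow from the pentagon identity for the Faddeev--Kashaev cyclic quantum dilogarithm, which is precisely the functional equation governing the $6j$-symbols in this model.

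Finally I would compute the $T$-scalar $\sqop{}{8}=\srop{}\Aop\srop{-}\Aop\slop{-}\scop{-}$ of Lemma~\ref{nnn17} in the explicit model. By that lemma it is a unitary $T$-scalar commuting with $\scop{}$, hence acts on each one-dimensional multiplicity space by a scalar; the content of the conjecture is that this scalar is a single constant $\wsqop$ on $\hat H$ and $\wsqop^{-1}$ on $\check H$. In the $\sll_2$--Borel case I expect $\wsqop$ to be a fixed root of unity determined by $\rofo$ --- the normalization already present in Kashaev's invariant~\cite{K1}, consistent with Remark~\ref{R:q8Q} identifying $\sqop{8}{}$ with $\Qop=\Lop\Rop\Lop^{-1}\Rop^{-1}$ up to stable $T$-equality --- and its constancy on $\hat H$ should be visible directly from the explicit formula for $\sqop{}{8}$. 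The main obstacle, and the reason the statement remains conjectural for general quantum groups and for Borel subalgebras other than $\sll_2$, is that beyond this case the $6j$-symbols are not known in closed form, the multiplicity spaces need not be one-dimensional, and there is as yet no structural argument forcing both the existence of $\srop{}$ with the required $T$-symmetries and the constancy of $\sqop{}{8}$ on $\hat H$; a proof in the general case would seem to need either new closed expressions for root-of-unity $6j$-symbols or a conceptual, ribbon-type interpretation of $\sqop{}{8}$ that is presently missing.
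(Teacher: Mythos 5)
Your overall plan matches the structure of the paper's Section~\ref{section13}: pass to the explicit cyclic representations of $B_\rofo$, observe that sovereignty gives $\Cop=\Id$ so one may take $\scop{}=\Id$, construct $\srop{}$ explicitly, and compute $\sqop{}{8}$ directly to verify that it is a constant scalar on $\hat H$. Your closing assessment of why the general case stays open is also consistent with the paper's.

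There is, however, a concrete factual error that affects how the computation has to be carried out. You assert that in the $\sll_2$--Borel case each multiplicity $h_O^{P,Q}$ is $0$ or $1$, so that each $H^{ij}_k$ is at most one-dimensional and $A,B$ become scalar-valued. This is false: Proposition~\ref{prop:psi} shows that $\dim H^{g,h}_{gh}=N$ (and $0$ for the other target labels). Consequently $A,B,A^*,B^*$ are genuine $N\times N$ matrices, which the paper makes explicit in Lemma~\ref{L:ExpAB} via the intertwiners $S_{g,h}$; the claim that $\sqop{}{8}$ acts by a single scalar on each multiplicity space (and hence by a constant on all of $\hat H$) is then a real computation (Lemma~13.7 in the paper) rather than an automatic consequence of one-dimensionality. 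Your intuition about the shape of $\srop{}$ is also slightly off: the paper's construction (Proposition~\ref{P:ExtendsPsiHat}) does not extract square roots of $q$-factorial ratios via the cyclic dilogarithm pentagon identity, but exploits the decomposition $\Rop=\Rop_0\Rop_1$ into a positive scalar part $\Rop_0$ and a translation $\Rop_1$ with $\Rop_1^N=\Id$, so that for odd $N$ one may set $\srop{}=\Rop_0^{1/2}\Rop_1^{(N+1)/2}$; the $T$-relations \eqref{E:sqrtCC} are then checked directly against the un-rooted versions of Lemma~\ref{L:semidir}, with $\scop{}=\Id$ making two of them trivial. The emphasis on oddness of $N$ (so that $(N+1)/2$ functions as a square root modulo $N$) is the pivotal point, and your account does not bring it out.
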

If this conjecture is true then Theorem \ref{T:MainTopInv} implies
that the state sum arising from $\cat$ with this $\spsi$-system and
the algebraic data of Theorem \ref{T:CHHopfPsi-system+}  is a
topological invariant of the triple (a closed connected oriented
3-manifold $M$, a non-empty  link in $M$, a conjugacy class  of
homomorphisms $\pi_1(M) \to \Gr$). In the next section, we verify
Conjecture~\ref{conj:odd-roots}   for the Borel subalgebra of
$U_\rofo(\sll_2)$   for  any  primitive complex  root  of unity
$\rofo$ of odd order $N$. The corresponding topological  invariant
generalizes the one constructed in \cite{K1} which in the case of links in
the 3-sphere coincides with the $N$-colored Jones polynomial
evaluated at $\rofo$.

\section{The Borel subalgebra of $U_\rofo(\sll_2)$}\label{section13}

\subsection{The $\Psi$-system} As above, fix a positive integer $N$ and a primitive   $N$-th root of unity~$\rofo$.    In what follows,   $\mathbb{Z}_N=\mathbb{Z}/N\mathbb{Z}$.
Consider the Hopf algebra $B_\rofo$ defined  by the following presentation
\[
 \mathbb{C}\langle a^{\pm 1},b\, \vert\ ab=\rofo ba,\ \Delta(a)=a\otimes a,\ \Delta(b)=a\otimes b+ b\otimes 1\rangle.
\]
Following \cite{K1,K2}, we consider the cyclic representations of $B_\rofo$, i.e. the representations  carrying  $b$ to an invertible operator.

Let $G=\mathbb{R}\times\mathbb{R}_{>0}$ be the upper half plane with the  group structure
given by   $(x,y)(u,v)=(x+yu,yv)$. As a topological space, the set
$I= G\setminus (\{0\}\times \mathbb{R}_{>0})$ has  two connected components    $I_\pm=\{(x,y)\in G \, \vert\ \pm x>0\}$.
We fix $\epsilon\in\mathbb{C}$ such that
$
\epsilon^N=-1$. In particular, in the case of odd $N$, we assume that $\epsilon=-1$. We define the $N$-th root function  $\sqrt[N]{x}$ on real numbers $x$ by the condition that it is positive real for positive real $x$ and $\sqrt[N]{x}=\epsilon \sqrt[N]{-x}$ for negative $x$.
Define two maps
\[
u\colon G\to \mathbb{R}_{>0},\quad v\colon G\to \mathbb{R}_{>0}\sqcup \epsilon\mathbb{R}_{>0}
\]
\[
u(g)= u_g=\sqrt[N]{y},\quad v(g)= v_g=\sqrt[N]{x},\quad g=(x,y)\in G.
\]
We have the following properties
\[
u_{gh}=u_gu_h,\quad u_{g^{-1}}=\frac{1}{u_g},
\]
and
\[
v_{g^{-1}}=\epsilon_g\frac{v_g}{u_g},\quad
\epsilon_g=\epsilon^{\pm1},\quad g\in I_{\pm}.
\]
To any $g\in I$, we associate a $B_\rofo$-module $V_g$ which is an $N$-dimensional vector space with a distinguished basis
$\{\bv_i\}_{i\in\mathbb{Z}_N}$, and the (left)  $B_\rofo$-module structure is given by the formulae:
\[
a\bv_i=u_g\rofo^i\bv_i,\quad b\bv_i=v_g\bv_{i+1},\quad i\in\mathbb{Z}_N.
\]
Note that the distinguished basis permits to identify   $V_g$ with    $\mathbb{C}^N$.

In what follows, we need  the following function
\[
\Phi_{g,m}=(-\epsilon_g)^m\rofo^{m(m-1)/2},\quad \overline\Phi_{g,m}=\frac1{\Phi_{g,m}},\quad m\in \mathbb{Z}_N.
\]
\begin{proposition}\label{prop:psi}
In the category of $B_\rofo$-modules, the set of objects $\{V_g\}_{g\in I}$ with the involution $g^*=g^{-1}$ is a $\Psi$-system, where the duality morphisms
\[
d_g\colon V_g\otimes V_{g^*}\to \mathbb{C},\quad b_g\colon \mathbb{C}\to V_g\otimes V_{g^*}
\]
are given by the formulae
\[
d_g(\bv_i\otimes \bv_j)=\left\{\begin{array}{cl}
\Phi_{g,i} &\mathrm{if}\ i+j=0;\\
0 &\mathrm{otherwise},
\end{array}
\right.\quad
b_g(1)=\sum_{i\in\mathbb{Z}_N} \overline\Phi_{g^*,-i}\bv_i\otimes \bv_{-i},
\]
and the multiplicity  spaces $H^{f,g}_h$ are such that  $\operatorname{dim}(H^{f,g}_h)$ is $N$ if $h=fg$ and is zero otherwise.
\end{proposition}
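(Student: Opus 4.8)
The plan is to verify directly the four axioms (1)--(4) in the definition of a $\Psi$-system, using the explicit module structure on $V_g = \mathbb{C}^N$ with basis $\{\bv_i\}_{i\in\mathbb{Z}_N}$. First I would check that each $V_g$ is simple as a $B_\rofo$-module: since $a$ acts on $\bv_i$ by the scalar $u_g\rofo^i$ and these $N$ scalars are pairwise distinct (because $\rofo$ is a primitive $N$-th root of unity), any $B_\rofo$-linear endomorphism is diagonal in the basis $\{\bv_i\}$; since $b$ acts as a cyclic shift $\bv_i\mapsto v_g\bv_{i+1}$ with $v_g\neq 0$ for $g\in I$, commuting with $b$ forces all the diagonal entries to coincide, so $\End(V_g)=\mathbb{C}\,\Id$. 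Moreover $\Hom(V_g,V_h)=0$ for $g\neq h$: a nonzero morphism would match the eigenvalue-multisets of $a$, forcing $u_g=u_h$ (up to a common $\rofo$-power) and then matching the $b$-action forces $v_g=v_h$; together with the group structure this gives $g=h$. This handles axioms (1) and (2), the latter because $g\mapsto g^{-1}$ is an involution of $G$ preserving $I$ (indeed preserving each component $I_\pm$ up to the sign $\epsilon_g$).

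Next I would verify axiom (3), i.e.\ that the stated $d_g$ and $b_g$ are $B_\rofo$-module morphisms satisfying the zig-zag relations \eqref{E:dualitybd}. That $d_g$ and $b_g$ intertwine the actions is a short computation using the coproduct $\Delta(a)=a\otimes a$, $\Delta(b)=a\otimes b+b\otimes 1$, the counit, and the antipode of $B_\rofo$; the precise normalizing factors $\Phi_{g,m}=(-\epsilon_g)^m\rofo^{m(m-1)/2}$ are chosen exactly so that $d_g$ kills all $\bv_i\otimes\bv_j$ with $i+j\neq 0$ and is equivariant on the antidiagonal, using the identities $u_{g^{-1}}=u_g^{-1}$, $v_{g^{-1}}=\epsilon_g v_g/u_g$. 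The zig-zag identities then reduce to the numerical fact that $\Phi_{g,i}\,\overline\Phi_{g^*,-i}$ telescopes correctly; I would record the relevant recursion $\Phi_{g,m+1}=-\epsilon_g\rofo^m\Phi_{g,m}$ and check both composites equal $\Id_{V_g}$ on each basis vector $\bv_i$.

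Finally, for axiom (4) and the dimension count of the multiplicity spaces, I would compute $\Hom(V_h, V_f\otimes V_g)$ and $\Hom(V_f\otimes V_g, V_h)$ explicitly. On $V_f\otimes V_g$ the element $a$ acts diagonally with eigenvalue $u_fu_g\rofo^{i+j}$ on $\bv_i\otimes\bv_j$, so the $\rofo^{k}$-eigenspace of $a$ (for the scalar $u_fu_g\rofo^k$) is $N$-dimensional, spanned by $\{\bv_i\otimes\bv_{k-i}\}_{i\in\mathbb{Z}_N}$, and $b$ acts on this eigenspace as an invertible cyclic operator of order $N$; diagonalizing it shows $V_f\otimes V_g$ decomposes as a direct sum of $N$ pairwise non-isomorphic simple modules, each isomorphic to $V_{fg}$ (here one uses $u_{fg}=u_fu_g$ and that the resulting $b$-eigenvalue is an $N$-th root of $(v_f v_g)^{N}\cdots$, matching $v_{fg}$ up to the eigenvalue reshuffle). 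Hence $\Hom(V_h,V_f\otimes V_g)$ is $N$-dimensional if $h=fg$ and zero otherwise, and likewise for $H^h_{fg}$; the splitting \eqref{E:isom} of $\Id_{V_f\otimes V_g}$ is then immediate since $V_f\otimes V_g$ is semisimple with all isotypic components appearing. I expect the main obstacle to be the bookkeeping in axiom (3): getting the phase factors $\Phi_{g,m}$, $\overline\Phi_{g,m}$ and the $\epsilon_g=\epsilon^{\pm1}$ signs to cancel consistently across the two zig-zag relations and the equivariance of $d_g,b_g$ requires careful tracking of the antipode $S(a)=a^{-1}$, $S(b)=-a^{-1}b$ and of which component $I_\pm$ contains $g$; everything else is linear algebra over $\mathbb{C}^N$ with a cyclic group action.
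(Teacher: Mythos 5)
Your handling of axioms (1)--(3) is essentially sound and agrees with the paper's (very terse) route: simplicity of $V_g$ from the distinct $a$-eigenvalues together with the invertible $b$-shift, vanishing of $\Hom(V_g,V_h)$ for $g\neq h$ by comparing the central scalars $a^N=y_g$, $b^N=x_g$, and a direct check of equivariance and the zig-zag identities for $d_g,b_g$ via the recursion $\Phi_{g,m+1}=-\epsilon_g\rofo^m\Phi_{g,m}$ (the paper declares this ``straightforward'' and defers the multiplicity count to \cite{K1,K2}).

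However, your argument for axiom (4) has a genuine gap: $b$ does \emph{not} act on the $a$-eigenspaces of $V_f\otimes V_g$. From $\Delta(b)=a\otimes b+b\otimes 1$ one finds $b(\bv_i\otimes\bv_{k-i})=u_f\rofo^i v_g\,\bv_i\otimes\bv_{k-i+1}+v_f\,\bv_{i+1}\otimes\bv_{k-i}$, which lands in the $\rofo^{k+1}$-eigenspace, so there is no operator to diagonalize on a single eigenspace. (Also, ``$N$ pairwise non-isomorphic simple modules, each isomorphic to $V_{fg}$'' is self-contradictory, and the relevant central scalar is $b^N=x_{fg}=x_f+y_fx_g$, not an $N$-th root of $(v_fv_g)^N=x_fx_g$.) The repair: observe that $b^N$ is central --- since $(a\otimes b)(b\otimes 1)=\rofo(b\otimes 1)(a\otimes b)$ and $\rofo$ is a primitive $N$-th root, the Gauss $q$-binomial identity gives $\Delta(b)^N=a^N\otimes b^N+b^N\otimes 1$ --- so $b^N$ acts on $V_f\otimes V_g$ by $x_{fg}$. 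If $fg\in I$ then $x_{fg}\neq 0$, $b$ is invertible on $V_f\otimes V_g$, and each vector $w$ in the $u_{fg}$-eigenspace $W_0$ of $a$ determines a well-defined intertwiner $V_{fg}\to V_f\otimes V_g$, $\bv_j\mapsto v_{fg}^{-j}b^jw$ (well defined because $b^Nw=v_{fg}^Nw$); invertibility of $b$ gives $\bigoplus_j b^jW_0=V_f\otimes V_g$, whence semisimplicity and $\dim H^{f,g}_{fg}=N$. If $fg\notin I$, axiom (4) is vacuous. This corrected direct argument is a reasonable self-contained substitute for the paper's citation of \cite{K1,K2}, which is encoded in the explicit intertwiner $S_{g,h}$ of Lemma~\ref{lem:sgh}.
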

\begin{proof}
It is straightforward to verify that $d_g$ and $b_g$ are morphisms of the category of $B_\rofo$-modules. The dimensions of the multiplicity spaces were calculated in \cite{K1,K2}.
\end{proof}

\subsection{Calculation  of the operators $\Aop$, $\Bop$, $\Lop$, and $\Rop$.} Let us call a pair of elements $g,h\in I$ \emph{admissible} if  $gh\in I$. For an operator $E$ satisfying the equation $E^N=-1$ and  an admissible pair $(g,h)$,
 we associate an operator valued function
\(
\Psi_{g,h}(E)
\)
as a solution of the functional equation
\[
\frac{\Psi_{g,h}(\rofo E)}{\Psi_{g,h}(E)}=\frac{v_g-u_gv_hE}{v_{gh}}.
\]
More precisely, we choose numerical coefficients $\psi_{g,h,m}$, $m\in\mathbb{Z}_N$, such that
\[
\Psi_{g,h}(E)=\sum_{m\in\mathbb{Z}_N}\psi_{g,h,m}(\epsilon E)^m.
\]
The above functional equation translates to the following difference equation
\[
\frac{\psi_{g,h,m}}{\psi_{g,h,m-1}}=\frac{\epsilon^{-1}u_gv_h}{v_g-v_{gh}\rofo^m}.
\]
We fix a unique solution of the latter equation normalized so that $\psi_{g,h,0}=1$. Using the notation of \cite{KMS}, we have
\[
\psi_{g,h,m}=w(v_{gh},u_gv_h/\epsilon,v_g\vert m),\quad \Psi_{g,h}(E)=f\left(v_{gh}/v_g,0\left\vert Eu_gv_h/v_g\right.\right),
\]
where
\[
w(x,y,z\vert m)=\frac{(y/z)^m}{(\rofo x/z;\rofo)_m},\quad x^N+y^N=z^N,
\]
\[
f(x,y\vert z)= \sum_{m\in\mathbb{Z}_N}\frac{(\rofo y;\rofo)_m}{(\rofo x;\rofo)_m}z^m,\quad 1-x^N=(1-y^N)z^N,
\]
and
\[
(x;\rofo)_m=\prod_{j=0}^{m-1}(1-x\rofo^j).
\]

For two operators $U$ and $V$ satisfying the conditions $U^N=V^N=1$, $UV=VU$, we also define
\[
L(U,V)=\frac1N\sum_{i,j\in\mathbb{Z}_N}\rofo^{ij}U^iV^j.
\]
In what follows, the standard basis $\{e_i\}$ of $\mathbb{C}^N$ will be indexed by elements of $\mathbb{Z}_N$. Define operators $X,Y\in\operatorname{Aut}(\mathbb{C}^N)$,
\[
Xe_i=\rofo^ie_i,\quad Ye_i=e_{i+1},\quad i\in\mathbb{Z}_N.
\]
For $g\in I$, let $\pi_g\colon B_\rofo\to \operatorname{End}(V_g)$ be the algebra homomorphism corresponding to the $B_\rofo$-module structure of $V_g$.
\begin{lemma}\label{lem:sgh}
For any admissible pair $(g,h)$, the operator valued function
\[
S_{g,h}=\Psi_{g,h}(-Y^{-1}X\otimes Y)L(Y\otimes 1,1\otimes X)
\] takes its values in the set of invertible matrices and satisfies the equation
\[
(\pi_g\otimes\pi_h)(\Delta(x))=S_{g,h}(\pi_{gh}(x)\otimes \operatorname{id}_{\mathbb{C}^N})S_{g,h}^{-1},\quad \forall x\in B_\rofo.
\]
\end{lemma}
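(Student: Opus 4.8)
\textbf{Proof proposal for Lemma~\ref{lem:sgh}.}

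The plan is to verify directly that $S_{g,h}$ intertwines the two representations $(\pi_g\otimes\pi_h)\circ\Delta$ and $\pi_{gh}\otimes\operatorname{id}$ of $B_\rofo$, and then separately argue invertibility. Since $B_\rofo$ is generated by $a^{\pm1}$ and $b$, it suffices to check the intertwining equation on these two generators. First I would record the explicit action of $\Delta(a)=a\otimes a$ and $\Delta(b)=a\otimes b+b\otimes 1$ on the tensor product basis $\{\bv_i\otimes\bv_j\}$ using the module structure $a\bv_i=u_g\rofo^i\bv_i$, $b\bv_i=v_g\bv_{i+1}$: one gets $(\pi_g\otimes\pi_h)(\Delta(a))=u_gu_h\,(X\otimes X)=u_{gh}(X\otimes X)$ and $(\pi_g\otimes\pi_h)(\Delta(b))=u_gv_h\,(X\otimes Y)+v_g\,(Y\otimes 1)$, while the target side gives $\pi_{gh}(a)\otimes\operatorname{id}=u_{gh}(X\otimes 1)$ and $\pi_{gh}(b)\otimes\operatorname{id}=v_{gh}(Y\otimes 1)$.

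Next I would handle the two factors of $S_{g,h}$ in turn. The operator $L(U,V)=\frac1N\sum_{i,j}\rofo^{ij}U^iV^j$ is a discrete Fourier-type kernel; the key computational fact is that it conjugates the pair $(X\otimes X,\ Y\otimes 1)$ into the pair governing the target representation, i.e.\ $L(Y\otimes1,1\otimes X)$ exchanges ``diagonal'' and ``shift'' operators in a controlled way so that after conjugation the $a$-equation becomes an identity (both sides equal $u_{gh}(X\otimes1)$ up to the extra factor $X$ which disappears) and the $b$-equation is transformed into $u_gv_h(X\otimes Y)+v_g(Y\otimes1)\ \mapsto\ (v_g - u_gv_h\,Y^{-1}X\otimes Y)\,(Y\otimes1)$ — this is exactly the combination appearing on the right-hand side of the functional equation defining $\Psi_{g,h}$. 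Then the role of the factor $\Psi_{g,h}(-Y^{-1}X\otimes Y)$ is to absorb this extra operator coefficient: using the defining functional equation $\Psi_{g,h}(\rofo E)/\Psi_{g,h}(E)=(v_g-u_gv_hE)/v_{gh}$ with $E=-Y^{-1}X\otimes Y$ (note $E^N=(-1)^N(Y^{-1}X)^N\otimes Y^N=(-1)^N\rofo^{\binom N2}\cdot 1$; the normalization $\epsilon^N=-1$ is precisely what makes $E^N=-1$ so that $\Psi_{g,h}(E)$ is well defined as an element of the finite-dimensional commutative algebra generated by $E$), one checks that conjugation by $\Psi_{g,h}(-Y^{-1}X\otimes Y)$ converts the coefficient $(v_g-u_gv_hE)$ into the scalar $v_{gh}$, yielding $v_{gh}(Y\otimes1)$ as required. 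This is the standard pentagon-type verification underlying the quantum dilogarithm, and I would cite \cite{K1,K2,KMS} for the identity $\Psi_{g,h}(E)\,(X\otimes Y)\,\Psi_{g,h}(E)^{-1}$-type manipulations if a shortcut is available.

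Finally, invertibility: $L(Y\otimes1,1\otimes X)$ is invertible because it is (a normalization of) a finite Fourier transform — concretely $L(U,V)^2$ is a permutation-type operator, or one exhibits the inverse explicitly — and $\Psi_{g,h}(-Y^{-1}X\otimes Y)$ is invertible because the scalars $\psi_{g,h,m}$ are all nonzero: the recursion $\psi_{g,h,m}/\psi_{g,h,m-1}=\epsilon^{-1}u_gv_h/(v_g-v_{gh}\rofo^m)$ has nonvanishing numerator (since $u_g,v_h\neq0$ for $g,h\in I$) and the admissibility hypothesis $gh\in I$ together with $v_g^N+\,(\text{etc.})$ guarantees the denominators $v_g-v_{gh}\rofo^m$ never vanish, so $\Psi_{g,h}(E)$ is a polynomial in $E$ with nonzero eigenvalues on each joint eigenspace of $E$. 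The main obstacle I anticipate is bookkeeping in the conjugation identities for $L$ and $\Psi$ — keeping straight which tensor leg each $X,Y$ acts on and verifying the precise scalar/operator coefficients match the functional equation — rather than any conceptual difficulty; everything reduces to the combinatorics of the Weyl algebra $\mathbb{Z}_N$-representation and the defining recursion for $w(x,y,z\mid m)$ recalled from \cite{KMS}.
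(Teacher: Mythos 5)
Your proposal follows the natural path, which is essentially what the paper leaves unstated (the paper's entire proof is ``A straightforward computation''): verify the intertwining relation on the generators $a,b$, using the $L$-factor to convert $\pi_{gh}\otimes\operatorname{id}$ into the appropriate tensor form and the $\Psi$-factor to absorb the remaining operator coefficient via the defining functional equation. This is indeed correct in outline.

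Two points, however, deserve tightening. On the bookkeeping: a short computation shows that $L_0:=L(Y\otimes 1,1\otimes X)$ acts by $e_k\otimes e_l\mapsto e_{k-l}\otimes e_l$, so $L_0(X\otimes 1)L_0^{-1}=X\otimes X$ (which gives the $a$-equation exactly, rather than anything ``disappearing'') and $L_0(Y\otimes 1)L_0^{-1}=Y\otimes 1$. For the $b$-equation one then needs the commutation $(Y\otimes 1)E=\rofo^{-1}E(Y\otimes 1)$ with $E=-Y^{-1}X\otimes Y$, which gives
\[
\Psi_{g,h}(E)(Y\otimes 1)\Psi_{g,h}(E)^{-1}=\frac{v_g-u_gv_h\,\rofo^{-1}E}{v_{gh}}\,(Y\otimes 1),
\]
and since $\rofo^{-1}E(Y\otimes 1)=-X\otimes Y$ the right side is $v_{gh}^{-1}\bigl(v_g(Y\otimes 1)+u_gv_h(X\otimes Y)\bigr)$, as required. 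Your intermediate expression $(v_g-u_gv_h\,Y^{-1}X\otimes Y)(Y\otimes 1)$ is off by the $\rofo^{-1}$-shift and a sign (with $E=-Y^{-1}X\otimes Y$ one has $v_g-u_gv_hE=v_g+u_gv_h\,Y^{-1}X\otimes Y$); without the shift the formula does not reduce to $(\pi_g\otimes\pi_h)(\Delta(b))$. On invertibility: nonvanishing of the coefficients $\psi_{g,h,m}$ does not by itself imply invertibility of $\Psi_{g,h}(E)$. Since $E^N=-1$, one must show $\Psi_{g,h}$ does not vanish at any eigenvalue $\epsilon\rofo^k$ of $E$, i.e.\ that the discrete Fourier-type sums $\sum_m\psi_{g,h,m}(\epsilon^2\rofo^k)^m$ are nonzero. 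This is a genuine extra step; it is furnished by the explicit inversion relation for the cyclic quantum dilogarithm $f(x,y\,|\,z)$ in \cite{KMS}, which you mention but should actually invoke here rather than rely on nonvanishing of the individual $\psi_{g,h,m}$.
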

\begin{proof}
A straightforward computation.
\end{proof}
The operator $S_{g,h}$, considered as a linear map $S_{g,h}\colon V_{gh}\otimes \mathbb{C}^N\to V_g\otimes V_h$, permits to identify the multiplicity spaces $H^{g,h}_{gh}$ and $H_{g,h}^{gh}$ with $\mathbb{C}^N$ and $(\mathbb{C}^N)^*$, respectively, through the formulae
\[
fv=S_{g,h}(v\otimes f), \quad f\in H^{g,h}_{gh},\ v\in V_{gh},
\]
\[
f(v\otimes w)=(\operatorname{id}_{V_{gh}}\otimes f)(S_{g,h}^{-1}(v\otimes w)),\quad
v\in V_g,\ w\in V_h,\ f\in H_{g,h}^{gh}.
\]
In what follows, we shall use the notation $e_i$ and $e^*_i$ for the dual bases in $H^{g,h}_{gh}$ and $H_{g,h}^{gh}$, respectively, which correspond to the standard dual bases in $\mathbb{C}^N$ and $(\mathbb{C}^N)^*$.
\begin{lemma}\label{L:ExpAB}
The action of the operators $\Aop$, $\Aop^*$, $\Bop$, and $\Bop^*$ is given by
\begin{multline*}
\Aop e_i=\Psi_{g^*,gh}(\epsilon_g\rofo)\sum_{j\in\mathbb{Z}_N}\Phi_{g^*,i-j}e^*_j,\quad
\Aop^* e_i=\Psi_{g,h}(\rofo/\epsilon_g)\sum_{j\in\mathbb{Z}_N}\Phi_{g,j-i}e^*_j,\\
\quad e_i\in H^{g,h}_{gh},\quad e^*_j\in H_{g^*,gh}^{h},
\end{multline*}
\begin{multline*}
\Aop e^*_i=\frac{1}{N\Psi_{g,h}(\rofo/\epsilon_g)}\sum_{j\in\mathbb{Z}_N}
\overline\Phi_{g,j-i}e_j,\quad
\Aop^* e^*_i=\frac{1}{N\Psi_{g^*,gh}(\epsilon_g\rofo)}\sum_{j\in\mathbb{Z}_N}
\overline\Phi_{g^*,i-j}e_j,\\ e^*_i\in H_{g,h}^{gh},\quad
 e_j\in H^{g^*,gh}_{h},
\end{multline*}
\[
\Bop e_i=\frac{1}{\nu(v_{gh}/v_g)}\Phi_{h,i}e^*_{-i},\quad \Bop^* e_i=\frac{1}{\nu(v_{g}/v_{gh})}\Phi_{h^*,-i}e^*_{-i},\quad
e_i\in H^{g,h}_{gh},\quad e^*_{-i}\in H_{gh,h^*}^{g},
\]
\[
\Bop e^*_i=\nu(v_{g}/v_{gh})\overline\Phi_{h^*,-i}e_{-i},\quad \Bop^* e^*_i=\nu(v_{gh}/v_{g})\overline\Phi_{h,i}e_{-i},\quad e^*_i\in H_{g,h}^{gh},\quad e_{-i}\in H^{gh,h^*}_{g},
\]
where
\[
\nu(x)=\frac{1-x^N}{N(1-x)}.
\]
\end{lemma}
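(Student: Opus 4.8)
\textbf{Proof plan for Lemma~\ref{L:ExpAB}.}
The plan is to compute the four operators $\Aop,\Aop^*,\Bop,\Bop^*$ directly from their definitions in Section~\ref{section 2.3}, using the explicit intertwiner $S_{g,h}$ of Lemma~\ref{lem:sgh} to express everything in terms of the standard bases of $\mathbb{C}^N$. Recall that $\Aop$ acts on a vector $x\in H^{g,h}_{gh}$ (thought of as a morphism $V_{gh}\to V_g\boxtimes V_h$) by the graphical rule pre-composing with $b_{g^*}\boxtimes\Id$ and post-composing with $\Id\boxtimes d_{g^*}$, landing in $H^{h}_{g^*,gh}$; dually on the $\check H$-side and with $\Bop$ using $b_{h},d_{h}$ on the right. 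So the first step is purely bookkeeping: write $\Aop e_i$ as the composite $(\Id_{V_h}\boxtimes d_{g^*})(S_{g,h}(\base_i\otimes-)\boxtimes\Id_{V_h})(b_{(g^*)^*}\boxtimes\Id_{V_h})$ after the identification $fv=S_{g,h}(v\otimes f)$, then expand $S_{g,h}$, $b$, $d$ in coordinates.

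First I would record the needed ingredients in coordinates. From Proposition~\ref{prop:psi}, $b_g(1)=\sum_i\overline\Phi_{g^*,-i}\,\bv_i\otimes\bv_{-i}$ and $d_g(\bv_i\otimes\bv_j)=\delta_{i+j,0}\Phi_{g,i}$. From Lemma~\ref{lem:sgh}, $S_{g,h}=\Psi_{g,h}(-Y^{-1}X\otimes Y)\,L(Y\otimes1,1\otimes X)$; since $L(Y\otimes1,1\otimes X)=\frac1N\sum_{k,l}\rofo^{kl}(Y^k\otimes X^l)$ and $\Psi_{g,h}(E)=\sum_m\psi_{g,h,m}(\epsilon E)^m$, one gets a closed formula for $S_{g,h}(e_t\otimes e_s)$ and for the matrix elements of $S_{g,h}^{-1}$ (its inverse is again of the same shape because $L$ is, up to normalization, a finite Fourier transform and $\Psi_{g,h}(E)^{-1}=\overline\Psi_{g,h}(E)$ is the ``$w$ against $\overline w$'' inversion from \cite{KMS}). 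The key algebraic identity that makes the answer collapse to the stated form is the cocycle-type relation $\Phi_{g,i}\Phi_{g^{-1},-i}=1$ together with $\Phi_{g,i+j}=\Phi_{g,i}\Phi_{g,j}\rofo^{ij}$ (Gaussian multiplicativity of $\Phi_{g,m}=(-\epsilon_g)^m\rofo^{m(m-1)/2}$), which is exactly what is needed to evaluate the $d_{g^*}$-contraction $\sum \Phi_{g^*,i}(\cdots)$ against the Gaussian sums coming from $S_{g,h}$ and $b$.

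Next I would carry out the four contractions one at a time. For $\Aop e_i$: plug $b_{g}(1)$ (note $(g^*)^*=g$), apply $S_{g,h}$ with $\base_i$ inserted in the multiplicity slot, then contract the first and last $V_{g^*}$-factors through $d_{g^*}$; the $L$-part produces a $\sum_l\rofo^{l(\cdots)}$ which is a finite geometric/Gauss sum forcing a single surviving index, and the $\Psi_{g,h}$-part gets evaluated at the root of unity $\epsilon_g\rofo$ (because after the contraction the operator argument $-Y^{-1}X\otimes Y$ acts by a scalar on the relevant one-dimensional piece, using $v_{g^{-1}}=\epsilon_g v_g/u_g$ and $u_{g^{-1}}=u_g^{-1}$). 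This yields $\Aop e_i=\Psi_{g^*,gh}(\epsilon_g\rofo)\sum_j\Phi_{g^*,i-j}e^*_j$. The computation of $\Aop^*$ is the mirror image, using $S_{g,h}^{-1}$ in place of $S_{g,h}$ and $d_{g}$ (resp.\ $b_{g^*}$) in the other cyclic position, giving the $\Psi_{g,h}(\rofo/\epsilon_g)$ and $\Phi_{g,j-i}$ there. For $\Bop,\Bop^*$ the right-hand duality $b_h,d_h$ is used: here the $\Psi$-functional equation enters not as an evaluation at a root of unity but through the ratio $v_g-u_gv_hE\big/v_{gh}$, and contracting the Gaussian sum $\sum_m\psi_{h,\bullet,m}\rofo^{m\bullet}$ collapses to the ``Fourier dual'' closed form $\nu(x)=(1-x^N)/(N(1-x))$ with $x=v_{gh}/v_g$; the accompanying $\Phi_{h,i}$ (resp.\ $\Phi_{h^*,-i}$) factors are the matrix elements of $d_h$ (resp.\ $b_h$), and the index flip $i\mapsto-i$ is forced by the $\delta_{i+j,0}$ in $d_h,b_h$.

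\textbf{Main obstacle.} The routine part is the bookkeeping of which cyclic positions the turnbacks occupy and keeping the four multiplicity spaces $H^{h}_{g^*,gh}$, $H^{g^*,gh}_{h}$, $H^{g}_{gh,h^*}$, $H^{gh,h^*}_{g}$ straight. The genuinely delicate point is the precise normalization: making sure the factors of $N$ (coming from $L=\frac1N\sum\rofo^{ij}(\cdots)$ and from the Gauss-sum evaluations), the signs/$\epsilon$'s (from $\epsilon_g=\epsilon^{\pm1}$ on $I_\pm$ and from $v_{g^{-1}}=\epsilon_g v_g/u_g$), and the direction of the functional equation for $\Psi_{g,h}$ vs.\ $\Psi_{g^*,gh}$ all land exactly as written — in particular that the $\Aop e_i^*$ formula carries $1/(N\Psi_{g,h}(\rofo/\epsilon_g))$ rather than its reciprocal. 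I expect to verify this by checking two sanity constraints that come for free from the earlier theory: $\Aop^2=\Bop^2=1$ (Lemma~\ref{L:ab}), which pins down the products of the scalar prefactors via the $\Psi$-inversion identity $\Psi_{g,h}(E)\,\Psi_{g^*,gh}(\text{conjugate }E)=\text{const}$ and the Gaussian reciprocity for $\Phi$; and the self-duality $\Aop^*=\Aop$-type relations only hold after transposition, so I would use the pairing \eqref{E:pair} to cross-check that $\langle \Aop e_i,\,e^{*}\rangle=\langle e_i,\,\Aop^* e^{*}\rangle$ with the stated coefficients. Once the normalizations survive these two checks, the lemma follows.
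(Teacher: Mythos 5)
Your strategy coincides with the paper's: the authors also compute $\Aop$, $\Bop$ (and their transposes) by expressing the pairings $\langle y,\Aop z\rangle$, $\langle y,\Bop z\rangle$ through the intertwiners $S_{g,h}$ and the explicit $b$, $d$, then evaluating with the Gaussian/basic-hypergeometric identities from the appendices of \cite{KMS}. There are, however, two concrete slips in your plan that would cause the computation to derail as written.

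First, your proposed composite for $\Aop e_i$,
$(\Id_{V_h}\boxtimes d_{g^*})\bigl(S_{g,h}(e_i\otimes-)\boxtimes\Id_{V_h}\bigr)\bigl(b_{(g^*)^*}\boxtimes\Id_{V_h}\bigr)$,
does not typecheck and does not match the definition of $\Aop$ in Section~\ref{section 2.3}: on $\check H = \bigoplus H^{ij}_k$, the operator $\Aop$ uses only $d_{i^*}$ (no $b$ appears). More importantly, to extract the coefficient of $e^*_j\in H_{g^*,gh}^h$ in $\Aop e_i$ one must also trivialize the target multiplicity space $H^{g^*,gh}_h$, which requires a \emph{second} intertwiner $S_{g^*,gh}$. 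The identity the paper actually uses is
$(d_{g^*}\otimes\Id_{V_h})(\Id_{V_{g^*}}\otimes S_{g,h})(S_{g^*,gh}\otimes\Id)(x\otimes y\otimes z)=x\,\langle y,\Aop z\rangle$,
with two $S$'s appearing; your version drops $S_{g^*,gh}$ and replaces it by a $b_g$, which is why the types do not line up.

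Second, the ``cocycle-type relation'' $\Phi_{g,i}\Phi_{g^{-1},-i}=1$ you plan to lean on is false: from $\Phi_{g,m}=(-\epsilon_g)^m\rofo^{m(m-1)/2}$ and $\epsilon_{g^{-1}}=\epsilon_g^{-1}$ one finds (for odd $N$, $\epsilon=-1$) that $\Phi_{g,i}\,\Phi_{g^{-1},-i}=\rofo^{i^2}$, not $1$. The multiplicativity $\Phi_{g,i+j}=\Phi_{g,i}\Phi_{g,j}\rofo^{ij}$ you quote is correct, and it together with the normalization $\overline\Phi_{g,m}\Phi_{g,m}=1$ (which \emph{is} the identity used for the duality equations \eqref{E:dualitybd}) is what actually drives the collapse of the Gaussian sums. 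Once these two points are fixed, your plan is the paper's proof.
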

\begin{proof}
From Lemma~\ref{lem:sgh} it follows that for any
\(
x\in V_h,\ y\in H^{g^*,gh}_h,\ z\in H^{g,h}_{gh}
\),
\[
(d_{g^*}\otimes\operatorname{id}_{V_h})(\operatorname{id}_{V_{g^*}}\otimes S_{g,h})(S_{g^*,gh}\otimes \operatorname{id}_{H^{g,h}_{gh}})(x\otimes y\otimes z)=x\langle y,Az\rangle.
\] Starting from this identity, and using the results of  Appendices A and C of \cite{KMS}, a  straightforward   calculation   yields  the formulas above for the action of   $\Aop$.

Similarly, for any
\(
x\in V_g,\ y\in H^{gh,h^*}_g,\ z\in H^{g,h}_{gh}
\) the identity
\[
(\operatorname{id}_{V_g}\otimes d_{h})(S_{g,h}P\otimes \operatorname{id}_{V_{h^*}})(\operatorname{id}_{H^{g,h}_{gh}}\otimes S_{gh,h^*})(z\otimes x\otimes y)=x\langle y,Bz\rangle.
\] gives rise to the action of the operator $\Bop$.
\end{proof}
\begin{lemma}\label{L:ExpLR}
The action of the operators $\Lop=\Aop^*\Aop$ and $\Rop=\Bop^*\Bop$ is given by
\[
\Lop e_i=\left(\frac{u_gv_h}{v_{gh}}\right)^{N-1}e_{i-1},\quad e_{i}\in H^{g,h}_{gh},
\]
\[
\Lop e^*_i=\left(\frac{u_gv_h}{v_{gh}}\right)^{N-1}e^*_{i+1},\quad e^*_{i}\in H_{g,h}^{gh},
\]
\[
\Rop e_i=\rofo^{-i}\left(\frac{v_g}{v_{gh}}\right)^{N-1}e_{i},\quad e_i\in H^{g,h}_{gh},
\]
\[
\Rop e^*_i=\rofo^{-i}\left(\frac{v_g}{v_{gh}}\right)^{N-1}e^*_{i},\quad e^*_i\in H_{g,h}^{gh}.
\]
\end{lemma}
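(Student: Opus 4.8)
The plan is to compute $\Lop=\Aop^*\Aop$ and $\Rop=\Bop^*\Bop$ directly by composing the formulas for $\Aop$, $\Aop^*$, $\Bop$, $\Bop^*$ from Lemma~\ref{L:ExpAB}. The key point is that the operators $\Aop$ and $\Bop$ swap the two types of multiplicity spaces ($H^{f,g}_h$ and $H^h_{f,g}$) while changing the labels by the duality involution, as recorded in \eqref{E:exch++}; so the compositions $\Aop^*\Aop$ and $\Bop^*\Bop$ are grading-preserving (as they must be by Lemma~\ref{L:propAB}), and on each multiplicity space $H^{g,h}_{gh}\cong \mathbb{C}^N$ they act by an explicit $N\times N$ matrix that we can evaluate.

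First I would treat $\Rop=\Bop^*\Bop$, which is the easier of the two since the $\Bop$-formulas involve no $\Psi$-function, only the scalars $\nu(\cdot)$ and the diagonal-in-reversed-index factors $\Phi_{h,i}$, $\overline\Phi_{h^*,-i}$. Applying $\Bop$ to $e_i\in H^{g,h}_{gh}$ gives $\frac{1}{\nu(v_{gh}/v_g)}\Phi_{h,i}e^*_{-i}\in H^{g}_{gh,h^*}$, and then applying $\Bop^*$ (in its form on starred generators, with the roles of $g,h$ suitably relabeled for the space $H^{g}_{gh,h^*}$) brings us back to $H^{g,h}_{gh}$ with a coefficient $\nu(v_{gh}/v_{g})\,\overline\Phi_{h,i}\cdot\frac{1}{\nu(v_{gh}/v_g)}\Phi_{h,i}$ times $e_i$ — except one must carefully track that the index has been reversed twice (back to $i$) and that the $\nu$-arguments combine into $\bigl(\tfrac{v_g}{v_{gh}}\bigr)^{N-1}$ via the identity $\nu(x)\nu(1/x)=x^{1-N}$ (which follows from $\nu(x)=\frac{1-x^N}{N(1-x)}$). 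The surviving $\rofo^{-i}$ comes from the product $\Phi_{h,i}\overline\Phi_{h,i}$ being trivial but the index shift in the intermediate space contributing the phase; I will need to be scrupulous about which $\Phi$-factor carries the $i$-dependence, using $\Phi_{g,m}=(-\epsilon_g)^m\rofo^{m(m-1)/2}$ and the quasi-periodicity relations for $\Phi$ under the involution. The analogous computation for the starred basis vectors $e^*_i$ is symmetric.

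For $\Lop=\Aop^*\Aop$, the calculation is structurally the same but now the $\Psi$-functions appear. Applying $\Aop$ to $e_i\in H^{g,h}_{gh}$ produces $\Psi_{g^*,gh}(\epsilon_g\rofo)\sum_j \Phi_{g^*,i-j}e^*_j\in H^{h}_{g^*,gh}$, and then $\Aop^*$ sends each $e^*_j$ back to a sum $\frac{1}{N\Psi_{g^*,gh}(\epsilon_g\rofo)}\sum_k \overline\Phi_{g^*,j-k}e_k$ — so the two $\Psi$-factors cancel and we are left with $\frac1N\sum_{j,k}\Phi_{g^*,i-j}\overline\Phi_{g^*,j-k}e_k$. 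The inner sum over $j$ is a finite Gauss-type sum in $\Phi_{g^*,\cdot}$; using $\Phi_{g,m}\overline\Phi_{g,m'}=(-\epsilon_g)^{m-m'}\rofo^{(m(m-1)-m'(m'-1))/2}$ one computes $\frac1N\sum_{j\in\mathbb{Z}_N}\Phi_{g^*,i-j}\overline\Phi_{g^*,j-k}$ and finds it equals $\bigl(\tfrac{u_gv_h}{v_{gh}}\bigr)^{N-1}\delta_{k,i-1}$, i.e.\ a pure index shift by $-1$ with a scalar. The scalar $\bigl(\tfrac{u_gv_h}{v_{gh}}\bigr)^{N-1}$ must be produced from the $\Psi$-cancellation more carefully: it is $\Psi_{g,h}(\rofo/\epsilon_g)/\Psi_{g^*,gh}(\epsilon_g\rofo)$ up to the Gauss-sum normalization, and one evaluates this ratio using the defining functional equation $\Psi_{g,h}(\rofo E)/\Psi_{g,h}(E)=(v_g-u_gv_hE)/v_{gh}$ telescoped $N$ times, which yields $\Psi(\rofo^N E)/\Psi(E)=\prod_{k=0}^{N-1}(v_g-u_gv_h\rofo^k E)/v_{gh}^N$; since $\rofo^N=1$ this forces the product $\prod_k(v_g-u_gv_h\rofo^kE)=v_g^N-(u_gv_hE)^N$ to relate $\Psi$ at different arguments, and plugging in $E=\epsilon_g\rofo$ (so $E^N=\epsilon_g^N\rofo^N=-1$ for odd $N$, using $\epsilon_g=\epsilon^{\pm1}$ and $\epsilon^N=-1$) gives exactly the factor $(v_g^N+(u_gv_h)^N)/v_{gh}^N$, which by the Fermat-type relation $v_g^N+(u_gv_h)^N=v_{gh}^N$ built into the $w$-function's constraint collapses appropriately — I expect after the dust settles the net scalar is $(u_gv_h/v_{gh})^{N-1}$.

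I expect the main obstacle to be precisely this last bookkeeping: correctly matching the normalization of the Gauss sum $\tfrac1N\sum_j \Phi\overline\Phi$ against the $\Psi$-ratio, and keeping the $\epsilon_g=\epsilon^{\pm1}$ sign (which differs on $I_+$ versus $I_-$) consistent through the identity $v_{g^{-1}}=\epsilon_g v_g/u_g$ so that all the $g^*$-quantities are expressed back in terms of $g$-quantities. The cleanest route is to first establish two scalar lemmas — (a) $\nu(x)\nu(1/x)=x^{1-N}$ and (b) the evaluation $\tfrac1N\sum_{j\in\mathbb{Z}_N}\Phi_{g,i-j}\overline\Phi_{g,j-k}=\rho_g^{\,?}\,\delta_{k,i-1}$ with the explicit scalar, together with (c) the $N$-fold telescoping of the $\Psi$-functional equation — and then assemble $\Lop$ and $\Rop$ formally from Lemma~\ref{L:ExpAB} with essentially no further computation. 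The statements for $e^*_i$ follow from those for $e_i$ by the same manipulations, or alternatively from symmetry of $\Lop$, $\Rop$ (Lemma~\ref{L:propAB}) combined with the explicit pairings identifying $H_{g,h}^{gh}$ with the dual of $H^{g,h}_{gh}$.
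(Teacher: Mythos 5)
Your overall plan --- compose the explicit formulas of Lemma~\ref{L:ExpAB} and simplify --- is the right one, and for $\Rop$ it matches the paper's case, which the authors dismiss as ``straightforward.'' (One small slip there: the identity you want is $\nu(1/x)=x^{1-N}\nu(x)$, not $\nu(x)\nu(1/x)=x^{1-N}$; the quantity that actually appears is the ratio $\nu(v_g/v_{gh})/\nu(v_{gh}/v_g)=(v_g/v_{gh})^{N-1}$, and the phase $\rofo^{-i}$ comes from $\Phi_{h,i}\overline\Phi_{h^*,-i}=\rofo^{-i}$ using $\epsilon_h\epsilon_{h^*}=1$, not from an ``index shift.'')

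For $\Lop$ there is a genuine gap. First, the $\Psi$-factors do \emph{not} cancel. To apply $\Aop^*$ to $e^*_j\in H_{g^*,gh}^{h}$ you must substitute $(g,h)\mapsto(g^*,gh)$ in the fourth formula of Lemma~\ref{L:ExpAB}; since $(g^*)^*=g$, $g^*\cdot gh=h$, and $\epsilon_{g^*}=1/\epsilon_g$, this gives $\Aop^*e^*_j=\tfrac{1}{N\Psi_{g,h}(\rofo/\epsilon_g)}\sum_k\overline\Phi_{g,j-k}e_k$, not the expression with $\Psi_{g^*,gh}(\epsilon_g\rofo)$ and $\overline\Phi_{g^*,\cdot}$ that you wrote. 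Hence
\[
\Lop e_i=\frac{\Psi_{g^*,gh}(\epsilon_g\rofo)}{\Psi_{g,h}(\rofo/\epsilon_g)}\cdot\frac{1}{N}\sum_{j,k}\Phi_{g^*,i-j}\overline\Phi_{g,j-k}\,e_k.
\]
The mixed Gauss sum $\tfrac1N\sum_j\Phi_{g^*,i-j}\overline\Phi_{g,j-k}$ is a sum of unimodular factors; it evaluates to $(-\epsilon_g^{-1})\,\delta_{k,i-1}$ and cannot possibly produce the non-unimodular scalar $(u_gv_h/v_{gh})^{N-1}$ as you claim. That scalar must come entirely from the $\Psi$-ratio.

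And this is exactly where your step~(c) fails. The $N$-fold telescoping of the functional equation only establishes $\Psi_{g,h}(\rofo^NE)=\Psi_{g,h}(E)$ --- the consistency of the normalization with the Fermat relation $v_g^N+(u_gv_h)^N=v_{gh}^N$ --- which is a statement about a \emph{single} $\Psi$-function. It gives no relation between $\Psi_{g,h}$ and $\Psi_{g^*,gh}$, which satisfy different recursions. What is actually needed (and what the paper's one-line proof invokes) is the nontrivial reciprocity of the cyclic function $f$ from \cite{KMS},
\[
z^{N-1}f(x,0\vert z\rofo)=x^{N-1}f(z,0\vert x\rofo),
\]
applied with $x=v_{gh}/v_g$ and $z=u_gv_h/(\epsilon_gv_g)$, together with $v_{g^*}=\epsilon_gv_g/u_g$, $u_{g^*}=1/u_g$, $v_{g^*\cdot gh}=v_h$. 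This gives $\Psi_{g^*,gh}(\epsilon_g\rofo)/\Psi_{g,h}(\rofo/\epsilon_g)=\epsilon_g^{1-N}(u_gv_h/v_{gh})^{N-1}$, and the residual $\epsilon_g$-factors combine with the $-\epsilon_g^{-1}$ from the Gauss sum via $\epsilon_g^N=-1$ to yield the stated coefficient. The $f$-reciprocity is genuine external input; it is not derivable from the functional-equation telescoping you propose, and without it your proof of the $\Lop$ formula does not close.
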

\begin{proof}
The case of $\Rop$ is straightforward. In the case of $\Lop$ we use the   equality
$z^{N-1}f(x,0\vert z \rofo)=x^{N-1}f(z,0\vert x \rofo)$ for the function $f(x,y\vert z)$ of \cite{KMS}.\end{proof}

\subsection{The $\widehat{\Psi}$-system for odd $N$}  Assume from now on that $N$ is odd.
\begin{proposition}\label{P:ExtendsPsiHat}
The $\Psi$-system of Proposition~\ref{prop:psi} extends to a $\widehat{\Psi}$-system
where $\scop{}=\Id_H$ and  $\srop{}$ is given by
\[
\srop{} e_i=\rofo^{-\left(\frac{N+1}{2}\right)i}\left(\frac{v_g}{v_{gh}}\right)^{\frac{N-1}2}e_{i},\quad e_i\in H^{g,h}_{gh},
\]
\[
\srop{} e_i^*=\rofo^{-\left(\frac{N+1}{2}\right)i}\left(\frac{v_g}{v_{gh}}\right)^{\frac{N-1}2}e_{i}^*,\quad e_i^*\in H^{gh}_{g,h}.
\]
\end{proposition}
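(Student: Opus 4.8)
The plan is to verify directly that $\scop{}=\Id_H$ and the operator $\srop{}$ defined in the statement are invertible, symmetric, grading-preserving and satisfy the relations \eqref{E:srts} and \eqref{E:sqrtCC} (with $\slop{}=\Bop\Aop\srop{-}\Aop\Bop$ as in \eqref{E:sqrtL}). The operator $\scop{}=\Id_H$ is trivially invertible, symmetric and grading-preserving; the relations of \eqref{E:srts} involving $\scop{}$ reduce to $\Aop^2=\Bop^2=\Id$ (Lemma~\ref{L:ab}) together with $(\scop{})^2=C$, and \eqref{E:sqrtC1C2} becomes the tautology $T=T$. So the first step is to check $C=(\Aop\Bop)^3=\Id_H$. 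This can be read off from Lemma~\ref{L:ExpAB}; alternatively, $B_\rofo$ is sovereign, since the group-like element $a$ satisfies $S^2(x)=a^{-1}xa$ for all $x\in B_\rofo$ (indeed $S^2(a)=a$ and $S^2(b)=S(-a^{-1}b)=a^{-1}ba=\rofo^{-1}b$), and then $C=\Id_H$ by the Remark following Theorem~\ref{T:CHHopfPsi-system+}. Thus $\scop{}=\Id_H$ meets all requirements and trivializes \eqref{E:sqrtC1C2}.

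Next I would handle $\srop{}$. By Proposition~\ref{prop:psi} a multiplicity space $H^{f,g}_h$ vanishes unless $h=fg$, so $H$ is the direct sum of the blocks $H^{g,h}_{gh}$ and $H^{gh}_{g,h}$ over admissible pairs $(g,h)$, and the two displayed formulas do define $\srop{}$ on all of $H$. It is grading-preserving by construction and invertible, its eigenvalues $\rofo^{-(\frac{N+1}{2})i}(v_g/v_{gh})^{(N-1)/2}$ being non-zero ($v_g\neq0$ for $g\in I$ and $v_{gh}\neq0$ for admissible $(g,h)$). It is symmetric because it acts by the same scalar on the mutually dual blocks $H^{g,h}_{gh}$ and $H^{gh}_{g,h}$; here one uses that the bases $\{e_i\}$ and $\{e^*_j\}$ of these two spaces are dual with respect to the pairing of Lemma~\ref{L:1}, which follows from $e^*_j\circ e_i=\delta_{ij}\Id_{V_{gh}}$, itself an immediate consequence of $S_{g,h}^{-1}S_{g,h}=\Id$. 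The equality $(\srop{})^2=\Rop$ follows from Lemma~\ref{L:ExpLR} and the congruences $2\cdot\frac{N+1}{2}\equiv1\pmod N$ and $2\cdot\frac{N-1}{2}=N-1$, which hold because $N$ is odd. The equality $\Bop\srop{}\Bop=\srop{-}$ is a short computation from the $\Bop$- and $\Bop^*$-formulas of Lemma~\ref{L:ExpAB}: applied twice on a vector $e_i\in H^{g,h}_{gh}$, the factors $\nu(v_{gh}/v_g)$, $\Phi_{h,i}$ produced by the first $\Bop$ are cancelled against the factors $\nu(v_{gh}/v_g)$, $\overline\Phi_{h,i}$ produced by the second (using $\Phi_{h,i}\overline\Phi_{h,i}=1$), leaving exactly the scalar $\rofo^{(\frac{N+1}{2})i}(v_{gh}/v_g)^{(N-1)/2}=\srop{-}e_i$; the case of $H^{gh}_{g,h}$ is analogous. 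Finally $\srop{}\scop{}=\scop{}\srop{}$ is automatic. Thus \eqref{E:srts} holds, and Lemma~\ref{New1204} then guarantees that $\slop{}$ is symmetric and grading-preserving with $(\slop{})^2=\Lop$ and $\Aop\slop{}\Aop=\slop{-}$.

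It remains to verify the $T$-identities \eqref{E:sqrtR1L2}, \eqref{E:sqrtR1R2}, \eqref{E:sqrtL1}; by the remark following \eqref{E:sqrtCC} it suffices to treat \eqref{E:sqrtR1L2} and just one of \eqref{E:sqrtR1R2}, \eqref{E:sqrtL1}. Each of these is a ``square root'' of one of the identities of Lemma~\ref{L:semidir} taken with $C=\Id$: for instance $T\srop{}_1\srop{}_2$ and $T\srop{}_4$ have the common ``square'' $T\Rop_1\Rop_2=T\Rop_4$, and similarly for the other two, so modulo Lemma~\ref{L:semidir} what is left is a coherence between the chosen square-root branches. To carry this out I would make $T$ explicit on the multiplicity spaces: using the intertwiners $S_{g,h}$ of Lemma~\ref{lem:sgh} and the formulas of Lemma~\ref{L:ExpAB}, one identifies the positive $6j$-symbol $\sj ijklmn$ with the cyclic-quantum-dilogarithm tetrahedral weight of \cite{K1,K2} (see also \cite{KMS}), namely an explicit product of the coefficients $\psi_{g,h,m}$, $\Phi$, $\nu$ times a Kronecker ``conservation law'' among its four multiplicity indices. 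Given this, \eqref{E:sqrtR1R2} reduces to the scalar identity $\rho(k,l;\alpha)\,\rho(i,j;\beta)=\rho(i,n;\delta)$ on the support of $\sj ijklmn$, where $\rho(g,h;\cdot)$ denotes the $\srop{}$-eigenvalue on the corresponding block: the $v$-dependent factors cancel via the telescoping $(v_k/v_m)(v_i/v_k)=v_i/v_m$ (using $v_{kl}=v_m$, $v_{ij}=v_k$), and the remaining powers of $\rofo$ agree because the conservation law forces $\alpha+\beta\equiv\delta\pmod N$. The identities \eqref{E:sqrtR1L2} and \eqref{E:sqrtL1}, which involve $\slop{}=\Bop\Aop\srop{-}\Aop\Bop$ (a ``shift times scalar'' operator by Lemma~\ref{L:ExpLR}), are treated in the same spirit, but now the defining functional equation $\Psi_{g,h}(\rofo E)/\Psi_{g,h}(E)=(v_g-u_gv_hE)/v_{gh}$ of the quantum dilogarithm enters — equivalently, one invokes the tetrahedral symmetry of the $6j$-symbols established in \cite{K1}.

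The main obstacle is precisely this last step: turning the abstract $T$-identities \eqref{E:sqrtR1L2} and \eqref{E:sqrtL1} into the explicit functional identities of the cyclic quantum dilogarithm and invoking (or re-deriving) the tetrahedral-symmetry and pentagon relations of \cite{K1,KMS}. By contrast the facts $C=\Id_H$, the structural properties of $\srop{}$, and the relations $(\srop{})^2=\Rop$ and $\Bop\srop{}\Bop=\srop{-}$ are routine manipulations of the closed-form expressions of Lemmas~\ref{L:ExpAB} and~\ref{L:ExpLR}.
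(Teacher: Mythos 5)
Your proposal is essentially correct and follows the same route as the paper; the main difference is in emphasis. The paper first \emph{constructs} $\srop{}$ as $(\sqrt{r})^{-1}\Rop^{(N+1)/2}$ where $r=\Rop^N$ (using the positive square root), and then \emph{derives} the displayed formula from the factorization $\Rop=\Rop_0\Rop_1$; you instead take the formula as given and verify the required properties directly, which is equally valid and in fact sidesteps a small sign subtlety in the positive square root when $v_g/v_{gh}<0$ and $N\equiv 3\pmod 4$. Your verification of $C=\Id_H$ via sovereignity of $B_\rofo$ (using $\phi=a$) is a nice alternative to the paper's direct computation $C=\Lop^{-1}\Bop^*\Lop\Bop=\Id_H$. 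Your explicit check of the relations~\eqref{E:srts} for $\srop{}$ is more detailed than the paper's one-line assertion, and you correctly identify that the $T$-identities~\eqref{E:sqrtCC} come down to a branch-coherence check between $\srop{},\slop{}$ and $\Rop,\Lop$, which the paper dismisses as ``easy to see.'' The only place where your account is a slight detour is the last step: you say the $\slop{}$-involving identities~\eqref{E:sqrtR1L2} and~\eqref{E:sqrtL1} require the functional equation of the cyclic quantum dilogarithm or the tetrahedral symmetry of \cite{K1}; in fact they follow from the unsquared identities of Lemma~\ref{L:semidir} alone by the same mechanism you already used for~\eqref{E:sqrtR1R2} — one writes $\slop{}=\Lop^{(N+1)/2}(\Lop^N)^{-1/2}$ and observes that the block scalars $(\Lop^N)^{-1/2}$ on positions $1$ and $3,4$ agree on the support of $T$ because $u_{gh}=u_gu_h$ together with the telescoping of the $v$-factors makes the two sides literally equal, so no dilogarithm identity is needed.
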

\begin{proof}
Since $N$ is odd, we can set  $\epsilon=-1$, and for any $g\in I$, the coordinates $u_g$ and $v_g$ are real numbers. In particular, as the operator $r=\Rop^N$ has a positive spectrum, we define $\sqrt{r}$ as the unique  positive operator such that $(\sqrt{r})^{2}=r$.  Define
\[
\srop{}=(\sqrt{r})^{-1}\Rop^{\frac{N+1}2}.
\]
Let us calculate $\srop{}e_i$ for $e_i\in H^{gh}_{g,h}$.  Write  $\Rop=\Rop_0\Rop_1$ as the product of commuting operators $\Rop_0$ and $\Rop_1$ where $\Rop_0=\left(v_g/v_{gh}\right)^{N-1}$ and $\Rop_1 e_j =\rofo^{-j}e_j$ for $e_j\in H^{gh}_{g,h}$.  Then we have $\Rop_1^N=\Id_H$ and $r=\Rop^N=\Rop_0^N\Rop_1^N=\Rop_0^N$ so
$$(\sqrt{r})^{-1}\Rop^{\frac{N+1}2}e_i=(\Rop_0^N)^{-1/2}\Rop_0^{(N+1)/2}\Rop_1^{(N+1)/2}e_i=\left({v_g}/{v_{gh}}\right)^{\frac{N-1}2}\rofo^{-\left(\frac{N+1}{2}\right)i}e_{i}.$$
The computation of  $\srop{}e_i^*$ is similar.

Next consider the operator
$$\Cop=(\Aop\Bop)^3=\Aop(\Bop\Aop\Bop)\Aop\Bop=\Aop\Aop^*\Bop^*\Aop^*\Aop\Bop=\Lop^{-1}\Bop^*\Lop\Bop.$$
From Lemmas \ref{L:ExpAB} and \ref{L:ExpLR} it is easy to see that $\Lop^{-1}\Bop^*\Lop\Bop=\Id_H$.
Thus, we can define $\scop{}=\Id_H$.  Then it is easy to see that $\scop{}, \srop{}$ satisfy Equations \eqref{E:srts} and \eqref{E:sqrtCC} since $\Cop$ and $\Rop$ satisfy analogous formulas without square roots.
\end{proof}

\begin{lemma}\label{L:slopComp}
The operator $\slop{}=\Bop\Aop \srop{-}  \Aop\Bop$ is given by
\[
\slop{} e_i=\left(\frac{u_gv_h}{v_{gh}}\right)^{\frac{N-1}{2}}e_{i-{(N+1)}/{2}},\quad e_{i}\in H^{g,h}_{gh},
\]
\[
\slop{} e^*_i=\left(\frac{u_gv_h}{v_{gh}}\right)^{\frac{N-1}{2}}e^*_{i+{(N+1)}/{2}},\quad e^*_{i}\in H_{g,h}^{gh}.
\]
\end{lemma}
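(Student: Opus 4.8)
The plan is to compute $\slop{}=\Bop\Aop\srop{-}\Aop\Bop$ directly on the distinguished basis, using the explicit formulas of Lemma~\ref{L:ExpAB} for $\Aop,\Bop$ and of Proposition~\ref{P:ExtendsPsiHat} for $\srop{}$ (hence for $\srop{-}$). Fix an admissible pair $g,h\in I$ and a basis vector $e_i\in H^{g,h}_{gh}$. Applying the five operators in succession, the intermediate vectors live in the multiplicity spaces
\[
H^{g,h}_{gh}\ \xrightarrow{\,\Bop\,}\ H_{gh,h^*}^g\ \xrightarrow{\,\Aop\,}\ H^{(gh)^*,g}_{h^*}\ \xrightarrow{\,\srop{-}\,}\ H^{(gh)^*,g}_{h^*}\ \xrightarrow{\,\Aop\,}\ H_{gh,h^*}^g\ \xrightarrow{\,\Bop\,}\ H^{g,h}_{gh},
\]
which already shows that $\slop{}$ is grading-preserving, as it must be. Reading off the scalar factors, $\slop{}e_i$ becomes a double sum over the two indices $j,k\in\mathbb{Z}_N$ carried by the two applications of $\Aop$; the summand is a product of four $\Phi$-type phases ($\Phi_{h,i}$, $\overline\Phi_{gh,j+i}$, $\Phi_{gh,j-k}$, $\overline\Phi_{h,-k}$), the factor $\rofo^{((N+1)/2)j}$ coming from $\srop{-}$, the $\Psi$-factors $\Psi_{gh,h^*}(\epsilon_{(gh)^*}\rofo)$ and $\bigl(N\Psi_{gh,h^*}(\rofo/\epsilon_{gh})\bigr)^{-1}$, the factors $\nu(v_{gh}/v_g)$ and $\nu(v_{gh}/v_g)^{-1}$, and the factor $(v_{(gh)^*}/v_{h^*})^{-(N-1)/2}$.

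Next I would use that $N$ is odd, which brings several simplifications: one takes $\epsilon=-1$, so $\epsilon_g=-1$ for every $g\in I$; hence $\Phi_{g,m}=\rofo^{m(m-1)/2}$ does not depend on $g$, the two arguments $\epsilon_{(gh)^*}\rofo$ and $\rofo/\epsilon_{gh}$ both equal $-\rofo$ so the two copies of $\Psi_{gh,h^*}(-\rofo)$ cancel, and the two $\nu(v_{gh}/v_g)$ cancel. Collecting the exponents of $\rofo$ in the summand one finds that the terms quadratic in $j$ cancel and the terms independent of $j$ cancel, so the inner sum is the Gauss sum $\sum_{j\in\mathbb{Z}_N}\rofo^{j((N+1)/2-i-k)}$, which equals $N$ if $k\equiv(N+1)/2-i\pmod N$ and $0$ otherwise. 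The factor $N$ cancels the $N^{-1}$, the surviving output index is $-k\equiv i-(N+1)/2$, and one is left with $\slop{}e_i=(v_{(gh)^*}/v_{h^*})^{-(N-1)/2}e_{i-(N+1)/2}$. Since $v_{g^{-1}}=\epsilon_g v_g/u_g=-v_g/u_g$ and $u_{gh}=u_gu_h$, we get $v_{(gh)^*}/v_{h^*}=v_{gh}/(u_gv_h)$, so the coefficient equals $(u_gv_h/v_{gh})^{(N-1)/2}$, as asserted.

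For the action on $e^*_i\in H_{g,h}^{gh}$ there is no need to repeat the computation. By Lemma~\ref{New1204} the operator $\slop{}$ is symmetric and grading-preserving, hence it preserves $H_{g,h}^{gh}$, and with respect to the dual bases $\{e_i\}$, $\{e_i^*\}$ and the non-degenerate pairing $\ast^{gh}_{g,h}$ its restriction to $H_{g,h}^{gh}$ is the transpose of its restriction to $H^{g,h}_{gh}$. The transpose of the map $e_i\mapsto c\,e_{i-(N+1)/2}$ is $e^*_i\mapsto c\,e^*_{i+(N+1)/2}$ with the same scalar $c=(u_gv_h/v_{gh})^{(N-1)/2}$, which is the second formula. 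The only genuine work is the bookkeeping in the first two paragraphs: tracking which multiplicity space each intermediate vector occupies, so that the right specializations of the formulas of Lemma~\ref{L:ExpAB} and Proposition~\ref{P:ExtendsPsiHat} are used (in particular the correct $\nu$- and $\Psi$-arguments), and verifying the cancellation of the $\Psi$-factors and of the non-Gaussian part of the phase; once $N$ is assumed odd there is no conceptual difficulty.
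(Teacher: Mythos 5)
Your proof is correct, and it takes a genuinely different route from the paper's. The paper never unwinds the five-fold composition: instead it observes that $\srop{}=(\sqrt r)^{-1}\Rop^{(N+1)/2}$ (with $r=\Rop^N$) so that $\slop{}=\Bop\Aop\sqrt r\,\Rop^{-(N+1)/2}\Aop\Bop=(\sqrt{r'})^{-1}\Lop^{(N+1)/2}$ with $r'=\Lop^N$, then factors $\Lop=\Lop_0\Lop_1$ into a scalar part $\Lop_0$ and a period-$N$ translation part $\Lop_1$ to conclude $\slop{}=\Lop_0^{1/2}\Lop_1^{(N+1)/2}$, which reads off as the stated formula. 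This is a short, conceptual argument that produces a canonical ``half-integer power'' of $\Lop$ and makes it clear why the shift is by $(N+1)/2$ without ever touching the $\Psi$- and $\Phi$-functions. Your computation, by contrast, tracks the chain of multiplicity spaces $H^{g,h}_{gh}\to H_{gh,h^*}^g\to H^{(gh)^*,g}_{h^*}\to H^{(gh)^*,g}_{h^*}\to H_{gh,h^*}^g\to H^{g,h}_{gh}$ and verifies directly that the $\nu$-factors and $\Psi$-factors cancel (the latter because $\epsilon=-1$ forces both arguments to equal $-\rofo$), that the quadratic parts of the $\Phi$-phases collapse to the linear exponent $-j(i+k)$, and that the resulting Gauss sum $\sum_j\rofo^{j((N+1)/2-i-k)}$ produces the Kronecker delta pinning $-k\equiv i-(N+1)/2$; the coefficient $(v_{(gh)^*}/v_{h^*})^{-(N-1)/2}=(u_gv_h/v_{gh})^{(N-1)/2}$ then comes out as claimed, and the $e^*_i$-formula follows by the symmetry of $\slop{}$ exactly as you say. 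Your approach costs more bookkeeping but has the virtue of being an independent check of the explicit formulas in Lemma~\ref{L:ExpAB}, Lemma~\ref{L:ExpLR}, and Proposition~\ref{P:ExtendsPsiHat}: a wrong sign or wrong $\Psi$-argument anywhere in those would make the cancellation fail. The paper's approach is cleaner but relies on $r$ and $r'$ having well-defined positive square roots, and on the correct identification of $\Lop_0$, $\Lop_1$; both arguments are valid.
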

\begin{proof}
Write $\Lop=\Lop_0\Lop_1$ as the product of commuting operators $\Lop_0$ and $\Lop_1$ where $\Lop_0\vert_{H^{g,h}_{gh}\oplus H_{g,h}^{gh}}=\left(\frac{u_gv_h}{v_{gh}}\right)^{N-1}$ and $\Lop_1$ is a translation operator such that $\Lop_1^N=\Id_H$.  We will show that $\slop{}=\Lop_0^{1/2}\Lop_1^{(N+1)/2}$.

Let $r'=\Lop^N=\Bop\Aop r^{-1} \Aop\Bop$ where $r=\Rop^N$.  Then
\begin{align*}
\slop{}&=\Bop\Aop \sqrt{r}\Rop^{-\frac{N+1}{2}}\Aop\Bop
=(\sqrt{r'})^{-1}\Bop\Aop \Rop^{-\frac{N+1}{2}}\Aop\Bop
= (\sqrt{r'})^{-1}L^{\frac{N+1}{2}}.
\end{align*}
Since $r'=L^N=\Lop_0^N$ we have
$$\Lop^\frac12=\sqrt{r'}^{-1}\Lop^{\frac{N+1}2}=(\Lop_0^N)^{-\frac12}\Lop_0^{\frac{N+1}2}\Lop_1^{\frac{N+1}2}=\Lop_0^{\frac12}\Lop_1^{\frac{N+1}2}$$
and the formulas of the lemma follow.
\end{proof}

\begin{lemma}
 The operator $\sqop{}{8}=\srop{}B\slop{} B \slop{-}\scop{-}$ has the following form
  $$\sqop{}{}= (-1)^{(N-1)/2}(\rofo^{-a}\Id_{\hat H}\oplus \rofo^{a}\Id_{\check H})\in \End(H)$$
   where $a=\frac{N^2-1}{8}$.
\end{lemma}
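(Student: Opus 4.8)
The strategy is to compute $\sqop{}{8}=\srop{}B\slop{} B \slop{-}\scop{-}$ by composing the explicit formulas already established: the action of $\srop{}$ from Proposition~\ref{P:ExtendsPsiHat}, the action of $\slop{}$ and $\slop{-}$ from Lemma~\ref{L:slopComp}, the action of $\Bop$ from Lemma~\ref{L:ExpAB}, and $\scop{}=\Id_H$ from Proposition~\ref{P:ExtendsPsiHat}. Since $\sqop{}{8}$ is grading-preserving (Lemma~\ref{nnn17}), it suffices to compute it separately on a basis vector $e_i\in H^{g,h}_{gh}$ of $\check H$ and on a basis vector $e_i^*\in H^{gh}_{g,h}$ of $\hat H$. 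On $\check H$, applying $\slop{-}$ moves $e_i$ (up to a scalar power of $u_gv_h/v_{gh}$) to $e_{i+(N+1)/2}$ in $H^{g,h}_{gh}$; then $\Bop$ sends it into a dual-type space $H_{gh,h^*}^g$ picking up a factor $\nu(v_{gh}/v_g)^{-1}\Phi_{h,\cdot}$ and flipping the index to its negative; then $\slop{}$ acts on that space as a translation by $-(N+1)/2$ together with the scalar $(u_gv_h/v_{gh})^{(N-1)/2}$ — here one must be careful that $\slop{}$ acts on a space of the form $H_{g',h'}^{g'h'}$ or $H^{g',h'}_{g'h'}$ with the $g',h'$ now being $gh,h^*$, so the prefactors are expressed in terms of $v_{gh}, v_{h^*}, u_{gh}$; then $\Bop$ again, returning to $H^{g,h}_{gh}$ (up to relabeling), picking up another $\nu$-factor and $\Phi$-factor and flipping the index back; and finally $\srop{}$ multiplies by $\rofo^{-((N+1)/2)\,(\cdot)}(v_g/v_{gh})^{(N-1)/2}$ on $H^{g,h}_{gh}$.

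\textbf{Key steps and bookkeeping.} First I would carefully track the index: the two index-flips from the two applications of $\Bop$ compose to the identity on indices, while the translations from $\srop{-}$ in the definition of $\slop{}$, from $\slop{}$, and from $\srop{}$ must cancel so that $\sqop{}{8}$ is indeed grading-preserving (which we know abstractly, so this is a consistency check). The substantive computation is the \emph{scalar}. Collecting the $u,v$-prefactors: the powers of $u_gv_h/v_{gh}$, $v_g/v_{gh}$, the $\nu$-factors $\nu(v_{gh}/v_g)^{\pm1}$, and the corresponding factors coming from the middle $\Bop$ (now with arguments $v_{gh},v_{h^*},u_{gh}$) should all telescope to $1$; this uses the identities $u_{gh}=u_gu_h$, $v_{g^{-1}}=\epsilon_g v_g/u_g$ (with $\epsilon=-1$ for odd $N$, so $\epsilon_g=\pm1$ and in fact $\epsilon_g^{N}=-1$ forces $\epsilon_g=-1$ here as well since $N$ is odd), and $x^N+y^N=z^N$ type relations for the $\nu$ and $\Psi$ functions. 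One must also verify that the $\Psi_{\cdot,\cdot}$-factors hidden inside $\Bop$ do not appear — and indeed Lemma~\ref{L:ExpAB} shows $\Bop$ involves only $\Phi$ and $\nu$, no $\Psi$. What remains is the $\rofo$-power and the sign. The $\rofo$-powers come from: $\srop{}$ contributes $\rofo^{-((N+1)/2)\,i'}$ where $i'$ is the index at that stage; the $\Phi_{h,\cdot}$ and $\Phi_{h^*,\cdot}$ (equivalently $\overline\Phi$) factors contribute $\rofo^{m(m-1)/2}$-type terms with $m$ equal to the (flipped) indices; and the translations shift these arguments. Expanding $\Phi_{g,m}=(-\epsilon_g)^m\rofo^{m(m-1)/2}=(-1)^m(-1)^{\pm m}\rofo^{m(m-1)/2}$ and summing the quadratic exponents over the chain of index substitutions $i\mapsto i+(N+1)/2\mapsto -(i+(N+1)/2)\mapsto \cdots$, the $i$-dependent part must vanish (grading-preservation again) leaving a pure constant $\rofo^{a}$ with $a$ a quadratic expression in $(N+1)/2$ modulo $N$; a direct evaluation of $\big(\tfrac{N+1}{2}\big)^2 - \tfrac{N+1}{2}$ type sums, reduced mod $N$, should give $a=\tfrac{N^2-1}{8}$ (note $N$ odd makes $\tfrac{N^2-1}{8}$ an integer). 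The overall sign $(-1)^{(N-1)/2}$ should emerge from the $(-\epsilon_g)^m$ factors in the two $\Phi$'s evaluated at the shifted indices together with the $(-1)$'s tracked through $v_{g^{-1}}/v_g$.

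\textbf{Main obstacle.} The hard part will be the careful relabeling of the multiplicity spaces through the alternating chain $H^{g,h}_{gh}\to H_{g,h}^{gh}\to H_{gh,h^*}^g\to H^{gh,h^*}_g\to H^{g,h}_{gh}$ (and the dual chain), making sure that when $\slop{}$ and $\srop{}$ are applied on the intermediate spaces the prefactors are written with the \emph{correct} group elements in the roles of "$g$" and "$h$" in Lemmas~\ref{L:slopComp} and \ref{P:ExtendsPsiHat}, and then seeing the whole product of $u,v$-prefactors collapse. The abstract facts that $\sqop{}{8}$ is grading-preserving, unitary, and a $T$-scalar (Lemma~\ref{nnn17}) are strong guides: grading-preservation forces the index-dependent phases to cancel, and unitarity forces the surviving scalar to have modulus $1$, which for a real $\FK$-power combined with a root of unity pins it down to $\pm\rofo^{a}$. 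So in practice I would use these constraints to shortcut the verification: compute $\sqop{}{8}e_0$ for the distinguished basis vector $e_0\in H^{g,h}_{gh}$ explicitly (where many $\Phi$-exponents vanish), extract the constant, and invoke grading-preservation to conclude the same scalar acts on all of $\check H$; then repeat on $\hat H$ — where, because $\sqop{}{8}$ is unitary and the two pieces are inverse scalars, one gets $\rofo^{+a}$ with the same sign $(-1)^{(N-1)/2}$ (the sign being its own inverse since $(N-1)/2$ and the parity work out). This reduces the whole proof to one or two honest but bounded scalar computations plus the cited structural lemmas.
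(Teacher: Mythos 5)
Your plan is essentially the paper's own proof: the paper's argument is precisely the "direct computation" chaining the explicit actions of $\srop{}$, $\slop{}$, $\slop{-}$, $\Bop$, and $\scop{}=\Id$ from Proposition~\ref{P:ExtendsPsiHat}, Lemma~\ref{L:slopComp}, and Lemma~\ref{L:ExpAB} through the cycle of multiplicity spaces $H^{g,h}_{gh}\to H_{gh,h^*}^{g}\to H_{gh,h^*}^{g}\to H^{g,h}_{gh}$ and observing the telescoping, so there is no methodological difference.

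A few slips in your sketch are worth fixing if you carry out the computation honestly. First, on $\hat H$-type spaces $\slop{}$ translates the index by $+\tfrac{N+1}{2}$, not $-\tfrac{N+1}{2}$ (Lemma~\ref{L:slopComp}: $\slop{}e^*_i=(\cdots)\,e^*_{i+(N+1)/2}$); with your sign the index fails to return to $i$ after the second $\Bop$, so the result would not even land on a multiple of $e_i$. Second, $\srop{}$ does not translate at all — it is diagonal in the $e_i$ basis — so the only translations are those from $\slop{\pm}$ and the index flips of $\Bop$. Third, for odd $N$ one has $\epsilon=-1$ and $\epsilon_h=-1$, so $(-\epsilon_h)^m=1$ and the $\Phi$-factors contribute only $\rofo$-powers, no signs; the sign $(-1)^{(N-1)/2}$ comes entirely from $v_{h^*}=\epsilon_h v_h/u_h$ inside the factor $(u_{gh}v_{h^*}/v_g)^{(N-1)/2}$. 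Finally, your proposed shortcut of computing only $q\,e_0$ and "invoking grading-preservation" to extend to all $e_i$ is not valid: grading-preservation means $q$ preserves each $N$-dimensional space $H^{g,h}_{gh}$, not that it acts as a scalar there. You genuinely have to track the quadratic exponents, $\overline\Phi_{h,i}\,\Phi_{h,\,i+\frac{N+1}{2}}=\rofo^{\,\frac{N+1}{2}i+\frac{N^2-1}{8}}$, and observe that the linear term $\rofo^{\,\frac{N+1}{2}i}$ is cancelled by the $\rofo^{-\frac{N+1}{2}i}$ coming from the final $\srop{}$; this cancellation is the substance of the lemma, not a consequence of an abstract property you may assume. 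Your use of unitarity ($q^*=q^{-1}$ with the transpose swapping the $\hat H$- and $\check H$-blocks) to derive the $\hat H$-scalar from the $\check H$-scalar is correct and is a legitimate shortcut for the second half.
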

\begin{proof}
 A direct computation using Proposition~\ref{P:ExtendsPsiHat} and Lemma~\ref{L:slopComp} shows that
\begin{align*}
\sqop{}{8}e_i
&=(-1)^{(N-1)/2}\rofo^{\left(\frac{N^2-1}{8}\right)}e_i
\end{align*}
for $e_i\in H^{g,h}_{gh}$.
Similarly, $\sqop{}{8}e_i^*=(-1)^{(N-1)/2}\rofo^{-\left(\frac{N^2-1}{8}\right)}e_i^*$ for $e_i^*\in H_{g,h}^{gh}$.
\end{proof}

 For $g\in \Gr$ set $I_g=\{g\}\subset I$, if $g\in I$  and  $I_g=\emptyset$, otherwise.  Let  $\bb:I\to \C$  be the constant function taking the value $\frac1N$.  Then  the triple  $(\Gr,I,\bb)$    satisfies  Conditions \ref{I:comp1}-\ref{I:comp4} of Subsection \ref{SS:AlgPre}.   We summarize the results of  this section in  the following theorem.
 \begin{theorem}
For any odd $N$, the category of $B_\rofo$-modules has a $\widehat{\Psi}$-system such that the algebraic data $(\Gr,I,\bb)$ satisfies   Conditions \ref{I:comp1}-\ref{I:comp4} of Subsection \ref{SS:AlgPre} and $\sqop{}{}=(-1)^{(N-1)/2}(\rofo^{-a}\Id_{\hat H}\oplus \rofo^{a}\Id_{\check H})$ where $a=\frac{N^2-1}{8}$.
 \end{theorem}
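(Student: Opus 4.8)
The plan is to assemble the constructions of this section. Proposition~\ref{prop:psi} already gives a $\Psi$-system in the category of $B_\rofo$-modules with distinguished simple objects $\{V_g\}_{g\in I}$, involution $g\mapsto g^{-1}$, and multiplicity spaces satisfying $\dim(H^{f,g}_h)=N$ if $h=fg$ and $\dim(H^{f,g}_h)=0$ otherwise. Since $N$ is odd, one may take $\epsilon=-1$, so that $u_g,v_g$ are real and $r=\Rop^N$ is a positive operator; then Proposition~\ref{P:ExtendsPsiHat} upgrades the $\Psi$-system to a $\widehat\Psi$-system with $\scop{}=\Id_H$ and the explicit operator $\srop{}$. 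The asserted form of $\sqop{}{8}$ is the content of the Lemma computing $\sqop{}{8}$ above, which follows by a direct substitution from the formulas of Proposition~\ref{P:ExtendsPsiHat} and Lemma~\ref{L:slopComp}; note that $a=\frac{N^2-1}{8}$ is an integer because $N$ odd forces $N^2\equiv1\pmod 8$. In particular $\sqop{}{8}$ is literally (hence $T$-) equal to $(-1)^{(N-1)/2}(\rofo^{-a}\Id_{\hat H}\oplus\rofo^{a}\Id_{\check H})$, so the hypothesis of Theorem~\ref{T:MainTopInv} holds with $\wsqop=(-1)^{(N-1)/2}\rofo^{-a}$.

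It remains to check Conditions~\ref{I:comp1}--\ref{I:comp4} of Subsection~\ref{SS:AlgPre} for $\Gr=G=\mathbb{R}\times\mathbb{R}_{>0}$, $I_g=\{g\}$ for $g\in I$ and $I_g=\emptyset$ otherwise, and $\bb\equiv 1/N$. First I would observe that $I=G\setminus(\{0\}\times\mathbb{R}_{>0})$ is stable under $g\mapsto g^{-1}$, since the inverse of $(x,y)$ is $(-x/y,1/y)$ and its first coordinate vanishes exactly when $x=0$; combined with the involution $i^*=i^{-1}$ this gives Condition~\ref{I:comp1}. For Conditions~\ref{I:comp2} and~\ref{I:comp2+}, whenever $i_1\in I_{g_1}$ and $i_2\in I_{g_2}$ one necessarily has $i_1=g_1$, $i_2=g_2$, and by Proposition~\ref{prop:psi} the only index $k\in I$ with $H^{g_1g_2}_k\neq0$ is $k=g_1g_2$: if $g_1g_2\notin I$ then $I_{g_1g_2}=\emptyset$ and all these multiplicity spaces vanish, while if $g_1g_2\in I$ then $I_{g_1g_2}=\{g_1g_2\}$ and $\dim H^{g_1g_2}_{g_1g_2}=N\neq0$. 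Condition~\ref{I:comp4} is then the one-line computation
\[
\sum_{i_1\in I_{g_1},\,i_2\in I_{g_2}}\bb(i_1)\bb(i_2)\dim(H^{i_1i_2}_k)=\frac1N\cdot\frac1N\cdot N=\frac1N=\bb(k)
\]
valid whenever $k\in I_{g_1g_2}$ and $I_{g_1},I_{g_2}\neq\emptyset$, and $\bb(i)=\bb(i^*)$ is automatic since $\bb$ is constant.

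The only step that is not a direct citation is Condition~\ref{I:comp3}: given a finite family $\{g_r\}\subset G$, I need $g\in G$ with $gg_r\in I$ for every $r$. For each $r$ the set $\{g\in G:gg_r\notin I\}$ equals the right translate $(\{0\}\times\mathbb{R}_{>0})g_r^{-1}$ of the one-dimensional subgroup $\{0\}\times\mathbb{R}_{>0}$ inside the two-dimensional group $G$, so it is a proper closed nowhere dense subset; a finite union of such sets is again proper, and any $g$ in the complement works. I expect this short genericity argument — together with carefully keeping track of the group law on $G$ and the stability of $I$ under inversion — to be the only place requiring genuine (if elementary) work; everything else reduces to the propositions and lemmas already proved in this section.
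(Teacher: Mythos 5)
Your proof is correct and follows the same route as the paper: it assembles Propositions~\ref{prop:psi} and~\ref{P:ExtendsPsiHat}, Lemma~\ref{L:slopComp}, and the lemma computing $\sqop{}{8}$, and then verifies Conditions~\ref{I:comp1}--\ref{I:comp4} for $(\Gr,I,\bb)$, which the paper simply asserts without detail. The verification itself (stability of $I$ under inversion via $(x,y)^{-1}=(-x/y,1/y)$, the single-summand computation for Condition~\ref{I:comp4}, and the genericity argument for Condition~\ref{I:comp3} using that each bad locus $(\{0\}\times\mathbb{R}_{>0})g_r^{-1}$ is a proper one-dimensional subset of the two-dimensional group $\Gr$) is sound.
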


 Thus, the category of $B_\rofo$-modules gives rise to a topological invariant as in Theorem \ref{T:MainTopInv} and as was mentioned above this invariant generalizes that of \cite{K1}.


\section*{Appendix}

The   relations in the Fundamental lemma (Lemma \ref{Fundamental lemma}) express  the action of the  standard generators  of the symmetric group $\mathbb{S}_4$ on the tensors $T$, $\bar T$ or equivalently on the tensors  $S$, $\bar S$ defined at the end of Section \ref{TheformsTand}.  We give  a geometric interpretation of this action   in the case where the operators $A$ and $B$ are symmetric, i.e.,  $A=A^*$, $B=B^*$. This interpretation involves a combinatorial 3-dimensional TQFT  which we now define.

Consider a compact oriented surface  (possibly with boundary)  endowed with  oriented  cellular structure $\Sigma$ such that all 2-cells are either bigons or triangles. For example, for any $\varepsilon, \mu\in \{-1,+1\}$, the unit disk $D$ in $\mathbb{C}$ has such a structure  consisting of  a single bigon with two 0-cells $\{ \pm1\}$ and  two 1-cells $e^1_\pm\colon [0,1]\to D$ given by
$
e^1_+(t)= \varepsilon e^{\varepsilon i\pi t}$ and $
e^1_-(t)=\mu e^{-\mu i\pi t}$ where $t\in [0,1]$. 

A bigon of $\Sigma$ is  \emph{inessential} if its  edges are co-oriented. In the example above, the   cellular structures with $\varepsilon=\mu$ are inessential.  
A triangular 2-cell of $\Sigma$ is positive (resp.\ negative) if the orientation of precisely two (resp.\  one) of its edges is compatible with that of the cell itself.  We shall consider only cellular structures $\Sigma$ without inessential bigons and such that all the triangular cells are either positive or negative.  We   associate the 1-dimensional vector space $\mathbb{C}$ to all bigons,   the vector space  $\hat H$ to all positive triangles  and $\check H$ to all negative triangles. Finally, we associate with $\Sigma$   the  tensor product over $\mathbb{C}$  of these vector spaces numerated by the 2-cells of $\Sigma$. It is isomorphic to   $\hat H^{\otimes m_+}\otimes \check H^{\otimes m_-}$, where $m_+$ (resp.\ $m_-$) is   the number  of positive  (resp.\  negative) triangles of $\Sigma$.

Next, we define  elementary 3-cobordisms. An oriented tetrahedron in $\mathbb{R}^3$ with ordered vertices has a natural cell structure, where the orientation on the edges is induced from the order. Such a tetrahedron is {\it positive} if the oriented
edges (12,13,14)  form a positive basis in $\mathbb{R}^3$ and {\it negative} otherwise. We associate with a positive (resp.\ negative) tetrahedron  the tensor $S\in \hat H\otimes \check H\otimes\hat H\otimes \check H$ (resp.\  $\bar S\in  \check H\otimes\hat H\otimes \check H\otimes\hat H$). Here the   face opposite to the $i$-vertex corresponds to the $i$-th tensor factor for $i\in\{1,2,3,4\}$.

Next, we consider cones over essential bigons with induced cellular structure. The orientation condition on the triangular cells leaves four isotopy classes of such cones. We describe them for  the cone   over the unit disk   $D\subset \mathbb{C}$ with the cone point  $(0,1)\in\mathbb{C}\times\mathbb{R}$. The four possible cellular structures have  three 0-cells $\{ (\pm 1,0),  (0,1)\}$  and four 1-cells 
\[
\{e^1_{0\pm}(t)=\pm e^{i\pi t},\ e^1_{1\pm}(t)=(\pm(1-t),t)\}
\] 
or
\[
\{e^1_{0\pm}(t)=\mp e^{-i\pi t},\ e^1_{1\pm}(t)=(\pm(1-t),t)\}
\] 
or
\[
\{e^1_{0\pm}(t)=\pm e^{i\pi t},\ e^1_{1\pm}(t)=(\pm t,1-t)\}
\]
or else
\[
\{e^1_{0\pm}(t)=\mp e^{-i\pi t},\ e^1_{1\pm}(t)=(\pm t,1-t)\}.
\]
Let us call them cones of type $a_+$, $a_-$, $b_+$, and $b_-$, respectively. We   associate to these cones respectively the operators $A\vert_{\check H}$, $A\vert_{\hat H}$, $B\vert_{\check H}$, and $B\vert_{\hat H}$ viewed as vectors in $\hat H^{\otimes 2}$ or $\check H^{\otimes 2}$. Note that our cones are invariant under rotation by the angle $\pi$ around the vertical coordinate axis, and this invariance leads here to the condition $A=A^*$, $B=B^*$. 

Now we give a TQFT interpretation of the formulae \eqref{E:sym01S}--\eqref{E:sym23S}. Let us take, for example the right hand side of \eqref{E:sym01S}. The form $\bar S$ corresponds to a negative tetrahedron. The edge joining the first and the second vertices is incident to two faces opposite to the third and  the forth vertices. We can glue two cones, one of type $a_+$ and another one of type $a_-$, to these two faces in the way that one of the edges of the base bigons are glued to the initial edge $12$ and of course by respecting all orientations. Namely, we glue the cone of type $a_+$ to the face opposite to the third vertex of the tetrahedron so that the tip of the cone is glued to the forth vertex, and we glue the cone of type $a_-$ to the face opposite to the forth vertex of the tetrahedron so that the tip of the cone is glued to the third vertex. Finally, we can glue naturally the two bigons with each other by pushing continuously the initial edge inside the ball and eventually closing the gap like a book. The result of all these operations is that we obtain a positive tetrahedron, where the only difference with respect to the initial tetrahedron is that the orientation of the initial edge $12$ has changed and this corresponds to changing the order of its vertices. Notice that as these vertices are neighbors, their exchange does not affect the orientation of all other edges. On the other hand, as the order of the tensor components in our TQFT rules for the tetrahedra is matched with the order of vertices, we have to exchange also the first two tensor components. This explains the left hand side of \eqref{E:sym01S}. The other two relations are interpreted in a similar manner.

\linespread{1}

\vfill

\end{document}